\newtheorem{theorem}{\bf Theorem}[section]
\newtheorem{lemma}[theorem]{\bf Lemma}
\newtheorem{proposition}[theorem]{\bf Proposition}
\newtheorem{remark}[theorem]{\bf Remark}
\numberwithin{equation}{section}
\begin{document}
\vspace*{0ex}
\begin{center}
{\Large\bf
A mathematical analysis of the Kakinuma model \\
for interfacial gravity waves. \\
Part I: Structures and well-posedness \\
}
\end{center}

\begin{center}
Vincent Duch\^ene and Tatsuo Iguchi
\end{center}

\begin{abstract}
We consider a model, 
which we named the Kakinuma model, for interfacial gravity waves. 
As is well-known, the full model for interfacial gravity waves has a variational structure whose 
Lagrangian is an extension of Luke's Lagrangian for surface gravity waves, that is, water waves. 
The Kakinuma model is a system of Euler--Lagrange equations for approximate Lagrangians, 
which are obtained by approximating the velocity potentials in the Lagrangian for the full model. 
In this paper, we first analyze the linear dispersion relation for the Kakinuma model and show that 
the dispersion curves highly fit that of the full model in the shallow water regime. 
We then analyze the linearized equations around constant states and derive a stability condition, 
which is satisfied for small initial data when the denser water is below the lighter water. 
We show that the initial value problem is in fact well-posed locally in time in Sobolev spaces 
under the stability condition, the non-cavitation assumption and intrinsic compatibility conditions 
in spite of the fact that the initial value problem for the full model 
does not have any stability domain so that its initial value problem is ill-posed in Sobolev spaces. 
Moreover, it is shown that the Kakinuma model enjoys a Hamiltonian structure and has conservative 
quantities: mass, total energy, and in the case of the flat bottom, momentum. 
\end{abstract}

\section{Introduction}
\label{sect:intro}
We are concerned with the motion of interfacial gravity waves at the interface between two 
layers of immiscible waters in a domain of the $(n+1)$-dimensional Euclidean space in the rigid-lid case. 
Let $t$ be the time, $\bm{x}=(x_1,\ldots,x_n)$ the horizontal spatial coordinates, and $z$ the vertical 
spatial coordinate. 
We assume that the interface, the rigid-lid of the upper layer, and the bottom of the lower layer are 
represented as $z=\zeta(\bm{x},t)$, $z=h_1$, and $z=-h_2+b(\bm{x})$, respectively, 
where $\zeta(\bm{x},t)$ is the elevation of the interface, $h_1$ and $h_2$ are mean thicknesses of 
the upper and lower layers, and $b(\bm{x})$ represents the bottom topography. 
The only external force applied to the system is the constant and vertical gravity, and interfacial tension is neglected. 
Moreover, we assume that the waters in the upper and the lower layers are both incompressible and 
inviscid fluids with constant densities $\rho_1$ and $\rho_2$, respectively, 
and that the flows are both irrotational. 
See Figure~\ref{intro:internal wave}. 
\begin{figure}[ht]
\setlength{\unitlength}{1pt}
\begin{picture}(0,0)
\put(95,-104){$\bm{x}$}
\put(77,-72){$z$}
\put(170,-70){$\zeta(\bm{x},t)$}
\put(375,-65){$h_1$}
\put(375,-145){$h_2$}
\put(280,-65){$\rho_1$}
\put(280,-145){$\rho_2$}
\end{picture}
\begin{center}
\includegraphics[width=0.7\linewidth]{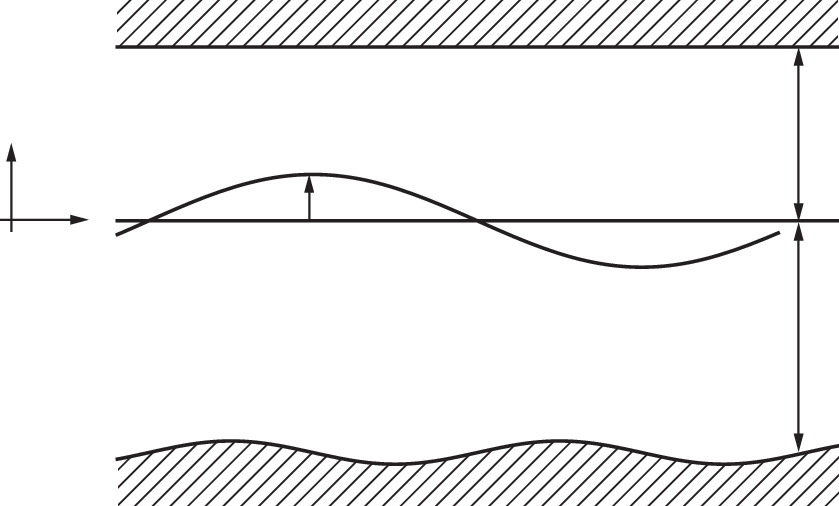}
\end{center}
\caption{Interfacial gravity waves}
\label{intro:internal wave}
\end{figure}
Then, the motion of the waters is described by the velocity potentials $\Phi_1$ and $\Phi_2$ and the 
pressures $P_1$ and $P_2$ in the upper and the lower layers, respectively, satisfying the basic equations 
in the theory of fluid dynamics, which will be referred as the full model for interfacial gravity waves 
throughout in this paper. 
As shown by J.~C.~Luke~\cite{Luke1967}, the basic equations for the surface gravity waves, that is, 
the water wave problem has a variational structure, whose Lagrangian is written in terms of the surface 
elevation of the water and the velocity potential, and the Lagrangian density is given by the vertical 
integral of the pressure in the water region. 
The full model for interfacial gravity waves has also a variational structure and the Lagrangian 
density $\mathscr{L}(\Phi_1,\Phi_2,\zeta)$ is again given by the vertical integral of the pressure 
in both water regions. 
T. Kakinuma~\cite{Kakinuma2000, Kakinuma2001, Kakinuma2003} proposed a model for interfacial gravity waves 
and applied his model to simulate numerically the waves. 
To derive the model, he approximated the velocity potentials $\Phi_1$ and $\Phi_2$ by 
\begin{equation}\label{intro:app}
\Phi_k^{\rm app}(\bm{x},z,t) = \sum_{i=0}^N Z_{k,i}(z;\tilde{h}_k(\bm{x}))\phi_{k,i}(\bm{x},t)
\end{equation}
for $k=1,2$, where $\{Z_{1,i}\}$ and $\{Z_{2,i}\}$ are appropriate function systems 
in the vertical coordinate $z$ and may depend on $\tilde{h}_1(\bm{x})$ and $\tilde{h}_2(\bm{x})$, 
respectively, which are thickness of the upper and the lower layers in the rest state, 
whereas $\bm{\phi}_k=(\phi_{k,0},\phi_{k,1},\ldots,\phi_{k,N})^{\rm T}$, $k=1,2$, are unknown variables. 
Then, he derived an approximate Lagrangian density 
$\mathscr{L}^{\rm app}(\bm{\phi}_1,\bm{\phi}_2,\zeta)=\mathscr{L}(\Phi_1^{\rm app},\Phi_2^{\rm app},\zeta)$ 
for unknowns $(\bm{\phi}_1,\bm{\phi}_2,\zeta)$. 
The Kakinuma model is a corresponding system of Euler--Lagrange equations for the approximated Lagrangian 
density $\mathscr{L}^{\rm app}(\bm{\phi}_1,\bm{\phi}_2,\zeta)$. 
Different choices of the function systems $\{Z_{1,i}\}$ and $\{Z_{2,i}\}$ give different Kakinuma models 
and we have to carefully choose the function systems for the Kakinuma model to provide good approximations 
for interfacial gravity waves.

The Kakinuma model is an extension to interfacial gravity waves of the so-called
Isobe--Kakinuma model for the surface gravity waves, that is, the water waves. 
In the case of the surface gravity waves, the basic equations are known to have a variational structure 
with Luke's Lagrangian density $\mathscr{L}_{\rm Luke}(\Phi,\zeta)$, 
where $\zeta$ is the surface elevation and $\Phi$ is the velocity potential of the water. 
The Isobe--Kakinuma model is a system of Euler--Lagrange equations for the approximated Lagrangian 
density $\mathscr{L}^{\rm app}(\bm{\phi},\zeta)=\mathscr{L}_{\rm Luke}(\Phi^{\rm app},\zeta)$, where 
$\Phi^{\rm app}$ is an approximate velocity potential 
\begin{equation}\label{intro:appww}
\Phi^{\rm app}(\bm{x},z,t) = \sum_{i=0}^N Z_i(z;b(\bm{x}))\phi_{i}(\bm{x},t)
\end{equation}
and $\bm{\phi}=(\phi_0,\phi_1,\ldots,\phi_N)^{\rm T}$ are unknown variables. 
The model was first proposed by M. Isobe~\cite{Isobe1994, Isobe1994-2} and then applied by T. Kakinuma 
to simulate numerically the water waves. 
We note that a similar model
 was derived by G. Klopman, B. van Groesen, and M. W. Dingemans~\cite{KlopmanGroesenDingemans2010}, 
and used to simulate the water waves. See also Ch. E. Papoutsellis and G. A. Athanassoulis~\cite{PapoutsellisAthanassoulis17}. 
Recently, this model was analyzed from mathematical point of view. 
One possible choice of the function system $\{Z_i\}$ is a set of polynomials in $z$, for example, 
$Z_i(z;b(\bm{x}))=(z+h-b(\bm{x}))^{p_i}$ with integers $p_i$ satisfying 
$0=p_0<p_1<\cdots<p_N$. 
Under this choice of the function system $\{Z_i\}$, the initial value problem to the Isobe--Kakinuma 
model was analyzed by Y. Murakami and T. Iguchi~\cite{MurakamiIguchi2015} in a special case and by 
R. Nemoto and T. Iguchi~\cite{NemotoIguchi2018} in the general case. 
The hypersurface $t=0$ in the space-time $\mathbf{R}^n\times\mathbf{R}$ is characteristic for 
the Isobe--Kakinuma model, so that one needs to impose some compatibility conditions on the initial data 
for the existence of the solution. 
Under these compatibility conditions and a sign condition 
$-\partial_z P^{\rm app} \geq c_0>0$ on the water surface, 
they showed the well-posedness of the initial value problem locally in time, where $P^{\rm app}$ 
is an approximate pressure in the Isobe--Kakinuma model calculated from Bernoulli's equation. 
Moreover, T. Iguchi~\cite{Iguchi2018-1, Iguchi2018-2} showed that under the choice of the function system 
\begin{equation}\label{intro:base}
Z_i(z;b(\bm{x})) = 
\begin{cases}
 (z+h)^{2i} & \mbox{in the case of the flat bottom}, \\
 (z+h-b(\bm{x}))^i & \mbox{in the case of a variable bottom},
\end{cases}
\end{equation}
the Isobe--Kakinuma model is a higher order shallow water approximation for the water wave problem 
in the strongly nonlinear regime. 
Furthermore, V. Duch\^ene and T. Iguchi~\cite{DucheneIguchi2019} showed that the Isobe--Kakinuma model 
also enjoys a Hamiltonian structure analogous to the one exhibited by V. E. Zakharov~\cite{Zakharov1968} 
on the full water wave problem. 
Our aim in the present paper is to extend these results on the surface gravity waves to interfacial 
gravity waves.

In view of these results on the Isobe--Kakinuma model, in the present paper we consider the Kakinuma model 
under the choice of the approximate velocity potentials in~\eqref{intro:app} as 
\begin{equation}\label{intro:appk}
\begin{cases}
 \displaystyle
  \Phi_1^{\rm app}(\bm{x},z,t) = \sum_{i=0}^N (-z+h_1)^{2i}\phi_{1,i}(\bm{x},t), \\[2.5ex]
 \displaystyle
  \Phi_2^{\rm app}(\bm{x},z,t) = \sum_{i=0}^{N^*} (z+h_2-b(\bm{x}))^{p_i}\phi_{2,i}(\bm{x},t),
\end{cases}
\end{equation}
where $N, N^*$ and $p_0,p_1,\ldots,p_{N^*}$ are nonnegative integers satisfying $0=p_0<p_1<\cdots<p_{N^*}$. 
In applications of the Kakinuma model, it would be better to choose $N^*=N$ and $p_i=2i$ in the case of the 
flat bottom, and $N^*=2N$ and $p_i=i$ in the case of a variable bottom. 
In the case $N=N^*=0$, that is, if we choose the approximation 
$\Phi_k^{\rm app}(\bm{x},z,t) = \phi_k(\bm{x},t)$ for $k=1,2$ the functions independent of the vertical 
coordinate $z$, then the corresponding Kakinuma model is reduced to the shallow water equations. 
In the case $N+N^*>0$, the Kakinuma model is classified into a system of nonlinear dispersive equations.

It is well-known that in the case of the flat bottom $b=0$, the dispersion relation of the 
linearized equations to the full model around the flow 
$(\zeta,\Phi_1,\Phi_2)=(0,\bm{u}_1\cdot\bm{x},\bm{u}_2\cdot\bm{x})$ with constant horizontal velocities 
$\bm{u}_1$ and $\bm{u}_2$ is given by 
\begin{align*}
& ( \rho_1\coth(h_1|\bm{\xi}|) + \rho_2\coth(h_2|\bm{\xi}|) )\omega^2 \\
& + 2( \rho_1\bm{\xi}\cdot\bm{u}_1\coth(h_1|\bm{\xi}|)
 + \rho_2\bm{\xi}\cdot\bm{u}_2\coth(h_2|\bm{\xi}|) )\omega \\
& + \rho_1(\bm{\xi}\cdot\bm{u}_1)^2\coth(h_1|\bm{\xi}|)
 + \rho_2(\bm{\xi}\cdot\bm{u}_2)^2\coth(h_2|\bm{\xi}|)
 - (\rho_2-\rho_1)g|\bm{\xi}| = 0,
\end{align*}
where $\bm{\xi}\in\mathbf{R}^n$ is the wave vector, $\omega\in\mathbf{C}$ the angular frequency, 
and $g$ the gravitational constant. 
It is easy to see that the roots $\omega$ of the above equation are always real for any wave vector 
$\bm{\xi}\in\mathbf{R}^n$ if and only if $\bm{u}_1=\bm{u}_2$ and $\rho_2\geq\rho_1$. 
Otherwise, the roots of the above equation have the form 
$\omega=\omega_r(|\bm{\xi}|)\pm\mathrm{i}\omega_i(|\bm{\xi}|)$ satisfying 
$\omega_i(|\bm{\xi}|)\to+\infty$ as $|\bm{\xi}|\to+\infty$, 
which leads to an instability of the interface. 
The instabilities in the case $\rho_2>\rho_1$ and $\bm{u}_1\ne\bm{u}_2$ and in the case 
$\rho_2<\rho_1$ and $\bm{u}_1=\bm{u}_2$ are known as the Kelvin--Helmholtz and the Rayleigh--Taylor 
instabilities, respectively. 
For more details, see for example P. G. Drazin and W. H. Reid~\cite{DrazinReid2004}. 
In the following of this paper, we are interested in the situation where 
\[
(\rho_2-\rho_1)g>0, 
\]
that is, the denser water is below the lighter water. 
In the case $\bm{u}_1=\bm{u}_2=\bm{0}$, the linear dispersion relation is written simply as 
\[
\omega^2 = \frac{(\rho_2-\rho_1)g|\bm{\xi}|}{\rho_1\coth(h_1|\bm{\xi}|) + \rho_2\coth(h_2|\bm{\xi}|)}.
\]
We denote the right-hand side by $\omega_{\mbox{\rm\tiny IW}}(\bm{\xi})^2$. 
Then, the phase speed $c_{\mbox{\rm\tiny IW}}(\bm{\xi})$ of the plane wave solution related to the 
wave vector $\bm{\xi}$ is given by 
\begin{equation}\label{intro:PSK}
c_{\mbox{\rm\tiny IW}}(\bm{\xi}) = \frac{\omega_{\mbox{\rm\tiny IW}}(\bm{\xi})}{|\bm{\xi}|}
= \pm\sqrt{
 \frac{(\rho_2-\rho_1)g}{\rho_1|\bm{\xi}|\coth(h_1|\bm{\xi}|) + \rho_2|\bm{\xi}|\coth(h_2|\bm{\xi}|)} }.
\end{equation}
As a shallow water limit $h_1|\bm{\xi}|,h_2|\bm{\xi}|\to0$, we have 
\begin{equation}\label{intro:linearPS}
c_{\mbox{\rm\tiny IW}}(\bm{\xi}) \simeq 
c_{\mbox{\rm\tiny SW}} = \pm\sqrt{ \frac{(\rho_2-\rho_1)gh_1h_2}{\rho_1h_2+\rho_2h_1} }, 
\end{equation}
where $c_{\mbox{\rm\tiny SW}}$ is the phase speed of infinitely long and small interfacial gravity waves. 
In Section~\ref{sect:ldr}, we will analyze the linear dispersion relation of the Kakinuma model and 
calculate the phase speed $c_{\mbox{\rm\tiny K}}(\bm{\xi})$ of the plane wave solution related to the 
wave vector $\bm{\xi}$. 
Under the choice $N^*=N$ and $p_i=2i$, or $N^*=2N$ and $p_i=i$ in the approximation~\eqref{intro:appk} 
of the velocity potentials, it turns out that 
\begin{equation}\label{intro:appc}
|c_{\mbox{\rm\tiny IW}}(\bm{\xi})^2 - c_{\mbox{\rm\tiny K}}(\bm{\xi})^2|
 \lesssim (h_1|\bm{\xi}|+h_2|\bm{\xi}|)^{4N+2},
\end{equation}
which indicates that the Kakinuma model may be a good approximation of the full model for interfacial 
gravity waves in the shallow water regime $h_1|\bm{\xi}|,h_2|\bm{\xi}| \ll 1$. 
We note that Miyata--Choi--Camassa model derived by M. Miyata~\cite{Miyata1985} and 
W. Choi and R. Camassa~\cite{ChoiCammasa1999} is a model for interfacial gravity waves in the strongly nonlinear regime 
and can be regarded as a generalization of the Green--Naghdi equations for water waves into a two-layer system. 
Let $c_{\mbox{\rm\tiny MCC}}(\bm{\xi})$ be the phase speed of the plane wave solution related to the 
wave vector $\bm{\xi}$ for the linearized equations of the Miyata--Choi--Camassa model around the rest state. 
Then, we have 
\[
|c_{\mbox{\rm\tiny IW}}(\bm{\xi})^2 - c_{\mbox{\rm\tiny MCC}}(\bm{\xi})^2|
 \lesssim (h_1|\bm{\xi}|+h_2|\bm{\xi}|)^4,
\]
so that the Kakinuma model gives a better approximation of the full model than the Miyata--Choi--Camassa model 
in the shallow water regime, at least, at the linear level. 
A rigorous analysis for the consistency of the Kakinuma model in the shallow water regime will be 
analyzed in the subsequent paper V. Duch\^ene and T. Iguchi~\cite{DucheneIguchi2022}. 
On the other hand, in the deep water limit we have 
\[
\lim_{h_1|\bm{\xi}|,h_2|\bm{\xi}|\to\infty}c_{\mbox{\rm\tiny K}}(\bm{\xi})^2>0,
\]
which is not consistent with the limit of the full model
\[
\lim_{h_1|\bm{\xi}|,h_2|\bm{\xi}|\to\infty}c_{\mbox{\rm\tiny IW}}(\bm{\xi})^2=0.
\]
We notice that the Miyata--Choi--Camassa model is only apparently consistent with 
the full model in this deep water limit since
\[
\lim_{h_1|\bm{\xi}|,h_2|\bm{\xi}|\to\infty}c_{\mbox{\rm\tiny MCC}}(\bm{\xi})^2=0 ,
\]
but  we note also that
\[
\lim_{h_1|\bm{\xi}|,h_2|\bm{\xi}|\to\infty}\frac{c_{\mbox{\rm\tiny IW}}(\bm{\xi})^2}{c_{\mbox{\rm\tiny MCC}}(\bm{\xi})^2}=\infty.
\]
We refer to V. Duch\^ene, S. Israwi, and R. Talhouk~\cite{DucheneIsrawiTalhouk2016} for further discussion and the derivation of 
modified Miyata--Choi--Camassa models having either the same dispersion relation 
as the full model, or the same behavior as the Kakinuma model in the deep water limit. 
As we discuss below, thanks to the high-frequency behavior of the linearized equations, and contrarily to both the full model 
and the Miyata--Choi--Camassa model, the Kakinuma model has a non-trivial stability domain and, as a result, the initial value problem 
to the Kakinuma model is well-posed locally in time in Sobolev spaces under appropriate assumptions on the initial data.

As we have already seen, the roots $\omega\in\mathbf{C}$ of the dispersion relation of the linearized 
equations of the full model around the rest state are always real, so that the corresponding initial 
value problem is well-posed. 
However, as for the nonlinear problem, even if the initial velocity is continuous on the interface, 
a discontinuity of the velocity in the tangential direction on the interface would be created 
instantaneously in general, so that the Kelvin--Helmholtz instability appears locally in space. 
As a result, the initial value problem for the full model turns out to be ill-posed. 
For more details, we refer to T. Iguchi, N. Tanaka, and A. Tani~\cite{IguchiTanakaTani1997}. 
See also V. Kamotski and G. Lebeau~\cite{KamotskiLebeau2005} and D. Lannes~\cite{Lannes2013}. 
In Section~\ref{sect:stability} we consider the linearized equations of the Kakinuma model around 
an arbitrary flow. 
After freezing the coefficients and neglecting lower order terms of the linearized equations, 
we calculate the linear dispersion relation and derive a stability condition, 
which is equivalent to 
\begin{equation}\label{intro:stability}
 - \partial_z (P_2^{\rm app} - P_1^{\rm app} )
 - \frac{\rho_1\rho_2}{\rho_1H_2\alpha_2 + \rho_2H_1\alpha_1} 
 |\nabla\Phi_2^{\rm app} - \nabla\Phi_1^{\rm app}|^2  \geq c_0 > 0
\end{equation}
on the interface, where $P_1^{\rm app}$ and $P_2^{\rm app}$ are approximate pressures of the waters in 
the upper and the lower layers in the Kakinuma model calculated from Bernoulli's equations, $H_1$ and $H_2$ are 
thickness of the upper and the lower layers, respectively, $\alpha_1$ is a constant depending only on $N$, 
$\alpha_2$ is a constant determined from $\{p_0,p_1,\ldots,p_{N^*}\}$, and 
$\nabla=(\partial_{x_1},\ldots,\partial_{x_n})^{\rm T}$ is the nabla with respect to the horizontal 
spatial coordinates $\bm{x}=(x_1,\ldots,x_n)$.  
If $\rho_1=0$, then~\eqref{intro:stability} coincides with the stability condition for the Isobe--Kakinuma model 
for water waves derived by R. Nemoto and T. Iguchi~\cite{NemotoIguchi2018}. 

As in the case of the Isobe--Kakinuma model, the hypersurface $t=0$ in the space-time 
$\mathbf{R}^n\times\mathbf{R}$ is characteristic for the Kakinuma model, so that 
one needs to impose some compatibility conditions on the initial data for the existence of the solution. 
Under these compatibility conditions, the non-cavitation assumption $H_1\geq c_0>0$ and $H_2\geq c_0>0$, 
and the stability condition~\eqref{intro:stability}, 
we will show in this paper that the initial value problem to the Kakinuma model is well-posed locally in time in Sobolev spaces. 
Here, we note that the coefficients $\alpha_1$ and $\alpha_2$ in the stability condition 
\eqref{intro:stability} converge to $0$ as $N,N^*\to\infty$, so that 
the domain of stability diminishes as $N$ and $N^*$ grow. 
This fact is consistent with the aforementioned properties of the full model. 

Let us further comment on the significance of approximating an ill-posed system with well-posed systems. 
Firstly, while the initial value problem for the full model is ill-posed in Sobolev spaces, analytic solutions
do exist, as shown by C.~Sulem, P.-L.~Sulem, C.~Bardos, and U.~Frisch~\cite{SulemSulemBardosFrisch1981} and 
C.~Sulem and P.-L.~Sulem~\cite{SulemSulem85} in the case where upper and lower boundaries are absent, 
and we expect that the corresponding solutions to the Kakinuma model provide valid approximations.
Secondly, it should be recalled that the full model itself is a simplified model that discards effects that would stabilize the flow,
especially vertical mixing across the pycnocline. In~\cite{Lannes2013}, D.~Lannes considered another stabilizing effect,
namely interfacial tension, and showed the existence and uniqueness of solutions with finite regularity 
to the corresponding initial value problem over a long time in the shallow water regime. 
The key physical mechanism at stake is that the Kelvin--Helmholtz instability which is responsible for ill-posedness issues 
occurs at sufficiently small spatial scale, so that it is possible to regularize the equations while being almost transparent 
to the behavior of the flow at large spatial scale, which is of practical interest for applications.
Our results demonstrate that the Kakinuma model inherently incorporates such a stabilizing effect whose strength diminishes 
as $N$ and $N^*$ grow, consistently with the expectation that the accuracy with respect to the full model increases. 

As is well-known that the full model for interfacial gravity waves has a conserved energy 
\begin{align}\label{intro:energy}
\mathscr{E}
&= \int\!\!\!\int_{\Omega_1(t)}\frac12\rho_1
 \bigl( |\nabla\Phi_1(\bm{x},z,t)|^2 + (\partial_z\Phi_1(\bm{x},z,t))^2 \bigr)
 {\rm d}\bm{x}{\rm d}z \\
&\quad\;
 + \int\!\!\!\int_{\Omega_2(t)}\frac12\rho_2
 \bigl( |\nabla\Phi_2(\bm{x},z,t)|^2 + (\partial_z\Phi_2(\bm{x},z,t))^2 \bigr)
 {\rm d}\bm{x}{\rm d}z \nonumber\\
&\quad\;
 + \int_{\mathbf{R}^n}\frac12(\rho_2-\rho_1)g\zeta(\bm{x},t)^2{\rm d}\bm{x}, \nonumber
\end{align}
where $\Omega_1(t)$ and $\Omega_2(t)$ are the upper and the lower layers, respectively. 
This is the total energy, that is, the sum of the kinetic energies of the waters in the upper 
and the lower layers and the potential energy due to the gravity. 
Moreover, T. B. Benjamin and T. J. Bridges~\cite{BenjaminBridges1997} found that the full model 
can be written in Hamilton's canonical form 
\[
\partial_t\zeta = \frac{\delta\mathscr{H}}{\delta\phi}, \quad
\partial_t\phi = -\frac{\delta\mathscr{H}}{\delta\zeta},
\]
where the canonical variable $\phi$ is defined by 
\begin{equation}\label{intro:cv}
\phi(\bm{x},t) = \rho_2\Phi_2(\bm{x},\zeta(\bm{x},t),t) - \rho_1\Phi_1(\bm{x},\zeta(\bm{x},t),t)
\end{equation}
and the Hamiltonian $\mathscr{H}$ is the total energy $\mathscr{E}$ written in terms of the canonical 
variables $(\zeta,\phi)$. 
Their result can be viewed as a generalization into interfacial gravity waves of 
Zakharov's Hamiltonian~\cite{Zakharov1968} for water waves. 
For mathematical treatments of the Hamiltonian for interfacial gravity waves, we refer to 
W. Craig and M. D. Groves~\cite {CraigGroves2000} and 
W. Craig, P. Guyenne, and H. Kalisch~\cite{CraigGuyenneKalisch2005}. 
The Kakinuma model has also a conserved energy $\mathscr{E}^{\rm K}$, which is the total 
energy given by~\eqref{intro:energy} with $\Phi_1$ and $\Phi_2$ replaced by 
$\Phi_1^{\rm app}$ and $\Phi_2^{\rm app}$. 
Moreover, we will show that the Kakinuma model enjoys a Hamiltonian structure with a Hamiltonian 
$\mathscr{H}^{\rm K}$ the total energy in terms of canonical variables $\zeta$ and $\phi$, 
where $\phi$ is defined by~\eqref{intro:cv} with $\Phi_1$ and $\Phi_2$ replaced by 
$\Phi_1^{\rm app}$ and $\Phi_2^{\rm app}$. 
This fact can be viewed as a generalization to the Kakinuma model 
for interfacial gravity waves of a Hamiltonian structure of the Isobe--Kakinuma model 
for water waves given by V. Duch\^ene and T. Iguchi~\cite{DucheneIguchi2019}.

The contents of this paper are as follows. 
In Section~\ref{sect:Kakinuma} we begin with reviewing the full model for interfacial gravity waves 
and derive the Kakinuma model. 
Then, we state one of the main results of this paper, that is, Theorem~\ref{Kaki:th1} about the well-posedness
of the initial value problem to the Kakinuma model locally in time. 
In Section~\ref{sect:ldr} we analyze the linear dispersion relation of the linearized equations 
of the Kakinuma model around the rest state in the case of the flat bottom and show~\eqref{intro:appc}. 
In Section~\ref{sect:stability} we derive the stability condition~\eqref{intro:stability} by analyzing 
the linearized equations of the Kakinuma model around an arbitrary flow. 
In Section~\ref{sect:LS} we derive an energy estimate for the linearized equations with frozen coefficients 
and then transform the equations into a standard positive symmetric system by introducing an appropriate 
symmetrizer. 
In Section~\ref{sect:ARO} we introduce several differential operators related to the Kakinuma model and 
derive elliptic estimates for these operators. 
In Section~\ref{sect:const} we prove one of our main result, Theorem~\ref{Kaki:th1}, by using the method 
of parabolic regularization of the equations. 
In Section~\ref{sect:H} we prove another main result Theorem~\ref{H:th2}, 
which ensures that the Kakinuma model enjoys a Hamiltonian structure. 
Finally, in Section~\ref{sect:CL} we derive conservation laws of mass, momentum, and energy to the 
Kakinuma model together with the corresponding flux functions.

\medskip
\noindent
{\bf Notation}. \ 
We denote by $W^{m,p}(\mathbf{R}^n)$ the $L^p$ Sobolev space of order $m$ on $\mathbf{R}^n$ and $H^m=W^{m,2}(\mathbf{R}^n)$. 
The norm of a Banach space $B$ is denoted by $\|\cdot\|_B$.
The $L^2$-inner product is denoted by $(\cdot,\cdot)_{L^2}$. 
We put $\partial_t=\frac{\partial}{\partial t}$, $\partial_j=\partial_{x_j}=\frac{\partial}{\partial x_j}$, 
and $\partial_z=\frac{\partial}{\partial z}$. 
$[P,Q]=PQ-QP$ denotes the commutator and $[P;\bm{u},\bm{v}]=P(\bm{u}\cdot\bm{v})-(P\bm{u})\cdot\bm{v}-\bm{u}\cdot(P\bm{v})$
denotes the symmetric commutator. 
For a matrix $A$ we denote by $A^{\rm T}$ the transpose of $A$. 
For a vector $\mbox{\boldmath$\phi$}=(\phi_0,\phi_1,\ldots,\phi_N)^{\rm T}$ we denote the last $N$ 
components by $\mbox{\boldmath$\phi$}'=(\phi_1,\ldots,\phi_N)^{\rm T}$. 
We use the notational convention $\frac{0}{0}=0$. 
We denote by $C(a_1,a_2,\ldots)$ a positive constant depending on $a_1,a_2,\ldots$. 
$f \lesssim g$ means that there exists a non-essential positive constant $C$ such that 
$f \leq Cg$ holds. 
$f \simeq g$ means that $f \lesssim g$ and $g \lesssim f$ hold.

\medskip
\noindent
{\bf Acknowledgement} \\
T. I. was partially supported by JSPS KAKENHI Grant Number JP17K18742, JP17H02856, and JP22H01133.
V. D. thanks the Centre Henri Lebesgue ANR-11-LABX-0020-01 for creating an attractive mathematical environment.

\section{Kakinuma model and well-posedness}
\label{sect:Kakinuma}
We begin with formulating mathematically the full model for interfacial gravity waves. 
In what follows, the upper layer, the lower layer, the interface, the rigid-lid of the upper layer, 
and the bottom of the lower layer, at time $t$, are denoted by $\Omega_1(t)$, $\Omega_2(t)$, $\Gamma(t)$, $\Sigma_t$, 
and $\Sigma_b$, respectively. 
Then, the motion of the waters is described by the velocity potentials $\Phi_1$ and $\Phi_2$ and the 
pressures $P_1$ and $P_2$ in the upper and the lower layers satisfying the equations of continuity 
\begin{align}
&\label{Kaki:LaplaceUpper}
 \Delta\Phi_1 + \partial_z^2\Phi_1 = 0 \quad\mbox{in}\quad \Omega_1(t), \\
&\label{Kaki:LaplaceLower}
 \Delta\Phi_2 + \partial_z^2\Phi_2 = 0 \quad\mbox{in}\quad \Omega_2(t),
\end{align}
where $\Delta=\partial_1^2+\cdots+\partial_n^2$ is the Laplacian with respect to the horizontal 
spatial coordinates $\bm{x}=(x_1,\ldots,x_n)$, and Bernoulli's equations 
\begin{align}
&\label{Kaki:BernoulliUpper}
 \rho_1\left( \partial_t\Phi_1 + \frac12(|\nabla\Phi_1|^2 + (\partial_z\Phi_1)^2) + gz \right) + P_1 = 0
 \quad\mbox{in}\quad \Omega_1(t), \\
&\label{Kaki:BernoulliLower}
 \rho_2\left( \partial_t\Phi_2 + \frac12(|\nabla\Phi_2|^2 + (\partial_z\Phi_2)^2) + gz \right) + P_2 = 0
 \quad\mbox{in}\quad \Omega_2(t). 
\end{align}
The dynamical boundary condition on the interface is given by 
\begin{equation}\label{Dynamical}
P_1 = P_2 \quad\mbox{on}\quad \Gamma(t).
\end{equation}
The kinematic boundary conditions on the interface, the rigid-lid, and the bottom are given by 
\begin{align}
&\label{Kaki:KinematicInterface1}
 \partial_t\zeta + \nabla\Phi_1\cdot\nabla\zeta - \partial_z\Phi_1 = 0
 \quad\mbox{on}\quad \Gamma(t), \\
&\label{Kaki:KinematicInterface2}
 \partial_t\zeta + \nabla\Phi_2\cdot\nabla\zeta - \partial_z\Phi_2 = 0
 \quad\mbox{on}\quad \Gamma(t), \\
&\label{Kaki:KinematicLid}
 \partial_z\Phi_1 = 0 
 \makebox[18.3ex]{} \quad\mbox{on}\quad \Sigma_t, \\
&\label{Kaki:KinematicBottom}
 \nabla\Phi_2\cdot\nabla b - \partial_z\Phi_2 = 0
 \makebox[6.3ex]{} \quad\mbox{on}\quad \Sigma_b.
\end{align}
These are the basic equations for interfacial gravity waves. 
We can remove the pressures $P_1$ and $P_2$ from these basic equations. 
In fact, it follows from Bernoulli's equations~\eqref{Kaki:BernoulliUpper}--\eqref{Kaki:BernoulliLower} 
and the dynamical boundary condition~\eqref{Dynamical} that 
\begin{align}\label{Kaki:DynamicalBC}
& \rho_1\left( \partial_t\Phi_1 + \frac12(|\nabla\Phi_1|^2 + (\partial_z\Phi_1)^2) + gz \right) \\
& -\rho_2\left( \partial_t\Phi_2 + \frac12(|\nabla\Phi_2|^2 + (\partial_z\Phi_2)^2) + gz \right) = 0
 \quad\mbox{on}\quad \Gamma(t). \nonumber
\end{align}
Then, the basic equations consist of~\eqref{Kaki:LaplaceUpper}--\eqref{Kaki:LaplaceLower} and 
\eqref{Kaki:KinematicInterface1}--\eqref{Kaki:DynamicalBC}, and we can regard Bernoulli's equations 
\eqref{Kaki:BernoulliUpper}--\eqref{Kaki:BernoulliLower} as the definition of the pressures $P_1$ and $P_2$.

In the case of surface gravity waves, as shown by J. C. Luke~\cite{Luke1967}, 
the basic equations have a variational structure and Luke's Lagrangian density is given by the vertical integral 
of the pressure $P-P_{\rm atm}$ in the water region, where $P_{\rm atm}$ is a constant atmospheric pressure. 
Therefore, it is natural to expect that even in the case of interfacial gravity waves the vertical integral of 
the pressure in the water regions would give a Lagrangian density $\mathscr{L}$, 
so that we first define $\mathscr{L}^{\rm pre}$ by 
\begin{equation}\label{Kaki:preL}
\mathscr{L}^{\rm pre} = \int_{\zeta(\bm{x},t)}^{h_1}P_1(\bm{x},z,t){\rm d}z
 + \int_{-h_2+b(\bm{x})}^{\zeta(\bm{x},t)}P_2(\bm{x},z,t){\rm d}z.
\end{equation}
By Bernoulli's equations~\eqref{Kaki:BernoulliUpper}--\eqref{Kaki:BernoulliLower}, this can be written 
in terms of the velocity potentials $\Phi_1$, $\Phi_2$, and the elevation of the interface $\zeta$ as 
\begin{align*}
\mathscr{L}^{\rm pre}
&= - \rho_1\int_{\zeta}^{h_1} \left( \partial_t\Phi_1 + \frac12(|\nabla\Phi_1|^2 + (\partial_z\Phi_1)^2) \right) {\rm d}z \\
&\quad\;
 - \rho_2\int_{-h_2+b}^{\zeta} \left( \partial_t\Phi_2 + \frac12(|\nabla\Phi_2|^2 + (\partial_z\Phi_2)^2) \right) {\rm d}z \\
&\quad\;
 - \frac12(\rho_2-\rho_1)g\zeta^2
 - \frac12\rho_1gh_1^2 + \frac12\rho_2g(-h_2+b)^2.
\end{align*}
The last two terms do not contribute to the calculus of variations of this Lagrangian, 
so that we define the Lagrangian density $\mathscr{L}(\Phi_1,\Phi_2,\zeta)$ by 
\begin{align}\label{Kaki:Lagrangian}
\mathscr{L}(\Phi_1,\Phi_2,\zeta)
&= -\rho_1\int_{\zeta}^{h_1} \left( \partial_t\Phi_1 + \frac12(|\nabla\Phi_1|^2 + (\partial_z\Phi_1)^2) \right) {\rm d}z \\
&\quad\;
 - \rho_2\int_{-h_2+b}^{\zeta} \left( \partial_t\Phi_2 + \frac12(|\nabla\Phi_2|^2 + (\partial_z\Phi_2)^2) \right) {\rm d}z 
 - \frac12(\rho_2-\rho_1)g\zeta^2
 \nonumber 
\end{align}
and the action function $\mathscr{J}(\Phi_1,\Phi_2,\zeta)$ by 
\[
\mathscr{J}(\Phi_1,\Phi_2,\zeta)
 = \int_{t_0}^{t_1}\!\!\!\int_{\mathbf{R}^n}\mathscr{L}(\Phi_1,\Phi_2,\zeta){\rm d}\bm{x}{\rm d}t.
\]
It is not difficult to check that the corresponding system of Euler--Lagrange equations is exactly the same as 
the basic equations~\eqref{Kaki:LaplaceUpper}--\eqref{Kaki:LaplaceLower} and 
\eqref{Kaki:KinematicInterface1}--\eqref{Kaki:DynamicalBC} for interfacial gravity waves.

We proceed to derive the Kakinuma model for interfacial gravity waves. 
Let $\Phi_1^{\rm app}$ and $\Phi_2^{\rm app}$ be approximate velocity potentials defined by~\eqref{intro:appk} 
and define an approximate Lagrangian density $\mathscr{L}^{\rm app}(\bm{\phi}_1,\bm{\phi}_2,\zeta)$ for 
$\bm{\phi}_1=(\phi_{1,0},\phi_{1,1},\ldots,\phi_{1,N})^{\rm T}$, 
$\bm{\phi}_2=(\phi_{2,0},\phi_{2,1},\ldots,\phi_{2,N^*})^{\rm T}$, and $\zeta$ by 
\begin{equation}\label{Kaki:appL}
\mathscr{L}^{\rm app}(\bm{\phi}_1,\bm{\phi}_2,\zeta)
=\mathscr{L}(\Phi_1^{\rm app},\Phi_2^{\rm app},\zeta),
\end{equation}
which can be written explicitly as 
\begin{align*}
\mathscr{L}^{\rm app}
&= \rho_1\left\{
 \sum_{i=0}^N \frac{1}{2i+1}H_1^{2i+1}\partial_t\phi_{1,i} \right. \\
&\qquad\left.
 + \frac12\sum_{i,j=0}^N\left( \frac{1}{2(i+j)+1}H_1^{2(i+j)+1}\nabla\phi_{1,i}\cdot\nabla\phi_{1,j}
  + \frac{4ij}{2(i+j)-1}H_1^{2(i+j)-1}\phi_{1,i}\phi_{1,j}\right) \right\} \\
&\quad
 - \rho_2\left\{
 \sum_{i=0}^{N^*}\frac{1}{p_i+1}H_2^{p_i+1}\partial_t\phi_{2,i} \right. \\
&\qquad\quad
 + \frac12\sum_{i,j=0}^{N^*}\left(
  \frac{1}{p_i+p_j+1}H_2^{p_i+p_j+1}\nabla\phi_{2,i}\cdot\nabla\phi_{2,j}
  - \frac{2p_i}{p_i+p_j}H_2^{p_i+p_j}\phi_{2,i}\nabla b\cdot\nabla\phi_{2,j} \right. \\
&\left.\phantom{ \qquad\quad + \frac12\sum_{i,j=0}^{N^*} \biggl( }\left.
 + \frac{p_ip_j}{p_i+p_j-1}H_2^{p_i+p_j-1}(1 + |\nabla b|^2)\phi_{2,i}\phi_{2,j}\right) \right\} \\
&\quad
 - \frac12(\rho_2-\rho_1)g\zeta^2,
\end{align*}
where $H_1$ and $H_2$ are thicknesses of the upper and the lower layers, that is, 
\[
H_1(\bm{x},t) = h_1 - \zeta(\bm{x},t), \qquad H_2(\bm{x},t) = h_2 + \zeta(\bm{x},t) - b(\bm{x}).
\]
The corresponding system of Euler--Lagrange equations is the Kakinuma model, which consists of the equations 
\begin{equation}\label{Kaki:KM1}
H_1^{2i}\partial_t\zeta - \sum_{j=0}^N\left\{ \nabla\cdot\left(
  \frac{1}{2(i+j)+1}H_1^{2(i+j)+1}\nabla\phi_{1,j} \right)
  - \frac{4ij}{2(i+j)-1}H_1^{2(i+j)-1}\phi_{1,j} \right\} = 0
\end{equation}
for $i=0,1,\ldots,N$, 
\begin{align}\label{Kaki:KM2}
& H_2^{p_i}\partial_t\zeta + \sum_{j=0}^{N^*} \left\{ \nabla\cdot\left(
  \frac{1}{p_i+p_j+1}H_2^{p_i+p_j+1}\nabla\phi_{2,j}
  -\frac{p_j}{p_i+p_j}H_2^{p_i+p_j}\phi_{2,j}\nabla b \right) \right. \\
&\phantom{ H_2^{p_i}\partial_t\zeta + \sum_{j=0}^{N^*}  }\left.
  +\frac{p_i}{p_i+p_j}H_2^{p_i+p_j}\nabla b\cdot\nabla\phi_{2,j}
  -\frac{p_ip_j}{p_i+p_j-1}H_2^{p_i+p_j-1}(1 + |\nabla b|^2)\phi_{2,j} \right\} = 0
   \nonumber
\end{align}
for $i=0,1,\ldots,N^*$, and 
\begin{align}\label{Kaki:KM3}
& \rho_1\left\{ \sum_{j=0}^NH_1^{2j}\partial_t\phi_{1,j} + g\zeta + \frac12\left(
  \left|\sum_{j=0}^NH_1^{2j}\nabla\phi_{1,j}\right|^2
  + \left(\sum_{j=0}^N2jH_1^{2j-1}\phi_{1,j}\right)^2 \right) \right\} \\
&\quad
 - \rho_2\left\{ \sum_{j=0}^{N^*} H_2^{p_j} \partial_t \phi_{2,j} + g\zeta \right.\nonumber \\
&\qquad\quad\left.
 + \frac12 \left( \left| \sum_{j=0}^{N^*} ( H_2^{p_j}\nabla\phi_{2,j} - p_j H_2^{p_j-1}\phi_{2,j}\nabla b ) \right|^2 
  + \left( \sum_{j=0}^{N^*} p_j H_2^{p_j-1} \phi_{2,j} \right)^2 \right) \right\} = 0. 
 \nonumber 
\end{align}
Here and in what follows we use the notational convention $\frac{0}{0} = 0$.
This system of equations is the Kakinuma model that we are going to consider in this paper. 
We consider the initial value problem to the Kakinuma model~\eqref{Kaki:KM1}--\eqref{Kaki:KM3} 
under the initial condition 
\begin{equation}\label{Kaki:IC}
(\zeta,\bm{\phi}_1,\bm{\phi}_2)=(\zeta_{(0)},\bm{\phi}_{1(0)},\bm{\phi}_{2(0)})
 \makebox[3em]{at} t=0. 
\end{equation}
For notational convenience, we decompose $\bm{\phi}_k$ as 
$\bm{\phi}_k=(\phi_{k,0},\bm{\phi}_k')^{\rm T}$ for $k=1,2$ with 
$\bm{\phi}_1'=(\phi_{1,1},\ldots,\phi_{1,N})$ and $\bm{\phi}_2'=(\phi_{2,1},\ldots,\phi_{2,N^*})$. 
Accordingly, we decompose the initial data $\bm{\phi}_{k(0)}$ as 
$\bm{\phi}_{k(0)}=(\phi_{k,0(0)},\bm{\phi}_{k(0)}')^{\rm T}$ for $k=1,2$.

The hypersurface $t = 0$ in the space-time $\mathbf{R}^n\times\mathbf{R}$ is characteristic for the Kakinuma model 
\eqref{Kaki:KM1}--\eqref{Kaki:KM3}, so that the initial value problem~\eqref{Kaki:KM1}--\eqref{Kaki:IC} 
is not solvable in general. 
In fact, by eliminating the time derivative $\partial_t\zeta$ from the equations, we see that if the problem has a solution $(\zeta,\bm{\phi}_1,\bm{\phi}_2)$, then the solution has to satisfy the $N+N^*+1$ relations 
\begin{align}\label{Kaki:CC1}
& H_1^{2i}\sum_{j=0}^N \nabla\cdot\left( \frac{1}{2j+1}H_1^{2j+1}\nabla\phi_{1,j} \right) \\
&\quad
 - \sum_{j=0}^N\left\{ \nabla\cdot\left(
  \frac{1}{2(i+j)+1}H_1^{2(i+j)+1}\nabla\phi_{1,j} \right)
  - \frac{4ij}{2(i+j)-1}H_1^{2(i+j)-1}\phi_{1,j} \right\} = 0 \nonumber
\end{align}
for $i=1,2,\ldots,N$, 
\begin{align}\label{Kaki:CC1.5}
& H_2^{p_i}\sum_{j=0}^{N^*} \nabla\cdot\left( \frac{1}{p_j+1}H_2^{p_j+1}\nabla\phi_{2,j}
  -\frac{p_j}{p_j}H_2^{p_j}\phi_{2,j}\nabla b \right) \\
&\quad
 - \sum_{j=0}^{N^*} \left\{ \nabla\cdot\left(
  \frac{1}{p_i+p_j+1}H_2^{p_i+p_j+1}\nabla\phi_{2,j}
  -\frac{p_j}{p_i+p_j}H_2^{p_i+p_j}\phi_{2,j}\nabla b \right) \right. \nonumber \\
&\phantom{ \quad + \sum_{j=0}^{N^*}\bigg\{   }\left.
  +\frac{p_i}{p_i+p_j}H_2^{p_i+p_j}\nabla b\cdot\nabla\phi_{2,j}
  -\frac{p_ip_j}{p_i+p_j-1}H_2^{p_i+p_j-1}(1 + |\nabla b|^2)\phi_{2,j} \right\} = 0
   \nonumber
\end{align}
for $i=1,2,\ldots,N^*$, and 
\begin{align}\label{Kaki:CC2}
\sum_{j=0}^N \nabla\cdot\left( \frac{1}{2j+1}H_1^{2j+1}\nabla\phi_{1,j} \right)
+ \sum_{j=0}^{N^*} \nabla\cdot\left( \frac{1}{p_j+1}H_2^{p_j+1}\nabla\phi_{2,j}
  -\frac{p_j}{p_j}H_2^{p_j}\phi_{2,j}\nabla b \right)
 = 0.
\end{align}
Therefore, as a necessary condition the initial date $(\zeta_{(0)},\bm{\phi}_{1(0)},\bm{\phi}_{2(0)})$ 
and the bottom topography $b$ have to satisfy the relation~\eqref{Kaki:CC1}--\eqref{Kaki:CC2} 
for the existence of the solution. 
These necessary conditions will be referred as the compatibility conditions.

The following theorem is one of our main results in this paper, which guarantees the well-posedness of 
the initial value problem to the Kakinuma model~\eqref{Kaki:KM1}--\eqref{Kaki:IC} locally in time.

\begin{theorem}\label{Kaki:th1}
Let $g, \rho_1,\rho_2,h_1, h_2, c_0, M_0$ be positive constants and $m$ an integer such that
$m > \frac{n}{2} + 1$. 
There exists a time $T > 0$ such that for any initial data $(\zeta_{(0)},\bm{\phi}_{1(0)},\bm{\phi}_{2(0)})$ 
and bottom topography $b$ satisfying the compatibility conditions~\eqref{Kaki:CC1}--\eqref{Kaki:CC2}, 
the stability condition~\eqref{intro:stability}, and 
\begin{equation}\label{Kaki:CID}
 \begin{cases}
  \|(\zeta_{(0)},\nabla\phi_{1,0(0)},\nabla\phi_{2,0(0)})\|_{H^m}
   + \|(\bm{\phi}_{1(0)}',\bm{\phi}_{2(0)}')\|_{H^{m+1}} + \|b\|_{W^{m+2,\infty}} \leq M_0, \\
  h_1-\zeta_{(0)}(\bm{x}) \geq c_0, \quad h_2+\zeta_{(0)}(\bm{x})-b(\bm{x}) \geq c_0
   \makebox[3em]{for} \bm{x}\in\mathbf{R}^n,
 \end{cases}
\end{equation}
the initial value problem~\eqref{Kaki:KM1}--\eqref{Kaki:IC} has a unique solution 
$(\zeta,\bm{\phi}_1,\bm{\phi}_2)$ satisfying 
\[
\begin{cases}
 \zeta,\nabla\phi_{1,0},\nabla\phi_{2,0} \in C([0,T];H^m)\cap C^1([0,T];H^{m-1}), \\
 \bm{\phi}_1',\bm{\phi}_2' \in C([0,T];H^{m+1})\cap C^1([0,T];H^m).
\end{cases}
\]
\end{theorem}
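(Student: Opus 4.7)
My plan is to treat the system \eqref{Kaki:KM1}--\eqref{Kaki:KM3} as a coupled problem in which $(\zeta,\phi_{1,0},\phi_{2,0})$ play the role of dynamic unknowns while $\bm{\phi}_1'$ and $\bm{\phi}_2'$ are determined instantaneously from the remaining equations in \eqref{Kaki:KM1}--\eqref{Kaki:KM2}. Indeed, time derivatives appear in the system only through the $i=0$ components of \eqref{Kaki:KM1}, \eqref{Kaki:KM2} and through \eqref{Kaki:KM3}, and the compatibility conditions \eqref{Kaki:CC1}--\eqref{Kaki:CC2}, obtained by eliminating $\partial_t\zeta$, turn the remaining $N+N^*$ equations into an elliptic system for $(\bm{\phi}_1',\bm{\phi}_2')$ given $(\zeta,\phi_{1,0},\phi_{2,0})$. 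The compatibility conditions are required only at $t=0$; once the evolution is solved, they are automatically propagated by the structure of the equations.

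The core analytical ingredients are threefold. First, a linearization around an arbitrary admissible state, coefficient freezing, and the symmetrizer of Section~\ref{sect:LS} yield an $H^m$ energy estimate for the dynamic variables whose positivity is guaranteed precisely by the stability condition \eqref{intro:stability}. Second, the elliptic estimates of Section~\ref{sect:ARO} control $\bm{\phi}_1',\bm{\phi}_2'$ in $H^{m+1}$ by $\zeta,\nabla\phi_{1,0},\nabla\phi_{2,0}$ in $H^m$, which together with tame Moser inequalities and commutator estimates of Kato--Ponce type allow one to close the estimates at the nonlinear level. Third, I would construct solutions by parabolic regularization: adding $\varepsilon$ times a suitable symmetric positive operator (e.g.\ $(I-\Delta)^{m+1}$) to the evolution part of the system produces a genuinely parabolic problem that is locally well-posed by a contraction argument; uniform-in-$\varepsilon$ a priori bounds from the energy and elliptic estimates then give existence on a time $T>0$ independent of $\varepsilon$, and a compactness argument yields the sought-after solution. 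Uniqueness follows from a difference energy estimate at a lower regularity, and continuity of the flow map from a Bona--Smith approximation scheme.

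The main technical obstacle I anticipate is the interaction between the parabolic regularization and the characteristic/constraint structure of the Kakinuma model. A naive dissipative perturbation destroys both the compatibility relations \eqref{Kaki:CC1}--\eqref{Kaki:CC2}, on which the elliptic determination of $\bm{\phi}_k'$ relies, and the delicate symmetric structure from which the energy estimate is extracted. Overcoming this will require regularizing only the dynamic variables $(\zeta,\phi_{1,0},\phi_{2,0})$ while solving the auxiliary elliptic problems exactly, so that the symmetrizer of Section~\ref{sect:LS} still applies and the compatibility conditions are preserved by construction. A secondary, but still delicate, point is maintaining uniformly in $\varepsilon$ on $[0,T]$ both the non-cavitation bounds $H_1,H_2\geq c_0/2$ and the stability condition \eqref{intro:stability}; this is handled by a standard continuation argument using the a priori bounds together with the embedding $H^m\hookrightarrow C^1$ available since $m>n/2+1$.
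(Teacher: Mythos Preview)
Your overall strategy---parabolic regularization, uniform energy estimates via the symmetrizer of Section~\ref{sect:LS}, elliptic control from Section~\ref{sect:ARO}, and Bona--Smith for continuity---matches the paper's. The difference lies in how the regularization is implemented, and here the paper takes a route that neatly avoids the obstacle you flag.

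You propose to regularize only the ``dynamic'' variables $(\zeta,\phi_{1,0},\phi_{2,0})$ and solve elliptic problems exactly for $\bm{\phi}_1',\bm{\phi}_2'$. The paper instead applies the naive regularization $\partial_t\mapsto\partial_t-\varepsilon\Delta$ to \emph{every} equation in~\eqref{ARO:IK}, obtaining~\eqref{const:rKM}. From this it extracts genuine heat equations~\eqref{const:eqz} and~\eqref{const:eqphi} for \emph{all} of $(\zeta,\bm{\phi}_1,\bm{\phi}_2)$, with right-hand sides $G_0,\bm{G}_1,\bm{G}_2$ determined by the elliptic system~\eqref{const:eqG12} via Lemma~\ref{ARO:elliptic estimate2}. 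The point you anticipate---that naive regularization destroys the compatibility relations---is handled by observing (Lemma~\ref{const:existRP2}) that the compatibility quantities $\mathcal{L}_{1,i}\bm{\phi}_1$, $\mathcal{L}_{2,i}\bm{\phi}_2$, and $\mathcal{L}_{1,0}\bm{\phi}_1+\mathcal{L}_{2,0}\bm{\phi}_2$ themselves satisfy homogeneous heat equations, so they vanish for all time by uniqueness once they vanish initially. This lets the energy estimate be run on the full vector $\bm{U}^\beta=(\zeta^\beta,\bm{\phi}_1^\beta,\bm{\phi}_2^\beta)$ with the symmetrizer $\mathscr{A}_0^{\rm mod}$ acting on all components, exactly as in Section~\ref{sect:LS}.

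Your splitting also has a structural issue: the compatibility conditions~\eqref{Kaki:CC1}--\eqref{Kaki:CC2} comprise $N+N^*+1$ relations, not $N+N^*$, so they do not determine $(\bm{\phi}_1',\bm{\phi}_2')$ from $(\zeta,\phi_{1,0},\phi_{2,0})$ alone---the last relation~\eqref{Kaki:CC2} is a residual constraint coupling $\phi_{1,0}$ and $\phi_{2,0}$. The natural pair of free scalar unknowns is $(\zeta,\phi)$ with $\phi$ the canonical variable~\eqref{Kaki:cv}, as in Proposition~\ref{Kaki:prop1} and Lemma~\ref{ARO:elliptic estimate2}. (Relatedly, \eqref{Kaki:KM3} involves $\partial_t\phi_{k,j}$ for all $j$, not just $j=0$.) Your scheme could likely be repaired along these lines, but the paper's direct regularization is cleaner precisely because it never needs to single out a reduced set of dynamic variables.
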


\begin{remark}\label{Kaki:re0}
{\rm 
The term $ \left( \partial_z ( P_2^{\rm app} - P_1^{\rm app} ) \right)|_{z=\zeta}$ in the stability condition 
\eqref{intro:stability} is explicitly given in~\eqref{sc:a}. 
It includes the terms $\partial_t\bm{\phi}_k(\bm{x},0)$ for $k=1,2$. 
Although the hypersurface $t=0$ is characteristic for the Kakinuma model, 
we can uniquely determine them in terms of the initial data and $b$. 
For details, we refer to Remark~\ref{const:re1}. 
Under the condition $(\rho_2-\rho_1)g>0$ and if the initial data and the bottom topography are suitably small, 
the stability condition~\eqref{intro:stability} is automatically satisfied at $t=0$. 
}
\end{remark}

\begin{remark}\label{Kaki:re1}
{\rm 
In the case $N=N^*=0$, that is, if we approximate the velocity potentials in the Lagrangian by 
functions independent of the vertical spatial variable $z$ as 
$\Phi_k^{\rm app}(\bm{x},z,t)=\phi_k(\bm{x},t)$ for $k=1,2$, 
then the Kakinuma model~\eqref{Kaki:KM1}--\eqref{Kaki:KM3} is reduced to the nonlinear shallow water equations 
\begin{equation}\label{Kaki:NLSW1}
\begin{cases}
\partial_t\zeta - \nabla\cdot((h_1-\zeta)\nabla\phi_1) = 0, \\
\partial_t\zeta + \nabla\cdot((h_2+\zeta-b)\nabla\phi_2) = 0, \\
\displaystyle
\rho_1\left( \partial_t\phi_1 + g\zeta + \frac12|\nabla\phi_1|^2 \right)
 - \rho_2\left( \partial_t\phi_2 + g\zeta + \frac12|\nabla\phi_2|^2 \right) = 0.
\end{cases}
\end{equation}
The compatibility conditions~\eqref{Kaki:CC1}--\eqref{Kaki:CC2} are reduced to
\[
\nabla\cdot((h_1-\zeta)\nabla\phi_1) +\nabla\cdot((h_2+\zeta-b)\nabla\phi_2) =0
\]
and the stability condition~\eqref{intro:stability} is reduced to
\[
g(\rho_2-\rho_1)-\frac{\rho_1\rho_2}{\rho_1H_2+\rho_2 H_1}|\nabla\phi_2-\nabla\phi_1|^2\geq c_0>0.
\]
Therefore, we recover the conditions for the well-posedness in Sobolev spaces of the initial value problem to 
the nonlinear shallow water equations~\eqref{Kaki:NLSW1} proved by D. Bresch and M. Renardy~\cite{BreschRenardy2011}. 
}
\end{remark}

\begin{remark}\label{Kaki:re2}
{\rm 
By analogy to the canonical variable~\eqref{intro:cv} for interfacial gravity waves introduced by T. B. Benjamin and 
T. J. Bridges~\cite{BenjaminBridges1997}, we introduce a canonical variable for the Kakinuma model by 
\begin{equation}\label{Kaki:cv}
\phi = \rho_2\sum_{j=0}^{N^*}H_2^{p_j}\phi_{2,j} - \rho_1\sum_{j=0}^N H_1^{2j}\phi_{1,j}. 
\end{equation}
Given the initial data $(\zeta_{(0)},\phi_{(0)})$ for the canonical variables $(\zeta,\phi)$ and the bottom 
topography $b$, the compatibility conditions~\eqref{Kaki:CC1}--\eqref{Kaki:CC2} and the relation~\eqref{Kaki:cv} 
determine the initial data $(\bm{\phi}_{1(0)},\bm{\phi}_{2(0)})$ for the Kakinuma model, 
which is unique up to an additive constant of the form $(\mathcal{C}\rho_2,\mathcal{C}\rho_1)$ to $(\phi_{1,0(0)},\phi_{2,0(0)})$. 
In fact, we have the following proposition, which is a simple corollary of Lemma~\ref{ARO:elliptic estimate2} 
given in Section~\ref{sect:ARO}. 
}
\end{remark}

\begin{proposition}\label{Kaki:prop1}
Let $\rho_1,\rho_2,h_1, h_2, c_0, M_0$ be positive constants and $m$ an integer such that $m > \frac{n}{2} + 1$. 
There exists a positive constant $C$ such that for any initial data $(\zeta_{(0)},\phi_{(0)})$ and bottom 
topography $b$ satisfying 
\[
\begin{cases}
 \|\zeta_{(0)}\|_{H^m} + \|b\|_{W^{m,\infty}} \leq M_0, \quad \|\nabla\phi_{(0)}\|_{H^{m-1}} < \infty, \\
 h_1-\zeta_{(0)}(\bm{x}) \geq c_0, \quad h_2+\zeta_{(0)}(\bm{x})-b(\bm{x}) \geq c_0
   \makebox[3em]{for} \bm{x}\in\mathbf{R}^n,
\end{cases}
\]
 the compatibility conditions~\eqref{Kaki:CC1}--\eqref{Kaki:CC2} and the relation~\eqref{Kaki:cv} 
determine the initial data $(\bm{\phi}_{1(0)},\bm{\phi}_{2(0)})$ for the Kakinuma model, 
uniquely up to an additive constant of the form $(\mathcal{C}\rho_2,\mathcal{C}\rho_1)$ to $(\phi_{1,0(0)},\phi_{2,0(0)})$. 
Moreover, we have 
\[
\|(\nabla\phi_{1,0(0)},\nabla\phi_{2,0(0)})\|_{H^{m-1}} + \|(\bm{\phi}_{1(0)}',\bm{\phi}_{2(0)}')\|_{H^m}
 \leq C\|\nabla\phi_{(0)}\|_{H^{m-1}}.
\]
\end{proposition}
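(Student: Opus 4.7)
The plan is to recast the compatibility conditions \eqref{Kaki:CC1}--\eqref{Kaki:CC2} together with the defining relation \eqref{Kaki:cv} as a linear elliptic system for $(\bm{\phi}_{1(0)}, \bm{\phi}_{2(0)})$ whose coefficients depend on $(\zeta_{(0)}, b)$ and whose right-hand side is determined by $\phi_{(0)}$, and then to read off existence, uniqueness modulo the stated kernel, and the desired estimate directly from Lemma~\ref{ARO:elliptic estimate2}. The non-cavitation bounds $h_1 - \zeta_{(0)} \geq c_0$ and $h_2 + \zeta_{(0)} - b \geq c_0$, together with $\|\zeta_{(0)}\|_{H^m} + \|b\|_{W^{m,\infty}} \leq M_0$, ensure that the resulting coefficients fall within the range of hypotheses for which that lemma applies.

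Next, I would check the gauge structure responsible for the indicated non-uniqueness. In \eqref{Kaki:CC1}, \eqref{Kaki:CC1.5} and \eqref{Kaki:CC2} the components $\phi_{1,0}$ and $\phi_{2,0}$ appear only through their gradients: every lower-order contribution carries a prefactor $j$ (upper layer) or $p_j$ (lower layer), both of which vanish at $j=0$ and $p_0 = 0$. Consequently these conditions are invariant under constant shifts $\phi_{1,0} \mapsto \phi_{1,0} + \mathcal{C}_1$ and $\phi_{2,0} \mapsto \phi_{2,0} + \mathcal{C}_2$. Substituting such shifts into \eqref{Kaki:cv}, and again using $p_0 = 0$, the canonical variable $\phi$ is altered precisely by $\rho_2 \mathcal{C}_2 - \rho_1 \mathcal{C}_1$; requiring this quantity to vanish singles out the one-parameter family $(\mathcal{C}_1, \mathcal{C}_2) = (\mathcal{C}\rho_2, \mathcal{C}\rho_1)$, matching the stated ambiguity exactly.

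Finally, existence of a solution orthogonal to this one-dimensional kernel, together with the estimate
\[
\|(\nabla\phi_{1,0(0)}, \nabla\phi_{2,0(0)})\|_{H^{m-1}} + \|(\bm{\phi}_{1(0)}', \bm{\phi}_{2(0)}')\|_{H^m} \leq C\|\nabla\phi_{(0)}\|_{H^{m-1}},
\]
follows by applying Lemma~\ref{ARO:elliptic estimate2} to this system, with $C$ depending only on $c_0, M_0, \rho_1, \rho_2, h_1, h_2, m, n$. The only conceivable obstacle is a mismatch between the form of the system above and the precise statement of the lemma, but since the proposition is described as a simple corollary the task reduces to bookkeeping: matching the layered structure of \eqref{Kaki:CC1}--\eqref{Kaki:CC2} and the constraint \eqref{Kaki:cv} to the input form of the lemma, and tracking how $\phi_{(0)}$ enters on the right-hand side. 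No substantive new estimate is required beyond what the lemma already provides.
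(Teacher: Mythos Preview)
Your proposal is correct and matches the paper's approach exactly: the paper states Proposition~\ref{Kaki:prop1} as a simple corollary of Lemma~\ref{ARO:elliptic estimate2}, and your proof amounts to specializing that lemma to the system~\eqref{ARO:eqvarphi} with data $\bm{f}_1'=\bm{0}$, $\bm{f}_2'=\bm{0}$, $\bm{f}_3=\bm{0}$, $f_4=\phi_{(0)}$ and $k=m-1$. Your explicit verification of the gauge kernel is a helpful elaboration but is already contained in the uniqueness clause of the lemma.
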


Therefore, given the initial data $(\zeta_{(0)},\phi_{(0)})$, we infer initial data for the Kakinuma model, which satisfy the compatibility 
conditions~\eqref{Kaki:CC1}--\eqref{Kaki:CC2}.

\section{Linear dispersion relation}
\label{sect:ldr}
In this section we consider the linearized equations of the Kakinuma model~\eqref{Kaki:KM1}--\eqref{Kaki:KM3} 
around the flow $(\zeta,\bm{\phi}_1,\bm{\phi}_2)=(0,\bm{0},\bm{0})$ in the case of the flat bottom. 
The linearized equations have the form 
\begin{equation}\label{ldr:Linearized}
\begin{cases}
\displaystyle
 \partial_t\zeta - \sum_{j=0}^N \left(
  \frac{h_1^{2j+1}}{2(i+j)+1}\Delta\phi_{1,j}
  - \frac{4ij}{2(i+j)-1}h_1^{2j-1}\phi_{1,j} \right) = 0
  \quad\mbox{for}\quad i=0,1,\ldots,N, \\
\displaystyle
 \partial_t\zeta + \sum_{j=0}^{N^*} \left(
  \frac{h_2^{p_j+1}}{p_i+p_j+1}\Delta\phi_{2,j}
  - \frac{p_ip_j}{p_i+p_j-1}h_2^{p_j-1}\phi_{2,j} \right) = 0
  \quad\mbox{for}\quad i=0,1,\ldots,N^*, \\
\displaystyle
 \rho_1\left( \sum_{j=0}^N h_1^{2j}\partial_t\phi_{1,j} + g\zeta \right)
 - \rho_2\left( \sum_{j=0}^{N^*} h_2^{p_j}\partial_t\phi_{2,j} + g\zeta \right) = 0.
\end{cases}
\end{equation}
Putting $\bm{\psi}_1 = (\phi_{1,0},h_1^2\phi_{1,1},\ldots,h_1^{2N}\phi_{1,N})^{\rm T}$ and 
$\bm{\psi}_2 = (\phi_{2,0},h_1^{p_1}\phi_{2,1},\ldots,h_1^{p_{N^*}}\phi_{2,N^*})^{\rm T}$, 
we can rewrite the above equations as the following simple matrix form 
\begin{align*}
&\begin{pmatrix}
 0 & -\rho_1\bm{1}^{\rm T} & \rho_2\bm{1}^{\rm T} \\
 h_1\bm{1} & O & O \\
 -h_2\bm{1} & O & O 
\end{pmatrix}
\partial_t
\begin{pmatrix}
 \zeta \\ 
 \bm{\psi}_1 \\ 
 \bm{\psi}_2
\end{pmatrix} \\
&\quad 
+
\begin{pmatrix}
 (\rho_2-\rho_1)g & \bm{0}^{\rm T} & \bm{0}^{\rm T} \\
 \bm{0} & -h_1^2A_{1,0}\Delta + A_{1,1} & O \\
 \bm{0} & O & -h_2^2A_{2,0}\Delta + A_{2,1} 
\end{pmatrix}
\begin{pmatrix}
 \zeta \\ 
 \bm{\psi}_1 \\ 
 \bm{\psi}_2
\end{pmatrix}
= \bm{0},
\end{align*}
where ${\bm 1}=(1,\ldots,1)^{\rm T}$ and matrices $A_{k,0}$ and $A_{k,1}$ for $k=1,2$ 
are given by 
\begin{align*}
& A_{1,0} = \biggl( \frac{1}{2(i+j)+1} \biggr)_{0\leq i,j\leq N}, \qquad
  A_{1,1} = \biggl( \frac{4ij}{2(i+j)-1} \biggr)_{0\leq i,j\leq N}, \\
& A_{2,0} = \biggl( \frac{1}{p_i+p_j+1} \biggr)_{0\leq i,j\leq N^*}, \qquad
  A_{2,1} = \biggl( \frac{p_ip_j}{p_i+p_j-1} \biggr)_{0\leq i,j\leq N^*}.
\end{align*}
Therefore, the linear dispersion relation is given by 
\[
\det
\begin{pmatrix}
 (\rho_2-\rho_1)g & \mbox{\rm i}\rho_1\omega\bm{1}^{\rm T}
  & -\mbox{\rm i}\rho_2\omega\bm{1}^{\rm T} \\
 -\mbox{\rm i}h_1\omega\bm{1} & \mathcal{A}_1(h_1\bm{\xi}) & O \\
 \mbox{\rm i}h_2\omega\bm{1} & O & \mathcal{A}_2(h_2\bm{\xi})
\end{pmatrix}
=0,
\]
where $\bm{\xi}\in\mathbf{R}^n$ is the wave vector, $\omega\in\mathbf{C}$ is the angular frequency, 
and $\mathcal{A}_k(\bm{\xi}) = |\bm{\xi}|^2A_{k,0} + A_{k,1}$ for $k=1,2$. 
We can expand this dispersion relation as 
\begin{multline}\label{ldr:ldr}
 \left( \rho_1h_1\det\tilde{\mathcal{A}}_1(h_1\bm{\xi}) \det \mathcal{A}_2(h_2\bm{\xi})
 + \rho_2h_2\det\tilde{\mathcal{A}}_2(h_2\bm{\xi}) \det \mathcal{A}_1(h_1\bm{\xi}) \right)\omega^2 \\
 - (\rho_2-\rho_1)g \det \mathcal{A}_1(h_1\bm{\xi}) \det \mathcal{A}_2(h_2\bm{\xi}) = 0. 
\end{multline}
Here and in what follows, we use the notation 
\[
\tilde{\mathcal{A}} = 
\begin{pmatrix}
  0 & {\bm 1}^{\rm T} \\
 -{\bm 1} & \mathcal{A}
\end{pmatrix}
\]
for a matrix $\mathcal{A}$. 
Concerning the determinants appearing in the above dispersion relation, we have the following proposition, 
which was proved by R. Nemoto and T. Iguchi~\cite{NemotoIguchi2018}.

\begin{proposition}\label{ldr:prop1}~
\begin{enumerate}
\setlength{\itemsep}{-3pt}
\item
For any $\bm{\xi}\in\mathbf{R}^n\setminus\{{\bm 0}\}$, 
the symmetric matrices $\mathcal{A}_1(\bm{\xi})$ and $\mathcal{A}_2(\bm{\xi})$ are positive. 
\item
There exists $c_0>0$ such that for any $\bm{\xi}\in\mathbf{R}^n$ 
we have $\det\tilde{\mathcal{A}}_k(\bm{\xi})\geq c_0$ for $k=1,2$. 
\item
$|\bm{\xi}|^{-2}\det \mathcal{A}_1(\bm{\xi})$ and $|\bm{\xi}|^{-2}\det \mathcal{A}_2(\bm{\xi})$ are polynomials in $|\bm{\xi}|^2$ 
of degree $N$ and $N^*$ and their coefficient of $|\bm{\xi}|^{2N}$ and $|\bm{\xi}|^{2N^*}$ are 
$\det A_{1,0}$ and $\det A_{2,0}$, respectively. 
\item
$\det\tilde{\mathcal{A}}_1(\bm{\xi})$ and $\det\tilde{\mathcal{A}}_2(\bm{\xi})$ are polynomials in $|\bm{\xi}|^2$ 
of degree $N$ and $N^*$ and their coefficient of $|\bm{\xi}|^{2N}$ and $|\bm{\xi}|^{2N^*}$ are 
$\det\tilde{A}_{1,0}$ and $\det\tilde{A}_{2,0}$, respectively. 
\end{enumerate}
\end{proposition}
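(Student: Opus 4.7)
The plan is to exploit the Gram matrix structure underlying $A_{k,0}$ and $A_{k,1}$, deduce the polynomial structure of the determinants via multilinearity, and combine pointwise positivity with the asymptotic behavior at $|\bm{\xi}|\to\infty$ to obtain the uniform lower bound in (ii).

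For (i) I observe the integral representations
\[
A_{1,0}[i,j]=\int_0^1 z^{2i}z^{2j}\,\mathrm{d}z, \qquad A_{1,1}[i,j]=\int_0^1 (2iz^{2i-1})(2jz^{2j-1})\,\mathrm{d}z,
\]
so that $A_{1,0}$ is the Gram matrix of the linearly independent family $\{z^{2i}\}_{i=0}^N$ in $L^2(0,1)$ and is positive definite, whereas $A_{1,1}$ is the Gram matrix of $\{2iz^{2i-1}\}_{i=0}^N$, hence positive semidefinite with vanishing first row and column (the $i=0$ function is identically zero). Analogous identities hold for $A_{2,0}$ and $A_{2,1}$ with $\{z^{p_i}\}_{i=0}^{N^*}$ and $\{p_i z^{p_i-1}\}_{i=0}^{N^*}$, using $p_0=0$ and the convention $\frac{0}{0}=0$. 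For $|\bm{\xi}|^2>0$, the matrix $\mathcal{A}_k(\bm{\xi})=|\bm{\xi}|^2 A_{k,0}+A_{k,1}$ is then the sum of a positive definite and a positive semidefinite matrix, hence positive definite.

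For (iii), multilinear expansion of $\det\mathcal{A}_k(\bm{\xi})$ in the rows gives a polynomial in $|\bm{\xi}|^2$ whose coefficient of the top power $|\bm{\xi}|^{2(N+1)}$ (resp.\ $|\bm{\xi}|^{2(N^*+1)}$) is $\det A_{k,0}>0$ and whose constant term $\det A_{k,1}$ vanishes by the zero first row; dividing by $|\bm{\xi}|^2$ yields the claim. For (iv), I rely on the identity
\[
\det\tilde{\mathcal{A}}=\bm{1}^{\rm T}(\mathrm{adj}\,\mathcal{A})\bm{1},
\]
valid for any symmetric matrix $\mathcal{A}$ and obtained by Laplace expansion along the bordering row and column of $\tilde{\mathcal{A}}$. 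Each entry of $\mathrm{adj}\,\mathcal{A}_k(\bm{\xi})$ is a signed $N\times N$ (resp.\ $N^*\times N^*$) minor of $\mathcal{A}_k$, hence a polynomial of degree at most $N$ (resp.\ $N^*$) in $|\bm{\xi}|^2$ whose leading coefficient is the corresponding minor of $A_{k,0}$; summing, the leading coefficient of $\det\tilde{\mathcal{A}}_k$ is $\bm{1}^{\rm T}(\mathrm{adj}\,A_{k,0})\bm{1}=\det\tilde{A}_{k,0}$.

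Finally, for (ii), the positive definiteness of $A_{k,0}$ gives $\det\tilde{A}_{k,0}=\det A_{k,0}\cdot\bm{1}^{\rm T} A_{k,0}^{-1}\bm{1}>0$, so $\det\tilde{\mathcal{A}}_k(\bm{\xi})\to +\infty$ as $|\bm{\xi}|\to\infty$. For $\bm{\xi}\ne\bm{0}$, the Schur complement formula applied to the invertible block $\mathcal{A}_k(\bm{\xi})$ from (i) yields $\det\tilde{\mathcal{A}}_k(\bm{\xi})=\det\mathcal{A}_k(\bm{\xi})\cdot\bm{1}^{\rm T}\mathcal{A}_k(\bm{\xi})^{-1}\bm{1}>0$. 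At $\bm{\xi}=\bm{0}$, Laplace expansion exploiting the zero first row of $A_{k,1}$ inside $\tilde{A}_{k,1}$ reduces $\det\tilde{A}_{k,1}$ to the determinant of the Gram matrix of $\{2iz^{2i-1}\}_{i=1}^N$ (resp.\ $\{p_i z^{p_i-1}\}_{i=1}^{N^*}$), which is strictly positive by linear independence. Therefore $\det\tilde{\mathcal{A}}_k$ is a continuous, strictly positive function of $|\bm{\xi}|^2$ on $[0,\infty)$ that diverges at infinity, and so attains a strictly positive infimum $c_0$. The main technical point is verifying the identity $\det\tilde{\mathcal{A}}=\bm{1}^{\rm T}(\mathrm{adj}\,\mathcal{A})\bm{1}$ and carefully tracking the vanishing entries of $A_{k,1}$ at $\bm{\xi}=\bm{0}$; the remaining steps are standard linear algebra.
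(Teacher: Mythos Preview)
The paper does not supply its own proof of this proposition; it simply cites Nemoto--Iguchi~\cite{NemotoIguchi2018}. Your argument is correct and self-contained. The Gram-matrix interpretation of $A_{k,0}$ and $A_{k,1}$ as inner-product matrices of $\{z^{2i}\}$ (resp.\ $\{z^{p_i}\}$) and their derivatives is exactly the right structural observation, and it matches how the paper itself exploits these quadratic forms later (see the computations around~\eqref{ARO:identityL1}). Your treatment of the bordered matrix via $\det\tilde{\mathcal{A}}=\bm{1}^{\rm T}(\mathrm{adj}\,\mathcal{A})\bm{1}$ and the Schur complement is clean, and the reduction of $\det\tilde{A}_{k,1}$ at $\bm{\xi}=\bm{0}$ to the Gram determinant of $\{2iz^{2i-1}\}_{i\geq1}$ (resp.\ $\{p_iz^{p_i-1}\}_{i\geq1}$) is carried out correctly. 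One small presentational point: the positivity of $\det\tilde{A}_{k,0}$, which you establish in your discussion of (ii), is what certifies that the degree in (iv) is \emph{exactly} $N$ (resp.\ $N^*$); you may wish to state that inequality before, or alongside, the degree claim in (iv) so the logical order is transparent.
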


Thanks of this proposition and the dispersion relation~\eqref{ldr:ldr}, the linearized system 
\eqref{ldr:Linearized} is classified into the dispersive system in the case $N+N^*>0$, so that the Kakinuma model 
\eqref{Kaki:KM1}--\eqref{Kaki:KM3} is a nonlinear dispersive system of equations. 

Therefore, we can define the phase speed $c_{\mbox{\rm\tiny K}}(\bm{\xi})$ of the plane wave solution to 
\eqref{ldr:Linearized} related to the wave vector $\bm{\xi}\in\mathbf{R}^n$ by 
\begin{equation}\label{ldr:ps}
c_{\mbox{\rm\tiny K}}(\bm{\xi})^2 = \frac{ (\rho_2-\rho_1)g |\bm{\xi}|^{-2}\det \mathcal{A}_1(h_1\bm{\xi}) \det \mathcal{A}_2(h_2\bm{\xi}) }{
 \rho_1h_1\det\tilde{\mathcal{A}}_1(h_1\bm{\xi}) \det \mathcal{A}_2(h_2\bm{\xi})
  + \rho_2h_2\det\tilde{\mathcal{A}}_2(h_2\bm{\xi}) \det \mathcal{A}_1(h_1\bm{\xi}) }.
\end{equation}
It follows from Proposition~\ref{ldr:prop1} that 
\[
\lim_{h_1|\bm{\xi}|,h_2|\bm{\xi}|\to\infty}c_{\mbox{\rm\tiny K}}(\bm{\xi})^2
 = \frac{ (\rho_2-\rho_1)gh_1h_2 \det A_{1,0} \det A_{2,0} }{
 \rho_1h_2\det\tilde{A}_{1,0} \det A_{2,0} + \rho_2h_1\det\tilde{A}_{2,0} \det A_{1,0} } > 0,
\]
which is not consistent with the linear interfacial gravity waves 
\[
\lim_{h_1|\bm{\xi}|,h_2|\bm{\xi}|\to\infty}c_{\mbox{\rm\tiny IW}}(\bm{\xi})^2 = 0. 
\]
However, as is shown by the following theorems the Kakinuma model gives a very precise approximation 
in the shallow water regime $h_1|\bm{\xi}|,h_2|\bm{\xi}| \ll 1$ under an appropriate choice of the indices $p_i$ 
for $i=0,1,\ldots,N^*$.

\begin{theorem}\label{ldr:th1}
If we choose $N^*=N$ and $p_i = 2i$ for $i = 0,1,\ldots,N^*$ or $N^*=2N$ and $p_i=i$ for $i=0,1,\ldots,N^*$, 
then for any $\bm{\xi}\in\mathbf{R}^n$ and any $h_1,h_2,g > 0$ we have
\[
\left|\left( \frac{ c_{\mbox{\rm\tiny IW}}(\bm{\xi}) }{ c_{\mbox{\rm\tiny SW}} }\right)^2
 - \left( \frac{ c_{\mbox{\rm\tiny K}}(\bm{\xi}) }{ c_{\mbox{\rm\tiny SW}} } \right)^2 \right|
\leq C(h_1|\bm{\xi}| + h_2|\bm{\xi}|)^{4N+2},
\]
where $C$ is a positive constant depending only on $N$.  
\end{theorem}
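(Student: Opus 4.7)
The plan is to reduce the claim to a scalar asymptotic estimate for a rational function, and then prove that estimate via a Galerkin/Rayleigh--Ritz interpretation. Setting $\eta_k := h_k|\bm{\xi}|$, Proposition~\ref{ldr:prop1} lets us write $\det\mathcal{A}_k(h_k\bm{\xi}) = \eta_k^2 P_k(\eta_k^2)$ and $\det\tilde{\mathcal{A}}_k(h_k\bm{\xi}) = Q_k(\eta_k^2)$ for polynomials $P_k,Q_k$ strictly positive on $[0,\infty)$. With $R_k(\eta^2) := Q_k(\eta^2)/P_k(\eta^2)$ and $f(\eta) := \eta\coth\eta$, direct algebraic simplification of~\eqref{ldr:ps} and~\eqref{intro:linearPS} gives
\begin{equation*}
\biggl(\frac{c_{\mbox{\rm\tiny IW}}(\bm{\xi})}{c_{\mbox{\rm\tiny SW}}}\biggr)^{\!2} = \frac{\rho_1 h_2+\rho_2 h_1}{\rho_1 h_2\, f(\eta_1) + \rho_2 h_1\, f(\eta_2)}, \qquad
\biggl(\frac{c_{\mbox{\rm\tiny K}}(\bm{\xi})}{c_{\mbox{\rm\tiny SW}}}\biggr)^{\!2} = \frac{\rho_1 h_2+\rho_2 h_1}{\rho_1 h_2\, R_1(\eta_1^2) + \rho_2 h_1\, R_2(\eta_2^2)}.
\end{equation*}
Since $f\ge 1$ and $R_k(0)=1$, both denominators are bounded below by a positive multiple of $\rho_1 h_2+\rho_2 h_1$, so the left-hand side of the theorem is controlled by $\max_{k=1,2}|f(\eta_k) - R_k(\eta_k^2)|$ up to a universal constant. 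When $\eta_1+\eta_2 \gtrsim 1$ the target $C(\eta_1+\eta_2)^{4N+2}$ is trivial from the uniform boundedness of the left-hand side, so it suffices to prove $|f(\eta) - R_k(\eta^2)| \le C_N\, \eta^{4N+2}$ for $\eta$ in a bounded interval.

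To extract the variational content of $R_k$, apply the Schur complement formula to the bordered matrix $\tilde{\mathcal{A}}_k$ to obtain
\begin{equation*}
\frac{\eta^2}{R_k(\eta^2)} = \frac{1}{\bm{1}^{\rm T}B_k(\eta^2)^{-1}\bm{1}}, \qquad B_k(\eta^2) := \eta^2 A_{k,0} + A_{k,1}.
\end{equation*}
Here $A_{k,0}$ and $A_{k,1}$ are nothing but the $L^2$- and $\dot H^1$-Gram matrices of the family $\{s^{p_i}\}_i$ on $[0,1]$. A Lagrange multiplier computation then identifies $\eta^2/R_k(\eta^2)$ with the Rayleigh--Ritz minimum
\begin{equation*}
\frac{\eta^2}{R_k(\eta^2)} = \min\biggl\{\int_0^1\bigl(\phi'(s)^2 + \eta^2\phi(s)^2\bigr)\,{\rm d}s \;:\; \phi\in\mathrm{span}\{s^{p_i}\},\ \phi(1)=1\biggr\}.
\end{equation*}
The same quadratic form minimised over the full space $\{\phi\in H^1(0,1):\phi(1)=1\}$ is attained by $\phi^*(s) = \cosh(\eta s)/\cosh\eta$ (with natural condition $(\phi^*)'(0)=0$) and has minimum value $\eta\tanh\eta$.

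The standard affine-constraint version of C\'ea's lemma then yields
\begin{equation*}
\frac{\eta^2}{R_k(\eta^2)} - \eta\tanh\eta = \|\phi^*_N - \phi^*\|_{E_\eta}^2 \le \inf_{\psi_N}\|\psi_N - \phi^*\|_{E_\eta}^2,
\end{equation*}
where $\|\cdot\|_{E_\eta}^2$ denotes the energy functional above and the infimum runs over trial $\psi_N\in\mathrm{span}\{s^{p_i}\}$ with $\psi_N(1)=1$. Under either choice $(N^*,p_i)=(N,2i)$ or $(2N,i)$ the trial space contains all even polynomials in $s$ of degree at most $2N$; since $\phi^*(s)$ is an even entire function with Taylor expansion $\sum_{k=0}^\infty \frac{\eta^{2k}}{(2k)!\cosh\eta}s^{2k}$, truncating at degree $2N$ and adjusting the top coefficient to enforce $\psi_N(1)=1$ produces a trial $\psi_N$ whose $C^1[0,1]$-distance to $\phi^*$ is $O(\eta^{2N+2})$ uniformly for $\eta$ in a bounded range, hence $\|\psi_N-\phi^*\|_{E_\eta}^2 \lesssim \eta^{4N+4}$.

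The estimate on $f(\eta)-R_k(\eta^2)$ then follows from the algebraic identity
\begin{equation*}
f(\eta) - R_k(\eta^2) = \frac{R_k(\eta^2)}{\eta\tanh\eta}\biggl(\frac{\eta^2}{R_k(\eta^2)} - \eta\tanh\eta\biggr),
\end{equation*}
combined with $R_k(0)=1$ and $\eta\tanh\eta\sim \eta^2$ as $\eta\to 0$, which together give $|f(\eta)-R_k(\eta^2)|\lesssim \eta^{4N+2}$ uniformly on any bounded $\eta$-interval. I expect the principal technical subtlety to be the variational doubling of the approximation order: a direct Taylor expansion of $R_k$ would yield only $O(\eta^{2N+1})$, and obtaining the sharp exponent $4N+2$ relies critically on the Rayleigh--Ritz identification of $R_k$ together with careful handling of the affine constraint $\phi(1)=1$ in C\'ea's lemma, so that the constants remain uniform in $\rho_k, h_k$ and independent of which of the two admissible choices of $(N^*,p_i)$ is taken.
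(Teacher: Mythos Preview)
Your argument is correct and begins with the same reduction as the paper: both rewrite $(c_{\mbox{\rm\tiny IW}}/c_{\mbox{\rm\tiny SW}})^2$ and $(c_{\mbox{\rm\tiny K}}/c_{\mbox{\rm\tiny SW}})^2$ as $(\rho_1h_2+\rho_2h_1)$ divided by a convex combination of, respectively, $f(\eta_k)=\eta_k\coth\eta_k$ and its rational approximant $R_k(\eta_k^2)$, thereby reducing the theorem to the scalar estimate $|f(\eta)-R_k(\eta^2)|\lesssim\eta^{4N+2}$ (equivalently $|\tanh\eta/\eta-1/R_k|\lesssim\eta^{4N+2}$). The difference is that the paper simply \emph{cites} this scalar estimate from Nemoto--Iguchi~\cite{NemotoIguchi2018} and stops, whereas you supply a self-contained proof via the Rayleigh--Ritz identification $\eta^2/R_k(\eta^2)=\min\{\int_0^1((\phi')^2+\eta^2\phi^2)\,{\rm d}s:\phi\in\mathrm{span}\{s^{p_i}\},\ \phi(1)=1\}$, C\'ea's lemma for the affine constraint, and the Taylor approximation of the exact minimizer $\phi^*(s)=\cosh(\eta s)/\cosh\eta$. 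Your route is longer but more informative: the Galerkin--Pythagoras identity $J(\phi_N^*)-J(\phi^*)=\|\phi_N^*-\phi^*\|_{E_\eta}^2$ makes transparent why the order doubles from $2N+2$ to $4N+2$, and the variational picture unifies the two admissible choices of $(N^*,p_i)$ since both trial spaces contain the even polynomials of degree $\le 2N$. One minor tightening: the statement ``$R_k(0)=1$'' alone does not give the uniform denominator lower bound you need; the clean argument is that $R_k(\eta^2)\ge1$ for \emph{all} $\eta$, which follows immediately from your variational formula by testing with $\phi\equiv1$.
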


\begin{proof}[{\bf Proof}.]
The phase speeds $c_{\mbox{\rm\tiny IW}}(\bm{\xi})$ and $c_{\mbox{\rm\tiny K}}(\bm{\xi})$ can be written in the form 
\[
\left( \frac{ c_{\mbox{\rm\tiny IW}}(\bm{\xi}) }{ c_{\mbox{\rm\tiny SW}} }\right)^2
= \frac{\displaystyle \frac{\tanh(h_1|\bm{\xi}|)}{h_1|\bm{\xi}|}\frac{\tanh(h_2|\bm{\xi}|)}{h_2|\bm{\xi}|}}{
 \displaystyle \theta\frac{\tanh(h_1|\bm{\xi}|)}{h_1|\bm{\xi}|} + (1-\theta)\frac{\tanh(h_2|\bm{\xi}|)}{h_2|\bm{\xi}|}}
\]
and 
\[
\left( \frac{ c_{\mbox{\rm\tiny K}}(\bm{\xi}) }{ c_{\mbox{\rm\tiny SW}} } \right)^2
= \frac{ \displaystyle \frac{ \det \mathcal{A}_1(h_1\bm{\xi}) }{ (h_1|\bm{\xi}|)^2\det\tilde{\mathcal{A}}_1(h_1\bm{\xi})} 
 \frac{ \det \mathcal{A}_2(h_2\bm{\xi}) }{ (h_2|\bm{\xi}|)^2\det\tilde{\mathcal{A}}_2(h_2\bm{\xi})} }{ 
 \displaystyle \theta\frac{ \det \mathcal{A}_1(h_1\bm{\xi}) }{ (h_1|\bm{\xi}|)^2\det\tilde{\mathcal{A}}_1(h_1\bm{\xi})}
  + (1-\theta)\frac{ \det \mathcal{A}_2(h_2\bm{\xi}) }{ (h_2|\bm{\xi}|)^2\det\tilde{\mathcal{A}}_2(h_2\bm{\xi})} },
\]
respectively, where $\theta=\frac{\rho_2h_1}{\rho_2h_1+\rho_1h_2} \in (0,1)$. 
It has been shown by R. Nemoto and T. Iguchi~\cite{NemotoIguchi2018} that 
\[
\left|\frac{\tanh|\bm{\xi}|}{|\bm{\xi}|} - \frac{ \det \mathcal{A}_k(\bm{\xi}) }{ |\bm{\xi}|^2\det\tilde{\mathcal{A}}_k(\bm{\xi})}\right|
\leq C|\bm{\xi}|^{4N+2}
\]
for $k=1,2$, so that we obtain the desired inequality. 
\end{proof}

\section{Stability condition}
\label{sect:stability}
In this section, we will derive the stability condition~\eqref{intro:stability} by analyzing 
a system of linearized equations to the Kakinuma model~\eqref{Kaki:KM1}--\eqref{Kaki:KM3}. 
We linearize the Kakinuma model around an arbitrary flow $(\zeta,{\bm \phi}_1,{\bm \phi}_2)$ 
and denote the variation by $(\dot{\zeta},\dot{{\bm \phi}}_1,\dot{{\bm \phi}}_2)$. 
After neglecting lower order terms, the linearized equations have the form 
\begin{equation}\label{sc:Linearized}
\begin{cases}
\displaystyle
 \partial_t\dot{\zeta} + {\bm u}_1\cdot\nabla\dot{\zeta} 
  - \sum_{j=0}^N \frac{1}{2(i+j)+1}H_1^{2j+1}\Delta\dot{\phi}_{1,j} = 0
  \quad\mbox{for}\quad i=0,1,\ldots,N, \\
\displaystyle
 \partial_t\dot{\zeta} + {\bm u}_2\cdot\nabla\dot{\zeta} 
  + \sum_{j=0}^{N^*} \frac{1}{p_i+p_j+1}H_2^{p_j+1}\Delta\dot{\phi}_{2,j} = 0
  \quad\mbox{for}\quad i=0,1,\ldots,N^*, \\
\displaystyle
 \rho_1\sum_{j=0}^N H_1^{2j}( \partial_t\dot{\phi}_{1,j} + {\bm u}_1\cdot\nabla\dot{\phi}_{1,j} )
 -  \rho_2\sum_{j=0}^{N^*} H_2^{p_j}( \partial_t\dot{\phi}_{2,j} + {\bm u}_2\cdot\nabla\dot{\phi}_{2,j} )
 - a\dot{\zeta} = 0,
\end{cases}
\end{equation}
where $H_1=h_1-\zeta$ and $H_2=h_2+\zeta-b$ are the thicknesses of the layers, 
\begin{equation}\label{sc:u}
\begin{cases}
\displaystyle
 {\bm u}_1 = (\nabla\Phi_1^{\rm app})|_{z=\zeta} = \sum_{j=0}^N H_1^{2j}\nabla\phi_{1,j}, \\
\displaystyle
 {\bm u}_2 = (\nabla\Phi_2^{\rm app})|_{z=\zeta}
  = \sum_{j=0}^{N^*} (H_2^{p_j}\nabla\phi_{2,j} - p_jH_2^{p_j-1}\phi_{2,j}\nabla b)
\end{cases}
\end{equation}
are approximate horizontal velocities in the upper and the lower layers on the interface, 
\begin{equation}\label{sc:w}
\begin{cases}
\displaystyle
 w_1 = (\partial_z\Phi_1^{\rm app})|_{z=\zeta} = - \sum_{j=0}^N 2jH_1^{2j-1}\phi_{1,j}, \\
\displaystyle
 w_2 = (\partial_z\Phi_2^{\rm app})|_{z=\zeta} = \sum_{j=0}^{N^*} p_jH_2^{p_j-1}\phi_{2,j}
\end{cases}
\end{equation}
are approximate vertical velocities in the upper and the lower layers on the interface, and 
\begin{align}\label{sc:a}
a &= \rho_2 \left( \sum_{j=0}^{N^*}p_jH_2^{p_j-1}(\partial_t\phi_{2,j} + {\bm u}_2\cdot\nabla\phi_{2,j} )
 + (w_2 - {\bm u}_2\cdot\nabla b)\sum_{j=0}^{N^*} p_j(p_j-1)H_2^{p_j-2}\phi_{2,j} + g \right) \\
&\quad
 + \rho_1 \left( \sum_{j=0}^N2jH_1^{2j-1}(\partial_t\phi_{1,j} + {\bm u}_1\cdot\nabla\phi_{1,j} )
 - w_1\sum_{j=0}^N 2j(2j-1)H^{2(j-1)}\phi_{1,j} - g \right) \nonumber \\
&= - \left( \partial_z ( P_2^{\rm app} - P_1^{\rm app} ) \right)|_{z=\zeta}.
 \nonumber
\end{align}
Here, $P_1^{\rm app}$ and $P_2^{\rm app}$ are approximate pressures in the upper and 
the lower layers calculated from Bernoulli's equations~\eqref{Kaki:BernoulliUpper}--\eqref{Kaki:BernoulliLower}, 
that is, 
\[
P_k^{\rm app} = - \rho_k\left( \partial_t\Phi_k^{\rm app} 
 + \frac12\left( |\nabla\Phi_k^{\rm app}|^2 +  (\partial_z\Phi_k^{\rm app})^2 \right) + gz \right)
\]
for $k=1,2$. 
Now, we freeze the coefficients in the linearized equations~\eqref{sc:Linearized} and put 
\begin{equation}\label{sc:vecpsi}
\begin{cases}
\dot{{\bm \psi}}_1
 = (\dot{\phi}_{1,0},H_1^2\dot{\phi}_{1,1},\ldots,H_1^{2N}\dot{\phi}_{1,N})^{\rm T}, \\
\dot{{\bm \psi}}_2
 = (\dot{\phi}_{2,0},H_2^{p_1}\dot{\phi}_{2,1},\ldots,H_2^{p_{N^*}}\dot{\phi}_{2,N^*})^{\rm T}. 
\end{cases}
\end{equation}
Then,~\eqref{sc:Linearized} can be written in the form  
\begin{align*}
&\begin{pmatrix}
 0 & -\rho_1{\bm 1}^{\rm T} & \rho_2{\bm 1}^{\rm T} \\
 H_1{\bm 1} & O & O \\
 -H_2{\bm 1} & O & O 
\end{pmatrix}
\partial_t
\begin{pmatrix}
 \dot{\zeta} \\
 \dot{\bm{\psi}}_1 \\
 \dot{\bm{\psi}}_2
\end{pmatrix} \\
&\quad 
+
\begin{pmatrix}
 a & -\rho_1\bm{1}^{\rm T}(\bm{u}_1\cdot\nabla)
  & \rho_2\bm{1}^{\rm T}(\bm{u}_2\cdot\nabla) \\
 H_1\bm{1}(\bm{u}_1\cdot\nabla) & -H_1^2A_{1,0}\Delta & O \\
 -H_2\bm{1}(\bm{u}_2\cdot\nabla) & O & -H_2^2A_{2,0}\Delta
\end{pmatrix}
\begin{pmatrix}
 \dot{\zeta} \\
 \dot{\bm{\psi}}_1 \\
 \dot{\bm{\psi}}_2
\end{pmatrix}
= \bm{0}.
\end{align*}
Therefore, the linear dispersion relation for~\eqref{sc:Linearized} is given by 
\[
\det
\begin{pmatrix}
 a & \mbox{\rm i}\rho_1(\omega-\bm{u}_1\cdot\bm{\xi})\bm{1}^{\rm T}
  & -\mbox{\rm i}\rho_2(\omega-\bm{u}_2\cdot\bm{\xi})\bm{1}^{\rm T} \\
 -\mbox{\rm i}H_1(\omega-\bm{u}_1\cdot\bm{\xi})\bm{1} & (H_1|\bm{\xi}|)^2A_{1,0} & O \\
 \mbox{\rm i}H_2(\omega-\bm{u}_2\cdot\bm{\xi})\bm{1} & O & (H_2|\bm{\xi}|)^2A_{2,0}
\end{pmatrix}
= 0, 
\]
where $\bm{\xi}\in\mathbf{R}^n$ is the wave vector and $\omega\in\mathbf{C}$ the angular frequency. 
The left-hand side can be expanded as 
\begin{align*}
\mbox{LHS}
&= \det
\begin{pmatrix}
 a & \mbox{\rm i}\rho_1(\omega-\bm{u}_1\cdot\bm{\xi})\bm{1}^{\rm T}
   & -\mbox{\rm i}\rho_2(\omega-\bm{u}_2\cdot\bm{\xi})\bm{1}^{\rm T} \\
 \bm{0} & (H_1|\bm{\xi}|)^2A_{1,0} & O \\
 \bm{0} & O & (H_2|\bm{\xi}|)^2A_{2,0}
\end{pmatrix} \\
&\quad
+ \det
\begin{pmatrix}
 0 & \mbox{\rm i}\rho_1(\omega-\bm{u}_1\cdot\bm{\xi})\bm{1}^{\rm T}
   & -\mbox{\rm i}\rho_2(\omega-\bm{u}_2\cdot\bm{\xi})\bm{1}^{\rm T} \\
 -\mbox{\rm i}H_1(\omega-\bm{u}_1\cdot\bm{\xi})\bm{1} & (H_1|\bm{\xi}|)^2A_{1,0} & O \\
  \mbox{\rm i}H_2(\omega-\bm{u}_2\cdot\bm{\xi})\bm{1} & O & (H_2|\bm{\xi}|)^2A_{2,0}
\end{pmatrix} \\
&= a \det\left((H_1|\bm{\xi}|)^2A_{1,0}\right) \det\left((H_2|\bm{\xi}|)^2A_{2,0}\right) \\
&\quad
+ \det
\begin{pmatrix}
 0 & \mbox{\rm i}\rho_1(\omega-\bm{u}_1\cdot\bm{\xi})\bm{1}^{\rm T} \\
 -\mbox{\rm i}H_1(\omega-\bm{u}_1\cdot\bm{\xi})\bm{1} & (H_1|\bm{\xi}|)^2A_{1,0}
\end{pmatrix}
\det \left((H_2|\bm{\xi}|)^2A_{2,0} \right) \\
&\quad
+ \det
\begin{pmatrix}
 0 & -\mbox{\rm i}\rho_2(\omega-\bm{u}_2\cdot\xi)\bm{1}^{\rm T} \\
 \mbox{\rm i}H_2(\omega-\bm{u}_2\cdot\bm{\xi})\bm{1} & (H_2|\bm{\xi}|)^2A_{2,0}
\end{pmatrix}
\det \left((H_1|\bm{\xi}|)^2A_{1,0} \right) \\
&= H_1^{2N+1}H_2^{2N^*+1}|\bm{\xi}|^{2(N+N^*+1)} \left\{ aH_1H_2|\bm{\xi}|^2\det A_{1,0}\det A_{2,0} \right. \\
&\qquad\left.
 - \rho_1H_2(\omega-\bm{u}_1\cdot\bm{\xi})^2 \det\tilde{A}_{1,0} \det A_{2,0}
  - \rho_2H_1(\omega-\bm{u}_2\cdot\bm{\xi})^2 \det\tilde{A}_{2,0} \det A_{1,0} \right\},
\end{align*}
so that the linear dispersion relation is given simply as 
\begin{equation}\label{sc:ldr}
\frac{\rho_1}{H_1\alpha_1}(\omega-\bm{u}_1\cdot\bm{\xi})^2
 + \frac{\rho_2}{H_2\alpha_2}(\omega-\bm{u}_2\cdot\bm{\xi})^2
 - a|\bm{\xi}|^2 = 0,
\end{equation}
where 
\begin{equation}\label{sc:alpha}
\alpha_k = \frac{\det A_{k,0}}{\det\tilde{A}_{k,0}}, \qquad
\tilde{A}_{k,0} = 
\begin{pmatrix}
  0 & \bm{1}^{\rm T} \\
 -\bm{1} & A_{k,0}
\end{pmatrix}
\end{equation}
for $k=1,2$. 
The discriminant of this quadratic equation in $\omega$ is 
\begin{align*}
&\left( \frac{\rho_1}{H_1\alpha_1}\bm{u}_1\cdot\bm{\xi}
 + \frac{\rho_2}{H_2\alpha_2}\bm{u}_2\cdot\bm{\xi} \right)^2 \\
&\quad
 - \left( \frac{\rho_1}{H_1\alpha_1} + \frac{\rho_2}{H_2\alpha_2} \right)
 \left( \frac{\rho_1}{H_1\alpha_1}(\bm{u}_1\cdot\bm{\xi})^2
  + \frac{\rho_2}{H_2\alpha_2}(\bm{u}_2\cdot\bm{\xi})^2 - a|\bm{\xi}|^2 \right) \\
&= \left( \frac{\rho_1}{H_1\alpha_1} + \frac{\rho_2}{H_2\alpha_2} \right)
 \left( a|\bm{\xi}|^2 
 - \frac{\rho_1\rho_2}{ \rho_1H_2 \alpha_2 + \rho_2H_1 \alpha_1 }
   \left( (\bm{u}_2-\bm{u}_1)\cdot\bm{\xi} \right)^2 \right).
\end{align*}
Therefore, the solutions $\omega$ to the dispersion relation~\eqref{sc:ldr} are real for any 
wave vector $\bm{\xi}\in\mathbf{R}^n$ if and only if 
\[
a - \frac{\rho_1\rho_2}{\rho_1H_2\alpha_2 + \rho_2H_1\alpha_1} |\bm{u}_2-\bm{u}_1|^2 \geq 0.
\]
Otherwise, the roots of the linear dispersion relation~\eqref{sc:ldr} have the form 
$\omega=\omega_r(\bm{\xi})\pm\mathrm{i}\omega_i(\bm{\xi})$ satisfying 
$\omega_i(\bm{\xi})\to+\infty$ as $\bm{\xi}=(\bm{u}_2-\bm{u}_1)\xi$ and $\xi\to+\infty$, 
which leads to an instability of the problem. 
These consideration leads us to the following stability condition 
\begin{equation}\label{sc:Stability}
a - \frac{\rho_1\rho_2}{\rho_1H_2\alpha_2 + \rho_2H_1\alpha_1} |\bm{u}_2-\bm{u}_1|^2 \geq c_0 > 0,
\end{equation}
which is equivalent to 
\[
-\big(\partial_z ( P_2^{\rm app} - P_1^{\rm app} )\big)|_{z=\zeta} 
 - \frac{\rho_1\rho_2}{\rho_1H_2\alpha_2 + \rho_2H_1\alpha_1} 
\big( |\nabla\Phi_2^{\rm app} - \nabla\Phi_1^{\rm app}|^2\big)|_{z=\zeta}  \geq c_0
.
\]
Here, we note that $\alpha_1$ and $\alpha_2$ are positive constants depending only on $N$ and $\{p_0,p_1,\ldots,p_{N^*}\}$ 
and converge to $0$ as $N,N^*\to\infty$.
Therefore, as $N$ and $N^*$ go to infinity the domain of stability diminishes.

\section{Analysis of the linearized system}
\label{sect:LS}
In this section, we still analyze the system of linearized equations~\eqref{sc:Linearized} with frozen coefficients. 
We first derive an energy estimate for solutions to the linearized system by defining a suitable energy function, 
and then transform the linearized system into a standard symmetric form, for which the hypersurface $t=0$ in the 
space-time $\mathbf{R}^n\times\mathbf{R}$ is noncharacteristic. 
These results motivate the subsequent analysis on the nonlinear equations.

\subsection{Energy estimate}\label{sect:EE}
With the notation~\eqref{sc:vecpsi}, the linearized system~\eqref{sc:Linearized} with frozen coefficients 
can be written in a symmetric form as 
\begin{equation}\label{LS:eqs}
\mathscr{A}_1\partial_t\dot{\bm{U}} + \mathscr{A}_0\dot{\bm{U}} = \bm{0},
\end{equation}
where $\dot{\bm{U}} = (\dot{\zeta}, \dot{\bm{\psi}}_1, \dot{\bm{\psi}}_2)^{\rm T}$ and 
\begin{align*}
& \mathscr{A}_1 = 
\begin{pmatrix}
 0 & -\rho_1{\bm 1}^{\rm T} & \rho_2{\bm 1}^{\rm T} \\
 \rho_1{\bm 1} & O & O \\
 -\rho_2{\bm 1} & O & O 
\end{pmatrix}, \\
& \mathscr{A}_0 = 
\begin{pmatrix}
 a & -\rho_1\bm{1}^{\rm T}(\bm{u}_1\cdot\nabla)
  & \rho_2\bm{1}^{\rm T}(\bm{u}_2\cdot\nabla) \\
 \rho_1\bm{1}(\bm{u}_1\cdot\nabla) & -\rho_1H_1A_{1,0}\Delta & O \\
 -\rho_2\bm{1}(\bm{u}_2\cdot\nabla) & O & -\rho_2H_2A_{2,0}\Delta
\end{pmatrix}.
\end{align*}
We note that $\mathscr{A}_0$ is symmetric in $L^2(\mathbf{R}^n)$ whereas $\mathscr{A}_1$ is skew-symmetric. 
Therefore, by taking $L^2$-inner product of~\eqref{LS:eqs} with $\partial_t\dot{\bm{U}}$ we have 
\[
\frac{\rm d}{{\rm d}t}(\dot{{\bm U}},\mathscr{A}_0\dot{{\bm U}})_{L^2} = 0 
\]
for any regular solution $\dot{\bm{U}}$ to~\eqref{LS:eqs}, so that 
$(\dot{{\bm U}},\mathscr{A}_0\dot{{\bm U}})_{L^2}$ would give a mathematical energy function to 
the linearized system~\eqref{LS:eqs} if we show the positivity of the symmetric operator 
$\mathscr{A}_0$ in $L^2(\mathbf{R}^n)$. 
We proceed to check the positivity. 
For simplicity, we consider first the case $N=N^*=0$ so that $A_{1,0}=A_{2,0}=1$. 
Then, we see that 
\[
(\dot{\bm{U}},\mathscr{A}_0\dot{\bm{U}})_{L^2}
= \int_{\mathbf{R}^n}
\begin{pmatrix}
 \dot{\zeta} \\
 \nabla\dot{\phi}_{1,0} \\
 \nabla\dot{\phi}_{2,0}
\end{pmatrix}
\cdot
\begin{pmatrix}
 a & -\rho_1\bm{u}_1^{\rm T} & \rho_2\bm{u}_2^{\rm T} \\
 -\rho_1\bm{u}_1 & \rho_1H_1\mbox{\rm Id} & O \\
  \rho_2\bm{u}_2 & O & \rho_2H_2\mbox{\rm Id} 
\end{pmatrix}
\begin{pmatrix}
 \dot{\zeta} \\
 \nabla\dot{\phi}_{1,0} \\
 \nabla\dot{\phi}_{2,0}
\end{pmatrix}
{\rm d}\bm{x}.
\]
Therefore, it is sufficient to analyze the positivity of this $(2n+1)\times(2n+1)$ matrix. 
The characteristic polynomial of this matrix is given by 
\begin{align*}
0 &= \det
\begin{pmatrix}
 \lambda-a & \rho_1\bm{u}_1^{\rm T} & -\rho_2\bm{u}_2^{\rm T} \\
  \rho_1\bm{u}_1 & (\lambda-\rho_1H_1)\mbox{\rm Id} & O \\
 -\rho_2\bm{u}_2 & O & (\lambda-\rho_2H_2)\mbox{\rm Id} 
\end{pmatrix} \\
&= (\lambda-a)(\lambda-\rho_1H_1)^n(\lambda-\rho_2H_2)^n \\
&\quad
 - \rho_1^2|\bm{u}_1|^2(\lambda-\rho_1H_1)^{n-1}(\lambda-\rho_2H_2)^n 
 - \rho_2^2|\bm{u}_2|^2(\lambda-\rho_1H_1)^n(\lambda-\rho_2H_2)^{n-1} \\
&= (\lambda-\rho_1H_1)^{n-1}(\lambda-\rho_2H_2)^{n-1} \left\{
 (\lambda-a)(\lambda-\rho_1H_1)(\lambda-\rho_2H_2) \right.\\
&\quad\left.
 - \rho_1^2|\bm{u}_1|^2(\lambda-\rho_2H_2)
 - \rho_2^2|\bm{u}_2|^2(\lambda-\rho_1H_1) \right\}.
\end{align*}
Therefore, the eigenvalues of the matrix are $\rho_1H_1$ and $\rho_2H_2$ of multiplicity $n-1$ and 
$\lambda_1,\lambda_2,\lambda_3$, which are the roots of the polynomial 
\[
(\lambda-a)(\lambda-\rho_1H_1)(\lambda-\rho_2H_2) 
 - \rho_1^2|\bm{u}_1|^2(\lambda-\rho_2H_2)
 - \rho_2^2|\bm{u}_2|^2(\lambda-\rho_1H_1) = 0.
\]
Here, we see that 
\[
\lambda_1\lambda_2\lambda_3
= \rho_1\rho_2( aH_1H_2 - \rho_1H_2|{\bm u}_1|^2 - \rho_2H_1|{\bm u}_2|^2 ),
\]
which is not necessarily positive even if ${\bm u}_1 = {\bm u}_2$. 
Therefore, for the positivity of the symmetric operator $\mathscr{A}_0$ we need a smallness of the horizontal 
velocities ${\bm u}_1$ and ${\bm u}_2$. 
Such a condition is, of course, stronger restriction than the stability condition~\eqref{sc:Stability}. 
This means that $(\dot{{\bm U}},\mathscr{A}_0\dot{{\bm U}})_{L^2}$ is not an optimal energy function 
and we proceed to find out another one. 

We are now considering the linearized system~\eqref{LS:eqs} with frozen coefficients, that is, 
\begin{equation}\label{LS:eqs2}
\begin{cases}
H_1\bm{1}( \partial_t\dot{\zeta} + \bm{u}_1\cdot\nabla\dot{\zeta} )
 - H_1^2A_{1,0}\Delta\dot{\bm{\psi}}_1 = \bm{0}, \\
H_2\bm{1}( \partial_t\dot{\zeta} + \bm{u}_2\cdot\nabla\dot{\zeta} )
 + H_2^2A_{2,0}\Delta\dot{\bm{\psi}}_2 = \bm{0}, \\
\rho_1 \bm{1} \cdot\left( \partial_t\dot{\bm{\psi}}_1
  + (\bm{u}_1 \cdot \nabla) \dot{\bm{\psi}}_1 \right)
 - \rho_2 \bm{1}\cdot\left( \partial_t\dot{\bm{\psi}}_2
  + (\bm{u}_2 \cdot \nabla) \dot{\bm{\psi}}_2 \right) - a\dot{\zeta} = 0.
\end{cases}
\end{equation}
Applying $\Delta$ to the last equation in~\eqref{LS:eqs2} we have 
\begin{equation}\label{LS:eq3}
\rho_1 (A_{1,0})^{-1} \bm{1}\cdot(\partial_t + \bm{u}_1\cdot\nabla)
 A_{1,0}\Delta\dot{\bm{\psi}}_1
- \rho_2 (A_{2,0})^{-1} \bm{1}\cdot(\partial_t + \bm{u}_2\cdot\nabla)
 A_{2,0}\Delta\dot{\bm{\psi}}_2 - a\Delta\dot{\zeta} = 0.
\end{equation}
Plugging the first and the second equations in~\eqref{LS:eqs2} into~\eqref{LS:eq3} to remove $\dot{\bm{\psi}}_1$ and $\dot{\bm{\psi}}_2$, we obtain 
\[
\left( \frac{\rho_1 (A_{1,0})^{-1}\bm{1}\cdot\bm{1} }{H_1}
 (\partial_t + \bm{u}_1\cdot\nabla)^2
 + \frac{\rho_2 (A_{2,0})^{-1}\bm{1}\cdot\bm{1} }{H_2}
 (\partial_t + {\bm u}_2\cdot\nabla)^2 \right)\dot{\zeta}
 - a\Delta\dot{\zeta} = 0.
\]
In view of the relation following from Cramer's rule
\[
(A_{k,0})^{-1}\bm{1}\cdot\bm{1}
 = \frac{\det\tilde{A}_{k,0}}{\det A_{k,0}} = \frac{1}{\alpha_k}
\]
for $k=1,2$, 
the above equation for $\dot{\zeta}$ can be written as 
\begin{equation}\label{LS:eqz}
\left( \frac{\rho_1}{H_1\alpha_1} + \frac{\rho_2}{H_2\alpha_2} \right)
 (\partial_t + \bm{u}\cdot\nabla)^2\dot{\zeta}
- \left(a\Delta - \frac{\rho_1\rho_2}{\rho_1H_2\alpha_2+\rho_2H_1\alpha_1}
 ((\bm{u}_2-\bm{u}_1)\cdot\nabla)^2 \right)
 \dot{\zeta} = 0,
\end{equation}
where $\bm{u}$ is an averaged horizontal velocity on the interface defined by 
\begin{equation}\label{LS:defu}
\bm{u} = \frac{\rho_1H_2\alpha_2}{\rho_1H_2\alpha_2+\rho_2H_1\alpha_1}\bm{u}_1
 + \frac{\rho_2H_1\alpha_1}{\rho_1H_2\alpha_2+\rho_2H_1\alpha_1}\bm{u}_2.
\end{equation}

Taking~\eqref{LS:eqz} into account, we consider the following constant coefficient 
second order partial differential equation 
\begin{equation}\label{LS:waveeq}
c_1 (\partial_t + \bm{u}\cdot\nabla)^2 \dot{\zeta}
- \left( c_2\Delta - (\bm{v}\cdot\nabla)^2 \right) \dot{\zeta} = 0,
\end{equation}
where $c_1$ and $c_2$ are positive constants. 
By taking $L^2$-inner product of~\eqref{LS:waveeq} with $(\partial_t + {\bm u}\cdot\nabla) \dot{\zeta}$ 
and using integration by parts, we see that 
\[
\frac{\rm d}{{\rm d}t}\left(
 c_1\|\partial_t\dot{\zeta} + {\bm u}\cdot\nabla\dot{\zeta}\|_{L^2}^2
 + c_2\|\nabla\dot{\zeta}\|_{L^2}^2 - \|{\bm v}\cdot\nabla\dot{\zeta}\|_{L^2}^2 \right)
 = 0
\]
for any regular solution $\dot{\zeta}$ to~\eqref{LS:waveeq}. 
Here, we have 
\[
c_2\|\nabla\dot{\zeta}\|_{L^2}^2 - \|{\bm v}\cdot\nabla\dot{\zeta}\|_{L^2}^2 
= (\nabla\dot{\zeta},(c_2{\rm Id} - {\bm v}\otimes{\bm v})\nabla\dot{\zeta})_{L^2}. 
\]
The matrix $c_2{\rm Id} - {\bm v}\otimes{\bm v}$ is positive if and only if 
$c_2 - |{\bm v}|^2 > 0$. 
Under this assumption, we obtain an energy estimate for the solutions to~\eqref{LS:waveeq}. 
Applying this consideration to~\eqref{LS:eqz}, we see that the positivity condition is exactly the 
same as the stability condition~\eqref{sc:Stability}, under which we can obtain an energy estimate for~\eqref{LS:eqz}. 

In~\cite{BreschRenardy2011} (see also~\cite{BreschDesjardinsGhidagliaGrenierHillairet2018}), 
D.~Bresch and M.~Renardy rewrote the the nonlinear shallow water equations~\eqref{Kaki:NLSW1},
corresponding to the case $N=N^*=0$, as a scalar second order partial differential equation analogous to~\eqref{LS:eqz}, 
and then used the abstract theory of T.~J.~R.~Hughes, T.~Kato, and J.~E.~Marsden~\cite{HughesKatoMarsden1976} 
to obtain the local well-posedness of the initial-value problem under sharp hyperbolicity conditions, as mentioned in Remark~\ref{Kaki:re1}.
Our strategy is different as we rely on the symmetrization of the system and parabolic regularization to prove Theorem~\ref{Kaki:th1}.

In view of~\eqref{LS:eqz} and the subsequent observation we rewrite the linearized system~\eqref{LS:eqs} 
with frozen coefficients in the form 
\[
\mathscr{A}_1( \partial_t + \bm{u}\cdot\nabla )\dot{\bm{U}}
 + \mathscr{A}_0^{\rm mod}\dot{\bm{U}} = \bm{0},
\]
where 
\begin{align*}
 \mathscr{A}_0^{\rm mod} 
 &= \mathscr{A}_0 - \mathscr{A}_1({\bm u}\cdot\nabla) \\
&= 
\begin{pmatrix}
 a & \frac{\rho_1\rho_2H_1\alpha_1}{\rho_1H_2\alpha_2+\rho_2H_1\alpha_1}\bm{1}^{\rm T}
  (\bm{v}\cdot\nabla)
  & \frac{\rho_1\rho_2H_2\alpha_2}{\rho_1H_2\alpha_2+\rho_2H_1\alpha_1}\bm{1}^{\rm T}
  (\bm{v}\cdot\nabla) \\[1ex]
 -\frac{\rho_1\rho_2H_1\alpha_1}{\rho_1H_2\alpha_2+\rho_2H_1\alpha_1}\bm{1}
  (\bm{v}\cdot\nabla)
  & -\rho_1H_1A_{1,0}\Delta & O \\[1ex]
 -\frac{\rho_1\rho_2H_2\alpha_2}{\rho_1H_2\alpha_2+\rho_2H_1\alpha_1}\bm{1}
  (\bm{v}\cdot\nabla)
  & O & -\rho_2H_2A_{2,0}\Delta
\end{pmatrix}
\end{align*}
and $\bm{v} = \bm{u}_2 - \bm{u}_1$. 
By taking $L^2$-inner product of this equation with 
$( \partial_t + {\bm u}\cdot\nabla )\dot{\bm{U}}$ and using integration by parts, 
we see that 
\[
\frac{\rm d}{{\rm d}t}(\mathscr{A}_0^{\rm mod}\dot{\bm{U}}, \dot{\bm{U}})_{L^2} = 0
\]
for any regular solution to~\eqref{LS:eqs}. 
We proceed to check the positivity of the symmetric operator $\mathscr{A}_0^{\rm mod}$ 
in $L^2(\mathbf{R}^n)$ under the stability condition~\eqref{sc:Stability}. 
We see that 
\begin{align*}
(\mathscr{A}_0^{\rm mod}\dot{\bm{U}},\dot{\bm{U}})_{L^2}
&= (a\dot{\zeta},\dot{\zeta})_{L^2} + \sum_{l=1}^n \left\{
 (\rho_1H_1A_{1,0}\partial_l\dot{\bm{\psi}}_1,\partial_l\dot{\bm{\psi}}_1)_{L^2}
 + (\rho_2H_2A_{2,0}\partial_l\dot{\bm{\psi}}_2,\partial_l\dot{\bm{\psi}}_2)_{L^2} \right\} \\
&\quad
 + 2(\frac{\rho_1\rho_2H_1\alpha_1}{\rho_1H_2\alpha_2+\rho_2H_1\alpha_1}(\bm{v}\cdot\nabla)
  (\bm{1}\cdot\dot{\bm{\psi}}_1),\dot{\zeta})_{L^2} \\
&\quad
 + 2(\frac{\rho_1\rho_2H_2\alpha_2}{\rho_1H_2\alpha_2+\rho_2H_1\alpha_1}(\bm{v}\cdot\nabla)
  (\bm{1}\cdot\dot{\bm{\psi}}_2),\dot{\zeta})_{L^2} .
\end{align*}
On the other hand, the matrix $\tilde{A}_{k,0}$ is nonsingular and its inverse matrix can be written as 
\[
(\tilde{A}_{k,0})^{-1} = 
\begin{pmatrix}
 0 & \bm{1}^{\rm T} \\
 -\bm{1} & A_{k,0}
\end{pmatrix}^{-1}
=
\begin{pmatrix}
 q_{k,0} & (\bm{q}_{k,0})^{\rm T} \\
 -\bm{q}_{k,0} & Q_{k,0}
\end{pmatrix}
\]
with a symmetric matrix $Q_{k,0}$ for $k=1,2$. 
Moreover, $q_{k,0} = \frac{\det A_{k,0}}{\det\tilde{A}_{k,0}} = \alpha_k$ is positive and 
$Q_{k,0}$ is nonnegative. 
In fact, for any $\bm{\psi}$ putting $\zeta$ and $\bm{\phi}$ by 
$\begin{pmatrix} \zeta \\ \bm{\phi} \end{pmatrix}
= (\tilde{A}_{k,0})^{-1} \begin{pmatrix} 0 \\ \bm{\psi} \end{pmatrix}$ we have 
\[
Q_{k,0}\bm{\psi} \cdot \bm{\psi}
= 
\begin{pmatrix}
 q_{k,0} & (\bm{q}_{k,0})^{\rm T} \\
 -\bm{q}_{k,0} & Q_{k,0}
\end{pmatrix}
\begin{pmatrix} 0 \\ \bm{\psi} \end{pmatrix}
 \cdot \begin{pmatrix} 0 \\ \bm{\psi} \end{pmatrix}
= \begin{pmatrix}  \zeta \\ \bm{\phi} \end{pmatrix}
 \cdot \tilde{A}_{k,0} \begin{pmatrix} \zeta \\ \bm{\phi} \end{pmatrix}
= \bm{\phi} \cdot A_{k,0} \bm{\phi} \geq 0.
\]
We note that $Q_{k,0}$ is not positive because it has a zero eigenvalue with an eigenvector $\bm{1}$. 
Now, for any $\bm{\phi}$, putting $\eta=\bm{1} \cdot \bm{\phi}$ and 
$\bm{\psi} = A_{k,0}\bm{\phi}$, we have $\tilde{A}_{k,0} \begin{pmatrix} 0 \\ \bm{\phi} \end{pmatrix}
= \begin{pmatrix} \eta \\ \bm{\psi} \end{pmatrix}$ so that 
\[
A_{k,0}\bm{\phi} \cdot \bm{\phi}
= \tilde{A}_{k,0} \begin{pmatrix} 0 \\ \bm{\phi} \end{pmatrix} 
 \cdot \begin{pmatrix} 0 \\ \bm{\phi} \end{pmatrix}
= \begin{pmatrix} \eta \\ \bm{\psi} \end{pmatrix}
 \cdot (\tilde{A}_{k,0})^{-1} \begin{pmatrix} \eta \\ \bm{\psi} \end{pmatrix}
= q_{k,0}\eta^2 + Q_{k,0}\bm{\psi} \cdot \bm{\psi},
\]
from which we deduce the identity 
\begin{equation}\label{LS:relation1}
A_{k,0}\bm{\phi} \cdot \bm{\phi}
= \alpha_k(\bm{1} \cdot \bm{\phi})^2
 + Q_{k,0}A_{k,0}\bm{\phi} \cdot A_{k,0}\bm{\phi}.
\end{equation}
By using the decomposition~\eqref{LS:relation1} we see that 
\begin{align*}
& (\mathscr{A}_0^{\rm mod}\dot{\bm{U}},\dot{\bm{U}})_{L^2} \\
&= \sum_{l=1}^n \left\{
 (\rho_1H_1Q_{1,0}A_{1,0}\partial_l\dot{\bm{\psi}}_1,
  A_{1,0}\partial_l\dot{\bm{\psi}}_1)_{L^2}
 + (\rho_2H_2Q_{2,0}A_{2,0}\partial_l\dot{\bm{\psi}}_2,
  A_{2,0}\partial_l\dot{\bm{\psi}}_2)_{L^2}
  \right\} \\
&\quad
 + \biggl\{ (a\dot{\zeta},\dot{\zeta})_{L^2} + 
 (\rho_1H_1\alpha_1\nabla(\bm{1} \cdot \dot{\bm{\psi}}_1),
  \nabla(\bm{1} \cdot \dot{\bm{\psi}}_1))_{L^2} 
 + (\rho_2H_2\alpha_2\nabla(\bm{1} \cdot \dot{\bm{\psi}}_2),
  \nabla(\bm{1} \cdot \dot{\bm{\psi}}_2))_{L^2} \\
&\qquad
 + (\frac{2\rho_1\rho_2H_1\alpha_1}{\rho_1H_2\alpha_2+\rho_2H_1\alpha_1}(\bm{v}\cdot\nabla)
  (\bm{1}\cdot\dot{\bm{\psi}}_1),\dot{\zeta})_{L^2}
 + (\frac{2\rho_1\rho_2H_2\alpha_2}{\rho_1H_2\alpha_2+\rho_2H_1\alpha_1}(\bm{v}\cdot\nabla)
  (\bm{1}\cdot\dot{\bm{\psi}}_2),\dot{\zeta})_{L^2} \biggr\} \\
&=: I_1 + I_2.
\end{align*}
Here, $I_1\geq 0$ since $Q_{1,0}$ and $Q_{2,0}$ are nonnegative, and 
\begin{align*}
I_2 & \geq \int_{\mathbf{R}^n} \biggl\{
 a\dot{\zeta}^2 + \rho_1H_1\alpha_1 |\nabla(\bm{1} \cdot \dot{\bm{\psi}}_1)|^2
 + \rho_2H_2\alpha_2 |\nabla(\bm{1} \cdot \dot{\bm{\psi}}_2)|^2 \\
&\makebox[4em]{}
 - \frac{2\rho_1\rho_2|\bm{v}|}{\rho_1H_2\alpha_2+\rho_2H_1\alpha_1}\left(
  H_1\alpha_1|\nabla(\bm{1} \cdot \dot{\bm{\psi}}_1)|
  + H_2\alpha_2|\nabla(\bm{1} \cdot \dot{\bm{\psi}}_2)| \right) |\dot{\zeta}|
  \biggr\} {\rm d}\bm{x},
\end{align*}
so that it is sufficient to show the positivity of the matrix 
\[
\mathfrak{A}_0 :=
\begin{pmatrix}
 a & - \frac{\rho_1\rho_2H_1\alpha_1}{\rho_1H_2\alpha_2+\rho_2H_1\alpha_1}|\bm{v}|
  & - \frac{\rho_1\rho_2H_2\alpha_2}{\rho_1H_2\alpha_2+\rho_2H_1\alpha_1}|\bm{v}| \\[1ex]
 - \frac{\rho_1\rho_2H_1\alpha_1}{\rho_1H_2\alpha_2+\rho_2H_1\alpha_1}|\bm{v}| & \rho_1H_1\alpha_1 & 0 \\[1ex]
 - \frac{\rho_1\rho_2H_2\alpha_2}{\rho_1H_2\alpha_2+\rho_2H_1\alpha_1}|\bm{v}| & 0 & \rho_2H_2\alpha_2
\end{pmatrix}.
\]
From Sylvester's criterion and since $\rho_kH_k\alpha_k$ is positive for $k=1,2$, 
the positivity of the matrix $\mathfrak{A}_0$ is equivalent to 
\begin{align*}
\det\mathfrak{A}_0
&= a(\rho_1H_1\alpha_1)(\rho_2H_2\alpha_2) \\
&\quad\;
-\rho_1H_1\alpha_1\left( \frac{\rho_1\rho_2H_2\alpha_2}{\rho_1H_2\alpha_2+\rho_2H_1\alpha_1}|\bm{v}| \right)^2
-\rho_2H_2\alpha_2\left( \frac{\rho_1\rho_2H_1\alpha_1}{\rho_1H_2\alpha_2+\rho_2H_1\alpha_1}|\bm{v}| \right)^2\\
&=(\rho_1H_1\alpha_1)(\rho_2H_2\alpha_2)\left( a-\frac{\rho_1\rho_2}{\rho_1H_2\alpha_2+\rho_2H_1\alpha_1}|\bm{v}|^2 \right)>0.
\end{align*}
Since $\bm{v} = \bm{u}_2 - \bm{u}_1$, 
under the stability condition~\eqref{sc:Stability} we have the positivity of $\mathfrak{A}_0$, 
so that in view of~\eqref{LS:relation1} and the positivity of the matrix $A_{k,0}$ for $k=1,2$
we finally obtain the equivalence 
\[
(\mathscr{A}_0^{\rm mod}\dot{\bm{U}}, \dot{\bm{U}})_{L^2}
 \simeq  \|\dot{\zeta}\|_{L^2}^2 + \|\nabla\dot{\bm{\phi}}_1\|_{L^2}^2
 + \|\nabla\dot{\bm{\phi}}_2\|_{L^2}^2.
\]
Therefore, $(\mathscr{A}_0^{\rm mod}\dot{\bm{U}}, \dot{\bm{U}})_{L^2}$ would provide a useful 
mathematical energy function.

\subsection{Symmetrization of the linearized equations}
We still consider the linearized equations~\eqref{sc:Linearized} with frozen coefficients. 
However, for later use we define $\dot{\bm{\phi}}_1$ and $\dot{\bm{\phi}}_2$ 
in place of~\eqref{sc:vecpsi} by 
\[
\begin{cases}
\dot{\bm{\phi}}_1
 = (\dot{\phi}_{1,0},\dot{\phi}_{1,1},\ldots,\dot{\phi}_{1,N})^{\rm T}, \\
\dot{\bm{\phi}}_2
 = (\dot{\phi}_{2,0},\dot{\phi}_{2,1},\ldots,\dot{\phi}_{2,N^*})^{\rm T}. 
\end{cases}
\]
Then, the linearized equations have the form 
\begin{equation}\label{LS:Linearized4}
\begin{cases}
\bm{l}_1(H_1)( \partial_t\dot{\zeta} + \bm{u}_1\cdot\nabla\dot{\zeta} )
 - A_1(H_1)\Delta\dot{\bm{\phi}}_1 = \bm{0}, \\
-\bm{l}_2(H_2)( \partial_t\dot{\zeta} + \bm{u}_2\cdot\nabla\dot{\zeta} )
 - A_2(H_2)\Delta\dot{\bm{\phi}}_2 = \bm{0}, \\
- \rho_1 \bm{l}_1(H_1) \cdot( \partial_t\dot{\bm{\phi}}_1
  + (\bm{u}_1 \cdot \nabla) \dot{\bm{\phi}}_1 )
 + \rho_2 \bm{l}_2(H_2) \cdot( \partial_t\dot{\bm{\phi}}_2
  + (\bm{u}_2 \cdot \nabla) \dot{\bm{\phi}}_2 )
 + a\dot{\zeta} = 0,
\end{cases}
\end{equation}
where 
\begin{equation}\label{LS:defl}
{\bm l}_1(H_1) = (1,H_1^2,H_1^4,\ldots,H_1^{2N})^{\rm T}, \quad
{\bm l}_2(H_2) = (1,H_2^{p_1},H_2^{p_2},\ldots,H_2^{p_{N^*}})^{\rm T},
\end{equation}
and 
\begin{equation}\label{LS:defA}
\begin{cases}
 \displaystyle
 A_1(H_1) = \biggl( \frac{1}{2(i+j)+1}H_1^{2(i+j)+1} \biggr)_{0\leq i,j\leq N}, \\
 \displaystyle
 A_2(H_2) = \biggl( \frac{1}{p_i+p_j+1}H_2^{p_i+p_j+1} \biggr)_{0\leq i,j\leq N^*}.
\end{cases}
\end{equation}
In the following, we abbreviate simply ${\bm l}_k(H_k)$ and $A_k(H_k)$ as 
${\bm l}_k$ and $A_k$ for $k=1,2$. 
We are going to show that the system can be transformed 
into a positive symmetric system of the form 
\begin{equation}\label{LS:sym}
\mathscr{A}_0^{\rm mod} \partial_t \dot{\bm{U}}
 + \mathscr{A} \dot{\bm{U}} = \bm{0},
\end{equation}
where $\dot{\bm{U}}
 = (\dot{\zeta},\dot{\bm{\phi}}_1,\dot{\bm{\phi}}_2)^{\rm T}$, 
$\mathscr{A}_0^{\rm mod}$ is the positive operator defined in the previous section 
with slight modification, and 
$\mathscr{A}$ is a skew-symmetric operator in $L^2(\mathbf{R}^n)$. 
As before, we put $\bm{v} = \bm{u}_2-\bm{u}_1$ and define 
$\bm{u}$ by~\eqref{LS:defu}. 
Furthermore, we introduce the notation 
\begin{equation}\label{LS:alpha}
\theta_1 = \frac{\rho_2H_1\alpha_1}{\rho_1H_2\alpha_2+\rho_2H_1\alpha_1}, \qquad
\theta_2 = \frac{\rho_1H_2\alpha_2}{\rho_1H_2\alpha_2+\rho_2H_1\alpha_1},
\end{equation}
where $\alpha_1$ and $\alpha_2$ are positive constants defined by~\eqref{sc:alpha}. 
Then, we have 
$\bm{u} = \theta_2\bm{u}_1 + \theta_1\bm{u}_2$ and $\theta_1+\theta_2=1$. 
We can also express $\bm{u}_1$ and $\bm{u}_2$ in terms of $\bm{u}$ and $\bm{v}$ as 
\[
\bm{u}_1 = \bm{u} - \theta_1\bm{v}, \qquad
\bm{u}_2 = \bm{u} + \theta_2\bm{v}.
\]
Applying $\Delta$ to the third equation in~\eqref{LS:Linearized4} and differentiating 
the first and the second equations with respect to $t$, we obtain 
\begin{align*}
&
\begin{pmatrix}
 0 & -\rho_1\bm{l}_1^{\rm T} & \rho_2\bm{l}_2^{\rm T} \\
 -\rho_1\bm{l}_1 & \rho_1A_1 & O \\
  \rho_2\bm{l}_2 & O & \rho_2A_2
\end{pmatrix}
\begin{pmatrix}
 \partial_t^2 \dot{\zeta} \\
 \Delta\partial_t \dot{\bm{\phi}}_1 \\
 \Delta\partial_t \dot{\bm{\phi}}_2
\end{pmatrix} 
 + 
\begin{pmatrix}
 0 \\
 -\rho_1\bm{l}_1(\bm{u}_1\cdot\nabla) \\
  \rho_2\bm{l}_2(\bm{u}_2\cdot\nabla)
\end{pmatrix}
 \partial_t\dot{\zeta} \\
&\quad + 
\begin{pmatrix}
 a & -\rho_1\bm{l}_1^{\rm T}(\bm{u}_1\cdot\nabla)
   &  \rho_2\bm{l}_2^{\rm T}(\bm{u}_2\cdot\nabla) \\
 \bm{0} & O & O \\
 \bm{0} & O & O 
\end{pmatrix}
\Delta\dot{\bm{U}}
= \bm{0}.
\end{align*}
In view of this, we introduce a symmetric matrix as 
\begin{equation}\label{LS:defQ}
\begin{pmatrix}
 q_0 & \bm{q}_1^{\rm T} & \bm{q}_2^{\rm T} \\
 \bm{q}_1 & Q_{11} & Q_{12} \\
 \bm{q}_2 & Q_{21} & Q_{22}
\end{pmatrix}
= 
\begin{pmatrix}
 0 & -\rho_1\bm{l}_1^{\rm T} & \rho_2\bm{l}_2^{\rm T} \\
 -\rho_1\bm{l}_1 & \rho_1A_1 & O \\
  \rho_2\bm{l}_2 & O & \rho_2A_2
\end{pmatrix}^{-1},
\end{equation}
where $Q_{11}^{\rm T}=Q_{11}$, $Q_{22}^{\rm T}=Q_{22}$, and $Q_{12}^{\rm T}=Q_{21}$. 
Moreover, we have 
\[
\begin{cases}
 -\rho_1\bm{l}_1\cdot\bm{q}_1 + \rho_2\bm{l}_2\cdot\bm{q}_2 = 1, 
  \quad A_1\bm{q}_1 =  q_0\bm{l}_1,
  \quad A_2\bm{q}_2 = -q_0\bm{l}_2, \\
 \rho_1A_1Q_{11} = \mbox{\rm Id} + \rho_1\bm{l}_1\bm{q}_1^{\rm T}, \quad
  \rho_2A_2Q_{22} = \mbox{\rm Id} - \rho_2\bm{l}_2\bm{q}_2^{\rm T}, \\
 A_1Q_{12} = \bm{l}_1\bm{q}_2^{\rm T}, \quad
  A_2Q_{21} = -\bm{l}_2\bm{q}_1^{\rm T}
\end{cases}
\]
and by Cramer's rule, 
\[
q_0 = -\frac{ H_1H_2\alpha_1\alpha_2 }{ \rho_1H_2\alpha_2+\rho_2H_1\alpha_1 }, \quad
\bm{l}_1\cdot\bm{q}_1 = \frac{-q_0}{H_1\alpha_1} = -\frac{\theta_2}{\rho_1}, \quad
\bm{l}_2\cdot\bm{q}_2 = \frac{q_0}{H_2\alpha_2} = \frac{\theta_1}{\rho_2}.
\]
Using these notations, we have 
\begin{align*}
&
\begin{pmatrix}
 -\rho_1A_1\Delta\partial_t\dot{\bm{\phi}}_1 \\
 -\rho_2A_2\Delta\partial_t\dot{\bm{\phi}}_2
\end{pmatrix}
+
\begin{pmatrix}
 -\rho_1A_1 & O \\
 O & -\rho_2A_2
\end{pmatrix}
\begin{pmatrix}
 \bm{q}_1 & Q_{11} & Q_{12} \\
 \bm{q}_2 & Q_{21} & Q_{22}
\end{pmatrix}
\begin{pmatrix}
 0 \\
 -\rho_1\bm{l}_1(\bm{u}_1\cdot\nabla) \\
 \rho_2\bm{l}_2(\bm{u}_2\cdot\nabla)
\end{pmatrix}
 \partial_t\dot{\zeta} \\
&\quad + 
\begin{pmatrix}
 -\rho_1A_1 & O \\
 O & -\rho_2A_2
\end{pmatrix}
\begin{pmatrix}
 \bm{q}_1 & Q_{11} & Q_{12} \\
 \bm{q}_2 & Q_{21} & Q_{22}
\end{pmatrix}
\begin{pmatrix}
 a & -\rho_1\bm{l}_1^{\rm T}(\bm{u}_1\cdot\nabla)
   &  \rho_2\bm{l}_2^{\rm T}(\bm{u}_2\cdot\nabla) \\
 \bm{0} & O & O \\
 \bm{0} & O & O 
\end{pmatrix}
\Delta\dot{\bm{U}}
= \bm{0}.
\end{align*}
Here, we see that 
\begin{align*}
&
\begin{pmatrix}
 -\rho_1A_1 & O \\
 O & -\rho_2A_2
\end{pmatrix}
\begin{pmatrix}
 \bm{q}_1 & Q_{11} & Q_{12} \\
 \bm{q}_2 & Q_{21} & Q_{22}
\end{pmatrix}
\begin{pmatrix}
 0 \\
 -\rho_1\bm{l}_1(\bm{u}_1\cdot\nabla) \\
  \rho_2\bm{l}_2(\bm{u}_2\cdot\nabla)
\end{pmatrix}
= -
\begin{pmatrix}
 \theta_1\rho_1\bm{l}_1 \\
 \theta_2\rho_2\bm{l}_2
\end{pmatrix}
(\bm{v}\cdot\nabla), \\
&
\begin{pmatrix}
 -\rho_1A_1 & O \\
 O & -\rho_2A_2
\end{pmatrix}
\begin{pmatrix}
 \bm{q}_1 & Q_{11} & Q_{12} \\
 \bm{q}_2 & Q_{21} & Q_{22}
\end{pmatrix}
\begin{pmatrix}
 a & -\rho_1\bm{l}_1^{\rm T}(\bm{u}_1\cdot\nabla)
   &  \rho_2\bm{l}_2^{\rm T}(\bm{u}_2\cdot\nabla) \\
 \bm{0} & O & O \\
 \bm{0} & O & O 
\end{pmatrix} \\
&\quad
= q_0
\begin{pmatrix}
 -a\rho_1\bm{l}_1
  & \rho_1^2\bm{l}_1\bm{l}_1^{\rm T}(\bm{u}_1\cdot\nabla)
  & -\rho_1\rho_2\bm{l}_1\bm{l}_2^{\rm T}(\bm{u}_2\cdot\nabla) \\
 a\rho_2\bm{l}_2
  & -\rho_1\rho_2\bm{l}_2\bm{l}_1^{\rm T}(\bm{u}_1\cdot\nabla)
  & \rho_2^2\bm{l}_2\bm{l}_2^{\rm T}(\bm{u}_2\cdot\nabla)
\end{pmatrix},
\end{align*}
so that 
\begin{align}\label{LS:relation2}
&
\begin{pmatrix}
 -\rho_1A_1\Delta\partial_t\dot{\bm{\phi}}_1
  -\theta_1\rho_1\bm{l}_1(\bm{v}\cdot\nabla)\partial_t\dot{\zeta} \\
 -\rho_2A_2\Delta\partial_t\dot{{\bm \phi}}_2
  -\theta_2\rho_2\bm{l}_2(\bm{v}\cdot\nabla)\partial_t\dot{\zeta}
\end{pmatrix} \\
&= q_0a
\begin{pmatrix}
  \rho_1\bm{l}_1 \\
 -\rho_2\bm{l}_2
\end{pmatrix}
\Delta\dot{\zeta}
+ q_0
\begin{pmatrix}
 -\rho_1^2\bm{l}_1\bm{l}_1^{\rm T}(\bm{u}_1\cdot\nabla)
  & \rho_1\rho_2\bm{l}_1\bm{l}_2^{\rm T}(\bm{u}_2\cdot\nabla) \\
 \rho_1\rho_2\bm{l}_2\bm{l}_1^{\rm T}(\bm{u}_1\cdot\nabla)
  & -\rho_2^2\bm{l}_2\bm{l}_2^{\rm T}(\bm{u}_2\cdot\nabla)
\end{pmatrix}
\Delta
\begin{pmatrix}
 \dot{\bm{\phi}}_1 \\
 \dot{\bm{\phi}}_2
\end{pmatrix}. \nonumber
\end{align}
On the other hand, taking the Euclidean inner product of the first and the second equations in~\eqref{LS:Linearized4} 
with $-\rho_1\bm{q}_1$ and $\rho_2\bm{q}_2$, respectively, we obtain 
\[
\begin{cases}
 \theta_2( \partial_t\dot{\zeta} + \bm{u}_1\cdot\nabla\dot{\zeta} )
  + q_0\rho_1\bm{l}_1\cdot\Delta \dot{\bm{\phi}}_1 = 0, \\
 \theta_1( \partial_t\dot{\zeta} + \bm{u}_2\cdot\nabla\dot{\zeta} )
  - q_0\rho_2\bm{l}_2\cdot\Delta \dot{{\bm \phi}}_2 =0,
\end{cases}
\]
which are equivalent to 
\begin{equation}\label{LS:relation3}
\begin{cases}
 \partial_t\dot{\zeta} + \bm{u}\cdot\nabla\dot{\zeta}
  + q_0\Delta( \rho_1\bm{l}_1\cdot\dot{\bm{\phi}}_1
   - \rho_2\bm{l}_2\cdot\dot{\bm{\phi}}_2 ) = 0, \\
 \theta_1\theta_2\bm{v}\cdot\nabla\dot{\zeta}
  - q_0\Delta( \theta_1\rho_1\bm{l}_1\cdot\dot{\bm{\phi}}_1
   + \theta_2\rho_2\bm{l}_2\cdot\dot{\bm{\phi}}_2 ) = 0.
\end{cases}
\end{equation}
It follows from the second equation in~\eqref{LS:relation3} that 
\begin{align*}
\theta_1\rho_1\bm{l}_1 \cdot \partial_t\dot{\bm{\phi}}_1
   + \theta_2\rho_2\bm{l}_2 \cdot \partial_t\dot{\bm{\phi}}_2
&= q_0^{-1}\theta_1\theta_2(\bm{v} \cdot \nabla)\Delta^{-1}\partial_t\dot{\zeta}.
\end{align*}
Therefore, we obtain 
\begin{align}\label{LS:relation4}
& a\partial_t\dot{\zeta}
 + (\bm{v} \cdot \nabla)(
  \theta_1\rho_1\bm{l}_1 \cdot \partial_t\dot{\bm{\phi}}_1
  + \theta_2\rho_2\bm{l}_2 \cdot \partial_t\dot{\bm{\phi}}_2 ) \\
&= -a\bigl( (\bm{u} \cdot \nabla)\dot{\zeta}
 + q_0\Delta( \rho_1\bm{l}_1 \cdot \dot{\bm{\phi}}_1
  - \rho_2\bm{l}_2 \cdot \dot{\bm{\phi}}_2 ) \bigr) \nonumber \\
&\quad
 - \theta_1\theta_2(\bm{v} \cdot \nabla)^2\bigl(
  q_0^{-1}(\bm{u} \cdot \nabla)\Delta^{-1}\dot{\zeta}
 + (\rho_1\bm{l}_1 \cdot \dot{\bm{\phi}}_1
  - \rho_2\bm{l}_2 \cdot \dot{\bm{\phi}}_2 ) \bigr). \nonumber
\end{align}
We proceed to symmetrize the second term in the right-hand side of~\eqref{LS:relation2}. 
\begin{align*}
& q_0
\begin{pmatrix}
 -\rho_1^2\bm{l}_1\bm{l}_1^{\rm T}(\bm{u}_1\cdot\nabla)
  & \rho_1\rho_2\bm{l}_1\bm{l}_2^{\rm T}(\bm{u}_2\cdot\nabla) \\
 \rho_1\rho_2\bm{l}_2\bm{l}_1^{\rm T}(\bm{u}_1\cdot\nabla)
  & -\rho_2^2\bm{l}_2\bm{l}_2^{\rm T}(\bm{u}_2\cdot\nabla)
\end{pmatrix}
\Delta
\begin{pmatrix}
 \dot{\bm{\phi}}_1 \\
 \dot{\bm{\phi}}_2
\end{pmatrix} \\
&= q_0
\begin{pmatrix}
 -\rho_1^2\bm{l}_1\bm{l}_1^{\rm T}
  & \rho_1\rho_2\bm{l}_1\bm{l}_2^{\rm T} \\
 \rho_1\rho_2\bm{l}_2\bm{l}_1^{\rm T}
  & -\rho_2^2\bm{l}_2\bm{l}_2^{\rm T}
\end{pmatrix}
(\bm{u}\cdot\nabla)\Delta
\begin{pmatrix}
 \dot{\bm{\phi}}_1 \\
 \dot{\bm{\phi}}_2
\end{pmatrix} 
+ q_0
\begin{pmatrix}
 \theta_1\rho_1^2\bm{l}_1\bm{l}_1^{\rm T}
  & \theta_2\rho_1\rho_2\bm{l}_1\bm{l}_2^{\rm T} \\
 -\theta_1\rho_1\rho_2\bm{l}_2\bm{l}_1^{\rm T}
  & -\theta_2\rho_2^2\bm{l}_2\bm{l}_2^{\rm T}
\end{pmatrix}
(\bm{v}\cdot\nabla)\Delta
\begin{pmatrix}
 \dot{\bm{\phi}}_1 \\
 \dot{\bm{\phi}}_2
\end{pmatrix},
\end{align*}
where 
\begin{align*}
q_0
\begin{pmatrix}
 \theta_1\rho_1^2\bm{l}_1\bm{l}_1^{\rm T}
  & \theta_2\rho_1\rho_2\bm{l}_1\bm{l}_2^{\rm T} \\
 -\theta_1\rho_1\rho_2\bm{l}_2\bm{l}_1^{\rm T}
  & -\theta_2\rho_2^2\bm{l}_2\bm{l}_2^{\rm T}
\end{pmatrix}
\Delta
\begin{pmatrix}
 \dot{\bm{\phi}}_1 \\
 \dot{\bm{\phi}}_2
\end{pmatrix}
&=
\begin{pmatrix}
  \rho_1\bm{l}_1 \\
 -\rho_2\bm{l}_2
\end{pmatrix}
q_0\Delta( \theta_1\rho_1\bm{l}_1 \cdot \dot{\bm{\phi}}_1
  + \theta_2\rho_2\bm{l}_2 \cdot \dot{\bm{\phi}}_2 ) \\
&= 
\theta_1\theta_2
\begin{pmatrix}
  \rho_1\bm{l}_1 \\
 -\rho_2\bm{l}_2
\end{pmatrix}
(\bm{v}\cdot\nabla)\dot{\zeta}.
\end{align*}
In the above calculation, we used the second equation in~\eqref{LS:relation3}. 
Therefore, 
\begin{align*}
\begin{pmatrix}
 -\rho_1A_1\Delta\partial_t\dot{\bm{\phi}}_1
  -\theta_1\rho_1\bm{l}_1(\bm{v}\cdot\nabla)\partial_t\dot{\zeta} \\
 -\rho_2A_2\Delta\partial_t\dot{\bm{\phi}}_2
  -\theta_2\rho_2\bm{l}_2(\bm{v}\cdot\nabla)\partial_t\dot{\zeta}
\end{pmatrix}
&= q_0a
\begin{pmatrix}
  \rho_1\bm{l}_1 \\
 -\rho_2\bm{l}_2
\end{pmatrix}
\Delta\dot{\zeta}
+ \theta_1\theta_2
\begin{pmatrix}
  \rho_1\bm{l}_1 \\
 -\rho_2\bm{l}_2
\end{pmatrix}
(\bm{v}\cdot\nabla)^2\dot{\zeta} \\
&\quad
+ q_0
\begin{pmatrix}
 -\rho_1^2\bm{l}_1\bm{l}_1^{\rm T}
  & \rho_1\rho_2\bm{l}_1\bm{l}_2^{\rm T} \\
 \rho_1\rho_2\bm{l}_2\bm{l}_1^{\rm T}
  & -\rho_2^2\bm{l}_2\bm{l}_2^{\rm T}
\end{pmatrix}
(\bm{u}\cdot\nabla)\Delta
\begin{pmatrix}
 \dot{\bm{\phi}}_1 \\
 \dot{\bm{\phi}}_2
\end{pmatrix}.
\end{align*}
Summarizing the above calculations, if we define the symmetrizer $\mathscr{A}_0^{\rm mod}$ by 
\begin{equation}\label{LS:symmetrizer}
\mathscr{A}_0^{\rm mod}
=
\begin{pmatrix}
 a & \theta_1\rho_1\bm{l}_1^{\rm T}(\bm{v} \cdot \nabla)
   & \theta_2\rho_2\bm{l}_2^{\rm T}(\bm{v} \cdot \nabla) \\
 -\theta_1\rho_1\bm{l}_1(\bm {v} \cdot \nabla) & -\rho_1A_1\Delta & O \\
 -\theta_2\rho_2\bm{l}_2(\bm{v} \cdot \nabla) & O & -\rho_2A_2\Delta
\end{pmatrix},
\end{equation}
then we obtain 
\begin{align*}
\mathscr{A}_0^{\rm mod} \partial_t \dot{\bm{U}}
&= 
\begin{pmatrix}
 a\partial_t\dot{\zeta} + (\bm{v} \cdot \nabla)(
  \theta_1\rho_1\bm{l}_1 \cdot \partial_t\dot{\bm{\phi}}_1
  + \theta_2\rho_2\bm{l}_2 \cdot \partial_t\dot{\bm{\phi}}_2 ) \\
 -\theta_1\rho_1\bm{l}_1(\bm{v}\cdot\nabla)\partial_t\dot{\zeta}
  - \rho_1A_1\Delta\partial_t\dot{\bm{\phi}}_1 \\
 -\theta_2\rho_2\bm{l}_2(\bm{v}\cdot\nabla)\partial_t\dot{\zeta}
  - \rho_2A_2\Delta\partial_t\dot{\bm{\phi}}_2
\end{pmatrix} \\
&= a
\begin{pmatrix}
 -\bm{u} \cdot \nabla & -q_0\rho_1\bm{l}_1^{\rm T}\Delta
  & q_0\rho_2\bm{l}_2^{\rm T}\Delta \\
  q_0\rho_1\bm{l}_1\Delta & O & O \\
 -q_0\rho_2\bm{l}_2\Delta & O & O
\end{pmatrix}
\dot{\bm{U}} \\
&\quad +
\begin{pmatrix}
 -q_0^{-1}\theta_1\theta_2(\bm{v} \cdot \nabla)^2(\bm{u} \cdot \nabla)\Delta^{-1}
  & -\theta_1\theta_2\rho_1\bm{l}_1^{\rm T}(\bm{v} \cdot \nabla)^2
  &  \theta_1\theta_2\rho_2\bm{l}_2^{\rm T}(\bm{v} \cdot \nabla)^2 \\
 \theta_1\theta_2\rho_1\bm{l}_1(\bm{v} \cdot \nabla)^2
  & -q_0\rho_1^2\bm{l}_1\bm{l}_1^{\rm T}(\bm{u} \cdot \nabla)\Delta
  & q_0\rho_1\rho_2\bm{l}_1\bm{l}_2^{\rm T}(\bm{u} \cdot \nabla)\Delta \\
 -\theta_1\theta_2\rho_2\bm{l}_2(\bm{v} \cdot \nabla)^2
  & q_0\rho_1\rho_2\bm{l}_2\bm{l}_1^{\rm T}(\bm{u} \cdot \nabla)\Delta
  & -q_0\rho_2^2\bm{l}_2\bm{l}_2^{\rm T}(\bm{u} \cdot \nabla)\Delta
\end{pmatrix}
\dot{\bm{U}}.
\end{align*}
Therefore, $\dot{\bm{U}}$ satisfies the symmetric system~\eqref{LS:sym} 
with a skew-symmetric operator $\mathscr{A}$ defined by 
\begin{align*}
\mathscr{A}
&= a
\begin{pmatrix}
 \bm{u} \cdot \nabla & q_0\rho_1\bm{l}_1^{\rm T}\Delta
  & -q_0\rho_2\bm{l}_2^{\rm T}\Delta \\
 -q_0\rho_1\bm{l}_1\Delta & O & O \\
  q_0\rho_2\bm{l}_2\Delta & O & O
\end{pmatrix} \\
&\quad +
\begin{pmatrix}
 q_0^{-1}\theta_1\theta_2(\bm{v} \cdot \nabla)^2(\bm{u} \cdot \nabla)\Delta^{-1}
  &  \theta_1\theta_2\rho_1\bm{l}_1^{\rm T}(\bm{v} \cdot \nabla)^2
  & -\theta_1\theta_2\rho_2\bm{l}_2^{\rm T}(\bm{v} \cdot \nabla)^2 \\
 -\theta_1\theta_2\rho_1\bm{l}_1(\bm{v} \cdot \nabla)^2
  & q_0\rho_1^2\bm{l}_1\bm{l}_1^{\rm T}(\bm{u} \cdot \nabla)\Delta
  & -q_0\rho_1\rho_2\bm{l}_1\bm{l}_2^{\rm T}(\bm{u} \cdot \nabla)\Delta \\
 \theta_1\theta_2\rho_2\bm{l}_2(\bm{v} \cdot \nabla)^2
  & -q_0\rho_1\rho_2\bm{l}_2\bm{l}_1^{\rm T}(\bm{u} \cdot \nabla)\Delta
  & q_0\rho_2^2\bm{l}_2\bm{l}_2^{\rm T}(\bm{u} \cdot \nabla)\Delta
\end{pmatrix}.
\end{align*}
For the positive symmetric system~\eqref{LS:sym}, we can apply the standard theory for 
partial differential equations to show its well-posedness of the initial value problem. 
Moreover, these considerations help us to analyze the nonlinear problem~\eqref{Kaki:KM1}--\eqref{Kaki:KM3}.

\section{Analysis of related operators}
\label{sect:ARO}
We go back to consider the nonlinear problem, that is, the Kakinuma model~\eqref{Kaki:KM1}--\eqref{Kaki:KM3}. 
We introduce second order differential operators $L_{1,ij}=L_{1,ij}(H_1)$ $(i,j=0,1,\ldots,N)$ and 
$L_{2,ij}=L_{2,ij}(H_2,b)$ $(i,j=0,1,\ldots,N^*)$ by 
\begin{align}\label{ARO:operatorLij}
L_{1,ij}\varphi_{1,j}
&= - \nabla\cdot\left( \frac{1}{2(i+j)+1}H_1^{2(i+j)+1}\nabla\varphi_{1,j} \right) 
   + \frac{4ij}{2(i+j)-1}H_1^{2(i+j)-1}\varphi_{1,j}, \\
L_{2,ij}\varphi_{2,j} \label{ARO:operatorLij2}
&= - \nabla\cdot\left(
   \frac{1}{p_i+p_j+1}H_2^{p_i+p_j+1}\nabla\varphi_{2,j}
   - \frac{p_j}{p_i+p_j}H_2^{p_i+p_j}\varphi_{2,j}\nabla b\right) \\[0.5ex]
&\quad\,
  - \frac{p_i}{p_i+p_j}H_2^{p_i+p_j}\nabla b\cdot\nabla\varphi_{2,j}
   + \frac{p_ip_j}{p_i+p_j-1}H_2^{p_i+p_j-1}(1 + |\nabla b|^2)\varphi_{2,j}. \nonumber
\end{align}
Then, we have $(L_{k,ij})^*=L_{k,ji}$ for $k=1,2$, where $(L_{k,ij})^*$ is the adjoint operator of 
$L_{k,ij}$ in $L^2(\mathbf{R}^n)$.  
We also use $\bm{u}_k$ and $w_k$ for $k=1,2$ defined by~\eqref{sc:u} and~\eqref{sc:w}, 
which represent approximately the horizontal and the vertical components 
of the velocity field on the interface from the water region $\Omega_k(t)$, respectively. 
Then, the Kakinuma model~\eqref{Kaki:KM1}--\eqref{Kaki:KM3} can be written simply as 
\[
\begin{cases}
 \displaystyle
 H_1^{2i}\partial_t\zeta + \sum_{j=0}^N L_{1,ij}(H_1)\phi_{1,j} = 0 \quad\mbox{for}\quad i=0,1,\ldots,N, \\
 \displaystyle
 - H_2^{p_i}\partial_t\zeta + \sum_{j=0}^{N^*} L_{2,ij}(H_2,b)\phi_{2,j} = 0 \quad\mbox{for}\quad i=0,1,\ldots,N^*, \\
 \displaystyle
 - \rho_1\left\{ \sum_{j=0}^N H_1^{2j}\partial_t\phi_{1,j} + g\zeta
  + \frac12\bigl( |\bm{u}_1|^2 + w_1^2 \bigr) \right\} \\
 \displaystyle\qquad
 + \rho_2\left\{ \sum_{j=0}^{N^*} H_2^{p_j}\partial_t\phi_{2,j} + g\zeta
  + \frac12\bigl( |\bm{u}_2|^2 + w_2^2 \bigr) \right\} = 0.
\end{cases}
\]
Moreover, introducing $\bm{\phi}_1 = (\phi_{1,0},\phi_{1,1},\ldots,\phi_{1,N})^{\rm T}$, 
$\bm{\phi}_2 = (\phi_{2,0},\phi_{2,1},\ldots,\phi_{2,N^*})^{\rm T}$, and 
\begin{equation}\label{ARO:lk}
\begin{cases}
\bm{l}_1(H_1) = (1,H_1^2,H_1^4,\ldots,H_1^{2N})^{\rm T}, \quad
L_1(H_1) = (L_{1,ij}(H_1))_{0\leq i,j\leq N}, \\
\bm{l}_2(H_2) = (1,H_2^{p_1},H_2^{p_2},\ldots,H_2^{p_{N^*}})^{\rm T}, \quad
L_2(H_2,b) = (L_{2,ij}(H_2,b))_{0\leq i,j\leq N^*},
\end{cases}
\end{equation}
we can write the Kakinuma model~\eqref{Kaki:KM1}--\eqref{Kaki:KM3} more simply as 
\begin{equation}\label{ARO:IK}
\begin{cases}
 \displaystyle
 \bm{l}_1(H_1)\partial_t\zeta + L_1(H_1)\bm{\phi}_1 = \bm{0}, \\
 \displaystyle
 - \bm{l}_2(H_2)\partial_t\zeta + L_2(H_2,b)\bm{\phi}_2 = \bm{0}, \\
 \displaystyle
 - \rho_1\left\{ {\bm l}_1(H_1) \cdot \partial_t\bm{\phi}_1 + g\zeta
  + \frac12\bigl( |\bm{u}_1|^2 + w_1^2 \bigr) \right\} \\
 \displaystyle\qquad
 + \rho_2\left\{ \bm{l}_2(H_2) \cdot \partial_t\bm{\phi}_2 + g\zeta
  + \frac12\bigl( |\bm{u}_2|^2 + w_2^2 \bigr) \right\} = 0. 
\end{cases}
\end{equation}

By eliminating $\partial_t\zeta$ from the Kakinuma model, we obtain $N+N^*+1$ scalar relations 
\[
\begin{cases}
 \displaystyle
  \sum_{j=0}^N ( L_{1,ij}(H_1)\phi_{1,j} - H_1^{2i}L_{1,0j}(H_1)\phi_{1,j} ) = 0
  \quad\mbox{for}\quad i=1,2,\ldots,N, \\
 \displaystyle
  \sum_{j=0}^{N^*} ( L_{2,ij}(H_2,b)\phi_{2,j} - H_2^{p_i}L_{2,0j}(H_2,b)\phi_{2,j} ) = 0
  \quad\mbox{for}\quad i=1,2,\ldots,N^*, \\
 \displaystyle
  \sum_{j=0}^N L_{1,0j}(H_1)\phi_{1,j} + \sum_{j=0}^{N^*}L_{2,0j}(H_2,b)\phi_{2,j} = 0.
\end{cases}
\]
These are compatibility conditions for the existence of the solution to the Kakinuma model, 
and exactly the same as the compatibility conditions~\eqref{Kaki:CC1}--\eqref{Kaki:CC2}. 
Introducing furthermore linear operators $\mathcal{L}_{1,i} = \mathcal{L}_{1,i}(H_1)$ $(i=0,1,\ldots,N)$ 
acting on $\bm{\varphi}_1 = (\varphi_{1,0},\ldots,\varphi_{1,N})^{\rm T}$ and 
$\mathcal{L}_{2,i} = \mathcal{L}_{2,i}(H_2,b)$ $(i=0,1,\ldots,N^*)$ 
acting on $\bm{\varphi}_2 = (\varphi_{2,0},\ldots,\varphi_{2,N^*})^{\rm T}$ by 
\begin{equation}\label{ARO:calLk}
\begin{cases}
 \displaystyle
  \mathcal{L}_{1,0}(H_1) \bm{\varphi}_1
  = \sum_{j=0}^N L_{1,0j}(H_1)\varphi_{1,j}, \\
 \displaystyle
  \mathcal{L}_{1,i}(H_1) \bm{\varphi}_1
  = \sum_{j=0}^N ( L_{1,ij}(H_1)\varphi_{1,j} - H_1^{2i}L_{1,0j}(H_1)\varphi_{1,j} )
   \quad\mbox{for}\quad i=1,2,\ldots,N, \\
 \displaystyle
  \mathcal{L}_{2,0}(H_2,b) \bm{\varphi}_2
  = \sum_{j=0}^{N^*} L_{2,0j}(H_2,b)\varphi_{2,j}, \\
 \displaystyle
  \mathcal{L}_{2,i}(H_2,b) \bm{\varphi}_2
  = \sum_{j=0}^{N^*} ( L_{2,ij}(H_2,b)\varphi_{2,j} - H_2^{p_i}L_{2,0j}(H_2,b)\varphi_{2,j} )
   \quad\mbox{for}\quad i=1,2,\ldots,N^*, 
\end{cases}
\end{equation}
the compatibility conditions can be written simply as 
\begin{equation}\label{ARO:CC}
\begin{cases}
 \mathcal{L}_{1,i}(H_1) \bm{\phi}_1 = 0 \quad\mbox{for}\quad i=1,2,\ldots,N, \\
 \mathcal{L}_{2,i}(H_2,b) \bm{\phi}_2 = 0 \quad\mbox{for}\quad i=1,2,\ldots,N^*, \\
 \mathcal{L}_{1,0}(H_1) \bm{\phi}_1
  + \mathcal{L}_{2,0}(H_2,b) \bm{\phi}_2 = 0.
\end{cases}
\end{equation}

We proceed to derive evolution equations for $\bm{\phi}_1$ and $\bm{\phi}_2$. 
To this end, we differentiate the above compatibility conditions with respect to $t$ and use equations of 
the Kakinuma model to eliminate $\partial_t\zeta$. 
Then, we obtain 
\begin{equation}\label{ARO:eqphi1}
\begin{cases}
 \mathcal{L}_{1,i}(H_1) \partial_t\bm{\phi}_1 = F_{1,i} \quad\mbox{for}\quad i=1,2,\ldots,N, \\
 \mathcal{L}_{2,i}(H_2,b) \partial_t\bm{\phi}_2 = F_{2,i} \quad\mbox{for}\quad i=1,2,\ldots,N^*, \\
 \mathcal{L}_{1,0}(H_1) \partial_t\bm{\phi}_1
  + \mathcal{L}_{2,0}(H_2,b) \partial_t\bm{\phi}_2 = F_3,
\end{cases}
\end{equation}
where 
\[
\begin{cases}
 F_{1,i} = -\frac{\partial\mathcal{L}_{1,i}}{\partial H_1}(H_1)[\mathcal{L}_{1,0}(H_1)\bm{\phi}_1]
  \bm{\phi}_1 \quad\mbox{for}\quad i=1,2,\ldots,N, \\
 F_{2,i} = -\frac{\partial\mathcal{L}_{2,i}}{\partial H_2}(H_2,b)[\mathcal{L}_{2,0}(H_2,b)\bm{\phi}_2]
  \bm{\phi}_2 \quad\mbox{for}\quad i=1,2,\ldots,N^*, \\
 F_3 = -\frac{\partial\mathcal{L}_{1,0}}{\partial H_1}(H_1)[\mathcal{L}_{1,0}(H_1)\bm{\phi}_1]
  \bm{\phi}_1
  -\frac{\partial\mathcal{L}_{2,0}}{\partial H_2}(H_2,b)[\mathcal{L}_{2,0}(H_2,b)\bm{\phi}_2]
  \bm{\phi}_2.
\end{cases}
\]
Here, we note that $F_3$ can be written in divergence form as 
\[
F_3 = \nabla\cdot\biggl\{ (\mathcal{L}_{1,0}(H_1)\bm{\phi}_1)\sum_{j=0}^N H_1^{2j}\nabla\phi_{1,j}
 + (\mathcal{L}_{2,0}(H_2,b)\bm{\phi}_2)\sum_{j=0}^{N^*}H_2^{p_j}\nabla\phi_{2,j} \biggr\}.
\]
On the other hand, the last equation in the Kakinuma model can be written as 
\begin{equation}\label{ARO:eqphi2}
- \rho_1{\bm l}_1(H_1) \cdot \partial_t{\bm \phi}_1
 + \rho_2{\bm l}_2(H_2) \cdot \partial_t{\bm \phi}_2 = F_4,
\end{equation}
where 
\[
F_4 =  \rho_1\biggl\{ g\zeta + \frac12\bigl( |{\bm u}_1|^2 + w_1^2 \bigr) \biggr\}
 - \rho_2\biggl\{ g\zeta + \frac12\bigl( |{\bm u}_2|^2 + w_2^2 \bigr) \biggr\}.
\]
In view of these evolution equations~\eqref{ARO:eqphi1}--\eqref{ARO:eqphi2} for $\bm{\phi}_1$ and $\bm{\phi}_2$, 
we will consider the following equations for $\bm{\varphi}_1$ and $\bm{\varphi}_2$. 
\begin{equation}\label{ARO:eqvarphi}
\begin{cases}
 \mathcal{L}_{1,i}(H_1) \bm{\varphi}_1 = f_{1,i} \quad\mbox{for}\quad i=1,2,\ldots,N, \\
 \mathcal{L}_{2,i}(H_2,b) \bm{\varphi}_2 = f_{2,i} \quad\mbox{for}\quad i=1,2,\ldots,N^*, \\
 \mathcal{L}_{1,0}(H_1) \bm{\varphi}_1 + \mathcal{L}_{2,0}(H_2,b) \bm{\varphi}_2
   = \nabla\cdot\bm{f}_3, \\
 - \rho_1\bm{l}_1(H_1) \cdot \bm{\varphi}_1
  + \rho_2\bm{l}_2(H_2) \cdot \bm{\varphi}_2 = f_4.
\end{cases}
\end{equation}
In the following we will use the notation 
$\bm{\varphi}_1' = (\varphi_{1,1},\ldots,\varphi_{1,N})^{\rm T}$ and 
$\bm{\varphi}_2' = (\varphi_{2,1},\ldots,\varphi_{2,N^*})^{\rm T}$, and we put 
$\bm{f}_1' = (f_{1,1},\ldots,f_{1,N})^{\rm T}$ and 
$\bm{f}_2' = (f_{2,1},\ldots,f_{2,N^*})^{\rm T}$. 

\begin{lemma}\label{ARO:lem1}
Let $c_0$ and $c_1$ be positive constants. 
There exists a positive constant $C=C(c_0,c_1)$ depending only on $c_0$ and $c_1$ such that for any 
$H_1,H_2,\nabla b\in L^\infty(\mathbf{R}^n)$ 
satisfying $H_1(x),H_2(x) \geq c_0$ and $|\nabla b(x)|\leq c_1$, any regular solution $(\bm{\varphi}_1,\bm{\varphi}_2)$ 
to~\eqref{ARO:eqvarphi} satisfies 
\begin{align*}
& \rho_1( \|\nabla\varphi_{1,0}\|_{L^2}^2 + \|{\bm \varphi}_1'\|_{H^1}^2 )
 + \rho_2( \|\nabla\varphi_{2,0}\|_{L^2}^2 + \|{\bm \varphi}_2'\|_{H^1}^2 ) \\
&\leq C\biggl( - \sum_{j=0}^N(\nabla f_4,\frac{1}{2j+1}H_1^{2j+1}\nabla\varphi_{1,j})_{L^2} \\
&\qquad\quad
 + \rho_1({\bm f}_1',{\bm \varphi}_1')_{L^2} 
 + \rho_2({\bm f}_2',{\bm \varphi}_2')_{L^2}
 + \rho_2(\nabla\cdot{\bm f}_3, {\bm l}_2(H_2) \cdot
  {\bm \varphi}_2)_{L^2} \biggr).
\end{align*}
\end{lemma}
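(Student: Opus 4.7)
The plan is to combine the four lines of \eqref{ARO:eqvarphi} into a single integral identity whose left-hand side is $\rho_1(L_1\bm{\varphi}_1,\bm{\varphi}_1)_{L^2} + \rho_2(L_2\bm{\varphi}_2,\bm{\varphi}_2)_{L^2}$, and then to invoke pointwise coercivity of these two quadratic forms with constants depending only on $c_0$ and $c_1$.

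For the first step, I use the algebraic identity
\begin{equation*}
\sum_{i=0}^{N} \bigl(L_1(H_1)\bm{\varphi}_1\bigr)_i\,\varphi_{1,i}
= \bigl(\mathcal{L}_{1,0}(H_1)\bm{\varphi}_1\bigr)\bigl(\bm{l}_1(H_1)\cdot\bm{\varphi}_1\bigr)
+ \sum_{i=1}^{N} \bigl(\mathcal{L}_{1,i}(H_1)\bm{\varphi}_1\bigr)\varphi_{1,i},
\end{equation*}
which follows from \eqref{ARO:calLk} upon writing $\bigl(L_1\bm{\varphi}_1\bigr)_i = \mathcal{L}_{1,i}\bm{\varphi}_1 + H_1^{2i}\mathcal{L}_{1,0}\bm{\varphi}_1$ for $i\geq 1$, together with the analogous identity for the lower layer. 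Integrating over $\mathbf{R}^n$ and using the first two lines of \eqref{ARO:eqvarphi} yields
\begin{align*}
&\rho_1(L_1\bm{\varphi}_1,\bm{\varphi}_1)_{L^2} + \rho_2(L_2\bm{\varphi}_2,\bm{\varphi}_2)_{L^2} \\
&\quad = \rho_1(\mathcal{L}_{1,0}\bm{\varphi}_1, \bm{l}_1\cdot\bm{\varphi}_1)_{L^2}
 + \rho_2(\mathcal{L}_{2,0}\bm{\varphi}_2, \bm{l}_2\cdot\bm{\varphi}_2)_{L^2}
 + \rho_1(\bm{f}_1',\bm{\varphi}_1')_{L^2} + \rho_2(\bm{f}_2',\bm{\varphi}_2')_{L^2}.
\end{align*}
Setting $A = \mathcal{L}_{1,0}\bm{\varphi}_1$, $B = \mathcal{L}_{2,0}\bm{\varphi}_2$, $C = \bm{l}_1\cdot\bm{\varphi}_1$, and $D = \bm{l}_2\cdot\bm{\varphi}_2$, the algebraic rearrangement $\rho_1 AC + \rho_2 BD = -A(-\rho_1 C + \rho_2 D) + \rho_2(A+B)D$, combined with the last two lines of \eqref{ARO:eqvarphi}, rewrites the first two inner products as $-(\mathcal{L}_{1,0}\bm{\varphi}_1, f_4)_{L^2} + \rho_2(\nabla\cdot\bm{f}_3, \bm{l}_2\cdot\bm{\varphi}_2)_{L^2}$. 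Since $\mathcal{L}_{1,0}\bm{\varphi}_1 = -\sum_{j=0}^N\nabla\cdot\bigl(\frac{H_1^{2j+1}}{2j+1}\nabla\varphi_{1,j}\bigr)$, a final integration by parts on the $f_4$ term produces the leading sum on the right-hand side of the claim.

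For the coercivity step, I claim
\begin{equation*}
(L_k\bm{\varphi}_k,\bm{\varphi}_k)_{L^2} \geq c\bigl(\|\nabla\varphi_{k,0}\|_{L^2}^2 + \|\bm{\varphi}_k'\|_{H^1}^2\bigr),\qquad k=1,2,
\end{equation*}
with $c>0$ depending only on $c_0$ and $c_1$. An integration by parts identifies $(L_1\bm{\varphi}_1,\bm{\varphi}_1)_{L^2}$ with the Dirichlet-type integral $\int_{\mathbf{R}^n}\int_0^{H_1(\bm{x})}\bigl(|\nabla_{\bm{x}}\tilde\Phi_1|^2 + (\partial_u\tilde\Phi_1)^2\bigr)du\,d\bm{x}$ for the polynomial expansion $\tilde\Phi_1(\bm{x},u) = \sum_{j=0}^N u^{2j}\varphi_{1,j}(\bm{x})$. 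Restricting the inner integral to $u\in[0,c_0]$, which is permissible since $H_1\geq c_0$, the linear independence of $\{u^{2j}\}_{j=0}^N$ and $\{u^{2j-1}\}_{j=1}^N$ on $[0,c_0]$ --- equivalently, positivity of the constant-coefficient Gram matrices analogous to those in Proposition~\ref{ldr:prop1} --- furnishes the pointwise bound. For $L_2$, after the change of variable $u = z+h_2-b(\bm{x})$, the integrand is $|\nabla_{\bm{x}}\tilde\Phi_2 - (\partial_u\tilde\Phi_2)\nabla b|^2 + (\partial_u\tilde\Phi_2)^2$ with $\tilde\Phi_2(\bm{x},u) = \sum_{j=0}^{N^*} u^{p_j}\varphi_{2,j}(\bm{x})$; the elementary inequality $|\bm{a} - \tau\nabla b|^2 + \tau^2 \gtrsim |\bm{a}|^2+\tau^2$, with constant depending only on $c_1$, absorbs the topography cross terms, and the rest of the argument proceeds as for $L_1$.

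The main technical point is the algebraic recombination in the first step: the mixed quantity $\rho_1(\mathcal{L}_{1,0}\bm{\varphi}_1)(\bm{l}_1\cdot\bm{\varphi}_1) + \rho_2(\mathcal{L}_{2,0}\bm{\varphi}_2)(\bm{l}_2\cdot\bm{\varphi}_2)$ is not directly controlled by the data of \eqref{ARO:eqvarphi}, and it is precisely the linear combination $-\rho_1 C + \rho_2 D = f_4$ --- the canonical variable of Remark~\ref{Kaki:re2} --- that produces the cancellation. The coercivity step is comparatively standard but demands care with the bottom topography terms in the lower-layer form.
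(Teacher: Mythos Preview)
Your proof is correct and follows essentially the same approach as the paper: both derive the identity $\rho_1(L_1\bm{\varphi}_1,\bm{\varphi}_1)_{L^2}+\rho_2(L_2\bm{\varphi}_2,\bm{\varphi}_2)_{L^2}=\text{RHS}$ and then invoke pointwise coercivity of the forms $(L_k\cdot,\cdot)_{L^2}$ via their integral representation. The only presentational differences are that the paper introduces a dummy variable $\eta=-\mathcal{L}_{1,0}\bm{\varphi}_1$ and a skew-symmetric matrix structure to obtain the identity (your direct algebraic recombination $\rho_1AC+\rho_2BD=-A(-\rho_1C+\rho_2D)+\rho_2(A+B)D$ is equivalent), and that the paper treats the coercivity of $L_2$ by a case distinction on whether $\{z^{p_i},z^{p_i-1}\}$ are linearly independent, whereas your elementary inequality $|\bm{a}-\tau\nabla b|^2+\tau^2\gtrsim|\bm{a}|^2+\tau^2$ handles all cases uniformly.
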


\begin{proof}[{\bf Proof}.]
We introduce a dummy variable $\eta$ by 
\[
\eta = -\mathcal{L}_{1,0}(H_1)\bm{\varphi}_1.
\]
Then, we can rewrite the equations in~\eqref{ARO:eqvarphi} as 
\[
\begin{cases}
 \eta\bm{l}_1(H_1) + L_1(H_1)\bm{\varphi}_1
  = \bm{f}_1 = (0,f_{1,1},\ldots,f_{1,N})^{\rm T}, \\
 -\eta\bm{l}_2(H_2) + L_2(H_2,b)\bm{\varphi}_2
  = \bm{f}_2 = (0,f_{2,1},\ldots,f_{2,N^*})^{\rm T} + (\nabla\cdot\bm{f}_3)\bm{l}_2(H_2), \\
 - \rho_1\bm{l}_1(H_1) \cdot \bm{\varphi}_1
  + \rho_2\bm{l}_2(H_2) \cdot \bm{\varphi}_2 = f_4,
\end{cases}
\]
that is, 
\[
\begin{pmatrix}
 0 & -\rho_1{\bm l}_1(H_1)^{\rm T} & \rho_2{\bm l}_2(H_2)^{\rm T} \\
 \rho_1{\bm l}_1(H_1) & \rho_1L_1(H_1) & O \\
 -\rho_2{\bm l}_2(H_2) & O & \rho_2L_2(H_2,b)
\end{pmatrix}
\begin{pmatrix} \eta \\ {\bm \varphi}_1 \\ {\bm \varphi}_2 \end{pmatrix}
=
\begin{pmatrix} f_4 \\ \rho_1{\bm f}_1 \\ \rho_2{\bm f}_2 \end{pmatrix}.
\]
By taking the $L^2$-inner product of this equation with 
$(\eta,\bm{\varphi}_1,\bm{\varphi}_2)^{\rm T}$, we see that 
\begin{align*}
& \rho_1(L_1(H_1)\bm{\varphi}_1,\bm{\varphi}_1)_{L^2}
 + \rho_2(L_2(H_2,b)\bm{\varphi}_2,\bm{\varphi}_2)_{L^2} \\
&= (f_4,\eta)_{L^2} + \rho_1(\bm {f}_1,\bm{\varphi}_1)_{L^2}
 + \rho_2(\bm{f}_2,\bm{\varphi}_2)_{L^2} \\
&= - \sum_{j=0}^N(\nabla f_4,\frac{1}{2j+1}H_1^{2j+1}\nabla\varphi_{1,j})_{L^2} \\
&\quad\;
 + \rho_1(\bm{f}_1',\bm{\varphi}_1')_{L^2}
 + \rho_2(\bm{f}_2',\bm{\varphi}_2')_{L^2}
 + \rho_2(\nabla\cdot\bm{f}_3, \bm{l}_2(H_2) \cdot {\bm \varphi}_2)_{L^2}.
\end{align*}
Here, by direct calculation we have 
\begin{align}\label{ARO:identityL1}
(L_1(H_1)\bm{\varphi}_1,\bm{\varphi}_1)_{L^2}
&= \sum_{i,j=0}^N(L_{1,ij}(H_1)\varphi_{1,j},\varphi_{1,i})_{L^2} \\
&= \int_{\mathbf{R}^n}\!{\rm d}\bm{x}\!\int_0^{H_1}\left\{
  \left|\sum_{i=0}^N(z^{2i}\nabla\varphi_{1,i})\right|^2
 + \left(\sum_{i=0}^N2iz^{2i-1}\varphi_{1,i}\right)^2\right\}{\rm d}z \nonumber \\
&\simeq \int_{\mathbf{R}^n}\!{\rm d}\bm{x}\!\int_0^{H_1} \sum_{i=0}^N\left( 
 z^{4i}|\nabla\varphi_{1,i}|^2 + i^2 z^{4i-2}\varphi_{1,i}^2 \right) {\rm d}z \nonumber \\
&\simeq \int_{\mathbf{R}^n}\sum_{i=0}^N\left( 
 H_1^{4i+1}|\nabla\varphi_{1,i}|^2 + i^2 H_1^{4i-1}\varphi_{1,i}^2 \right) {\rm d}\bm{x}, \nonumber
\end{align}
where we used the fact that $\{z^{2i}\}_{i=0,\dots,N}$ and $\{z^{2i-1}\}_{i=1,\dots,N}$ are both linearly independent. 
We have also 
\begin{align*}
&(L_2(H_2,b)\bm{\varphi}_2,\bm{\varphi}_2)_{L^2}
 = \sum_{i,j=0}^{N^*}(L_{2,ij}(H_2,b)\varphi_{2,j},\varphi_{2,i})_{L^2} \\
&= \int_{\mathbf{R}^n}\!{\rm d}\bm{x}\!\int_0^{H_2}\left\{
  \left|\sum_{i=0}^{N^*} (z^{p_i}\nabla\varphi_{2,i}-p_iz^{p_i-1}\varphi_{2,i}\nabla b)\right|^2
 + \left(\sum_{i=0}^{N^*} p_iz^{p_i-1}\varphi_{2,i}\right)^2\right\}{\rm d}z. 
\end{align*}
If $\{z^{p_i},z^{p_i-1}\}_{i=0,\dots,N}$ are linearly independent, then we have 
\begin{align}\label{ARO:identityL2}
&(L_2(H_2,b)\bm{\varphi}_2,\bm{\varphi}_2)_{L^2} \\
&\simeq \int_{\mathbf{R}^n}\!{\rm d}\bm{x}\!\int_0^{H_2}\sum_{i=0}^{N^*} \left\{
 \left(z^{2p_i}|\nabla\varphi_{2,i}|^2 + p_i^2z^{2p_i-2}|\nabla b|^2\varphi_{2,i}^2\right)
 +p_i^2z^{2p_i-2}\varphi_{2,i}^2\right\}{\rm d}z \nonumber \\
&\simeq \int_{\mathbf{R}^n}\sum_{i=0}^{N^*} \left\{
 H_2^{2p_i+1}|\nabla\varphi_{2,i}|^2+p_i^2H_2^{2p_i-1}(1+|\nabla b|^2)\varphi_{2,i}^2\right\}{\rm d}\bm{x}. \nonumber
\end{align}
Otherwise, for example, in the case $p_i=i$ $(i=0,\dots,N)$ we obtain 
\begin{align}\label{ARO:identityL3}
&(L_2(H_2,b)\bm{\varphi}_2,\bm{\varphi}_2)_{L^2} \\
&= \int_{\mathbf{R}^n}\!{\rm d}\bm{x}\!\int_0^{H_2}\left\{
  \left|\sum_{i=0}^{N^*-1} z^i(\nabla\varphi_{2,i}-(i+1)\varphi_{2,i+1}\nabla b) + z^{N^*}\nabla\varphi_{2,N^*}\right|^2
 + \left(\sum_{i=0}^{N^*} p_iz^{p_i-1}\varphi_{2,i}\right)^2\right\}{\rm d}z \nonumber \\
&\simeq \int_{\mathbf{R}^n}\!{\rm d}\bm{x}\!\int_0^{H_2}\left\{
  \sum_{i=0}^{N^*-1} (z^{2i}|\nabla\varphi_{2,i}-(i+1)\varphi_{2,i+1}\nabla b|^2 + z^{2N^*}|\nabla\varphi_{2,N^*}|^2
 + \sum_{i=0}^{N^*} i^2z^{2(i-1)}\varphi_{2,i}^2\right\}{\rm d}z \nonumber \\
&\simeq \int_{\mathbf{R}^n}\left\{
  \sum_{i=0}^{N^*-1} (H_2^{2i+1}|\nabla\varphi_{2,i}-(i+1)\varphi_{2,i+1}\nabla b|^2 + H_2^{2N^*+1}|\nabla\varphi_{2,N^*}|^2
 + \sum_{i=0}^{N^*} i^2H_2^{2i-1}\varphi_{2,i}^2\right\}{\rm d}\bm{x}. \nonumber
\end{align}
A similar estimate holds in other cases. 
These estimates give the desired one. 
\end{proof}

Although this lemma gives an a priori bound of the solution to~\eqref{ARO:eqvarphi}, 
the equations in~\eqref{ARO:eqvarphi} do not have a good symmetry. 
In order to give an existence theorem to~\eqref{ARO:eqvarphi} with robust elliptic estimates, 
it is better to rewrite 
them in a symmetric form by introducing a good unknown variable. 
We introduce scalar functions $\varphi_1$ and $\varphi_2$ by 
\begin{equation}\label{varphi}
\varphi_1 = {\bm l}_1(H_1)\cdot{\bm \varphi}_1, \qquad
\varphi_2 = {\bm l}_2(H_2)\cdot{\bm \varphi}_2.
\end{equation}
We also introduce second order differential operators $P_{1,i}(H_1)$ $(i=1,\ldots,N)$ and $Q_1(H_1)$ 
acting on $\mathbf{R}^N$-valued functions $\bm{\varphi}_1' = (\varphi_{1,1},\ldots,\varphi_{1,N})^{\rm T}$ 
and $P_{2,i}(H_2,b)$ $(i=1,\ldots,N^*)$ and $Q_2(H_2)$ acting on $\mathbf{R}^{N^*}$-valued functions 
$\bm{\varphi}_2' = (\varphi_{2,1},\ldots,\varphi_{2,N^*})^{\rm T}$ by
\begin{equation}\label{P1Q}
\begin{cases}
\displaystyle
P_{1,i}(H_1){\bm \varphi}_1'
 = \sum_{j=1}^N\bigl\{ \bigl(L_{1,ij}(H_1)-H_1^{2i}L_{1,0j}(H_1)\bigr)\varphi_{1,j} \\
\makebox[16ex]{}
  - \bigl(L_{1,i0}(H_1)-H_1^{2i}L_{1,00}(H_1) \bigr)\bigl( H_1^{2j}\varphi_{1,j} \bigr) \bigr\}, \\
\displaystyle
Q_1(H_1){\bm \varphi}_1'
 = \sum_{j=1}^N\bigl\{ L_{1,0j}(H_1)\varphi_{1,j}
  - L_{1,00}(H_1)\bigl( H_1^{2j}\varphi_{1,j} \bigr) \bigr\}, \\
\end{cases}
\end{equation}
and 
\begin{equation}\label{P2Q}
\begin{cases}
\displaystyle
P_{2,i}(H_2,b){\bm \varphi}_2'
 = \sum_{j=1}^{N^*} \bigl\{ \bigl(L_{2,ij}(H_2,b)-H_2^{p_i}L_{2,0j}(H_2,b)\bigr)\varphi_{2,j} \\
\makebox[18ex]{}
  - \bigl(L_{2,i0}(H_2,b)-H_2^{p_i}L_{2,00}(H_2,b) \bigr)\bigl( H_2^{p_j}\varphi_{2,j} \bigr) \bigr\}, \\
\displaystyle
Q_2(H_2,b){\bm \varphi}_2'
 = \sum_{j=1}^{N^*} \bigl\{ L_{2,0j}(H_2,b)\varphi_{2,j}
  - L_{2,00}(H_2,b)\bigl( H_2^{p_j}\varphi_{2,j} \bigr) \bigr\},
\end{cases}
\end{equation}
respectively, and put 
\[
\begin{cases}
P_1(H_1)\bm{\varphi}_1' 
= (P_{1,1}(H_1)\bm{\varphi}_1',\ldots,P_{1,N}(H_1)\bm{\varphi}_1')^{\rm T}, \\
P_2(H_2,b)\bm{\varphi}_2' 
= (P_{2,1}(H_2,b)\bm{\varphi}_2',\ldots,P_{2,N^*}(H_2,b)\bm{\varphi}_2')^{\rm T}. 
\end{cases}
\]
Then, we see easily that $P_1(H_1)$ and $P_2(H_2,b)$ are symmetric in $L^2(\mathbf{R}^n)$ and that 
\begin{align*}
\mathcal{L}_{1,i}(H_1)\bm{\varphi}_1 =
\begin{cases}
Q_1(H_1)\bm{\varphi}_1'
 + L_{1,00}(H_1)(\bm{l}_1(H_1)\cdot\bm{\varphi}_1) & \mbox{for}\quad i = 0, \\
P_{1,i}(H_1)\bm{\varphi}_1'
 + ((Q_1(H_1))^*(\bm{l}_1(H_1)\cdot\bm{\varphi}_1))_i
  & \mbox{for}\quad i = 1,\ldots,N, 
\end{cases} \\
\mathcal{L}_{2,i}(H_2,b)\bm{\varphi}_2 =
\begin{cases}
Q_2(H_2,b)\bm{\varphi}_2'
 + L_{2,00}(H_2,b)(\bm{l}_2(H_2)\cdot\bm{\varphi}_2) & \mbox{for}\quad i = 0, \\
P_{2,i}(H_2,b){\bm \varphi}_2'
 + ((Q_2(H_2,b))^*(\bm{l}_2(H_2)\cdot\bm{\varphi}_2))_i
  & \mbox{for}\quad i = 1,\ldots,N^*, \\
\end{cases}
\end{align*}
where $Q^*$ denotes an adjoint operator of $Q$ in $L^2(\mathbf{R}^n)$. 
Therefore, we can rewrite~\eqref{ARO:eqvarphi} as 
\[
\begin{cases}
P_1(H_1)\bm{\varphi}_1' + (Q_1(H_1))^*\varphi_1 = \bm{f}_1, \\
P_2(H_2,b)\bm{\varphi}_2' + (Q_2(H_2,b))^*\varphi_2 = \bm{f}_2, \\
Q_1(H_1)\bm{\varphi}_1' + L_{1,00}(H_1)\varphi_1
 + Q_2(H_2,b)\bm{\varphi}_2' + L_{2,00}(H_2,b)\varphi_2 = \nabla\cdot\bm{f}_3, \\
- \rho_1\varphi_1 + \rho_2\varphi_2 = f_4.
\end{cases}
\]
These equations for $(\bm{\varphi}_1',\varphi_1,\bm{\varphi}_2',\varphi_2)$ 
do not have yet any good symmetry. 
But, it follows from the last equation that 
\[
\rho_2\varphi_2 = \rho_1\varphi_1 + f_4.
\]
Using this we can remove $\varphi_2$ from the equations and obtain 
\[
\begin{cases}
\rho_1 P_1(H_1)\bm{\varphi}_1' + \rho_1 (Q_1(H_1))^*\varphi_1 = \rho_1\bm{F}_1, \\
\rho_2 P_2(H_2,b)\bm{\varphi}_2' + \rho_1 (Q_2(H_2,b))^*\varphi_1 = \rho_2\bm{F}_2, \\
\rho_1 Q_1(H_1)\bm{\varphi}_1' + \rho_1 Q_2(H_2,b)\bm{\varphi}_2'
 + \rho_1 \left( L_{1,00}(H_1) + \frac{\rho_1}{\rho_2}L_{2,00}(H_2,b) \right)\varphi_1
  = \rho_1\nabla\cdot\bm{F}_3, 
\end{cases}
\]
where 
\begin{equation}\label{ARO:F123}
\bm{F}_1 = \bm{f}_1, \quad
\bm{F}_2 = \bm{f}_2 - \frac{1}{\rho_2}(Q_2(H_2,b))^*f_4, \quad
\bm{F}_3 = \bm{f}_3 + \frac{1}{\rho_2}H_2\nabla f_4. 
\end{equation}
These equations for $(\bm{\varphi}_1',\bm{\varphi}_2',\varphi_1)$ have a 
good symmetry and can be written in the matrix form 
\begin{equation}\label{ARO:eqvarphi2}
\mathscr{P}(\zeta,b)
\begin{pmatrix} \bm{\varphi}_1' \\ \bm{\varphi}_2' \\ \varphi_1 \end{pmatrix}
= \begin{pmatrix} \rho_1\bm{F}_1 \\ \rho_2\bm{F}_2 \\ 
 \rho_1\nabla\cdot\bm{F}_3 \end{pmatrix},
\end{equation}
where 
\begin{equation}\label{scrP}
\mathscr{P}(\zeta,b) = 
\begin{pmatrix}
 \rho_1 P_1(H_1) & O & \rho_1(Q_1(H_1))^* \\
 O & \rho_2 P_2(H_2,b) & \rho_1(Q_2(H_2,b))^* \\
 \rho_1Q_1(H_1) & \rho_1Q_2(H_2,b) & 
  \rho_1 \left( L_{1,00}(H_1) + \frac{\rho_1}{\rho_2}L_{2,00}(H_2,b) \right)
\end{pmatrix},
\end{equation}
which is symmetric in $L^2(\mathbf{R}^n)$. 
Moreover, $\mathscr{P}(\zeta,b)$ is positive in $L^2(\mathbf{R}^n)$ as shown in the following lemma.

\begin{lemma}\label{ARO:lem2}
Let $c_0,c_1$ be positive constants. 
There exists a positive constant $C=C(c_0,c_1)$ depending only on $c_0$ and $c_1$ such that 
if $\zeta, b\in W^{1,\infty}(\mathbf{R}^n)$ satisfy $H_1(x),H_2(x)\geq c_0$ and 
$H_1(x) + |\nabla H_1(x)| + |\nabla b(x)| \leq c_1$, 
then for any $\widetilde{\bm{\varphi}}
 = (\bm{\varphi}_1',\bm{\varphi}_2',\varphi_1)^{\rm T}$ we have 
\[
(\mathscr{P}(\zeta,b)\widetilde{\bm{\varphi}},\widetilde{\bm{\varphi}})_{L^2}
\geq C^{-1}\bigl( \rho_1 \|\bm{\varphi}_1'\|_{H^1}^2
 + \rho_2 \|\bm{\varphi}_2'\|_{H^1}^2
 + \rho_1 \|\nabla\varphi_1\|_{L^2}^2 \bigr).
\]
\end{lemma}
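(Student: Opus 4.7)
The plan is to identify the quadratic form $(\mathscr{P}(\zeta,b)\widetilde{\bm{\varphi}},\widetilde{\bm{\varphi}})_{L^2}$ with a combination $\rho_1(L_1(H_1)\bm{\varphi}_1,\bm{\varphi}_1)_{L^2} + \rho_2(L_2(H_2,b)\bm{\varphi}_2,\bm{\varphi}_2)_{L^2}$ of the positive forms already analyzed in Lemma~\ref{ARO:lem1}, where $\bm{\varphi}_1$ and $\bm{\varphi}_2$ are reconstructed from $\widetilde{\bm{\varphi}}$ by the same device used to derive~\eqref{ARO:eqvarphi2}. Concretely, I would set $\varphi_{1,0} := \varphi_1 - \sum_{j=1}^N H_1^{2j}\varphi_{1,j}$ so that $\bm{l}_1(H_1)\cdot\bm{\varphi}_1 = \varphi_1$, and $\varphi_{2,0} := (\rho_1/\rho_2)\varphi_1 - \sum_{j=1}^{N^*} H_2^{p_j}\varphi_{2,j}$ so that $\rho_2\,\bm{l}_2(H_2)\cdot\bm{\varphi}_2 = \rho_1\varphi_1$, which is the homogeneous analogue of the last equation in~\eqref{ARO:eqvarphi}.

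Then I would verify the key identity
\[
(\mathscr{P}(\zeta,b)\widetilde{\bm{\varphi}},\widetilde{\bm{\varphi}})_{L^2} = \rho_1(L_1(H_1)\bm{\varphi}_1,\bm{\varphi}_1)_{L^2} + \rho_2(L_2(H_2,b)\bm{\varphi}_2,\bm{\varphi}_2)_{L^2}.
\]
This reduces to expanding each $(L_k\bm{\varphi}_k,\bm{\varphi}_k)_{L^2}$ as $(\mathcal{L}_{k,0}\bm{\varphi}_k,\varphi_{k,0})_{L^2} + \sum_{i\geq 1}(\mathcal{L}_{k,i}\bm{\varphi}_k,\varphi_{k,i})_{L^2}$, substituting the block decompositions $\mathcal{L}_{k,0}\bm{\varphi}_k = Q_k\bm{\varphi}_k' + L_{k,00}(\bm{l}_k\cdot\bm{\varphi}_k)$ and $\mathcal{L}_{k,i}\bm{\varphi}_k = P_{k,i}\bm{\varphi}_k' + ((Q_k)^*(\bm{l}_k\cdot\bm{\varphi}_k))_i$ for $i\geq 1$, and using the gauge relation $\rho_2\,\bm{l}_2\cdot\bm{\varphi}_2 = \rho_1\varphi_1$ to collapse the $L_{2,00}$-diagonal and the $Q_2$-cross terms into exactly the entries of $\mathscr{P}(\zeta,b)$. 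This step is essentially the bookkeeping inverse of the derivation of~\eqref{ARO:eqvarphi2} from~\eqref{ARO:eqvarphi}.

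Once the identity is established, the pointwise-in-$\bm{x}$ coercivity computations carried out in the proof of Lemma~\ref{ARO:lem1} (which only require $H_1,H_2\geq c_0$ together with the bounds on $H_1,|\nabla b|$) yield
\[
(\mathscr{P}(\zeta,b)\widetilde{\bm{\varphi}},\widetilde{\bm{\varphi}})_{L^2} \gtrsim \rho_1\bigl(\|\nabla\varphi_{1,0}\|_{L^2}^2 + \|\bm{\varphi}_1'\|_{H^1}^2\bigr) + \rho_2\bigl(\|\nabla\varphi_{2,0}\|_{L^2}^2 + \|\bm{\varphi}_2'\|_{H^1}^2\bigr),
\]
with constants depending only on $c_0,c_1$. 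Finally, differentiating the reconstruction formula $\varphi_{1,0} = \varphi_1 - \sum_{j=1}^N H_1^{2j}\varphi_{1,j}$ and invoking $H_1+|\nabla H_1|\leq c_1$ gives $\|\nabla\varphi_1\|_{L^2}^2 \lesssim \|\nabla\varphi_{1,0}\|_{L^2}^2 + \|\bm{\varphi}_1'\|_{H^1}^2$, so that $\rho_1\|\nabla\varphi_1\|_{L^2}^2$ is controlled by the first parenthesis; the term $\rho_2\|\nabla\varphi_{2,0}\|_{L^2}^2$ is simply discarded as a nonnegative contribution, which is why no upper bound on $H_2$ is required.

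The least automatic part of the argument will be the algebraic identity between the two quadratic forms, since it requires keeping track of the symmetries $(L_{k,ij})^*=L_{k,ji}$ and of the cancellations induced by the constraint $\rho_2\bm{l}_2\cdot\bm{\varphi}_2 = \rho_1\varphi_1$; once that identity is in hand, the lower bound reduces to already-established ingredients from Lemma~\ref{ARO:lem1} and a single elementary estimate on the reconstruction.
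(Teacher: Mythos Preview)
Your proposal is correct and follows essentially the same route as the paper: you reconstruct $\varphi_{1,0}$ and $\varphi_{2,0}$ exactly as the paper does, establish the identity $(\mathscr{P}(\zeta,b)\widetilde{\bm{\varphi}},\widetilde{\bm{\varphi}})_{L^2} = \rho_1(L_1\bm{\varphi}_1,\bm{\varphi}_1)_{L^2} + \rho_2(L_2\bm{\varphi}_2,\bm{\varphi}_2)_{L^2}$, and then invoke the coercivity estimates~\eqref{ARO:identityL1}--\eqref{ARO:identityL3} together with the reconstruction bound $\|\nabla\varphi_1\|_{L^2}^2 \lesssim \|\nabla\varphi_{1,0}\|_{L^2}^2 + \|\bm{\varphi}_1'\|_{H^1}^2$. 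The only cosmetic difference is that the paper organizes the verification of the key identity by introducing the dummy variable $\eta = -\mathcal{L}_{1,0}(H_1)\bm{\varphi}_1$ and writing the resulting system in block-matrix form before taking the $L^2$-inner product, whereas you propose to expand directly using the block decompositions of $\mathcal{L}_{k,i}$ in terms of $P_k,Q_k,L_{k,00}$; both computations yield the same identity.
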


\begin{proof}[{\bf Proof}.]
Given $\widetilde{\bm{\varphi}}
 = (\bm{\varphi}_1',\bm{\varphi}_2',\varphi_1)^{\rm T}$, 
we define $\varphi_{1,0}$ and $\varphi_{2,0}$ by 
\[
\varphi_{1,0} = \varphi_1 - \sum_{j=1}^N H_1^{2j}\varphi_{1,j}, \qquad
\varphi_{2,0} = \frac{\rho_1}{\rho_2}\varphi_1 - \sum_{j=1}^{N^*} H_2^{p_j}\varphi_{2,j}
\]
and put $\bm{\varphi}_1 = (\varphi_{1,0},\varphi_{1,1},\ldots,\varphi_{1,N})^{\rm T}$ and 
$\bm{\varphi}_2 = (\varphi_{2,0},\varphi_{2,1},\ldots,\varphi_{2,N^*})^{\rm T}$. 
Then, we have $\varphi_1 = \bm{l}_1(H_1)\cdot\bm{\varphi}_1 = 
 \frac{\rho_2}{\rho_1}\bm{l}_2(H_2)\cdot\bm{\varphi}_2$, so that 
\[
\rho_1\bm{l}_1(H_1)\cdot\bm{\varphi}_1
 -\rho_2\bm{l}_2(H_2)\cdot\bm{\varphi}_2 = 0.
\]
We also define $\bm{F}_1=(F_{1,1},\ldots,F_{1,N})^{\rm T}$, 
$\bm{F}_2=(F_{2,1},\ldots,F_{2,N^*})^{\rm T}$, and $F_3$ by 
\[
\begin{pmatrix} \bm{F}_1 \\ \bm{F}_2 \\ F_3 \end{pmatrix}
= \mathscr{P}(\zeta,b)
\begin{pmatrix} \bm{\varphi}_1' \\ \bm{\varphi}_2' \\ \varphi_1 \end{pmatrix}. 
\]
Then, we have 
\[
\begin{cases}
\rho_1\mathcal{L}_{1,i}(H_1)\bm{\varphi}_1 = F_{1,i} \quad\mbox{for}\quad i=1,2,\ldots,N, \\
\rho_2\mathcal{L}_{2,i}(H_2,b)\bm{\varphi}_2 = F_{2,i} \quad\mbox{for}\quad i=1,2,\ldots,N^*, \\
\rho_1\mathcal{L}_{1,0}(H_1)\bm{\varphi}_1
 + \rho_2\mathcal{L}_{2,0}(H_2,b)\bm{\varphi}_2 = F_3.
\end{cases}
\]
Now, we introduce a dummy variable $\eta$ by 
\[
\eta = -\mathcal{L}_{1,0}(H_1)\bm{\varphi}_1.
\]
Then, it follows from the above equations that 
\[
\begin{cases}
-\rho_1\bm{l}_1(H_1)\cdot\bm{\varphi}_1
 +\rho_2\bm{l}_2(H_2)\cdot\bm{\varphi}_2 = 0, \\
\rho_1( \eta\bm{l}_1(H_1) + L_1(H_1)\bm{\varphi}_1 ) = \bm{f}_1, \\
\rho_2( -\eta\bm{l}_2(H_2) + L_2(H_2,b)\bm{\varphi}_2 )
 = \bm{f}_2 + \frac{\rho_2}{\rho_1}\bm{l}_2(H_2)F_3, \\
\end{cases}
\]
where $\bm{f}_1 = (0,F_{1,1},\ldots,F_{1,N})^{\rm T}$ and 
$\bm{f}_2 = (0,F_{2,1},\ldots,F_{2,N^*})^{\rm T}$. 
These equations can be written in the matrix form 
\[
\begin{pmatrix}
 0 & -\rho_1\bm{l}_1(H_1)^{\rm T} & \rho_2\bm{l}_2(H_2)^{\rm T} \\
 \rho_1\bm{l}_1(H_1) & \rho_1L_1(H_1) & O \\
 -\rho_2\bm{l}_2(H_2) & O & \rho_2L_2(H_2,b)
\end{pmatrix}
\begin{pmatrix} \eta \\ \bm{\varphi}_1 \\ \bm{\varphi}_2 \end{pmatrix}
= \begin{pmatrix} 0 \\ \bm{f}_1 \\ 
 \bm{f}_2 + \frac{\rho_2}{\rho_1}\bm{l}_2(H_2)F_3 \end{pmatrix}.
\]
By taking the $L^2$-inner product of this equation with 
$(\eta,\bm{\varphi}_1,\bm{\varphi}_2)^{\rm T}$ we see that 
\begin{align*}
& \rho_1(L_1(H_1)\bm{\varphi}_1,\bm{\varphi}_1)_{L^2}
 + \rho_2(L_2(H_2,b)\bm{\varphi}_2,\bm{\varphi}_2)_{L^2} \\
&= (\bm{f}_1,\bm{\varphi}_1)_{L^2}
 + (\bm{f}_2,\bm{\varphi}_2)_{L^2}
 + \frac{\rho_2}{\rho_1}(\bm{l}_2(H_2)F_3, \bm{\varphi}_2)_{L^2} \\
&= (\bm{F}_1,\bm{\varphi}_1')_{L^2}
 + (\bm{F}_2,\bm{\varphi}_2')_{L^2}
 + (F_3,\varphi_1)_{L^2} \\
&= (\mathscr{P}(\zeta,b)\widetilde{\bm{\varphi}},\widetilde{\bm{\varphi}})_{L^2},
\end{align*}
which gives, by~\eqref{ARO:identityL1} and~\eqref{ARO:identityL2} or~\eqref{ARO:identityL3},
\begin{align*}
(\mathscr{P}(\zeta,b)\widetilde{\bm{\varphi}},\widetilde{\bm{\varphi}})_{L^2}
\simeq \rho_1( \|\bm{\varphi}_1'\|_{H^1}^2 + \|\nabla\varphi_{1,0}\|_{L^2}^2 )
 + \rho_2( \|\bm{\varphi}_2'\|_{H^1}^2 + \|\nabla\varphi_{2,0}\|_{L^2}^2 ).
\end{align*}
Since $\|\nabla\varphi_1\|_{L^2}^2 \lesssim \|\bm{\varphi}_1'\|_{H^1}^2
 + \|\nabla\varphi_{1,0}\|_{L^2}^2$, we obtain the desired estimate. 
\end{proof}

By this lemma, the explicit expression~\eqref{scrP} of the operator $\mathscr{P}(\zeta,b)$, and the 
standard theory of elliptic partial differential equations, we can obtain the following lemma.

\begin{lemma}\label{ARO:elliptic estimate1}
Let $\rho_1,\rho_2,h_1,h_2,c_0,M$ be positive constants and $m$ an integer such that $m>\frac{n}{2}+1$. 
There exists a positive constant $C=C(\rho_1,\rho_2,h_1,h_2,c_0,m)$ such that if $\zeta$ and $b$ satisfy
\[
\begin{cases}
\|\zeta\|_{H^m} + \|b\|_{W^{m,\infty}} \leq M, \\
H_1(\bm{x}) = h_1 - \zeta(\bm{x}) \geq c_0, \quad H_2(\bm{x}) = h_2 + \zeta(\bm{x}) - b(\bm{x}) \geq c_0
 \quad\mbox{for}\quad \bm{x}\in\mathbf{R}^n,
\end{cases}
\]
then for any $\bm{F}_1, \bm{F}_2 \in H^{k-1}$ and $\bm{F}_3 \in H^k$ 
with $k\in\{0,1,\ldots,m-1\}$ there exists a solution 
$(\bm{\varphi}_1',\bm{\varphi}_2',\varphi_1)$ of~\eqref{ARO:eqvarphi2} satisfying 
\[
\| (\bm{\varphi}_1',\bm{\varphi}_2')\|_{H^{k+1}} + \|\nabla\varphi_1\|_{H^k}
\leq C\left( \|(\bm{F}_1,\bm{F}_2)\|_{H^{k-1}} + \|\bm{F}_3\|_{H^k} \right).
\]
Moreover, the solution is unique up to an additive constant to $\varphi_1$. 
\end{lemma}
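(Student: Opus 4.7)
The plan is to combine the coercivity bound from Lemma~\ref{ARO:lem2} with the Lax--Milgram theorem to produce a weak solution in an $H^1$-type energy space, and then to bootstrap to the claimed regularity by differentiating the system and controlling commutators. Because $\varphi_1$ enters the coercivity estimate only through $\nabla\varphi_1$, and because adding a constant to $\varphi_1$ leaves~\eqref{ARO:eqvarphi2} invariant (all operators $P_1,P_2,Q_1,Q_2,L_{1,00},L_{2,00}$ annihilate constants in the variables paired with $\varphi_1$, since they are pure divergence-form operators acting on what would otherwise be a trivial contribution), the natural functional setting is $\widetilde{\bm{\varphi}}=(\bm{\varphi}_1',\bm{\varphi}_2',\varphi_1) \in H^1(\mathbf{R}^n)^N \times H^1(\mathbf{R}^n)^{N^*} \times \dot H^1(\mathbf{R}^n)/\mathbf{R}$.

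First I would establish the base case $k=0$. Define the symmetric bilinear form $a(\widetilde{\bm{\varphi}},\widetilde{\bm{\psi}}) = (\mathscr{P}(\zeta,b)\widetilde{\bm{\varphi}},\widetilde{\bm{\psi}})_{L^2}$; after integrating by parts the second-order terms in the explicit form~\eqref{scrP}, it becomes a sum of $L^2$-pairings of at most one derivative of each factor, with coefficients that are polynomials in $H_1,H_2,\nabla b$ and therefore bounded via $m>\frac{n}{2}+1$ and the assumption $\|\zeta\|_{H^m}+\|b\|_{W^{m,\infty}}\leq M$. Continuity of $a$ on the energy space follows, while coercivity is precisely Lemma~\ref{ARO:lem2}. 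The right-hand side acts as
\[
\widetilde{\bm{\psi}} \mapsto \rho_1(\bm{F}_1,\bm{\psi}_1')_{L^2}+\rho_2(\bm{F}_2,\bm{\psi}_2')_{L^2}-\rho_1(\bm{F}_3,\nabla\psi_1)_{L^2},
\]
which is a bounded linear functional when $\bm{F}_1,\bm{F}_2\in H^{-1}\supset L^2$ and $\bm{F}_3\in L^2$. Lax--Milgram then yields existence and uniqueness modulo constants in $\varphi_1$, together with the $k=0$ estimate.

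Next I would promote regularity by induction on $k\in\{0,1,\dots,m-1\}$. Apply $\partial^\alpha$ with $|\alpha|\leq k$ to~\eqref{ARO:eqvarphi2} to obtain
\[
\mathscr{P}(\zeta,b)\,\partial^\alpha\widetilde{\bm{\varphi}}=\partial^\alpha(\rho_1\bm{F}_1,\rho_2\bm{F}_2,\rho_1\nabla\cdot\bm{F}_3)^{\rm T}-[\partial^\alpha,\mathscr{P}(\zeta,b)]\widetilde{\bm{\varphi}}.
\]
Pair with $\partial^\alpha\widetilde{\bm{\varphi}}$ and invoke Lemma~\ref{ARO:lem2} on the left. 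The commutator, computed from the explicit expressions~\eqref{ARO:operatorLij}--\eqref{ARO:operatorLij2} and~\eqref{P1Q}--\eqref{P2Q}, is an operator of order at most one in $\widetilde{\bm{\varphi}}$ with coefficients built from derivatives of $H_1,H_2,b$ of order $\leq k+1\leq m$; Moser-type product estimates then give
\[
\|[\partial^\alpha,\mathscr{P}(\zeta,b)]\widetilde{\bm{\varphi}}\|_{H^{-1}}\leq C(M)\bigl(\|\bm{\varphi}_1'\|_{H^k}+\|\bm{\varphi}_2'\|_{H^k}+\|\nabla\varphi_1\|_{H^{k-1}}\bigr),
\]
which is exactly the inductive hypothesis. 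Combining these bounds yields the claimed estimate at level $k$.

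The main obstacle is the bookkeeping in the commutator step: the energy estimate on the left mixes $H^{k+1}$-control of $\bm{\varphi}_1',\bm{\varphi}_2'$ with merely $H^k$-control of $\nabla\varphi_1$, so one must exploit the block structure of $\mathscr{P}(\zeta,b)$ and the asymmetric way $\varphi_1$ enters (only through $Q_k^*$ and $L_{k,00}$, which are at worst second-order in divergence form) to ensure the commutator error lands in the correct dual space. Uniqueness up to an additive constant in $\varphi_1$ is immediate: if the right-hand side vanishes, pairing with $\widetilde{\bm{\varphi}}$ and applying Lemma~\ref{ARO:lem2} forces $\bm{\varphi}_1'=\bm{\varphi}_2'=0$ and $\nabla\varphi_1=0$, so $\varphi_1$ is constant.
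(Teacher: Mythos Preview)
Your proposal is correct and matches the paper's approach: the paper does not give a detailed proof of this lemma, but simply states that it follows from Lemma~\ref{ARO:lem2}, the explicit form~\eqref{scrP} of $\mathscr{P}(\zeta,b)$, and the standard theory of elliptic partial differential equations. What you have written is precisely the standard argument being invoked---Lax--Milgram in the energy space $H^1\times H^1\times(\dot H^1/\mathbf{R})$ using the coercivity of Lemma~\ref{ARO:lem2} for $k=0$, followed by differentiation and commutator estimates for higher $k$---so your proof fleshes out exactly what the paper leaves implicit.
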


We proceed to consider solvability to~\eqref{ARO:eqvarphi}. 
Given $\bm{f}_1',\bm{f}_2',\bm{f}_3,f_4$, 
we define $\bm{F}_1,\bm{F}_2,\bm{F}_3$ by~\eqref{ARO:F123}, 
for which there exists a solution $(\bm{\varphi}_1',\bm{\varphi}_2',\varphi_1)$ 
to~\eqref{ARO:eqvarphi2}, define $\varphi_{1,0}$ and $\varphi_{2,0}$ by 
\[
\varphi_{1,0} = \varphi_1 - \sum_{j=1}^N H_1^{2j}\varphi_{1,j}, \qquad
\varphi_{2,0} = \frac{\rho_1}{\rho_2}\varphi_1 - \sum_{j=1}^{N^*} H_2^{p_j}\varphi_{2,j} + \frac{1}{\rho_2}f_4,
\]
and put $\bm{\varphi}_1 = (\varphi_{1,0},\varphi_{1,1},\ldots,\varphi_{1,N})^{\rm T}$ and 
$\bm{\varphi}_2 = (\varphi_{2,0},\varphi_{2,1},\ldots,\varphi_{2,N^*})^{\rm T}$. 
Then, we see that $(\bm{\varphi}_1,\bm{\varphi}_2)$ is a solution to~\eqref{ARO:eqvarphi}. 
More precisely, we obtain the following lemma.

\begin{lemma}\label{ARO:elliptic estimate2}
Under the hypothesis of Lemma~\ref{ARO:elliptic estimate1}, for any 
$\bm{f}_1' = (f_{1,1},\ldots,f_{1,N})^{\rm T}$, 
$\bm{f}_2' = (f_{2,1},\ldots,$ $f_{2,N^*})^{\rm T}$, $\bm{f}_3$, and $f_4$ 
satisfying $\bm{f}_1',\bm{f}_2' \in H^{k-1}$ and 
$\bm{f}_3,\nabla f_4 \in H^k$ with $k \in \{0,1,\ldots,m-1\}$, 
there exists a solution $(\bm{\varphi}_1,\bm{\varphi}_2)$ to 
\eqref{ARO:eqvarphi} satisfying 
\[
\| (\bm{\varphi}_1',\bm{\varphi}_2')\|_{H^{k+1}}
 + \|(\nabla\varphi_{1,0},\nabla\varphi_{2,0})\|_{H^k}
\leq C\bigl( \|(\bm{f}_1',\bm{f}_2')\|_{H^{k-1}}
 + \|(\bm{f}_3,\nabla f_4)\|_{H^k} \bigr),
\]
where $C=C(\rho_1,\rho_2,h_1,h_2,c_0,m)$. 
Moreover, the solution is unique up to an additive constant of the form $(\mathcal{C}\rho_2,\mathcal{C}\rho_1)$ 
to $(\varphi_{1,0},\varphi_{2,0})$. 
\end{lemma}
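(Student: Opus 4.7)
The plan is to reduce~\eqref{ARO:eqvarphi} to the symmetric system~\eqref{ARO:eqvarphi2}, to which Lemma~\ref{ARO:elliptic estimate1} applies. The construction is exactly the one sketched in the paragraph preceding the lemma statement; it remains to verify the regularity, the estimates, and the uniqueness claim. First I would define $\bm{F}_1,\bm{F}_2,\bm{F}_3$ from $\bm{f}_1',\bm{f}_2',\bm{f}_3,f_4$ via~\eqref{ARO:F123}. Since $m>\frac{n}{2}+1$ and $\|\zeta\|_{H^m}+\|b\|_{W^{m,\infty}}\leq M$, standard Moser-type product estimates in $H^s$ for $s\leq m-1$ give
\[
\|(\bm{F}_1,\bm{F}_2)\|_{H^{k-1}} + \|\bm{F}_3\|_{H^k}
\leq C\bigl( \|(\bm{f}_1',\bm{f}_2')\|_{H^{k-1}} + \|(\bm{f}_3,\nabla f_4)\|_{H^k} \bigr)
\]
with $C=C(\rho_1,\rho_2,h_1,h_2,c_0,m)$; the one point deserving care is the term $\frac{1}{\rho_2}(Q_2(H_2,b))^* f_4$ in $\bm{F}_2$, but inspection of~\eqref{P2Q} shows that $(Q_2(H_2,b))^*$ is second-order in $f_4$ only through $\nabla f_4$ (after moving one derivative to the coefficients in divergence form), so its $H^{k-1}$ norm is controlled by $\|\nabla f_4\|_{H^k}$ times a polynomial in $\|(\zeta,\nabla b)\|_{H^m\cap W^{m,\infty}}$. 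Similarly, $\frac{1}{\rho_2}H_2\nabla f_4\in H^k$ with the required bound.

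Next, I would apply Lemma~\ref{ARO:elliptic estimate1} to obtain $(\bm{\varphi}_1',\bm{\varphi}_2',\varphi_1)$ solving~\eqref{ARO:eqvarphi2} together with
\[
\|(\bm{\varphi}_1',\bm{\varphi}_2')\|_{H^{k+1}} + \|\nabla\varphi_1\|_{H^k}
\leq C\bigl( \|(\bm{f}_1',\bm{f}_2')\|_{H^{k-1}} + \|(\bm{f}_3,\nabla f_4)\|_{H^k} \bigr).
\]
I then define $\varphi_{1,0}$ and $\varphi_{2,0}$ by the formulas indicated just before the lemma and set $\bm{\varphi}_k=(\varphi_{k,0},\bm{\varphi}_k')^{\rm T}$ for $k=1,2$. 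By construction $\varphi_1=\bm{l}_1(H_1)\cdot\bm{\varphi}_1$ and $\rho_2\bm{l}_2(H_2)\cdot\bm{\varphi}_2 = \rho_1\varphi_1 + f_4$, so the fourth equation in~\eqref{ARO:eqvarphi} holds. Reversing the elimination that produced~\eqref{ARO:eqvarphi2} from~\eqref{ARO:eqvarphi} — that is, rewriting the three block rows of~\eqref{ARO:eqvarphi2} in terms of $\mathcal{L}_{k,i}$ via~\eqref{ARO:calLk} and using the identity $\rho_2\varphi_2=\rho_1\varphi_1+f_4$ — recovers the first three equations of~\eqref{ARO:eqvarphi}. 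The bound on $(\nabla\varphi_{1,0},\nabla\varphi_{2,0})$ in $H^k$ follows from the defining formulas by applying Moser estimates to $\nabla(H_1^{2j}\varphi_{1,j})$ and $\nabla(H_2^{p_j}\varphi_{2,j})$, together with the estimates on $\nabla\varphi_1$ and $\nabla f_4$ already at hand.

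For uniqueness, any two solutions differ by a solution of the homogeneous version of~\eqref{ARO:eqvarphi}. Retracing the reduction, such a difference corresponds to a homogeneous solution of~\eqref{ARO:eqvarphi2}; the uniqueness part of Lemma~\ref{ARO:elliptic estimate1} forces $\bm{\varphi}_1'=\bm{0}$, $\bm{\varphi}_2'=\bm{0}$, and $\varphi_1=\mathcal{C}\rho_2$ for some constant $\mathcal{C}$. The defining formulas for $\varphi_{1,0},\varphi_{2,0}$ (with $f_4=0$) then give $\varphi_{1,0}=\mathcal{C}\rho_2$ and $\varphi_{2,0}=\mathcal{C}\rho_1$, which is precisely the indeterminacy stated in the lemma. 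The main technical obstacle is the verification that $(Q_2(H_2,b))^* f_4$ is controlled in $H^{k-1}$ by $\|\nabla f_4\|_{H^k}$ with constants depending only on $(\rho_1,\rho_2,h_1,h_2,c_0,m)$ and $M$; everything else is a matter of carefully bookkeeping the algebraic reduction already presented in the text.
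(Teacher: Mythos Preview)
Your proposal is correct and follows exactly the approach the paper sketches in the paragraph preceding the lemma: reduce to~\eqref{ARO:eqvarphi2} via~\eqref{ARO:F123}, invoke Lemma~\ref{ARO:elliptic estimate1}, then reconstruct $\varphi_{1,0},\varphi_{2,0}$ by the stated formulas. Your added verification of the $H^{k-1}$ bound on $(Q_2(H_2,b))^*f_4$ and the uniqueness argument are the natural details the paper leaves implicit.
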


\section{Construction of the solution}
\label{sect:const}
In this section, we will prove Theorem~\ref{Kaki:th1} one of the main theorems in this paper. 
One possible strategy to construct the solution of the initial value problem to the Kakinuma model 
\eqref{Kaki:KM1}--\eqref{Kaki:KM3} would consist in firstly transforming the equations into a quasilinear positive 
symmetric system, that is, a quasilinear version of the positive symmetric system~\eqref{LS:sym}, 
secondly applying the method of parabolic regularization to construct the solution of the transformed system, 
and finally to show that the solution to the transformed system is in fact the solution of the 
Kakinuma model if we further impose the compatibility conditions~\eqref{Kaki:CC1}--\eqref{Kaki:CC1} on the initial data. 
Here, in order to avoid the heavy computations that would be involved when following this strategy, 
we find it more convenient to instead apply the method of parabolic regularization to the Kakinuma model directly.

\subsection{Parabolic regularization of the equations}
We remind that the Kakinuma model~\eqref{Kaki:KM1}--\eqref{Kaki:KM3} can be written compactly as~\eqref{ARO:IK}, that is,
\begin{equation}\label{const:KM}
\begin{cases}
 \displaystyle
 \bm{l}_1(H_1)\partial_t\zeta + L_1(H_1)\bm{\phi}_1 = \bm{0}, \\
 \displaystyle
 - \bm{l}_2(H_2)\partial_t\zeta + L_2(H_2,b)\bm{\phi}_2 = \bm{0}, \\
 \displaystyle
 - \rho_1 \bm{l}_1(H_1) \cdot \partial_t\bm{\phi}_1
 + \rho_2 \bm{l}_2(H_2) \cdot \partial_t\bm{\phi}_2 = F,
\end{cases}
\end{equation}
where $\bm{\phi}_1 = (\phi_{1,0},\phi_{1,1},\ldots,\phi_{1,N})^{\rm T}$, 
$\bm{\phi}_2 = (\phi_{2,0},\phi_{2,1},\ldots,\phi_{2,N^*})^{\rm T}$,
$\bm{l}_k$ and $L_k$ for $k=1,2$ are defined in~\eqref{ARO:lk}, and
\begin{equation}\label{const:F}
F = \rho_1\left\{ g\zeta + \frac12\left( |\bm{u}_1|^2 + w_1^2 \right) \right\}
 - \rho_2\left\{ g\zeta + \frac12\left( |\bm{u}_2|^2 + w_2^2 \right) \right\}.
\end{equation}
Here $\bm{u}_k$ and $w_k$ for $k=1,2$ are defined by~\eqref{sc:u} and~\eqref{sc:w} respectively.
We regularize the Kakinuma model by adding artificial viscosity terms as 
\begin{equation}\label{const:rKM}
\begin{cases}
 \displaystyle
 \bm{l}_1(H_1)(\partial_t\zeta - \varepsilon\Delta\zeta)
  + L_1(H_1)\bm{\phi}_1 = \bm{0}, \\
 \displaystyle
 - \bm{l}_2(H_2)(\partial_t\zeta - \varepsilon\Delta\zeta)
  + L_2(H_2,b)\bm{\phi}_2 = \bm{0}, \\
 \displaystyle
 - \rho_1 \bm{l}_1(H_1) \cdot
  (\partial_t\bm{\phi}_1 - \varepsilon\Delta\bm{\phi}_1)
 + \rho_2 {\bm l}_2(H_2) \cdot
  (\partial_t\bm{\phi}_2 - \varepsilon\Delta\bm{\phi}_2) = F.
\end{cases}
\end{equation}
We are going to show the existence of the solution to the initial value problem for this 
regularized Kakinuma model under the initial conditions 
\begin{equation}\label{const:ICrKM}
(\zeta,\bm{\phi}_1,\bm{\phi}_2)|_{t=0}
 = (\zeta_{(0)},\bm{\phi}_{1(0)},\bm{\phi}_{2(0)}).
\end{equation}
For this regularized Kakinuma model, the compatibility conditions for the existence of the solution 
have the same form as the original Kakinuma model, that is, 
\begin{equation}\label{const:necessary2}
\begin{cases}
 \mathcal{L}_{1,i}(H_1) \bm{\phi}_1 = 0 \quad\mbox{for}\quad i=1,2,\ldots,N, \\
 \mathcal{L}_{2,i}(H_2,b) \bm{\phi}_2 = 0 \quad\mbox{for}\quad i=1,2,\ldots,N^*, \\
 \mathcal{L}_{1,0}(H_1) \bm{\phi}_1
  + \mathcal{L}_{2,0}(H_2,b) \bm{\phi}_2 = 0,
\end{cases}
\end{equation}
where $\mathcal{L}_{1,i}(H_1)$ for $i=0,1,\ldots,N$ and $\mathcal{L}_{2,i}(H_2,b)$ for 
$i=0,1,\ldots,N^*$ are defined in~\eqref{ARO:calLk}. 
Here, we note the identities 
\[
\begin{cases}
[\partial_t, \mathcal{L}_{1,i}(H_1)] \bm{\phi}_1
 = f_{1,i}(\zeta,\bm{\phi}_1) \partial_t\zeta \quad\mbox{for}\quad i=1,2,\ldots,N, \\
[\partial_t, \mathcal{L}_{2,i}(H_2,b)] \bm{\phi}_2
 = f_{2,i}(\zeta,\bm{\phi}_2,b) \partial_t\zeta \quad\mbox{for}\quad i=1,2,\ldots,N^*, \\
[\partial_t,\mathcal{L}_{1,0}(H_1)] \bm{\phi}_1
 + [\partial_t,\mathcal{L}_{2,0}(H_2,b)] \bm{\phi}_2
 = -\nabla\cdot(\bm{v} \partial_t\zeta), 
\end{cases}
\]
where $\bm{v}=\bm{u}_2-\bm{u}_1$ and
\[
\begin{cases}
\displaystyle
f_{1,i}(\zeta,\bm{\phi}_1) \displaystyle= -\sum_{j=0}^N\left\{
 \frac{2i}{2j+1}H_1^{2(i+j)}\Delta\phi_{1,j} + 4ijH_1^{2(i+j-1)}\phi_{1,j} \right\}, \\
\displaystyle
f_{2,i}(\zeta,\bm{\phi}_2,b) \displaystyle= \sum_{j=0}^{N^*}\left\{
 \frac{p_i}{p_j+1}H_2^{p_i+p_j}\Delta\phi_{2,j} - \frac{p_ip_j}{p_j}H_2^{p_i+p_j-1}\nabla\cdot(\phi_{2,j}\nabla b) \right. \\
 \displaystyle \phantom{ f_{2,i}(\zeta,\bm{\phi}_2,b) = \sum\Big\{} 
 - p_iH_2^{p_i+p_j-1}\nabla b\cdot\nabla\phi_{2,j} + p_ip_jH_2^{p_i+p_j-2}(1+|\nabla b|^2)\phi_{2,j} \biggr\},
\end{cases}
\]
and 
\[
\begin{cases}
[\Delta, \mathcal{L}_{1,i}(H_1)] \bm{\phi}_1
= f_{1,i}(\zeta,\bm{\phi}_1) \Delta\zeta
 + \widetilde{f}_{1,i}(\zeta,\bm{\phi}_1) \quad\mbox{for}\quad i=1,2,\ldots,N, \\
[\Delta, \mathcal{L}_{2,i}(H_2,b)] \bm{\phi}_2
= f_{2,i}(\zeta,\bm{\phi}_2,b) \Delta\zeta
 + \widetilde{f}_{2,i}(\zeta,\bm{\phi}_2,b) \quad\mbox{for}\quad i=1,2,\ldots,N^*, \\
[\Delta,\mathcal{L}_{1,0}(H_1)] \bm{\phi}_1
 + [\Delta,\mathcal{L}_{2,0}(H_2,b)] \bm{\phi}_2 
 = -\nabla\cdot(\bm{v} \Delta\zeta)
 + f_3(\zeta,\bm{\phi}_1,\bm{\phi}_2,b),
\end{cases}
\]
where 
\[
\begin{cases}
\displaystyle
\widetilde{f}_{1,i}(\zeta,\bm{\phi}_1)
 = \sum_{l=1}^n\left\{ [\partial_l,\mathcal{L}_{1,i}(H_1)] \partial_l\bm{\phi}_1
  + (\partial_l\zeta)\partial_lf_{1,i}(\zeta,\bm{\phi}_1) \right\}, \\
\displaystyle
\widetilde{f}_{2,i}(\zeta,\bm{\phi}_2,b)
 = \sum_{l=1}^n\biggl\{ [\partial_l,\mathcal{L}_{2,i}(H_2,b)] \partial_l\bm{\phi}_2
  + (\partial_l\zeta)f_{2,i}(\zeta,\bm{\phi}_2,b)
  - \partial_l( (\partial_l b)f_{2,i}(\zeta,\bm{\phi}_2,b) ) \\
  \displaystyle \phantom{ \widetilde{f}_{2,i}(\zeta,\bm{\phi}_2,b)=\sum_{l=1}^n\biggl\{  }
 +\sum_{j=0}^{N^*}\partial_l\biggl(
  - \frac{p_ip_j}{(p_i+p_j)p_j}H_2^{p_i+p_j}\nabla\cdot(\phi_{2,j}\nabla\partial_lb)
  - \frac{p_i}{p_i+p_j}H_2^{p_i+p_j}\nabla\partial_lb\cdot\nabla\phi_{2,j} \\
  \displaystyle \phantom{ \widetilde{f}_{2,i}(\zeta,\bm{\phi}_2,b)=\sum_{l=1}^n\biggl\{ +\sum_{j=0}^{N^*}\partial_l\biggl(}
 +\frac{p_ip_j}{p_i+p_j-1}H_2^{p_i+p_j-1}2(\nabla b\cdot\nabla\partial_lb) \biggr) \biggr\}, \\
\displaystyle
f_3(\zeta,\bm{\phi}_1,\bm{\phi}_2,b)
= \sum_{l=1}^n\biggl\{
 [\partial_l,\mathcal{L}_{1,0}(H_1)] \partial_l\bm{\phi}_1
 + [\partial_l,\mathcal{L}_{2,0}(H_2,b)] \partial_l\bm{\phi}_2 \\
\displaystyle\phantom{f_3(\zeta,\bm{\phi}_1,\bm{\phi}_2,b)
= \sum_{l=1}^n\biggl\{}
 + \nabla\cdot\biggl( - (\partial_l\zeta)(\partial_l\bm{v})
  + \partial_l\biggl( (\partial_lb)\bm{u}_2
   + \sum_{j=1}^{N^*}H_2^{p_j}\phi_{2,j}\nabla\partial_l b \biggr) \biggr) \biggr\}.
\end{cases}
\]
We also note that $f_3(\zeta,\bm{\phi}_1,\bm{\phi}_2,b)$ can be written 
in a divergence form as 
\[
f_3(\zeta,\bm{\phi}_1,\bm{\phi}_2,b)
= \nabla\cdot\bm{f}_3(\zeta,\bm{\phi}_1,\bm{\phi}_2,b),
\]
where 
\begin{align*}
\bm{f}_3(\zeta,\bm{\phi}_1,\bm{\phi}_2,b)
&= \sum_{l=1}^n\left\{ (\partial_l\zeta)\sum_{j=0}^N H_1^{2j}\nabla\partial_l\phi_{1,j}
 + \sum_{j=1}^{N^*}H_2^{p_j}(\partial_l\phi_{2,j})\nabla\partial_lb \right. \\
&\phantom{= \sum_{l=1}^n\Bigl\{}
 + (\partial_lb - \partial_l\zeta)\sum_{j=0}^{N^*}\bigl( H_2^{p_j}\nabla\partial_l\phi_{2,j}
  - p_jH_2^{p_j-1}(\partial_l\phi_{2,j})\nabla b \bigr) \\
&\phantom{= \sum_{l=1}^n\Bigl\{}\left.
 - (\partial_l\zeta)(\partial_l\bm{v})
 + \partial_l\left( (\partial_lb)\bm{u}_2
   + \sum_{j=1}^{N^*}H_2^{p_j}\phi_{2,j}\nabla\partial_l b \right) \right\}.
\end{align*}
Therefore, applying the operator $\partial_t-\varepsilon\Delta$ to~\eqref{const:necessary2} we obtain 
\begin{equation}\label{const:DNC}
\begin{cases}
\mathcal{L}_{1,i}(H_1)(\partial_t\bm{\phi}_1 - \varepsilon\Delta\bm{\phi}_1)
= - f_{1,i}(\zeta,\bm{\phi}_1) (\partial_t\zeta - \varepsilon\Delta\zeta)
 + \varepsilon \widetilde{f}_{1,i}(\zeta,\bm{\phi}_1) \\
\makebox[18em]{}\mbox{for}\quad i=1,2,\ldots,N, \\
\mathcal{L}_{2,i}(H_2,b)(\partial_t\bm{\phi}_2 - \varepsilon\Delta\bm{\phi}_2)
= - f_{2,i}(\zeta,\bm{\phi}_2,b) (\partial_t\zeta - \varepsilon\Delta\zeta)
 + \varepsilon \widetilde{f}_{2,i}(\zeta,\bm{\phi}_2,b) \\
\makebox[18em]{}\mbox{for}\quad i=1,2,\ldots,N^*, \\
\mathcal{L}_{1,0}(H_1)(\partial_t\bm{\phi}_1 - \varepsilon\Delta\bm{\phi}_1)
 + \mathcal{L}_{2,0}(H_2,b)(\partial_t\bm{\phi}_2 - \varepsilon\Delta\bm{\phi}_2) \\
\makebox[10em]{} 
= \nabla\cdot\bigl(\bm{v}(\partial_t\zeta - \varepsilon\Delta\zeta)
 +\varepsilon \bm{f}_3(\zeta,\bm{\phi}_1,\bm{\phi}_2,b)\bigr).
\end{cases}
\end{equation}
On the other hand, we have $N+N^*+2$ evolution equations for one scalar function $\zeta$. 
To select an appropriate evolution equation for $\zeta$, we will use the notation defined by~\eqref{LS:defQ}. 
We note that they depend on the unknown functions $H_1$ and $H_2$. 
Taking Euclidean inner products of the first and the second equations in~\eqref{const:rKM} with 
$\rho_1\bm{q}_1$ and $\rho_2\bm{q}_2$, respectively, 
adding the resulting equations, and using the relation 
$-\rho_1\bm{l}_1\cdot\bm{q}_1 + \rho_2\bm{l}_2\cdot\bm{q}_2 = 1$, 
we obtain 
\begin{equation}\label{const:eqz}
\partial_t\zeta - \varepsilon\Delta\zeta = G_0, 
\end{equation}
where 
\[
G_0 = \rho_1\bm{q}_1\cdot L_1(H_1)\bm{\phi}_1 
 + \rho_2\bm{q}_2\cdot L_2(H_2,b)\bm{\phi}_2.
\]
Plugging this into~\eqref{const:DNC} and noting the last equation in~\eqref{const:rKM}, we have 
\begin{equation}\label{const:eqphi2}
\begin{cases}
\mathcal{L}_{1,i}(H_1)(\partial_t\bm{\phi}_1 - \varepsilon\Delta\bm{\phi}_1)
=- f_{1,i}(\zeta,\bm{\phi}_1) G_0 + \varepsilon \widetilde{f}_{1,i}(\zeta,\bm{\phi}_1) 
 \quad\mbox{for}\quad i=1,2,\ldots,N, \\
\mathcal{L}_{2,i}(H_2,b)(\partial_t\bm{\phi}_2 - \varepsilon\Delta\bm{\phi}_2)
= -f_{2,i}(\zeta,\bm{\phi}_2,b) G_0 + \varepsilon \widetilde{f}_{2,i}(\zeta,\bm{\phi}_2,b)
 \quad\mbox{for}\quad i=1,2,\ldots,N^*, \\
\mathcal{L}_{1,0}(H_1)(\partial_t\bm{\phi}_1 - \varepsilon\Delta\bm{\phi}_1)
 + \mathcal{L}_{2,0}(H_2,b)(\partial_t\bm{\phi}_2 - \varepsilon\Delta\bm{\phi}_2) \\
\makebox[10em]{} 
= \nabla\cdot\bigl( \bm{v}G_0
 + \varepsilon \bm{f}_3(\zeta,\bm{\phi}_1,\bm{\phi}_2,b) \bigr), \\
- \rho_1 \bm{l}_1(H_1) \cdot
  (\partial_t\bm{\phi}_1 - \varepsilon\Delta\bm{\phi}_1)
 + \rho_2 {\bm l}_2(H_2) \cdot
  (\partial_t\bm{\phi}_2 - \varepsilon\Delta\bm{\phi}_2) = F.
\end{cases}
\end{equation}
Therefore, thanks to Lemma~\ref{ARO:elliptic estimate2} we obtain 
\begin{equation}\label{const:eqphi}
\begin{cases}
 \partial_t\bm{\phi}_1 - \varepsilon\Delta\bm{\phi}_1 = \bm{G}_1, \\
 \partial_t\bm{\phi}_2 - \varepsilon\Delta\bm{\phi}_2 = \bm{G}_2,
\end{cases}
\end{equation}
where $\bm{G}_1=(G_{1,0},G_{1,1},\ldots,G_{1,N})^{\rm T}$ and 
$\bm{G}_2=(G_{2,0},G_{2,1},\ldots,G_{2,N^*})^{\rm T}$ 
are defined as a solution to the following equations 
\begin{equation}\label{const:eqG12}
\begin{cases}
\mathcal{L}_{1,i}(H_1) \bm{G}_1
 = -f_{1,i}(\zeta,\bm{\phi}_1) G_0 + \varepsilon \widetilde{f}_{1,i}(\zeta,\bm{\phi}_1) 
 \quad\mbox{for}\quad i=1,2,\ldots,N, \\
\mathcal{L}_{2,i}(H_2,b) \bm{G}_2
 = -f_{2,i}(\zeta,\bm{\phi}_2,b) G_0 + \varepsilon \widetilde{f}_{2,i}(\zeta,\bm{\phi}_2,b)
 \quad\mbox{for}\quad i=1,2,\ldots,N^*, \\
\mathcal{L}_{1,0}(H_1) \bm{G}_1 + \mathcal{L}_{2,0}(H_2,b) \bm{G}_2
 = \nabla\cdot\left( \bm{v}G_0
 + \varepsilon \bm{f}_3(\zeta,\bm{\phi}_1,\bm{\phi}_2,b) \right), \\
- \rho_1 \bm{l}_1(H_1) \cdot \bm{G}_1
 + \rho_2 \bm{l}_2(H_2) \cdot \bm{G}_2 = F.
\end{cases}
\end{equation}
Precisely speaking, $(\bm{G}_1,\bm{G}_2)$ are defined uniquely up to an 
additive constant of the form $(\mathcal{C}\rho_2,\mathcal{C}\rho_1)$ to $(G_{1,0},G_{2,0})$. 
However, this indeterminacy does not cause any difficulties in the following arguments.

\begin{remark}\label{const:re1}
{\rm
The equations in~\eqref{const:eqphi} are valid even in the case $\varepsilon=0$, that is, 
any regular solutions to the Kakinuma model~\eqref{Kaki:KM1}--\eqref{Kaki:KM2} satisfy~\eqref{const:eqphi} 
with $\varepsilon=0$. 
Particularly, $\partial_t\bm{\phi}_k(\bm{x},0)$ for $k=1,2$ can be expressed in term of the initial data 
$(\zeta_{(0)},\bm{\phi}_{1(0)},\bm{\phi}_{2(0)})$ and the bottom topography $b$. 
}
\end{remark}

\subsection{Existence of the solution to the regularized problem}
\begin{lemma}\label{const:existRP}
Let $g, \rho_1,\rho_2,h_1, h_2, c_0$ be positive constants and $m$ an integer such that $m>\frac{n}{2}+1$. 
For any initial data $(\zeta_{(0)},\bm{\phi}_{1(0)},\bm{\phi}_{2(0)})$ 
and bottom topography $b$ satisfying 
\[
\begin{cases}
 \zeta_{(0)},\nabla\phi_{1,0(0)},\nabla\phi_{2,0(0)} \in H^m, \quad
  \bm{\phi}_{1(0)}',\bm{\phi}_{2(0)}' \in H^{m+1}, \quad
  b \in W^{m+2,\infty}, \\
 h_1-\zeta_{(0)}(\bm{x}) \geq c_0, \quad h_2+\zeta_{(0)}(\bm{x})-b(\bm{x}) \geq c_0
  \quad\mbox{for}\quad \bm{x}\in\mathbf{R}^n, 
\end{cases}
\]
and for any $\varepsilon>0$ there exists a maximal existence time $T_{\varepsilon}\in(0,+\infty]$ such that 
the initial value problem~\eqref{const:eqz},~\eqref{const:eqphi}, and~\eqref{const:ICrKM} has a unique solution 
$(\zeta^\varepsilon,{\bm \phi}_1^\varepsilon,{\bm \phi}_2^\varepsilon)$ 
satisfying 
\[
\zeta^\varepsilon,\nabla\phi_{1,0}^\varepsilon,\nabla\phi_{2,0}^\varepsilon \in C([0,T_\varepsilon);H^m),
 \qquad {\bm \phi}_1^{\varepsilon \prime},{\bm \phi}_2^{\varepsilon \prime}
 \in C([0,T_\varepsilon);H^{m+1}).
\]
\end{lemma}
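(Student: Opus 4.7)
The plan is to recast the regularized system~\eqref{const:eqz}--\eqref{const:eqphi} as an integral equation via Duhamel's formula with the heat semigroup $S_\varepsilon(t):=e^{\varepsilon t\Delta}$, and then solve it by Banach's fixed-point theorem in a suitable Banach space. The key point of working with $\varepsilon>0$ is that the semigroup enjoys the smoothing estimate $\|S_\varepsilon(t)f\|_{H^{s+1}}\lesssim(\varepsilon t)^{-1/2}\|f\|_{H^s}$, whose singularity at $t=0$ is integrable, and which compensates for the fact that the right-hand sides $G_0$ and $(\bm{G}_1,\bm{G}_2)$, produced via Lemma~\ref{ARO:elliptic estimate2}, are naturally one derivative less regular than the target regularity of $(\zeta,\bm{\phi}_1,\bm{\phi}_2)$.

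Concretely, for $M>0$ large (depending on the initial-data norm) and $T>0$ small (to be chosen), I work in the closed metric subset $X_T^M$ of the Banach space of triples $(\zeta,\bm{\phi}_1,\bm{\phi}_2)$ with $\zeta,\nabla\phi_{1,0},\nabla\phi_{2,0}\in C([0,T];H^m)$ and $\bm{\phi}_1',\bm{\phi}_2'\in C([0,T];H^{m+1})$, taking the prescribed initial values, of norm at most $M$, and such that $H_1\geq c_0/2$ and $H_2\geq c_0/2$ hold pointwise on $[0,T]\times\mathbf{R}^n$. On such a triple, the coefficients of $L_k$, $\mathcal{L}_{k,i}$, and the auxiliary vectors $\bm{q}_k$ from~\eqref{LS:defQ} are uniformly controlled, so Moser-type product and composition estimates in $H^m$ (valid for $m>\frac{n}{2}+1$) give $\|G_0(t)\|_{H^{m-1}}\leq C(M)$, and applying Lemma~\ref{ARO:elliptic estimate2} with $k=m-1$ to the elliptic system~\eqref{const:eqG12} yields
\[
\|(\nabla G_{1,0},\nabla G_{2,0})(t)\|_{H^{m-1}}+\|(\bm{G}_1',\bm{G}_2')(t)\|_{H^m}\leq C(M).
\]

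I then define the map $\Phi$ sending $(\zeta,\bm{\phi}_1,\bm{\phi}_2)\in X_T^M$ to the triple of Duhamel integrals $S_\varepsilon(t)\zeta_{(0)}+\int_0^tS_\varepsilon(t-s)G_0(s)\,\mathrm{d}s$ and $S_\varepsilon(t)\bm{\phi}_{k(0)}+\int_0^tS_\varepsilon(t-s)\bm{G}_k(s)\,\mathrm{d}s$ for $k=1,2$, with $G_0$ and $\bm{G}_k$ evaluated on the input. Applying the smoothing estimate one derivative at a time gives $\|\Phi(\zeta,\bm{\phi}_1,\bm{\phi}_2)\|_{X_T}\leq C_0+C(M)\sqrt{T/\varepsilon}$, where $C_0$ depends only on the initial data, and $\|\widetilde\zeta-\zeta_{(0)}\|_{L^\infty([0,T]\times\mathbf{R}^n)}$ tends to $0$ as $T\to0$. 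Fixing $M=2C_0$ and choosing $T=T(M,\varepsilon)$ small enough therefore gives $\Phi(X_T^M)\subset X_T^M$, with the non-cavitation bounds preserved. The same smoothing estimate, combined with Lipschitz estimates of the nonlinearities in the weaker norm with Sobolev indices lowered by one, shows that $\Phi$ is a strict contraction after a further reduction of $T$, so Banach's theorem yields a unique fixed point. Uniqueness in the full stated class follows from the same difference estimate, and a standard continuation argument provides the maximal existence time $T_\varepsilon\in(0,+\infty]$.

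The main technical obstacle lies in the Lipschitz estimates for the nonlinearities, particularly for $(\bm{G}_1,\bm{G}_2)$: the difference of two instances of~\eqref{const:eqG12} has coefficients depending on two distinct inputs, and to invoke Lemma~\ref{ARO:elliptic estimate2} one must split the error into terms in which the differences of $\zeta,\bm{\phi}_1,\bm{\phi}_2$ appear linearly. Carefully tracking the asymmetric Sobolev regularity of $\nabla\phi_{k,0}$ versus $\bm{\phi}_k'$ through the nonlinear compositions, and verifying that each resulting contribution can be absorbed using Moser's inequality together with the lower bound $H_k\geq c_0/2$, is the main bookkeeping task.
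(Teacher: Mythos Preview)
Your proposal is correct and takes essentially the same approach as the paper: both obtain the bounds $\|G_0\|_{H^{m-1}}\leq C(M)$ and, via Lemma~\ref{ARO:elliptic estimate2} with $k=m-1$, $\|(\nabla G_{1,0},\nabla G_{2,0})\|_{H^{m-1}}+\|(\bm{G}_1',\bm{G}_2')\|_{H^m}\leq C(M)$, and then appeal to standard semilinear heat-equation theory. The paper simply writes ``we obtain the desired result by the standard theory on the heat equation,'' whereas you have unpacked that phrase into the explicit Duhamel/smoothing/fixed-point scheme; your identification of the one-derivative loss in the source terms and its recovery through the $(\varepsilon t)^{-1/2}$ smoothing of $e^{\varepsilon t\Delta}$ is exactly what that sentence encodes.
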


\begin{proof}[{\bf Proof}.]
We evaluate the right-hand sides of the equations, that is, the terms $G_0$, 
$\bm{G}_1$, and $\bm{G}_2$. 
To this end, suppose that $(\zeta,\bm{\phi}_1,\bm{\phi}_2)$ and $b$ satisfy 
\begin{equation}\label{const:assesti}
\begin{cases}
\|(\zeta,\nabla\phi_{1,0},\nabla\phi_{2,0})\|_{H^m}
 + \|(\bm{\phi}_1',\bm{\phi}_2')\|_{H^{m+1}}
 + \|b\|_{W^{m+2,\infty}} \leq M, \\
h_1-\zeta(\bm{x}) \geq c_1, \quad h_2+\zeta(\bm{x})-b(\bm{x}) \geq c_1
 \quad\mbox{for}\quad \bm{x}\in\mathbf{R}^n.
\end{cases}
\end{equation}
Then, we see that 
\[
\|G_0\|_{H^{m-1}} + \|(\bm{f}_1',\bm{f}_2',\bm{f}_3)\|_{H^{m-1}}
 + \|(\widetilde{\bm{f}}_1',\widetilde{\bm{f}}_2')\|_{H^{m-2}}
 + \|F\|_{H^m} \leq C(M,c_1),
\]
where $\bm{f}_1'=(f_{1,1}(\zeta,\bm{\phi}_1),\ldots,
f_{1,N}(\zeta,\bm{\phi}_1))$ and so on. 
Therefore, by Lemma~\ref{ARO:elliptic estimate2} we have 
\[
\|(\nabla G_{1,0},\nabla G_{2,0})\|_{H^{m-1}}
 + \|({\bm G}_1',{\bm G}_2')\|_{H^m} \leq C(M,c_1,\varepsilon),
\]
where we notice for further use that $C(M,c_1,\varepsilon)$ is bounded uniformly with respect to $\varepsilon\in(0,1]$. 
We obtain the desired result by the standard theory on the heat equation. 
\end{proof}

\begin{lemma}\label{const:existRP2}
Suppose that the initial data $(\zeta_{(0)},\bm{\phi}_{1(0)},\bm{\phi}_{2(0)})$ 
and the bottom topography $b$ satisfy the hypotheses in Lemma~\ref{const:existRP} and the compatibility conditions 
\eqref{const:necessary2}. 
Then, the solution $(\zeta^\varepsilon,\bm{\phi}_1^\varepsilon,\bm{\phi}_2^\varepsilon)$ 
constructed in Lemma~\ref{const:existRP} satisfies the regularized Kakinuma model~\eqref{const:rKM}. 
\end{lemma}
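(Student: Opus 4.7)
The strategy is to show that the construction of Lemma~\ref{const:existRP} automatically preserves the compatibility conditions~\eqref{const:necessary2}, and that once these are preserved, the modified system~\eqref{const:eqz},~\eqref{const:eqphi} coincides with the regularized Kakinuma model~\eqref{const:rKM}. The right-hand sides $G_0$ and $(\bm{G}_1,\bm{G}_2)$ introduced above were engineered precisely so that the compatibility errors satisfy homogeneous heat equations.

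First I would introduce the compatibility errors
\begin{equation*}
\mathcal{E}_{1,i}(t) = \mathcal{L}_{1,i}(H_1^\varepsilon)\bm{\phi}_1^\varepsilon, \quad
\mathcal{E}_{2,i}(t) = \mathcal{L}_{2,i}(H_2^\varepsilon, b)\bm{\phi}_2^\varepsilon \quad (i\geq 1), \quad
\mathcal{E}_0(t) = \mathcal{L}_{1,0}(H_1^\varepsilon)\bm{\phi}_1^\varepsilon + \mathcal{L}_{2,0}(H_2^\varepsilon, b)\bm{\phi}_2^\varepsilon,
\end{equation*}
and compute $(\partial_t - \varepsilon\Delta)\mathcal{E}$ by expanding through the commutator identities listed immediately before~\eqref{const:DNC}, then substituting~\eqref{const:eqz} for $\partial_t\zeta^\varepsilon - \varepsilon\Delta\zeta^\varepsilon$ and~\eqref{const:eqphi} for $\partial_t\bm{\phi}_k^\varepsilon - \varepsilon\Delta\bm{\phi}_k^\varepsilon$. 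The resulting terms $\mathcal{L}_{k,i}\bm{G}_k$ (for $i\geq 1$) and $\mathcal{L}_{1,0}\bm{G}_1 + \mathcal{L}_{2,0}\bm{G}_2$ are canceled exactly by the commutator contributions $f_{k,i}G_0 - \varepsilon\widetilde{f}_{k,i}$ and $-\nabla\cdot(\bm{v}G_0 + \varepsilon\bm{f}_3)$, thanks to the defining equations~\eqref{const:eqG12}. Thus each $\mathcal{E}_{1,i}$, $\mathcal{E}_{2,i}$ and $\mathcal{E}_0$ satisfies $(\partial_t - \varepsilon\Delta)\mathcal{E} = 0$ with $\mathcal{E}|_{t=0} = 0$ in view of the initial compatibility~\eqref{const:necessary2}. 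A standard $L^2$ energy estimate for the linear heat equation then forces all of them to vanish on $[0,T_\varepsilon)$, so~\eqref{const:necessary2} propagates.

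With compatibility in hand, the relations $[L_1(H_1)\bm{\phi}_1]_i = H_1^{2i}\mathcal{L}_{1,0}(H_1)\bm{\phi}_1$ (and its analogue for $L_2$) give $L_k\bm{\phi}_k^\varepsilon = (\mathcal{L}_{k,0}\bm{\phi}_k^\varepsilon)\,\bm{l}_k(H_k^\varepsilon)$ together with $\mathcal{L}_{1,0}\bm{\phi}_1^\varepsilon + \mathcal{L}_{2,0}\bm{\phi}_2^\varepsilon = 0$. Using the scalar identities $\rho_1\bm{l}_1\cdot\bm{q}_1 = -\theta_2$, $\rho_2\bm{l}_2\cdot\bm{q}_2 = \theta_1$ and $\theta_1+\theta_2 = 1$ recorded after~\eqref{LS:defQ}, the source $G_0$ of~\eqref{const:eqz} collapses to $G_0 = -\mathcal{L}_{1,0}(H_1^\varepsilon)\bm{\phi}_1^\varepsilon = \mathcal{L}_{2,0}(H_2^\varepsilon, b)\bm{\phi}_2^\varepsilon$. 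Substituting this back into~\eqref{const:eqz} and multiplying by $\bm{l}_1(H_1^\varepsilon)$ and $-\bm{l}_2(H_2^\varepsilon)$ respectively recovers the first two equations of~\eqref{const:rKM}, while the third equation is the last line of~\eqref{const:eqG12} after the substitution $\bm{G}_k = \partial_t\bm{\phi}_k^\varepsilon - \varepsilon\Delta\bm{\phi}_k^\varepsilon$ furnished by~\eqref{const:eqphi}. The whole argument is essentially bookkeeping once the commutator identities above~\eqref{const:DNC} are in hand; the only mildly delicate point is securing enough regularity of $\mathcal{E}_{k,i}$ and $\mathcal{E}_0$ for the heat-equation uniqueness step, which the Sobolev regularity supplied by Lemma~\ref{const:existRP} (combined with $m>\frac{n}{2}+1$) comfortably provides.
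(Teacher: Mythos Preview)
Your proposal is correct and follows essentially the same approach as the paper: propagate the compatibility conditions by showing the errors satisfy homogeneous heat equations with vanishing initial data, then use the compatibility identities $L_k\bm{\phi}_k = (\mathcal{L}_{k,0}\bm{\phi}_k)\,\bm{l}_k$ together with the relations $\rho_1\bm{l}_1\cdot\bm{q}_1=-\theta_2$, $\rho_2\bm{l}_2\cdot\bm{q}_2=\theta_1$ to identify $G_0=-\mathcal{L}_{1,0}(H_1)\bm{\phi}_1$ and recover~\eqref{const:rKM}. The paper organizes the second half slightly differently (passing through the intermediate system~\eqref{const:relation5} before contracting with $\rho_k\bm{q}_k$), but the substance is the same.
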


\begin{proof}[{\bf Proof}.]
By the construction of the solution, we easily see that it satisfies~\eqref{const:eqphi2} 
and in particular the last equation in~\eqref{const:rKM}. 
Therefore, it is sufficient to show that it satisfies also the first two equations in~\eqref{const:rKM}. 
By~\eqref{const:eqz} and~\eqref{const:eqphi2}, we have 
\[
\begin{cases}
 (\partial_t-\varepsilon\Delta)(\mathcal{L}_{1,i}(H_1) \bm{\phi}_1) = 0
  \quad\mbox{for}\quad i=1,2,\ldots,N, \\
 (\partial_t-\varepsilon\Delta)(\mathcal{L}_{2,i}(H_2,b) \bm{\phi}_2) = 0
  \quad\mbox{for}\quad i=1,2,\ldots,N^*, \\
 (\partial_t-\varepsilon\Delta)\left( \mathcal{L}_{1,0}(H_1) \bm{\phi}_1
  + \mathcal{L}_{2,0}(H_2,b) \bm{\phi}_2 \right) = 0,
\end{cases}
\]
so that by the uniqueness of the solution to the initial value problem of the heat equation, 
if the initial data satisfy the compatibility conditions~\eqref{const:necessary2}, 
then the solution satisfies also~\eqref{const:necessary2} for all $t \in [0,T_\varepsilon)$. 
Particularly, we obtain 
\[
\begin{cases}
 -\bm{l}_1(H_1)( \mathcal{L}_{1,0}(H_1) \bm{\phi}_1 )
  + L_1(H_1) \bm{\phi}_1 = \bm{0}, \\
 -\bm{l}_2(H_2)( \mathcal{L}_{2,0}(H_2,b) \bm{\phi}_2 )
  + L_2(H_2,b) \bm{\phi}_2 = \bm{0},
\end{cases}
\]
so that by the last equation in the compatibility conditions~\eqref{const:necessary2} we have 
\begin{equation}\label{const:relation5}
\begin{cases}
 -\bm{l}_1(H_1)( \mathcal{L}_{1,0}(H_1) \bm{\phi}_1 )
  + L_1(H_1) \bm{\phi}_1 = \bm{0}, \\
 \bm{l}_2(H_2)( \mathcal{L}_{1,0}(H_1) \bm{\phi}_1 )
  + L_2(H_2,b) \bm{\phi}_2 = \bm{0}.
\end{cases}
\end{equation}
Taking Euclidean inner products of the first and the second equations with 
$\rho_1\bm{q}_1$ and $\rho_2\bm{q}_2$, respectively, 
adding the resulting equations, and using the relation 
$-\rho_1\bm{l}_1\cdot\bm{q}_1 + \rho_2\bm{l}_2\cdot\bm{q}_2 = 1$, 
we obtain 
\[
\mathcal{L}_{1,0}(H_1) \bm{\phi}_1
+ \rho_1\bm{q}_1 \cdot L_1(H_1)\bm{\phi}_1
 + \rho_2\bm{q}_2 \cdot L_2(H_2,b)\bm{\phi}_2 = 0,
\]
which together with~\eqref{const:eqz} implies 
\[
\mathcal{L}_{1,0}(H_1) \bm{\phi}_1 = -( \partial_t\zeta - \varepsilon\Delta\zeta ).
\]
Plugging this into~\eqref{const:relation5}, we see that the solution satisfies 
the first two equations in~\eqref{const:rKM}. 
\end{proof}

\subsection{Uniform bound of the solution to the regularized problem}
We proceed to derive estimates on solutions $(\zeta^\varepsilon,\bm{\phi}_1^\varepsilon,\bm{\phi}_2^\varepsilon)$ 
to the regularized Kakinuma model~\eqref{const:rKM} uniform with respect to 
the regularized parameter $\varepsilon \in (0,1]$ and for a time interval independent of $\varepsilon$. 
To this end, we make use of a good symmetric structure of the Kakinuma model based on the analysis of Section~\ref{sect:EE}. 
In order to simplify the notation we write $(\zeta,\bm{\phi}_1,\bm{\phi}_2)$ 
in place of $(\zeta^\varepsilon,\bm{\phi}_1^\varepsilon,\bm{\phi}_2^\varepsilon)$.

In view of~\eqref{ARO:operatorLij} and~\eqref{ARO:operatorLij2} we decompose $L_1(H_1) \bm{\phi}_1$ and 
$L_2(H_2,b) \bm{\phi}_2$ into their principal parts and the remainder parts as 
\begin{align}
& L_1(H_1) \bm{\phi}_1
 = - A_1(H_1)\Delta\bm{\phi}_1 + \bm{l}_1(H_1)(\bm{u}_1\cdot\nabla\zeta)
  + L_1^{\rm low}(H_1) \bm{\phi}_1, \\
& L_2(H_2,b) \bm{\phi}_2
 = - A_2(H_2)\Delta\bm{\phi}_2 - \bm{l}_2(H_2)(\bm{u}_2\cdot\nabla\zeta)
  + L_2^{\rm low}(H_2,b) \bm{\phi}_2,
\end{align}
where the matrices $A_1(H_1)$ and $A_2(H_2)$ are given by~\eqref{LS:defA},  
$L_1^{\rm low}(H_1) = (L_{1,ij}^{\rm low}(H_1))_{0\leq i,j\leq N}$ and 
$L_2^{\rm }(H_2,b) = (L_{2,ij}^{\rm low}(H_2,b))_{0\leq i,j\leq N^*}$ are given by 
\begin{align*}
& L_{1,ij}^{\rm low}(H_1)\varphi_{1,j} = \frac{4ij}{2(i+j)-1}H_1^{2(i+j)-1}\varphi_{1,j}, \\
& L_{2,ij}^{\rm low}(H_2,b)\varphi_{2,j}
= \nabla b \cdot ( H_2^{p_i+p_j}\nabla\varphi_{2,j} - p_jH_2^{p_i+p_j-1}\varphi_{2,j}\nabla b )
   + \frac{p_j}{p_i+p_j}H_2^{p_i+p_j}\nabla\cdot( \varphi_{2,j}\nabla b ) \\[0.5ex]
&\makebox[18ex]{}
  - \frac{p_i}{p_i+p_j}H_2^{p_i+p_j}\nabla b\cdot\nabla\varphi_{2,j}
   + \frac{p_ip_j}{p_i+p_j-1}H_2^{p_i+p_j-1}(1 + |\nabla b|^2)\varphi_{2,j}. \nonumber
\end{align*}
Let us recall the definitions of $\bm{u}$ in~\eqref{LS:defu}, and $\theta_1$ and $\theta_2$ in~\eqref{LS:alpha}, so that
\[
\bm{u}_1 = \bm{u} - \theta_1\bm{v}, \qquad
\bm{u}_2 = \bm{u} + \theta_2\bm{v}.
\]
Therefore, we can rewrite the first two equations in~\eqref{const:rKM} as 
\[
\begin{cases}
 \bm{l}_1(H_1)(\partial_t\zeta - \varepsilon\Delta\zeta + (\bm{u} - \theta_1\bm{v})\cdot\nabla\zeta)
  - A_1(H_1)\Delta\bm{\phi}_1 + L_1^{\rm low}(H_1)\bm{\phi}_1
  = \bm{0}, \\
 -\bm{l}_2(H_2)(\partial_t\zeta - \varepsilon\Delta\zeta + ( \bm{u} + \theta_2\bm{v})\cdot\nabla\zeta)
  - A_2(H_2)\Delta\bm{\phi}_2 + L_2^{\rm low}(H_2,b)\bm{\phi}_2
  = \bm{0}.
\end{cases}
\]
Let $\beta=(\beta_1,\ldots,\beta_n)$ be a multi-index satisfying $|\beta| \leq m$. 
Applying the differential operator $\partial^\beta$ to these equations and noting the relation 
$(\bm{v}\cdot\nabla) = -(\bm{v}\cdot\nabla)^* - (\nabla\cdot\bm{v})$, 
we have 
\begin{equation}\label{const:drKM1}
\begin{cases}
 \displaystyle
 \rho_1\bm{l}_1( \partial_t\zeta^{\beta} - \varepsilon\Delta\zeta^{\beta}
  + \bm{u}\cdot\nabla\zeta^{\beta} )
  + (\bm{v}\cdot\nabla)^*(\rho_1\theta_1\bm{l}_1\zeta^{\beta})
  - \sum_{l=1}^n\partial_l( \rho_1A_1\partial_l{\bm \phi}_1^{\beta} )
   = \bm{F}_{1,\beta}, \\
 \displaystyle
 -\rho_2\bm{l}_2( \partial_t\zeta^{\beta} - \varepsilon\Delta\zeta^{\beta}
  + \bm{u}\cdot\nabla\zeta^{\beta} )
  + (\bm{v}\cdot\nabla)^*(\rho_2\theta_2\bm{l}_2\zeta^{\beta})
  - \sum_{l=1}^n\partial_l( \rho_2A_2\partial_l\bm{\phi}_2^{\beta} )
   = \bm{F}_{2,\beta},
\end{cases}
\end{equation}
where $\zeta^\beta = \partial^\beta\zeta$, 
$\bm{\phi}_k^\beta = \partial^\beta\bm{\phi}_k$ for $k=1,2$, and 
\[
\begin{cases}
 \bm{F}_{1,\beta} = \rho_1\biggl\{ -[\partial^\beta,\bm{l}_1]G_0
  - [\partial^\beta,\bm{l}_1\bm{u}_1^{\rm T}]\nabla\zeta
  - (\nabla\cdot\bm{v})\theta_1\bm{l}_1\zeta^\beta
  + [\bm{v}\cdot\nabla,\theta_1\bm{l}_1]\zeta^\beta \\
 \displaystyle\makebox[11ex]{}
  - \sum_{l=1}^n(\partial_l A_1)\partial_l\bm{\phi}_1^\beta
  + [\partial^\beta,A_1]\Delta\bm{\phi}_1
  - \partial^\beta L_1^{\rm low}(H_1)\bm{\phi}_1 \biggr\}, \\
 \bm{F}_{2,\beta} = \rho_2\biggl\{ [\partial^\beta,\bm{l}_2]G_0
  + [\partial^\beta,\bm{l}_2\bm{u}_2^{\rm T}]\nabla\zeta
  - (\nabla\cdot\bm{v})\theta_2\bm{l}_2\zeta^\beta
  + [\bm{v}\cdot\nabla,\theta_2\bm{l}_2]\zeta^\beta \\
 \displaystyle\makebox[11ex]{}
  - \sum_{l=1}^n(\partial_l A_2)\partial_l\bm{\phi}_2^\beta
  + [\partial^\beta,A_2]\Delta\bm{\phi}_2
  - \partial^\beta L_2^{\rm low}(H_2,b)\bm{\phi}_2 \biggr\}.
\end{cases}
\]
In the above calculation, we used~\eqref{const:eqz}. 
Similarly, applying the differential operator $\partial^\beta$ to the last equation in~\eqref{const:rKM}, 
we have 
\begin{align}\label{const:drKM2}
& - \rho_1\bm{l}_1\cdot( \partial_t\bm{\phi}_1^\beta
 - \varepsilon\Delta\bm{\phi}_1^\beta
 + (\bm{u}\cdot\nabla)\bm{\phi}_1^\beta )
 + \rho_1\theta_1\bm{l}_1\cdot(\bm{v}\cdot\nabla)\bm{\phi}_1^\beta \\
&\quad
+ \rho_2\bm{l}_2\cdot( \partial_t\bm{\phi}_2^\beta
 - \varepsilon\Delta\bm{\phi}_2^\beta
 + (\bm{u}\cdot\nabla)\bm{\phi}_2^\beta )
 + \rho_2\theta_2\bm{l}_2\cdot(\bm{v}\cdot\nabla)\bm{\phi}_2^\beta
 + a\zeta^\beta
 = F_{0,\beta}, \nonumber
\end{align}
where 
\begin{align*}
a &= \rho_2 \left( \sum_{i=0}^{N^*}p_iH_2^{p_i-1}(G_{2,i} + \bm{u}_2\cdot\nabla\phi_{2,i} )
 + \sum_{i=0}^{N^*} p_i(p_i-1)H_2^{p_i-2}(w_2 - \bm{u}_2\cdot\nabla b)\phi_{2,i} + g \right) \\
&\quad
 + \rho_1 \left( \sum_{i=0}^N2iH_1^{2i-1}(G_{1,i} + \bm{u}_1\cdot\nabla\phi_{1,i} )
  - w_1\sum_{i=0}^N 2i(2i-1)H^{2(i-1)}\phi_{1,i} - g \right), 
\end{align*}
\begin{align*}
F_{0,\beta} =&\rho_1\Biggl\{
 \bigl( \partial^\beta \bm{l}_1(H_1)
  - (\partial_{H_1}\bm{l}_1(H_1))\partial^\beta H_1 \bigr)\cdot\bm{G}_1
  + [\partial^\beta;\bm{l}_1(H_1),\bm{G}_1] \\
&\qquad
 + \bm{u}_1\cdot\sum_{j=0}^N\bigl( [\partial^\beta;l_{1,j}(H_1),\nabla\phi_{1,j}]
  + ( \partial^\beta l_{1,j}(H_1) -(\partial_{H_1}l_{1,j}(H_1))\partial^\beta H_1 )\nabla\phi_{1,j} \bigr) \\
&\qquad
  - w_1\sum_{j=0}^N\bigl( [\partial^\beta,\phi_{1,j}]\partial_{H_1}l_{1,j}(H_1)
  + (\partial^\beta \partial_{H_1}l_{1,j}(H_1) - (\partial_{H_1}^2l_{1,j}(H_1))\partial^\beta H_1 )\phi_{1,j} \bigr) \\
&\qquad
 + \frac12\bigl( [\partial^\beta; \bm{u}_1,\bm{u}_1]
  + [\partial^\beta; w_1,w_1] \bigr) \Biggr\} \\
&
-\rho_2\Biggl\{
 \bigl( \partial^\beta \bm{l}_2(H_2)
  - (\partial_{H_2}\bm{l}_2(H_2))\partial^\beta \zeta \bigr)\cdot\bm{G}_2
  + [\partial^\beta;\bm{l}_2(H_2),\bm{G}_2] \\
&\qquad
 + \bm{u}_2\cdot\sum_{j=0}^{N^*}\Bigl( [\partial^\beta;l_{2,j}(H_2),\nabla\phi_{2,j}]
  + ( \partial^\beta l_{2,j}(H_2) - (\partial_{H_2}l_{2,j}(H_2))\partial^\beta H_2 )\nabla\phi_{2,j}  \\
&\qquad\qquad
 - [\partial^\beta,\partial_{H_2}l_{2,j}(H_2)]\phi_{2,j}\nabla b
 - ( \partial^\beta \partial_{H_2}l_{2,j}(H_2)
  - (\partial_{H_2}^2l_{2,j}(H_2))\partial^\beta H_2 )\phi_{2,j}\nabla b \Bigr) \\
&\qquad
 + w_2\sum_{j=0}^{N^*}\bigl( [\partial^\beta,\phi_{2,j}]\partial_{H_2}l_{2,j}(H_2)
  + (\partial^\beta \partial_{H_2}l_{2,j}(H_2) - (\partial_{H_2}^2l_{2,j}(H_2))\partial^\beta H_2 )\phi_{2,j} \bigr) \\
&\qquad
 + \frac12\bigl( [\partial^\beta; \bm{u}_2,\bm{u}_2]
  + [\partial^\beta; w_2,w_2] \bigr) \Biggr\}.
\end{align*}
In the above calculation, we used~\eqref{const:eqphi} and the notation 
\[
\begin{cases}
 \bm{l}_1(H_1) = (l_{1,0}(H_1),l_{1,1}(H_1),\ldots,l_{1,N}(H_1))^{\rm T}, \\
 \bm{l}_2(H_2) = (l_{2,0}(H_2),l_{2,1}(H_2),\ldots,l_{2,N^*}(H_2))^{\rm T},
\end{cases}
\]
and the notation for the symmetric commutator $[\partial^\beta;\bm{u},\bm{v}]
=\partial^\beta(\bm{u}\cdot\bm{v})-(\partial^\beta \bm{u})\cdot\bm{v}-\bm{u}\cdot(\partial^\beta \bm{v})$. 
We can rewrite~\eqref{const:drKM1} and~\eqref{const:drKM2} in a matrix form as 
\begin{equation}\label{const:dsym}
\mathscr{A}_1\left( \partial_t\bm{U}^\beta - \varepsilon\Delta\bm{U}^\beta
 + (\bm{u}\cdot\nabla)\bm{U}^\beta \right)
+ \mathscr{A}_0^{\rm mod}\bm{U}^\beta
 = \bm{F}_\beta,
\end{equation}
where 
\[
\bm{U}^\beta = 
\begin{pmatrix}
 \zeta^\beta \\
 \bm{\phi}_1^\beta \\
 \bm{\phi}_2^\beta
\end{pmatrix}, \quad
\bm{F}_\beta = 
\begin{pmatrix}
 F_{0,\beta} \\
 \bm{F}_{1,\beta} \\
 \bm{F}_{2,\beta}
\end{pmatrix},
\]
and 
\begin{align*}
& \mathscr{A}_1 = 
 \begin{pmatrix}
  0 & -\rho_1\bm{l}_1^{\rm T} & \rho_2\bm{l}_2^{\rm T} \\
  \rho_1\bm{l}_1 & O & O \\
  -\rho_2\bm{l}_2 & O & O
 \end{pmatrix}, \\
& \mathscr{A}_0^{\rm mod} = 
 \begin{pmatrix}
  a & \rho_1\theta_1\bm{l}_1^{\rm T}(\bm{v}\cdot\nabla)
    & \rho_2\theta_2\bm{l}_2^{\rm T}(\bm{v}\cdot\nabla) \\
  (\bm{v}\cdot\nabla)^*(\rho_1\theta_1\bm{l}_1\,\cdot\,)
   &\displaystyle -\sum_{l=1}^n\partial_l(\rho_1A_1\partial_l\,\cdot\,) & O \\
  (\bm{v}\cdot\nabla)^*(\rho_2\theta_2\bm{l}_2\,\cdot\,)
   & O &\displaystyle -\sum_{l=1}^n\partial_l(\rho_2A_2\partial_l\,\cdot\,)
 \end{pmatrix}.
\end{align*}
Here, we note that $\mathscr{A}_1$ is a skew-symmetric matrix and $\mathscr{A}_0^{\rm mod}$ 
is symmetric in $L^2(\mathbf{R}^n)$. 
Concerning the positivity of $\mathscr{A}_0^{\rm mod}$, we have the following lemma.

\begin{lemma}\label{const:equinorm}
Let $c_0$ and $C_0$ be positive constants. 
Then, there exists $C=C(c_0,C_0)>0$ such that if $a, H_1, H_2$, and $\bm{v}$ satisfy 
\begin{equation}\label{const:assesti2}
\begin{cases}
\|a\|_{L^\infty} + \|(H_1,H_2)\|_{L^\infty}+ \|\bm{v}\|_{L^\infty} \leq C_0, \\
H_1(\bm{x}) \geq c_0, \quad H_2(\bm{x}) \geq c_0
 \quad\mbox{for}\quad \bm{x}\in\mathbf{R}^n,
\end{cases}
\end{equation}
and the stability condition 
\begin{equation}\label{const:Stability}
a(\bm{x}) - \frac{\rho_1\rho_2}{\rho_1H_2(\bm{x})\alpha_2 + \rho_2H_1(\bm{x})\alpha_1} |\bm{v}(\bm{x})|^2 \geq c_0 > 0
 \quad\mbox{for}\quad \bm{x}\in\mathbf{R}^n,
\end{equation}
then for any 
$\dot{\bm{U}} = (\dot{\zeta},\dot{\bm{\phi}}_1,\dot{\bm{\phi}}_2)^{\rm T}$, 
we have the equivalence 
\begin{align*}
C^{-1} \|(\dot{\zeta},\nabla\dot{\bm{\phi}}_1,\nabla\dot{\bm{\phi}}_2)\|_{L^2}^2
\leq (\mathscr{A}_0^{\rm mod}\dot{\bm{U}},\dot{\bm{U}})_{L^2}
\leq C \|(\dot{\zeta},\nabla\dot{\bm{\phi}}_1,\nabla\dot{\bm{\phi}}_2)\|_{L^2}^2.
\end{align*}
\end{lemma}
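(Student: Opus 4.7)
The plan is to reproduce the pointwise analysis of Section~\ref{sect:EE} with the $H$-dependent matrices $A_k(H_k)$ in place of the constant matrices $A_{k,0}$. First I would expand $(\mathscr{A}_0^{\rm mod}\dot{\bm U},\dot{\bm U})_{L^2}$ using the self-adjointness of $\mathscr{A}_0^{\rm mod}$ in $L^2$. Because no $\bm l_k$ sits under a derivative in the cross-entries of $\mathscr{A}_0^{\rm mod}$ (the differentiation $\bm v\cdot\nabla$ acts on $\dot{\bm\phi}_k$, and only then is the resulting vector paired with $\bm l_k$), the integration by parts introduces no $\nabla\bm l_k$ correction, yielding the clean pointwise representation
\[
(\mathscr{A}_0^{\rm mod}\dot{\bm U},\dot{\bm U})_{L^2} = \int_{\mathbf{R}^n}\!\Bigl\{a\dot\zeta^2 + \sum_{k=1,2}\sum_{l=1}^n\rho_k A_k\partial_l\dot{\bm\phi}_k\cdot\partial_l\dot{\bm\phi}_k + 2\dot\zeta\sum_{k=1,2}\rho_k\theta_k\bm v\cdot\bm z_k\Bigr\}\,{\rm d}\bm x,
\]
with $(\bm z_k)_l:=\bm l_k(H_k)\cdot\partial_l\dot{\bm\phi}_k$. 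The upper bound then follows immediately from Cauchy--Schwarz together with~\eqref{const:assesti2}.

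For the lower bound I would transfer the decomposition~\eqref{LS:relation1} to the $H$-dependent matrices through the diagonal rescaling $\tilde\psi_i=H_1^{2i}\psi_i$ (resp.~$H_2^{p_i}\psi_i$), which gives $A_k(H_k)\bm\psi\cdot\bm\psi=H_k A_{k,0}\tilde{\bm\psi}\cdot\tilde{\bm\psi}$ and $\bm l_k(H_k)\cdot\bm\psi=\bm 1\cdot\tilde{\bm\psi}$, so that pointwise
\[
A_k(H_k)\bm\psi\cdot\bm\psi = \alpha_k H_k\bigl(\bm l_k(H_k)\cdot\bm\psi\bigr)^2 + H_k Q_{k,0}A_{k,0}\tilde{\bm\psi}\cdot A_{k,0}\tilde{\bm\psi},
\]
whose second summand is nonnegative by the analysis of Section~\ref{sect:EE}. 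The pointwise version of~\eqref{ARO:identityL1} moreover yields the uniform equivalence $A_k(H_k)\bm\psi\cdot\bm\psi\simeq|\bm\psi|^2$ on $\{c_0\leq H_k\leq C_0\}$.

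Fixing a small $\lambda\in(0,1)$ to be chosen below, I would split each $\rho_k A_k\partial_l\dot{\bm\phi}_k\cdot\partial_l\dot{\bm\phi}_k$ as $\lambda$ times itself plus $(1-\lambda)$ times the pointwise decomposition above. The $\lambda$-piece contributes $\gtrsim\lambda|\partial_l\dot{\bm\phi}_k|^2$, and after discarding the nonnegative remainder of the $(1-\lambda)$-piece the residual pointwise form in $(\dot\zeta,\bm z_1,\bm z_2)$ is governed by the block matrix
\[
M_\lambda(\bm x) = \begin{pmatrix} a & \rho_1\theta_1\bm v^{\rm T} & \rho_2\theta_2\bm v^{\rm T} \\ \rho_1\theta_1\bm v & (1-\lambda)\rho_1\alpha_1 H_1 I_n & 0 \\ \rho_2\theta_2\bm v & 0 & (1-\lambda)\rho_2\alpha_2 H_2 I_n \end{pmatrix}.
\]
Its Schur complement simplifies, thanks to the algebraic identity $\frac{\rho_1\theta_1^2}{\alpha_1 H_1}+\frac{\rho_2\theta_2^2}{\alpha_2 H_2}=\frac{\rho_1\rho_2}{\rho_1 H_2\alpha_2+\rho_2 H_1\alpha_1}$ that follows directly from~\eqref{LS:alpha}, to the scalar $a-\frac{1}{1-\lambda}\frac{\rho_1\rho_2}{\rho_1 H_2\alpha_2+\rho_2 H_1\alpha_1}|\bm v|^2$. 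The stability condition~\eqref{const:Stability} makes this quantity $\geq c_0/2$ uniformly in $\bm x$ for $\lambda$ small enough depending only on $c_0,C_0$, so $M_\lambda$ is uniformly positive definite. Integrating the resulting pointwise bound produces the announced lower bound.

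The delicate point is the Schur-complement calculation: it is precisely the cancellation through~\eqref{LS:alpha} that forces the stability condition~\eqref{const:Stability} to appear in the hypothesis in exactly the sharp form stated, rather than in a weaker surrogate. Everything else amounts to organizing pointwise matrix identities already established in Sections~\ref{sect:ARO} and~\ref{sect:EE}.
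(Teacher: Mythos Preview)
Your proof is correct and follows essentially the same approach as the paper: both introduce the diagonal rescaling $A_k(H_k)=H_kD_k(H_k)A_{k,0}D_k(H_k)$, apply the decomposition~\eqref{LS:relation1}, and reduce the positivity question to the same algebraic identity $\frac{\rho_1\theta_1^2}{\alpha_1 H_1}+\frac{\rho_2\theta_2^2}{\alpha_2 H_2}=\frac{\rho_1\rho_2}{\rho_1 H_2\alpha_2+\rho_2 H_1\alpha_1}$ that makes the stability condition~\eqref{const:Stability} appear. The only difference is presentational: the paper performs a Cauchy--Schwarz reduction to the $3\times3$ matrix $\mathfrak{A}_0$ of Section~\ref{sect:EE} and appeals to its determinant computation, whereas your explicit $\lambda$-split together with the Schur complement of $M_\lambda$ carries out the same computation directly and, in particular, makes transparent how one recovers the full $\|\nabla\dot{\bm\phi}_k\|_{L^2}^2$ in the lower bound---a step the paper leaves implicit in its reference back to Section~\ref{sect:EE}.
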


\begin{proof}[{\bf Proof}.]
Introducing diagonal matrices $D_1(H_1)$ and $D_2(H_2)$ by 
\[
\begin{cases}
 D_1(H_1) = \mbox{diag}(1,H_1^2,H_1^4,\ldots,H_1^{2N}), \\
 D_2(H_2) = \mbox{diag}(1,H_2^{p_1},H_2^{p_2},\ldots,H_2^{p_{N^*}}),
\end{cases}
\]
we have 
\[
 A_k(H_k) = H_kD_k(H_k)A_{k,0}D_k(H_k), \quad k=1,2,
\]
where $A_{1,0}$ and $A_{2,0}$ are constant matrices defined by 
\[
A_{1,0} = \biggl( \frac{1}{2(i+j)+1} \biggr)_{0\leq i,j\leq N}, \quad
A_{2,0} = \biggl( \frac{1}{p_i+p_j+1} \biggr)_{0\leq i,j\leq N^*}.
\]
We have also 
\[
\bm{1}\cdot D_k(H_k)\bm{\phi}_k
 = \bm{l}_k(H_k)\cdot\bm{\phi}_k, \quad k=1,2.
\]
Therefore, 
\begin{align*}
(\mathscr{A}_0^{\rm mod}\dot{\bm{U}},\dot{\bm{U}})_{L^2}
&= (a\dot{\zeta},\dot{\zeta})_{L^2}
 + \sum_{l=1}^n\sum_{k=1,2}
  (\rho_kH_kA_{k,0}D_k\partial_l\dot{\bm{\phi}}_k,D_k\partial_l\dot{\bm{\phi}}_k)_{L^2} \\
&\quad\;
 + 2\sum_{k=1,2}(\rho_k\theta_k\bm{l}_k\cdot(\bm{v}\cdot\nabla)
  \dot{\bm{\phi}}_k,\dot{\zeta})_{L^2} \\
&= \sum_{l=1}^n\sum_{k=1,2}(\rho_kH_kQ_{k,0}A_{k,0}D_k\partial_l\dot{\bm{\phi}}_k,A_{k,0}D_k\partial_l\dot{\bm{\phi}}_k)_{L^2} \\
&\quad\;
 + (a\dot{\zeta},\dot{\zeta})_{L^2} + \sum_{k=1,2}\bigl\{ 
  (\rho_kH_k\alpha_k(\bm{l}_k\otimes\nabla)^{\rm T}\dot{\bm{\phi}}_k, 
  (\bm{l}_k\otimes\nabla)^{\rm T}\dot{\bm{\phi}}_k)_{L^2} \\
&\makebox[10em]{}
 + 2(\rho_k\theta_k\bm{v}\cdot(\bm{l}_k\otimes\nabla)^{\rm T}
  \dot{\bm{\phi}}_k,\dot{\zeta})_{L^2} \bigr\} \\
&=: I_1+I_2, 
\end{align*}
where we used the identity~\eqref{LS:relation1}. 
Since $Q_{1,0}$ and $Q_{2,0}$ are nonnegative and in view of 
\begin{align*}
I_2 &\geq \int_{\mathbf{R}^n}\biggl\{ a\dot{\zeta}^2 + \sum_{k=1,2}\bigl\{
 \rho_kH_k\alpha_k|(\bm{l}_k\otimes\nabla)^{\rm T}\dot{\bm{\phi}}_k|^2
 -2\rho_k\theta_k|\bm{v}||(\bm{l}_k\otimes\nabla)^{\rm T}\dot{\bm{\phi}}_k| |\dot{\zeta}| \bigr\} \biggr\}{\rm d}\bm{x}
\end{align*}
and the analysis in Section~\ref{sect:EE}, we can show the desired equivalence. 
\end{proof}

\begin{lemma}\label{const:unifesti}
Let $g, \rho_1,\rho_2,h_1, h_2, c_0,M_0$ be positive constants and $m$ an integer such that $m>\frac{n}{2}+1$. 
There exist a positive time $T$ and a positive constant $C$ such that if initial data 
$(\zeta_{(0)},\bm{\phi}_{1(0)},\bm{\phi}_{2(0)})$ and bottom topography $b$ satisfy
\[
\begin{cases}
 \|(\zeta_{(0)},\nabla\phi_{1,0(0)},\nabla\phi_{2,0(0)})\|_{H^m}
  + \|(\bm{\phi}_{1(0)}',\bm{\phi}_{2(0)}')\|_{H^{m+1}}
  + \|b\|_{W^{m+2,\infty}} \leq M_0, \\
 h_1-\zeta_{(0)}(\bm{x}) \geq 2c_0, \quad h_2+\zeta_{(0)}(\bm{x})-b(\bm{x}) \geq 2c_0
  \quad\mbox{for}\quad \bm{x}\in\mathbf{R}^n,
\end{cases}
\]
the stability condition~\eqref{const:Stability} with $c_0$ replaced by $2c_0$, 
and the compatibility conditions~\eqref{const:necessary2}, 
then for any $\varepsilon\in(0,1]$ the solution 
$(\zeta^\varepsilon,\bm{\phi}_1^\varepsilon,\bm{\phi}_2^\varepsilon)$ 
constructed in Lemmas~\ref{const:existRP} and~\ref{const:existRP2} satisfies 
\begin{align*}
& \sup_{0\leq t\leq T}\left(
 \|(\zeta^\varepsilon(t),\nabla\phi_{1,0}^\varepsilon(t),\nabla\phi_{2,0}^\varepsilon(t))\|_{H^m}^2
 + \|(\bm{\phi}_1^{\varepsilon \prime},\bm{\phi}_2^{\varepsilon \prime})\|_{H^{m+1}}^2
 \right) \\
&\makebox[25ex]{}
 + \varepsilon\int_0^T
  \|( \zeta^\varepsilon(t),\nabla\bm{\phi}_1^{\varepsilon}(t),
   \nabla\bm{\phi}_2^{\varepsilon}(t) )\|_{H^{m+1}}^2 {\rm d}t \leq C.
\end{align*}
\end{lemma}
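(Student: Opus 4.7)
The aim is to propagate a uniform-in-$\varepsilon\in(0,1]$ energy of the target order on a time interval $[0,T]$ whose length depends only on $M_0, c_0$ and the structural constants, and then to recover the extra derivative on $\bm{\phi}_1',\bm{\phi}_2'$ through the compatibility conditions. For each multi-index $\beta$ with $|\beta|\leq m$, the calculation already carried out in Section~\ref{sect:const} puts the $\partial^\beta$-differentiated regularized Kakinuma model into the symmetric form~\eqref{const:dsym},
$$\mathscr{A}_1 \mathcal{T}_\varepsilon \bm{U}^\beta + \mathscr{A}_0^{\rm mod}\bm{U}^\beta = \bm{F}_\beta,\qquad \mathcal{T}_\varepsilon := \partial_t - \varepsilon\Delta + \bm{u}\cdot\nabla,$$
with $\mathscr{A}_1$ pointwise skew-symmetric and $\mathscr{A}_0^{\rm mod}$ self-adjoint. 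By Lemma~\ref{const:equinorm}, $(\mathscr{A}_0^{\rm mod}\,\cdot,\cdot)_{L^2}$ is equivalent to $\|(\dot\zeta,\nabla\dot{\bm{\phi}}_1,\nabla\dot{\bm{\phi}}_2)\|_{L^2}^2$ so long as non-cavitation and the stability condition~\eqref{const:Stability} hold with a positive constant. I will test~\eqref{const:dsym} with $\mathcal{T}_\varepsilon\bm{U}^\beta$: the pointwise identity $\bm{V}\cdot\mathscr{A}_1\bm{V}=0$ annihilates the $\mathscr{A}_1$-contribution and leaves the master identity $(\mathscr{A}_0^{\rm mod}\bm{U}^\beta,\mathcal{T}_\varepsilon\bm{U}^\beta)_{L^2}=(\bm{F}_\beta,\mathcal{T}_\varepsilon\bm{U}^\beta)_{L^2}$.

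Expanding the left-hand side, the $\partial_t$-piece produces, by symmetry of $\mathscr{A}_0^{\rm mod}$, the time derivative $\tfrac12\tfrac{d}{dt}(\mathscr{A}_0^{\rm mod}\bm{U}^\beta,\bm{U}^\beta)_{L^2}$ up to a correction involving time derivatives of the coefficients of $\mathscr{A}_0^{\rm mod}$, all of which are controlled through the evolution equations~\eqref{const:eqz}--\eqref{const:eqphi}; the $-\varepsilon\Delta$-piece, after integration by parts, yields the positive parabolic dissipation $\varepsilon\sum_l(\mathscr{A}_0^{\rm mod}\partial_l\bm{U}^\beta,\partial_l\bm{U}^\beta)_{L^2}\gtrsim\varepsilon\|(\nabla\zeta^\beta,\nabla^2\bm{\phi}_1^\beta,\nabla^2\bm{\phi}_2^\beta)\|_{L^2}^2$ plus commutator residues; and the convective piece is handled through the standard symmetric-commutator identity, contributing only lower-order terms. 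Moser-type product estimates bound $\|\bm{F}_\beta\|_{L^2}$ by the target norm, and summation over $|\beta|\leq m$ yields a differential inequality of the form $\frac{d}{dt}E_m+\varepsilon D_m\leq C(1+E_m)^p+|(\bm{F}_\beta,\mathcal{T}_\varepsilon\bm{U}^\beta)_{L^2}|$, where $E_m\simeq\|(\zeta,\nabla\phi_{1,0},\nabla\phi_{2,0})\|_{H^m}^2+\|(\nabla\bm{\phi}_1',\nabla\bm{\phi}_2')\|_{H^m}^2$. Since Lemma~\ref{const:existRP2} propagates the compatibility conditions~\eqref{const:necessary2}, Lemma~\ref{ARO:elliptic estimate2} applied with $\bm{f}_1'=\bm{f}_2'=\bm{0}$, $\bm{f}_3=\bm{0}$, and $f_4=-\rho_1\bm{l}_1(H_1)\cdot\bm{\phi}_1+\rho_2\bm{l}_2(H_2)\cdot\bm{\phi}_2$ lifts this $H^m$-control of $\nabla\bm{\phi}_k'$ to the $H^{m+1}$-control required by the statement. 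A continuity argument then guarantees that $H_1,H_2\geq c_0$ and~\eqref{const:Stability} persist on $[0,T]$ for $T$ sufficiently small (depending only on $M_0, c_0, g, \rho_k, h_k$), and Gronwall's lemma delivers both the uniform bound and the integrated $\varepsilon$-dissipation estimate.

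\textbf{Main obstacle.} The principal technical difficulty is to control $(\bm{F}_\beta,\mathcal{T}_\varepsilon\bm{U}^\beta)_{L^2}$ uniformly in $\varepsilon\in(0,1]$. At the top order $|\beta|=m$, the multiplier $\mathcal{T}_\varepsilon\bm{U}^\beta$ contains a $\partial_t\bm{U}^\beta$-contribution that is not directly controlled in $L^2$; this contribution must be replaced through the semilinear evolution~\eqref{const:eqz}--\eqref{const:eqphi}, and the resulting second-order spatial operators must be tamed by the elliptic machinery of Lemma~\ref{ARO:elliptic estimate2}. Simultaneously, the $\varepsilon$-viscous commutators entering both $\bm{F}_\beta$ and the expansion of $(\mathscr{A}_0^{\rm mod}\bm{U}^\beta,-\varepsilon\Delta\bm{U}^\beta)_{L^2}$ must be carefully split into a principal part that is absorbed into $\varepsilon D_m$ after Cauchy--Schwarz and into a genuinely lower-order remainder, so that every constant appearing in the final Gronwall step is independent of $\varepsilon$.
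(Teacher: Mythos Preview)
Your overall strategy coincides with the paper's: test~\eqref{const:dsym} against $\mathcal{T}_\varepsilon\bm{U}^\beta$, kill the $\mathscr{A}_1$-term by skew-symmetry, and replace $\mathcal{T}_\varepsilon\bm{U}^\beta$ on the right by $(\partial^\beta G_0+\bm{u}\cdot\nabla\zeta^\beta,\partial^\beta\bm{G}_1+\bm{u}\cdot\nabla\bm{\phi}_1^\beta,\partial^\beta\bm{G}_2+\bm{u}\cdot\nabla\bm{\phi}_2^\beta)$ via~\eqref{const:eqz}--\eqref{const:eqphi}. Your identification of the main obstacle, and its resolution, match the paper.

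There is, however, a genuine gap in your scheme for recovering the $H^{m+1}$-control of $\bm{\phi}_1',\bm{\phi}_2'$. The $\mathscr{A}_0^{\rm mod}$-energy only controls $\|(\zeta,\nabla\bm{\phi}_1,\nabla\bm{\phi}_2)\|_{H^m}$; what is missing is $\|\bm{\phi}_k'\|_{L^2}$, and this quantity is genuinely needed to close the estimate (for instance, $\partial^\beta L_1^{\rm low}(H_1)\bm{\phi}_1$ in $\bm{F}_{1,\beta}$ contains $(\partial^\beta H_1^{2(i+j)-1})\phi_{1,j}$ with $j\geq1$, whose $L^2$-norm requires $\|\phi_{1,j}\|_{L^\infty}$). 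Your proposal is to invoke Lemma~\ref{ARO:elliptic estimate2} with $\bm{f}_1'=\bm{f}_2'=\bm{0}$, $\bm{f}_3=\bm{0}$ and $f_4=-\rho_1\bm{l}_1(H_1)\cdot\bm{\phi}_1+\rho_2\bm{l}_2(H_2)\cdot\bm{\phi}_2$. But this $f_4$ is built out of the unknown itself: $\nabla f_4$ contains the terms $(\partial_{H_k}\bm{l}_k)(\nabla H_k)\cdot\bm{\phi}_k$, so $\|\nabla f_4\|_{H^k}$ already involves $\|\bm{\phi}_k'\|_{H^k}$, and the elliptic estimate collapses to a tautology (the implicit constant in front of $\|\bm{\phi}_k'\|_{L^2}$ is not small, as it is governed by $\|\nabla\zeta\|_{L^\infty}$ and $\|\nabla b\|_{L^\infty}$, so it cannot be absorbed).

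The paper's remedy is minimal and direct: one augments the energy functional by
\[
\mathscr{E}_m(t)=\sum_{|\beta|\leq m}\Bigl\{(\mathscr{A}_0^{\rm mod}\partial^\beta\bm{U},\partial^\beta\bm{U})_{L^2}
+\|(\partial^\beta\bm{\phi}_1',\partial^\beta\bm{\phi}_2')\|_{L^2}^2\Bigr\},
\]
and derives the evolution of the added piece straight from the heat structure~\eqref{const:eqphi},
\[
\tfrac12\tfrac{{\rm d}}{{\rm d}t}\|(\bm{\phi}_1^{\beta\prime},\bm{\phi}_2^{\beta\prime})\|_{L^2}^2
+\varepsilon\|(\nabla\bm{\phi}_1^{\beta\prime},\nabla\bm{\phi}_2^{\beta\prime})\|_{L^2}^2
=\sum_{k=1,2}(\partial^\beta\bm{G}_k',\bm{\phi}_k^{\beta\prime})_{L^2}\leq C_2.
\]
This supplies exactly the missing $\|\bm{\phi}_k'\|_{H^m}$-control (hence $\|\bm{\phi}_k'\|_{H^{m+1}}$ once combined with $\|\nabla\bm{\phi}_k'\|_{H^m}$), together with the extra dissipation needed for $\varepsilon E_{m+1}$, and the bootstrap closes without any appeal to the compatibility conditions at this stage.
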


\begin{proof}[{\bf Proof}.]
Once again we simply write 
$\bm{U} = (\zeta,\bm{\phi}_1,\bm{\phi}_2)^{\rm T}$ in place of 
$(\zeta^\varepsilon,\bm{\phi}_1^\varepsilon,\bm{\phi}_2^\varepsilon)^{\rm T}$. 
We define an energy function $\mathscr{E}_m(t)$ by 
\[
\mathscr{E}_m(t) = \sum_{|\beta| \leq m}\left\{
 (\mathscr{A}_0^{\rm mod}\partial^\beta\bm{U}(t),
  \partial^\beta\bm{U}(t))_{L^2}
 + \|(\partial^\beta\bm{\phi}_1'(t),
  \partial^\beta\bm{\phi}_2'(t))\|_{L^2}^2 \right\}.
\]
We assume that the solution $(\zeta(t),\bm{\phi}_1(t),\bm{\phi}_2(t))$ satisfies~\eqref{const:assesti2}
and  the stability condition~\eqref{const:Stability} for $0\leq t\leq T$. 
Then, the energy function $\mathscr{E}_m(t)$ is equivalent to 
\[
E_m(t) = \|(\zeta(t),\nabla\phi_{1,0}(t),\nabla\phi_{2,0}(t))\|_{H^m}^2
 + \|(\bm{\phi}_1'(t),\bm{\phi}_2'(t))\|_{H^{m+1}}^2.
\]
Furthermore, we assume that 
\begin{equation}\label{const:unifest2}
E_m(t) + \varepsilon \int_0^t E_{m+1}(\tau){\rm d}\tau \leq M_1
\end{equation}
for $0\leq t\leq T$, where the constant $M_1$ and the time $T$ will be determined later. 
In the following we simply write the constants depending only on $(g,\rho_1,\rho_2,h_1,h_2,c_0,C_0,M_0)$ by $C_1$ 
and the constants depending also on $M_1$ by $C_2$. They may change from line to line. 
Then, it holds that 
\[
C_1^{-1}E_j(t) \leq \mathscr{E}_j(t) \leq C_1E_j(t)
\]
for $j=0,1,2,\ldots$. 
We are going to evaluate the evolution of the energy function $\mathscr{E}_m(t)$. 
To this end, we take the $L^2$-inner product of~\eqref{const:dsym} with 
$\partial_t\bm{U}^\beta - \varepsilon\Delta\bm{U}^\beta + (\bm{u}\cdot\nabla)\bm{U}^\beta$ 
and use integration by parts to get 
\begin{align*}
& \frac12\frac{\rm d}{{\rm d}t}
 (\mathscr{A}_0^{\rm mod}\bm{U}^\beta,\bm{U}^\beta)_{L^2}
 + \varepsilon\sum_{l=1}^n (\mathscr{A}_0^{\rm mod}\partial_l\bm{U}^\beta,
  \partial_l\bm{U}^\beta)_{L^2} \\
&= \frac12([\partial_t,\mathscr{A}_0^{\rm mod}] \bm{U}^\beta,
  \bm{U}^\beta)_{L^2}
 - \varepsilon\sum_{l=1}^n([\partial_l,\mathscr{A}_0^{\rm mod}] \bm{U}^\beta,
  \partial_l\bm{U}^\beta)_{L^2}
 - (\mathscr{A}_0^{\rm mod}\bm{U}^\beta,
  (\bm{u}\cdot\nabla)\bm{U}^\beta)_{L^2} \\
&\quad\;
 + (F_{0,\beta},\partial^\beta G_0 + (\bm{u}\cdot\nabla)\zeta^\beta)_{L^2}
 + \sum_{k=1,2}(\bm{F}_{k,\beta},
  \partial^\beta\bm{G}_k + (\bm{u}\cdot\nabla)\bm{\phi}_k^\beta)_{L^2}.
\end{align*}
Here, we see that 
\begin{align*}
& ([\partial_t,\mathscr{A}_0^{\rm mod}] \bm{U}^\beta, \bm{U}^\beta)_{L^2}
 = ((\partial_t a)\zeta^\beta,\zeta^\beta)_{L^2} \\
&\quad
 + 2\sum_{k=1,2}\rho_k([\partial_t,\theta_k\bm{l}_k^{\rm T}(\bm{v}\cdot\nabla)]
  \bm{\phi}_k^\beta,\zeta^\beta)_{L^2} 
 + \sum_{l=1}^n\sum_{k=1,2}\rho_k((\partial_t A_k)\partial_l\bm{\phi}_k^\beta,
  \partial_l\bm{\phi}_k^\beta)_{L^2}, 
\end{align*}
\begin{align*}
&([\partial_l,\mathscr{A}_0^{\rm mod}] \bm{U}^\beta, \partial_l\bm{U}^\beta)_{L^2}
 = ((\partial_l a)\zeta^\beta,\zeta^\beta)_{L^2} \\
&\quad
 + \sum_{k=1,2}\rho_k\left\{
  ([\partial_l,\theta_k\bm{l}_k^{\rm T}(\bm{v}\cdot\nabla)]
  \bm{\phi}_k^\beta,\partial_l\zeta^\beta)_{L^2}
 + (\zeta^\beta,[\partial_l,\theta_k\bm{l}_k^{\rm T}(\bm{v}\cdot\nabla)]
  \partial_l\bm{\phi}_k^\beta)_{L^2} \right\} \\
&\quad
 + \sum_{k=1,2}\sum_{j=1}^n \rho_k((\partial_jA_k)\partial_l\bm{\phi}_k^\beta,
  \partial_j\partial_l\bm{\phi}_k^\beta)_{L^2}, 
\end{align*}
\begin{align*}
& (\mathscr{A}_0^{\rm mod} \bm{U}^\beta,
  (\bm{u}\cdot\nabla)\bm{U}^\beta)_{L^2}
 = -\frac12((\nabla\cdot(a\bm{u}))\zeta^\beta,\zeta^\beta)_{L^2} \\
&\quad
 - \sum_{k=1,2}\rho_k\left\{
  ((\nabla\cdot\bm{u})\zeta^\beta,
   \theta_k\bm{l}_k\cdot(\bm{v}\cdot\nabla)\bm{\phi}_k^\beta)_{L^2}
  + (\zeta^\beta,[(\bm{u}\cdot\nabla),
   \theta_k\bm{l}_k^{\rm T}(\bm{v}\cdot\nabla)] \bm{\phi}_k^\beta)_{L^2}
   \right\} \\
&\quad
 - \sum_{k=1,2}\sum_{l=1}^n \rho_k\left\{
  (A_k\partial_l\bm{\phi}_k^\beta,
   ((\partial_l\bm{u})\cdot\nabla) \bm{\phi}_k^\beta)_{L^2}
  + \frac12(((\bm{u}\cdot\nabla)^*A_k)\partial_l\bm{\phi}_k^\beta,
   \partial_l\bm{\phi}_k^\beta)_{L^2} \right\},
\end{align*}
so that for $1\leq|\beta|\leq m$ we have 
\begin{align}\label{const:EI}
& \frac12\frac{\rm d}{{\rm d}t}
 (\mathscr{A}_0^{\rm mod}\bm{U}^\beta,\bm{U}^\beta)_{L^2}
 + \varepsilon\sum_{l=1}^n (\mathscr{A}_0^{\rm mod}\partial_l\bm{U}^\beta,
  \partial_l\bm{U}^\beta)_{L^2} \\
&\leq C_2(1 + \varepsilon E_{m+1}(t)^{\frac12})
 + \|F_{0,\beta}\|_{H^1}\|\partial^\beta G_0 + (\bm{u}\cdot\nabla)\zeta^\beta\|_{H^{-1}} \nonumber \\
&\quad\;
 + \sum_{k=1,2}\|\bm{F}_{k,\beta}\|_{L^2}
  \|\partial^\beta\bm{G}_k + (\bm{u}\cdot\nabla)\bm{\phi}_k^\beta\|_{L^2} \nonumber \\
&\leq C_2 \left( 1 + \varepsilon E_{m+1}(t)^{\frac12} \right). \nonumber
\end{align}
Similar estimate can be obtained in the case $|\beta|=0$ more directly. 
On the other hand, it follows from~\eqref{const:eqphi} that 
\begin{align*}
\frac12\frac{\rm d}{{\rm d}t}
 \|(\bm{\phi}_1^{\beta \prime},\bm{\phi}_2^{\beta \prime})\|_{L^2}^2
 + \varepsilon \|(\nabla\bm{\phi}_1^{\beta \prime},
  \nabla\bm{\phi}_2^{\beta \prime})\|_{L^2}^2
 = \sum_{k=1,2}(\partial^\beta\bm{G}_k',\bm{\phi}_k^{\beta \prime})_{L^2}^2
 \leq C_2.
\end{align*}
Therefore, we obtain 
\[
\frac{\rm d}{{\rm d}t}\mathscr{E}_m(t) +\varepsilon E_{m+1}(t)
 \leq C_2 \left( 1 + \varepsilon E_{m+1}(t)^{\frac12} \right),
\]
which yields 
\[
E_m(t) +\varepsilon \int_0^tE_{m+1}(\tau){\rm d}\tau \leq C_1 + C_2t.
\]
Putting $M_1=2C_1$ and taking $T>0$ so that $C_2T\leq C_1$, we obtain by a continuity argument that 
\eqref{const:unifest2} holds for $0\leq t\leq T$. 

It remains to show that $(\zeta(t),\bm{\phi}_1(t),\bm{\phi}_2(t))$ satisfies~\eqref{const:assesti2} 
and the stability condition~\eqref{const:Stability} for $0\leq t\leq T$. 
By the Sobolev embedding theorem,~\eqref{const:eqz}, and~\eqref{const:eqphi}, we see that 
\begin{align}\label{linfest}
& |\zeta(\bm{x},t)-\zeta_{(0)}(\bm{x})| + \sum_{k=1,2}\left( 
 |\nabla\bm{\phi}_k(\bm{x},t)-\nabla\bm{\phi}_{k(0)}(\bm{x})| 
 + |\bm{\phi}_k'(\bm{x},t)-\bm{\phi}_{k(0)}'(\bm{x})| \right) \\
&\leq C_1\biggl( \|\zeta(t)-\zeta_{(0)}\|_{H^{m-1}} + \sum_{k=1,2}\left( 
  \|\nabla\bm{\phi}_k(t)-\nabla\bm{\phi}_{k(0)}\|_{H^{m-1}} 
 + \|\bm{\phi}_k'(t)-\bm{\phi}_{k(0)}'\|_{H^{m-1}} \right) \biggr) \nonumber \\
&\leq 
 C_1\int_0^t\biggl( \|\partial_t\zeta(\tau)\|_{H^{m-1}}
  + \sum_{k=1,2}\left( \|\nabla\partial_t\bm{\phi}_k(\tau)\|_{H^{m-1}}
   + \|\partial_t\bm{\phi}_k'(\tau)\|_{H^{m-1}} \right) \biggr){\rm d}\tau \nonumber \\
&\leq
 C_1\int_0^t\left( \|(G_0,\nabla G_{1,0},\nabla G_{2,0})(\tau)\|_{H^{m-1}}
  + \|(\bm{G}_1',\bm{G}_2')(\tau)\|_{H^m}
  + \varepsilon E_{m+1}(\tau)^{\frac12} \right){\rm d}\tau \nonumber \\
&\leq C_2 \left( t+\sqrt{\varepsilon t} \right), \nonumber
\end{align}
which yields~\eqref{const:assesti2}, except for the estimate for $a$, by taking $T>0$ sufficiently small. 
We now turn to the stability condition~\eqref{const:Stability}.
In order to evaluate $\partial_t a$, we need to obtain estimates for 
$\partial_t\bm{G}_k'$ for $k=1,2$. 
Differentiating~\eqref{const:eqG12} with respect to $t$, we have 
\[
\begin{cases}
\mathcal{L}_{1,i}(H_1)\partial_t\bm{G}_1
 = g_{1,i} \quad\mbox{for}\quad i=1,2,\ldots,N, \\
\mathcal{L}_{2,i}(H_2,b)\partial_t\bm{G}_2
 = g_{2,i} \quad\mbox{for}\quad i=1,2,\ldots,N^*, \\
\mathcal{L}_{1,0}(H_1)\partial_t\bm{G}_1 + \mathcal{L}_{2,0}(H_2,b)\partial_t\bm{G}_2
 = \nabla\cdot\bm{g}_3, \\
- \rho_1 \bm{l}_1(H_1) \cdot \partial_t\bm{G}_1
 + \rho_2 \bm{l}_2(H_2) \cdot \partial_t\bm{G}_2 = g_4,
\end{cases}
\]
where 
\[
\begin{cases}
g_{1,i} = - [\partial_t,\mathcal{L}_{1,i}(H_1)]\bm{G}_1
 - \partial_t \bigl(f_{1,i}(\zeta,\bm{\phi}_1) G_0
 - \varepsilon \widetilde{f}_{1,i}(\zeta,\bm{\phi}_1) \bigr)
 \quad\mbox{for}\quad i=1,2,\ldots,N, \\
g_{2,i} = - [\partial_t,\mathcal{L}_{2,i}(H_2,b)]\bm{G}_2
 - \partial_t \bigl(f_{2,i}(\zeta,\bm{\phi}_2,b) G_0
 - \varepsilon \widetilde{f}_{2,i}(\zeta,\bm{\phi}_2,b) \bigr)
 \quad\mbox{for}\quad i=1,2,\ldots,N^*, \\
\displaystyle
\bm{g}_3
 = (\partial_t\zeta)\left( -\sum_{j=0}^NH_1^{2j}\nabla G_{1,j}
  + \sum_{j=0}^{N^*}(H_2^{p_j}\nabla G_{2,j}-p_jH_2^{p_j-1}G_{2,j}\nabla b) \right) \\
\makebox[5.2ex]{}
 + \partial_t \bigl( \bm{v}G_0
   + \varepsilon \bm{f}_3(\zeta,\bm{\phi}_1,\bm{\phi}_2,b) \bigr), \\
g_4 = \rho_1 [\partial_t,\bm{l}_1(H_1)^{\rm T}] \bm{G}_1
 - \rho_2 [\partial_t,\bm{l}_2(H_2)^{\rm T}] \bm{G}_2
 + \partial_tF.
\end{cases}
\]
Therefore, by Lemma~\ref{ARO:elliptic estimate2} with $k=m-2$ we obtain 
\begin{align*}
& \|(\nabla\partial_tG_{1,0},\nabla\partial_tG_{2,0})\|_{H^{m-2}}
 + \|(\partial_t\bm{G}_1',\partial_t\bm{G}_2')\|_{H^{m-1}} \\
&\leq C_2 \left( \|(\bm{g}_1,\bm{g}_2)\|_{H^{m-3}}
 + \|(\bm{g}_3,\nabla g_4)\|_{H^{m-2}} \right) \\
&\leq C_2 \left( \|(\partial_t\zeta,\nabla\partial_t\phi_{1,0},\nabla\partial_t\phi_{2,0})\|_{H^{m-1}}
 + \|(\partial_t\bm{\phi}_1',\partial_t\bm{\phi}_2')\|_{H^m} \right).
\end{align*}
On the other hand, it follows from~\eqref{const:eqz} and~\eqref{const:eqphi} that 
\begin{align*}
& \|(\partial_t\zeta,\nabla\partial_t\phi_{1,0},\nabla\partial_t\phi_{2,0})\|_{H^{m-1}}
 + \|(\partial_t\bm{\phi}_1',\partial_t\bm{\phi}_2')\|_{H^m} \\
&\leq  \|(G_0,\nabla G_{1,0},\nabla G_{2,0})\|_{H^{m-1}}
 + \|(\bm{G}_1',\bm{G}_2')\|_{H^m} \\
&\quad\;
 + \varepsilon \left( \|(\zeta,\nabla\phi_{1,0},\nabla\phi_{2,0})\|_{H^{m+1}}
  + \|({\bm \phi}_1',{\bm \phi}_2')\|_{H^{m+2}} \right) \\
&\leq C_2 \left( 1 + \varepsilon E_{m+1}(t)^{\frac12} \right).
\end{align*}
Thus, 
\begin{align*}
\|\partial_t a\|_{H^{m-1}}
&\leq C_2\left( \|(\partial_t\zeta,\nabla\partial_t\phi_{1,0},\nabla\partial_t\phi_{2,0},
  \partial_t\bm{G}_1',\partial_t\bm{G}_2')\|_{H^{m-1}}
 + \|(\partial_t{\bm \phi}_1',\partial_t{\bm \phi}_2')\|_{H^m} \right) \\
&\leq C_2 \left( 1 + \varepsilon E_{m+1}(t)^{\frac12} \right),
\end{align*}
so that 
\[
|a(x,t)-a(x,0)|
\leq C_1\int_0^t\|\partial_t a(\tau)\|_{H^{m-1}} {\rm d}\tau
 \leq C_2 \left( t+\sqrt{\varepsilon t} \right).
\]
This together with~\eqref{linfest} yields~\eqref{const:assesti2} and the stability condition~\eqref{const:Stability} by 
taking $T>0$ sufficiently small. 
This completes the proof. 
\end{proof}

Once we obtain this kind of uniform estimates, compactness arguments allow to pass to the limit $\varepsilon\to+0$ 
in the regularized problem~\eqref{const:rKM} and~\eqref{const:ICrKM}. 
By construction, the limit $(\zeta,\bm{\phi}_1,\bm{\phi}_2)$ satisfies~\eqref{Kaki:KM1}--\eqref{Kaki:IC} and 
\[
\begin{cases}
 \zeta,\nabla\phi_{1,0},\nabla\phi_{2,0} \in L^\infty(0,T;H^m)\cap C([0,T];H^{m-1}), \\
 \bm{\phi}_1',\bm{\phi}_2' \in L^\infty(0,T;H^{m+1})\cap C([0,T];H^m), \\
 \partial^\beta\zeta, \partial^\beta\nabla\bm{\phi}_1, \partial^\beta\nabla\bm{\phi}_2 \in C_{\text{w}}([0,T];L^2)
\end{cases}
\]
for any multi-index $\beta$ satisfing $|\beta|=m$. 
It remains to show that the above weak continuity in time can be replaced by the strong continuity. 
To this end, we use the technique by A. J. Majda~\cite{Majda1984}, that is, we make use of the energy estimate. 
See also A. J. Majda and A. L. Bertozzi~\cite{MajdaBertozzi2002}. 
For each $t\in[0,T]$ we introduce an inner product 
\[
\langle(\eta,\nabla\bm{\psi}_1,\nabla\bm{\psi}_2),(\tilde{\eta},\nabla\tilde{\bm{\psi}}_1,\nabla\tilde{\bm{\psi}}_2)\rangle_t
:= (\mathscr{A}_0^{\rm mod}(t) \bm{V},\tilde{\bm{V}})_{L^2} 
\]
with $\bm{V}=(\eta,\bm{\psi}_1,\bm{\psi}_2)^\mathrm{T}$ and 
$\tilde{\bm{V}}=(\tilde{\eta},\tilde{\bm{\psi}}_1,\tilde{\bm{\psi}}_2)^\mathrm{T}$, 
and denote the corresponding norm by $\|\cdot\|_t$, which is equivalent to the standard $L^2$-norm by Lemma~\ref{const:equinorm}. 
By using the energy estimate corresponding to~\eqref{const:EI}, for any multi-index $\beta$ satisfying $|\beta|=m$ 
we can show the continuity of 
$\|(\partial^\beta\zeta(t),\partial^\beta\nabla\bm{\phi}_1(t),\partial^\beta\nabla\bm{\phi}_2(t))\|_t$ in $t\in[0,T]$. 
Particularly, for each $t_0\in[0,T]$ we have 
\[
\lim_{t\to t_0}\|(\partial^\beta\zeta(t),\partial^\beta\nabla\bm{\phi}_1(t),\partial^\beta\nabla\bm{\phi}_2(t))\|_{t_0}
= \|(\partial^\beta\zeta(t_0),\partial^\beta\nabla\bm{\phi}_1(t_0),\partial^\beta\nabla\bm{\phi}_2(t_0))\|_{t_0}.
\]
Since we already knew the weak continuity, this gives the strong continuity, that is, we have 
$\partial^\beta\zeta,\partial^\beta\nabla\bm{\phi}_1,\partial^\beta\nabla\bm{\phi}_2 \in C([0,T];L^2)$. 
Thus, Theorem~\ref{Kaki:th1} follows. 

\section{Hamiltonian structure}
\label{sect:H}
In this section, we will show that the Kakinuma model~\eqref{Kaki:KM1}--\eqref{Kaki:KM3} also enjoys 
a Hamiltonian structure analogous to the one exhibited by T. B. Benjamin and T. J. Bridges~\cite{BenjaminBridges1997} 
on the full interfacial gravity waves. 
We remind again that the Kakinuma model can be written simply as 
\begin{equation}\label{H:KM}
\begin{cases}
 \displaystyle
 \bm{l}_1(H_1)\partial_t\zeta + L_1(H_1)\bm{\phi}_1 = \bm{0}, \\
 \displaystyle
 - \bm{l}_2(H_2)\partial_t\zeta + L_2(H_2,b)\bm{\phi}_2 = \bm{0}, \\
 \displaystyle
 - \rho_1 \bm{l}_1(H_1) \cdot \partial_t\bm{\phi}_1
 + \rho_2 \bm{l}_2(H_2) \cdot \partial_t\bm{\phi}_2 = F,
\end{cases}
\end{equation}
where $\bm{\phi}_1 = (\phi_{1,0},\phi_{1,1},\ldots,\phi_{1,N})^{\rm T}$, 
$\bm{\phi}_2 = (\phi_{2,0},\phi_{2,1},\ldots,\phi_{2,N^*})^{\rm T}$, $\bm{l}_k$ and $L_k$ for $k=1,2$ are defined by~\eqref{ARO:lk} and $F$ is defined by
\begin{align}\label{H:F}
F &= \rho_1\left\{ g\zeta + \frac12\left( |(\nabla\Phi_1^{\rm app})|_{z=\zeta}|^2
  + ((\partial_z\Phi_1^{\rm app})|_{z=\zeta})^2 \right) \right\} \\
&\quad\;
 - \rho_2\left\{ g\zeta + \frac12\left( |(\nabla\Phi_2^{\rm app})|_{z=\zeta}|^2
  + ((\partial_z\Phi_2^{\rm app})|_{z=\zeta})^2 \right) \right\}. \nonumber
\end{align}
Here, $\Phi_1^{\rm app}$ and $\Phi_2^{\rm app}$ are approximate velocity potentials defined by~\eqref{intro:appk}.

\subsection{Hamiltonian}
As was expected, the Hamiltonian would be the total energy. 
In terms of our variables $(\zeta,\bm{\phi}_1,\bm{\phi}_2)$, the total energy $\mathscr{E}^{\rm K}$ is given by 
\begin{align}\label{H:Hamiltonian1}
\mathscr{E}^{\rm K}(\zeta,\bm{\phi}_1,\bm{\phi}_2) = \int_{\mathbf{R}^n} e^{\rm K}(\zeta,\bm{\phi}_1,\bm{\phi}_2){\rm d}\bm{x},
\end{align}
where the density of the energy $e^{\rm K}=e^{\rm K}(\zeta,\bm{\phi}_1,\bm{\phi}_2)$ is given by 
\begin{align}\label{H:energy density}
e^{\rm K}
&= \int_{\zeta}^{h_1}\frac12\rho_1
 \bigl( |\nabla\Phi_1^{\rm app}|^2 + (\partial_z\Phi_1^{\rm app})^2 \bigr) {\rm d}z 
 + \int_{-h_2+b}^{\zeta}\frac12\rho_2
 \bigl( |\nabla\Phi_2^{\rm app}|^2 + (\partial_z\Phi_2^{\rm app})^2 \bigr) {\rm d}z \\
&\qquad
 + \frac12(\rho_2-\rho_1)g\zeta^2 \biggr\} {\rm d}\bm{x}, \nonumber \\
&= \frac12\rho_1 \sum_{i,j=0}^N \biggl(
 \frac{1}{2(i+j)+1}H_1^{2(i+j)+1}\nabla\phi_{1,i}\cdot\nabla\phi_{1,j}
 + \frac{4ij}{2(i+j)-1}H_1^{2(i+j)-1}\phi_{1,i}\phi_{1,j} \biggr)
 \nonumber \\
&\phantom{= \int_{\mathbf{R}^n} }
 + \frac12\rho_2 \sum_{i,j=0}^{N^*} \biggl(
 \frac{1}{p_i+p_j+1}H_2^{p_i+p_j+1}\nabla\phi_{2,i}\cdot\nabla\phi_{2,j}
 - \frac{2p_i}{p_i+p_j}H_2^{p_i+p_j}\phi_{2,i}\nabla b\cdot\nabla\phi_{2,j}
 \nonumber \\
&\phantom{\qquad + \frac12\rho_2 \sum}
 + \frac{p_ip_j}{p_i+p_j-1}H_2^{p_i+p_j-1}(1+|\nabla b|^2)\phi_{2,i}\phi_{2,j} \biggr)
 + \frac12(\rho_2-\rho_1)g\zeta^2. \nonumber
\end{align}
By integration by parts, we also have 
\[
\mathscr{E}^{\rm K}(\zeta,\bm{\phi}_1,\bm{\phi}_2)
= \int_{\mathbf{R}^n}\biggl(
 \frac12\rho_1L_1(H_1){\bm \phi}_1\cdot{\bm \phi}_1
 + \frac12\rho_2L_2(H_2,b){\bm \phi}_2\cdot{\bm \phi}_2
 + \frac12(\rho_2-\rho_1)g\zeta^2 \biggr){\rm d}\bm{x}.
\]
In view of the symmetry of the operators $L_1(H_1)$ and $L_2(H_2,b)$, 
we can easily calculate the variational derivatives of this energy functional and obtain 
\begin{equation}\label{H:VD1}
\begin{cases}
 \delta_{\zeta}\mathscr{E}^{\rm K}(\zeta,\bm{\phi}_1,\bm{\phi}_2) = -F, \\
 \delta_{\bm{\phi}_1}\mathscr{E}^{\rm K}(\zeta,\bm{\phi}_1,\bm{\phi}_2) = \rho_1L_1(H_1)\bm{\phi}_1, \\
 \delta_{\bm{\phi}_2}\mathscr{E}^{\rm K}(\zeta,\bm{\phi}_1,\bm{\phi}_2) = \rho_2L_2(H_2,b)\bm{\phi}_2.
\end{cases}
\end{equation}
Therefore, the Kakinuma model~\eqref{H:KM} can be written as 
\begin{equation}\label{H:pCF}
\begin{pmatrix}
 0 & \rho_1{\bm l}_1(H_1)^{\rm T} & -\rho_2{\bm l}_2(H_2)^{\rm T} \\
 -\rho_1{\bm l}_1(H_1) & O & O \\
 \rho_2{\bm l}_2(H_2) & O & O
\end{pmatrix}
\partial_t
\begin{pmatrix} \zeta \\ {\bm \phi}_1 \\ {\bm \phi}_2 \end{pmatrix}
=
\begin{pmatrix}
 \delta_{\zeta}\mathscr{E}^{\rm K}(\zeta,\bm{\phi}_1,\bm{\phi}_2) \\
 \delta_{\bm{\phi}_1}\mathscr{E}^{\rm K}(\zeta,\bm{\phi}_1,\bm{\phi}_2) \\
 \delta_{\bm{\phi}_2}\mathscr{E}^{\rm K}(\zeta,\bm{\phi}_1,\bm{\phi}_2)
\end{pmatrix}.
\end{equation}

As we will see later, the canonical variables of the Kakinuma model are the surface elevation $\zeta$ 
and $\phi$ given by 
\begin{equation}\label{H:cv}
\phi = \rho_2\Phi_2^{\rm app}|_{z=\zeta} - \rho_1\Phi_1^{\rm app}|_{z=\zeta}
= \rho_2{\bm l}_2(H_2)\cdot\bm{\phi}_2 - \rho_1{\bm l}_1(H_1)\cdot\bm{\phi}_1,
\end{equation}
which is the canonical variable for the full interfacial gravity waves found by 
T. B. Benjamin and T. J. Bridges~\cite{BenjaminBridges1997} with $(\Phi_1,\Phi_2)$ replaced by 
$(\Phi_1^{\rm app},\Phi_2^{\rm app})$. 
Then, the compatibility conditions~\eqref{Kaki:CC1}--\eqref{Kaki:CC2} and~\eqref{H:cv} are 
written in the form 
\begin{equation}\label{H:relations1}
\begin{cases}
 \mathcal{L}_{1,i}(H_1) \bm{\phi}_1 = 0 \quad\mbox{for}\quad i=1,2,\ldots,N, \\
 \mathcal{L}_{2,i}(H_2,b) \bm{\phi}_2 = 0 \quad\mbox{for}\quad i=1,2,\ldots,N^*, \\
 \mathcal{L}_{1,0}(H_1) \bm{\phi}_1 + \mathcal{L}_{2,0}(H_2,b) \bm{\phi}_2 = 0, \\
 - \rho_1\bm{l}_1(H_1) \cdot \bm{\phi}_1 + \rho_2\bm{l}_2(H_2) \cdot \bm{\phi}_2 = \phi.
\end{cases}
\end{equation}
Therefore, it follows from Lemma~\ref{ARO:elliptic estimate2} that once the canonical variables $(\zeta,\phi)$ 
are given in an appropriate class of functions, $\bm{\phi}_1'=(\phi_{1,1},\ldots,\phi_{1,N})^{\rm T}, 
 \bm{\phi}_2'=(\phi_{2,1},\ldots,\phi_{2,N^*})^{\rm T}, \nabla\phi_{1,0}, \nabla\phi_{2,0}$ 
can be determined uniquely. 
In other words, these variables depend on the canonical variables $(\zeta,\phi)$ and $b$, 
and furthermore they depend on $\phi$ linearly. 
Although the solution $({\bm \phi}_1,{\bm \phi}_2)$ to the above equations 
is not unique, we will denote the solution by 
\[
\bm{\phi}_1 = \bm{S}_1(\zeta,b)\phi, \quad \bm{\phi}_2 = \bm{S}_2(\zeta,b)\phi. 
\]
This abbreviation causes no confusion in the following calculations. 
Since we will fix $b$, we simply write $\bm{S}_1(\zeta)$ and $\bm{S}_2(\zeta)$ in place of 
$\bm{S}_1(\zeta,b)$ and $\bm{S}_2(\zeta,b)$ for simplicity. 
Now, we define the Hamiltonian to the Kakinuma model by 
\begin{equation}\label{H:Hamiltonian2}
\mathscr{H}^{\rm K}(\zeta,\phi)
= \mathscr{E}^{\rm K}(\zeta,\bm{S}_1(\zeta)\phi,\bm{S}_2(\zeta)\phi),
\end{equation}
which is uniquely determined from $(\zeta,\phi)$.

\subsection{Hamilton's canonical form}
We proceed to show that the Kakinuma model~\eqref{H:KM} is equivalent to Hamilton's canonical form with the 
Hamiltonian defined by~\eqref{H:Hamiltonian2}. 
In the following, we fix $b\in W^{m,\infty}$ with $m>\frac{n}{2}+1$ and put 
\[
U_b^m = \{\zeta\in H^m \,;\, \inf_{\bm{x}\in\mathbf{R}^n}(h_1-\zeta(\bm{x}))>0 \mbox{ and }
 \inf_{\bm{x}\in\mathbf{R}^n}(h_2+\zeta(\bm{x})-b(\bm{x}))>0 \},
\]
which is an open set in $H^m$. 
We also use the function space $\mathring{H}^k = \{\phi \,;\, \nabla\phi\in H^{m-1} \}$. 
For Banach spaces $\mathscr{X}$ and $\mathscr{Y}$, we denote by $B(\mathscr{X};\mathscr{Y})$ 
the set of all linear and bounded operators from $\mathscr{X}$ into $\mathscr{Y}$. 
By Lemma~\ref{ARO:elliptic estimate2}, we see easily the following lemma.

\begin{lemma}\label{H:lem1}
Let $m$ be an integer such that $m>\frac{n}{2}+1$ and $b\in W^{m,\infty}$. 
For each $\zeta \in U^m_b$ and for $k=1,2,\ldots,m$, the linear operators 
\[
\begin{cases}
{\bf S}_1(\zeta) : \mathring{H}^k \ni\phi \mapsto \bm{\phi}_1 \in\mathring{H}^k\times(H^k)^N, \\
{\bf S}_2(\zeta) : \mathring{H}^k \ni\phi \mapsto \bm{\phi}_2 \in\mathring{H}^k\times(H^k)^{N^*},
\end{cases}
\]
where $(\bm{\phi}_1,\bm{\phi}_2)$ is the solution to~\eqref{H:relations1}, are defined. 
Moreover, we have $\bm{S}_1(\zeta) \in B(\mathring{H}^k;\mathring{H}^k\times(H^k)^N)$
and $\bm{S}_2(\zeta) \in B(\mathring{H}^k;\mathring{H}^k\times(H^k)^{N^*})$.
\end{lemma}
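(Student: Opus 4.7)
The plan is to recognize that the system \eqref{H:relations1} defining $(\bm{\phi}_1,\bm{\phi}_2)=(\bm{S}_1(\zeta)\phi,\bm{S}_2(\zeta)\phi)$ is nothing other than the specialization of the elliptic system \eqref{ARO:eqvarphi} with the data choice
\[
\bm{f}_1'=\bm{0},\qquad \bm{f}_2'=\bm{0},\qquad \bm{f}_3=\bm{0},\qquad f_4=\phi,
\]
so the conclusion is obtained by directly invoking Lemma~\ref{ARO:elliptic estimate2}.

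First I would fix $\zeta\in U_b^m$ and $b\in W^{m,\infty}$, and set
\[
M=\|\zeta\|_{H^m}+\|b\|_{W^{m,\infty}},\qquad c_0=\min\bigl\{\inf_{\bm{x}}(h_1-\zeta),\ \inf_{\bm{x}}(h_2+\zeta-b)\bigr\}>0,
\]
so that the hypotheses of Lemma~\ref{ARO:elliptic estimate2} are in force. Given $k\in\{1,2,\ldots,m\}$, I would apply that lemma with its index parameter taken to be $k-1\in\{0,1,\ldots,m-1\}$. Since $\bm{f}_1'=\bm{f}_2'=\bm{0}$ and $\bm{f}_3=\bm{0}$ while $\nabla f_4=\nabla\phi\in H^{k-1}$, the lemma yields a solution $(\bm{\phi}_1,\bm{\phi}_2)$ to \eqref{H:relations1} with
\[
\|(\bm{\phi}_1',\bm{\phi}_2')\|_{H^k}+\|(\nabla\phi_{1,0},\nabla\phi_{2,0})\|_{H^{k-1}}\leq C\,\|\nabla\phi\|_{H^{k-1}},
\]
where $C=C(\rho_1,\rho_2,h_1,h_2,c_0,m)$ depends on $\zeta$ and $b$ but not on $\phi$. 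This is precisely the bound required for $\bm{S}_1(\zeta)\in B(\mathring{H}^k;\mathring{H}^k\times(H^k)^N)$ and $\bm{S}_2(\zeta)\in B(\mathring{H}^k;\mathring{H}^k\times(H^k)^{N^*})$.

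Next I would address well-definedness and linearity. Since $\phi$ enters only through $f_4=\phi$, which appears linearly in the data of \eqref{ARO:eqvarphi2} via $\bm{F}_2$ and $\bm{F}_3$ (see \eqref{ARO:F123}), the solution produced by the construction in Section~\ref{sect:ARO} depends linearly on $\phi$. Lemma~\ref{ARO:elliptic estimate2} also tells us that $(\bm{\phi}_1,\bm{\phi}_2)$ is unique only up to an additive constant of the form $(\mathcal{C}\rho_2,\mathcal{C}\rho_1)$ on $(\phi_{1,0},\phi_{2,0})$; however this ambiguity is invisible to $\nabla\phi_{1,0}$, $\nabla\phi_{2,0}$, $\bm{\phi}_1'$, and $\bm{\phi}_2'$, and is absorbed by working in the homogeneous space $\mathring{H}^k=\{\phi:\nabla\phi\in H^{k-1}\}$, whose elements are naturally identified modulo additive constants. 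Thus $\bm{S}_1(\zeta)$ and $\bm{S}_2(\zeta)$ are unambiguously defined as linear maps on this function space.

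No real obstacle is expected: the lemma is essentially a repackaging of the elliptic solvability result of Lemma~\ref{ARO:elliptic estimate2} in the language appropriate for the Hamiltonian formulation. The only mild point of care is the treatment of the constant ambiguity in $(\phi_{1,0},\phi_{2,0})$, which is handled by the choice of target space $\mathring{H}^k$ rather than $H^k$ for these components.
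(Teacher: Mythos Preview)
Your proposal is correct and takes essentially the same approach as the paper: the paper simply states that Lemma~\ref{H:lem1} follows easily from Lemma~\ref{ARO:elliptic estimate2}, and your argument makes this explicit by identifying \eqref{H:relations1} as the instance of \eqref{ARO:eqvarphi} with $\bm{f}_1'=\bm{f}_2'=\bm{0}$, $\bm{f}_3=\bm{0}$, $f_4=\phi$. Your additional remarks on linearity and on absorbing the additive-constant ambiguity into the quotient space $\mathring{H}^k$ are accurate and appropriate.
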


Formally, $\dot{\bm{\psi}}_k=D_{\zeta}\bm{S}_k(\zeta)[\dot{\zeta}]\phi$, the Fr\'echet derivative of 
$\bm{S}_k(\zeta)\phi$ with respect to $\zeta$ applied to $\dot{\zeta}$ for $k=1,2$ satisfy
\begin{equation}\label{H:FD1}
\begin{cases}
 \mathcal{L}_{1,i}(H_1) \dot{\bm{\psi}}_1 = D_{H_1}\mathcal{L}_{1,i}(H_1)[\dot{\zeta}]\bm{\phi}_1
  \quad\mbox{for}\quad i=1,2,\ldots,N, \\
 \mathcal{L}_{2,i}(H_2,b) \dot{\bm{\psi}}_2 = - D_{H_2}\mathcal{L}_{2,i}(H_2,b)[\dot{\zeta}]\bm{\phi}_2
  \quad\mbox{for}\quad i=1,2,\ldots,N^*, \\
 \mathcal{L}_{1,0}(H_1) \dot{\bm{\psi}}_1 + \mathcal{L}_{2,0}(H_2,b) \dot{\bm{\psi}}_2
  = D_{H_1}\mathcal{L}_{1,0}(H_1)[\dot{\zeta}]\bm{\phi}_1
   - D_{H_2}\mathcal{L}_{2,0}(H_2,b)[\dot{\zeta}]\bm{\phi}_2, \\
 - \rho_1\bm{l}_1(H_1) \cdot \dot{\bm{\psi}}_1 + \rho_2\bm{l}_2(H_2) \cdot \dot{\bm{\psi}}_2
  = -\bigl( \rho_1(\partial_{H_1}\bm{l}_1(H_1))\cdot\bm{\phi}_1
   + \rho_2(\partial_{H_2}\bm{l}_2(H_2))\cdot\bm{\phi}_2 \bigr) \dot{\zeta}
\end{cases}
\end{equation}
with $\bm{\phi}_j=\bm{S}_j(\zeta)\phi$ for $j=1,2$, where for $i=1,\ldots,N$, 
\begin{align*}
& D_{H_1}\mathcal{L}_{1,i}(H_1)[\dot{\zeta}]\bm{\phi}_1
= \sum_{j=0}^N\left( D_{H_1}L_{1,ij}(H_1)[\dot{\zeta}] - H_1^{2i}D_{H_1}L_{1,0j}(H_1)[\dot{\zeta}]
 - 2iH_1^{2i-1}\dot{\zeta}L_{1,0j}(H_1) \right)\phi_{1,j}, \\
& D_{H_1}L_{1,ij}(H_1)[\dot{\zeta}]\phi_{1,j}
= -\nabla\cdot( \dot{\zeta}H_1^{2(i+j)}\nabla\phi_{1,j} )
 + 4ij \dot{\zeta}H_1^{2(i+j-1)}\phi_{1,j},
\end{align*}
and so on. 
By using these equations together with Lemma~\ref{ARO:elliptic estimate2} and standard arguments, 
we can justify the Fr\'echet differentiability of $\bm{S}_k(\zeta)$ with respect to $\zeta$ for $k=1,2$. 
More precisely, we have the following lemma.

\begin{lemma}\label{H:lem2}
Let $m$ be an integer such that $m>\frac{n}{2}+1$ and $b\in W^{m,\infty}$. 
Then, the maps $U^m_b \ni \zeta \mapsto \bm{S}_1(\zeta) \in B(\mathring{H}^k;\mathring{H}^k\times(H^k)^N)$ 
and $U^m_b \ni \zeta \mapsto \bm{S}_2(\zeta) \in B(\mathring{H}^k;\mathring{H}^k\times(H^k)^{N^*})$ 
are Fr\'echet differentiable for $k=1,2,\ldots,m$, and~\eqref{H:FD1} holds. 
\end{lemma}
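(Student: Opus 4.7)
The plan is to combine the linear elliptic theory of Lemma~\ref{ARO:elliptic estimate2} with a straightforward difference-quotient argument. First I would verify that the auxiliary problem~\eqref{H:FD1} is solvable. Given $\zeta\in U^m_b$ and $\phi\in\mathring{H}^k$, set $\bm{\phi}_j=\bm{S}_j(\zeta)\phi$ so that $\bm{\phi}_1'\in(H^k)^N$, $\bm{\phi}_2'\in(H^k)^{N^*}$, $\nabla\phi_{1,0},\nabla\phi_{2,0}\in H^{k-1}$. The right-hand sides of~\eqref{H:FD1} are built from $\dot\zeta\in H^m$ and the first and second derivatives of $\bm{\phi}_j$; inspection of the explicit forms of $D_{H_k}\mathcal{L}_{k,i}$, $D_{H_k}\bm{l}_k$, and the Moser-type tame estimates in $H^s$, together with $m>\frac{n}{2}+1$, show that these right-hand sides fit the source terms $\bm{f}_1',\bm{f}_2',\bm{f}_3,f_4$ of system~\eqref{ARO:eqvarphi}. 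Thus Lemma~\ref{ARO:elliptic estimate2} yields a solution $(\dot{\bm\psi}_1,\dot{\bm\psi}_2)$, uniquely determined modulo the harmless $(\mathcal{C}\rho_2,\mathcal{C}\rho_1)$ constant, and defines a bounded linear map $\dot\zeta\mapsto D_\zeta\bm{S}_k(\zeta)[\dot\zeta]\phi$ with values in $\mathring{H}^k\times(H^k)^{N\text{ or }N^*}$.

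Next I would estimate the finite difference. For $\dot\zeta\in H^m$ small enough that $\zeta+\dot\zeta$ still lies in $U^m_b$, set $\bm{\phi}_j^\varepsilon=\bm{S}_j(\zeta+\dot\zeta)\phi$ and $\bm{\Delta}_j=\bm{\phi}_j^\varepsilon-\bm{\phi}_j$. Subtracting the defining systems and using that $\mathcal{L}_{k,i}$ and $\bm{l}_k$ depend smoothly on $H_k$, one gets that $(\bm{\Delta}_1,\bm{\Delta}_2)$ satisfies~\eqref{ARO:eqvarphi} at parameter $\zeta$ with source terms of the schematic form $D_{H_k}\mathcal{L}_{k,i}(H_k)[\dot\zeta]\bm{\phi}_j^\varepsilon$ plus quadratic remainders in $\dot\zeta$. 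Applying Lemma~\ref{ARO:elliptic estimate2} at level $k$ produces the continuity bound
\[
\|\bm{\Delta}_1'\|_{H^k}+\|\bm{\Delta}_2'\|_{H^k}+\|\nabla\Delta_{1,0}\|_{H^{k-1}}+\|\nabla\Delta_{2,0}\|_{H^{k-1}}\leq C\|\dot\zeta\|_{H^m}.
\]

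Finally I would analyze the remainder $\bm{r}_j=\bm{\phi}_j^\varepsilon-\bm{\phi}_j-\dot{\bm\psi}_j$. Subtracting~\eqref{H:FD1} from the difference equations for $\bm{\Delta}_j$, the source terms split as
\[
\bigl(D_{H_k}\mathcal{L}_{k,i}(H_k)[\dot\zeta]\bigr)\bm{\Delta}_j\;+\;R_{k,i}(\zeta,\dot\zeta)\bm{\phi}_j^\varepsilon,
\]
where $R_{k,i}$ is a second-order Taylor remainder in $\dot\zeta$. The first contribution is $O(\|\dot\zeta\|_{H^m}\cdot\|\bm{\Delta}_j\|)$, hence $O(\|\dot\zeta\|_{H^m}^2)$ by the previous step, and the second is of the same order by standard product estimates in $H^{k-1}$ combined with the Taylor formula with integral remainder. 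A further application of Lemma~\ref{ARO:elliptic estimate2} gives
\[
\|\bm{r}_1'\|_{H^k}+\|\bm{r}_2'\|_{H^k}+\|\nabla r_{1,0}\|_{H^{k-1}}+\|\nabla r_{2,0}\|_{H^{k-1}}\leq C\|\dot\zeta\|_{H^m}^2,
\]
which is precisely the Fr\'echet differentiability claim. The main obstacle I anticipate is the careful bookkeeping of which norms appear on each factor so that all occurrences of $D_{H_k}\mathcal{L}_{k,i}$ and $D_{H_k}\bm{l}_k$ are controlled in $L^\infty$ via the Sobolev embedding $H^{m-1}\hookrightarrow L^\infty$; the condition $m>\frac{n}{2}+1$ is exactly what makes this work uniformly in $\phi$ and linearly in the target Banach space norm of $B(\mathring{H}^k;\cdots)$.
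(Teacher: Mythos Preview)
Your proposal is correct and follows precisely the approach indicated in the paper, which does not give a detailed proof but merely states that the result follows ``by using these equations together with Lemma~\ref{ARO:elliptic estimate2} and standard arguments.'' Your outline supplies exactly those standard arguments---solving~\eqref{H:FD1} via Lemma~\ref{ARO:elliptic estimate2} to define the candidate derivative, then controlling the difference quotient and the second-order remainder by repeated application of the same elliptic estimate---and the bookkeeping you flag regarding the Sobolev embedding $H^{m-1}\hookrightarrow L^\infty$ is indeed the only point requiring care.
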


We proceed to calculate the variational derivatives of the Hamiltonian $\mathscr{H}^{\rm K}(\zeta,\phi)$, 
which are given by the following lemma.

\begin{lemma}\label{H:lem3}
Let $m$ be an integer such that $m>\frac{n}{2}+1$ and $b\in W^{m,\infty}$. 
Then, the map 
$U^m_b \times\mathring{H}^1 \ni (\zeta,\phi) \mapsto \mathscr{H}^{\rm K}(\zeta,\phi)\in\mathbf{R}$ 
is Fr\'echet differentiable and the variational derivatives of the Hamiltonian are 
\[
\begin{cases}
\delta_{\phi}\mathscr{H}^{\rm K}(\zeta,\phi)
= - \mathcal{L}_{1,0}(H_1)\bm{\phi}_1, \\
\delta_{\zeta}\mathscr{H}^{\rm K}(\zeta,\phi)
= (\delta_{\zeta}\mathscr{E}^{\rm K})(\zeta,\bm{\phi}_1,\bm{\phi}_2) 
 + ( \mathcal{L}_{1,0}(H_1) \bm{\phi}_1 )
 \bigl( \rho_1(\partial_{H_1}\bm{l}_1)(H_1) \cdot \bm{\phi}_1
  + \rho_2(\partial_{H_2}\bm{l}_2)(H_2) \cdot \bm{\phi}_2 \bigr), 
\end{cases}
\]
where $\bm{\phi}_k=\bm{S}_k(\zeta)$ for $k=1,2$. 
\end{lemma}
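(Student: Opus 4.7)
The plan is to apply the chain rule to the composition defining $\mathscr{H}^{\rm K}(\zeta,\phi) = \mathscr{E}^{\rm K}(\zeta,\bm{S}_1(\zeta)\phi,\bm{S}_2(\zeta)\phi)$, using the Fr\'echet differentiability of $\bm{S}_1(\zeta)$ and $\bm{S}_2(\zeta)$ provided by Lemma~\ref{H:lem2} together with the already computed variational derivatives \eqref{H:VD1} of $\mathscr{E}^{\rm K}$. For an arbitrary variation $(\dot{\zeta},\dot{\phi})$ I would write, with $\bm{\phi}_k = \bm{S}_k(\zeta)\phi$,
\begin{align*}
D\mathscr{H}^{\rm K}(\zeta,\phi)[\dot{\zeta},\dot{\phi}]
&= (\delta_\zeta\mathscr{E}^{\rm K},\dot{\zeta})_{L^2}
 + (\rho_1 L_1(H_1)\bm{\phi}_1,\dot{\bm{\psi}}_1)_{L^2}
 + (\rho_2 L_2(H_2,b)\bm{\phi}_2,\dot{\bm{\psi}}_2)_{L^2},
\end{align*}
where $\dot{\bm{\psi}}_k = D_\zeta\bm{S}_k(\zeta)[\dot{\zeta}]\phi + \bm{S}_k(\zeta)\dot{\phi}$. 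The goal is then to rewrite the sum of the last two terms as $(A,\dot{\phi})_{L^2} + (B,\dot{\zeta})_{L^2}$ for scalar expressions $A$ and $B$.

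The key algebraic observation I would record is that, thanks to the compatibility conditions \eqref{H:relations1} satisfied by $(\bm{\phi}_1,\bm{\phi}_2) = (\bm{S}_1(\zeta)\phi,\bm{S}_2(\zeta)\phi)$, the definitions \eqref{ARO:calLk} give
\begin{equation*}
L_1(H_1)\bm{\phi}_1 = \bm{l}_1(H_1)\,\mathcal{L}_{1,0}(H_1)\bm{\phi}_1, \qquad
L_2(H_2,b)\bm{\phi}_2 = -\bm{l}_2(H_2)\,\mathcal{L}_{1,0}(H_1)\bm{\phi}_1,
\end{equation*}
where in the second identity I used the third line of \eqref{H:relations1}. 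Substituting these into the chain rule expansion collapses the sum of the two energy contributions into
\begin{equation*}
\bigl(\mathcal{L}_{1,0}(H_1)\bm{\phi}_1,\ \rho_1\bm{l}_1(H_1)\cdot\dot{\bm{\psi}}_1 - \rho_2\bm{l}_2(H_2)\cdot\dot{\bm{\psi}}_2\bigr)_{L^2}.
\end{equation*}
This is the central simplification; without it, the terms involving $D_\zeta\bm{S}_k(\zeta)[\dot{\zeta}]\phi$ would remain implicit and intractable.

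To obtain $\delta_\phi\mathscr{H}^{\rm K}$, I set $\dot{\zeta}=0$ and linearize the fourth line of \eqref{H:relations1}, which yields $-\rho_1\bm{l}_1\cdot\dot{\bm{\psi}}_1 + \rho_2\bm{l}_2\cdot\dot{\bm{\psi}}_2 = \dot{\phi}$; the collapsed expression becomes $-(\mathcal{L}_{1,0}(H_1)\bm{\phi}_1,\dot{\phi})_{L^2}$, giving the stated formula for $\delta_\phi\mathscr{H}^{\rm K}$. To obtain $\delta_\zeta\mathscr{H}^{\rm K}$, I set $\dot{\phi}=0$ and use the fourth line of \eqref{H:FD1}, which gives $\rho_1\bm{l}_1\cdot\dot{\bm{\psi}}_1 - \rho_2\bm{l}_2\cdot\dot{\bm{\psi}}_2 = \bigl(\rho_1(\partial_{H_1}\bm{l}_1)\cdot\bm{\phi}_1 + \rho_2(\partial_{H_2}\bm{l}_2)\cdot\bm{\phi}_2\bigr)\dot{\zeta}$, and adding the contribution $(\delta_\zeta\mathscr{E}^{\rm K},\dot{\zeta})_{L^2}$ produces exactly the stated formula.

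The one step requiring care, rather than cleverness, is the Fr\'echet differentiability of the scalar map $(\zeta,\phi) \mapsto \mathscr{H}^{\rm K}(\zeta,\phi)$ itself and the justification that $D_\zeta\bm{S}_k(\zeta)[\dot{\zeta}]\phi$ really satisfies the linearized system \eqref{H:FD1} in the topology $U_b^m \times \mathring{H}^1$ used in the statement. The main obstacle here is bookkeeping: the energy density \eqref{H:energy density} is a polynomial in $(\bm{\phi}_k,\nabla\bm{\phi}_k,H_k,\nabla b)$ of low total order, so its differentiability on $U_b^m\times(\mathring{H}^1\times (H^1)^N)\times(\mathring{H}^1\times(H^1)^{N^*})$ follows from standard Moser-type product estimates once $m > n/2 + 1$ is used, and Lemma~\ref{H:lem2} supplies differentiability of $\bm{S}_k$ at the required level of regularity. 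Once these ingredients are in place the chain-rule computation above is purely algebraic, and the formal manipulations are fully rigorous.
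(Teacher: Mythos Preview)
Your proposal is correct and follows essentially the same approach as the paper: the chain rule applied to $\mathscr{E}^{\rm K}(\zeta,\bm{S}_1(\zeta)\phi,\bm{S}_2(\zeta)\phi)$, the key collapse $L_k\bm{\phi}_k = \pm\bm{l}_k\,\mathcal{L}_{1,0}(H_1)\bm{\phi}_1$ coming from the compatibility conditions in \eqref{H:relations1}, and then the fourth lines of \eqref{H:relations1} and \eqref{H:FD1} to read off the two variational derivatives. The only minor difference is presentational: the paper treats the $\dot\phi$- and $\dot\zeta$-variations separately and handles the $\mathring{H}^1$ regularity by first computing in $H^2$ and then reinterpreting the $L^2$ inner products as duality pairings, whereas you bundle both variations into $\dot{\bm{\psi}}_k$ and invoke product estimates directly.
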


\begin{proof}[{\bf Proof}.]
Let us calculate Fr\'echet derivatives of the Hamiltonian $\mathscr{H}^{\rm K}(\zeta,\phi)$. 
Let us consider first 
$U^m_b \times H^2 \ni (\zeta,\phi) \mapsto \mathscr{H}^{\rm K}(\zeta,\phi)$.
For any $\dot{\phi}\in H^2$, we see that 
\begin{align*}
D_\phi\mathscr{H}^{\rm K}(\zeta,\phi)[\dot{\phi}]
&= (D_{\bm{\phi}_1} \mathscr{E}^{\rm K}) (\zeta,\bm{S}_1(\zeta)\phi,\bm{S}_2(\zeta)\phi)[\bm{S}_1(\zeta)\dot{\phi}]
 + (D_{\bm{\phi}_2} \mathscr{E}^{\rm K}) (\zeta,\bm{S}_1(\zeta)\phi,\bm{S}_2(\zeta)\phi)[\bm{S}_2(\zeta)\dot{\phi}] \\
&= ( (\delta_{\bm{\phi}_1} \mathscr{E}^{\rm K})(\zeta,\bm{\phi}_1,\bm{\phi}_2), \bm{S}_1(\zeta)\dot{\phi} )_{L^2}
 + ( (\delta_{\bm{\phi}_2} \mathscr{E}^{\rm K})(\zeta,\bm{\phi}_1,\bm{\phi}_2), \bm{S}_2(\zeta)\dot{\phi} )_{L^2} \\
&= (\rho_1L_1(H_1)\bm{\phi}_1,\bm{S}_1(\zeta)\dot{\phi})_{L^2}
 + (\rho_2L_2(H_2,b)\bm{\phi}_2,\bm{S}_2(\zeta)\dot{\phi})_{L^2} \\
&=  (\rho_1\bm{l}_1(H_1)\bigl(\mathcal{L}_{1,0}(H_1)\bm{\phi}_1\bigr), \bm{S}_1(\zeta)\dot{\phi})_{L^2}
 - (\rho_2\bm{l}_2(H_2)\bigl(\mathcal{L}_{1,0}(H_1)\bm{\phi}_1\bigr), \bm{S}_2(\zeta)\dot{\phi})_{L^2} \\
&= (\mathcal{L}_{1,0}(H_1)\bm{\phi}_1, \rho_1\bm{l}_1(H_1)\cdot \bm{S}_1(\zeta)\dot{\phi}
  - \rho_2\bm{l}_2(H_2)\cdot \bm{S}_2(\zeta)\dot{\phi})_{L^2} \\
&= -( \mathcal{L}_{1,0}(H_1)\bm{\phi}_1, \dot{\phi})_{L^2},
\end{align*}
where we used~\eqref{H:VD1} and Lemma~\ref{H:lem1}. 
The above calculations are also valid when $(\phi,\dot{\phi})\in  \mathring{H}^1\times \mathring{H}^1$, 
provided we replace the $L^2$ inner products with the $\mathscr{X}^\prime$--$\mathscr{X}$ duality product 
where $\mathscr{X} = \mathring{H}^1\times (H^1)^N$ or $\mathscr{X} = \mathring{H}^1\times (H^1)^{N^*}$ 
for the first lines, and $\mathscr{X}= \mathring{H}^1$ for the last line. 
This gives the first equation of the lemma.

Similarly, for any $(\zeta,\phi)\in U_b^m\times \mathring{H}^2$ and $\dot{\zeta}\in H^m$ we see that 
\begin{align*}
D_\zeta\mathscr{H}^{\rm K}(\zeta,\phi)[\dot{\zeta}]
&= (D_\zeta \mathscr{E}^{\rm K})(\zeta,\bm{S}_1(\zeta)\phi,\bm{S}_2(\zeta)\phi)[\dot{\zeta}]
 + (D_{\bm{\phi}_1} \mathscr{E}^{\rm K})(\zeta,\bm{S}_1(\zeta)\phi,\bm{S}_2(\zeta)\phi)
  [D_\zeta \bm{S}_1(\zeta)[\dot{\zeta}]\phi] \\
&\quad\;
 + (D_{\bm{\phi}_2} \mathscr{E}^{\rm K})(\zeta,\bm{S}_1(\zeta)\phi,\bm{S}_2(\zeta)\phi)
  [D_\zeta \bm{S}_2(\zeta)[\dot{\zeta}]\phi] \\
&= ( (\delta_{\zeta} \mathscr{E}^{\rm K})(\zeta,\bm{\phi}_1,\bm{\phi}_2), \dot{\zeta} )_{L^2}
 + ( (\delta_{\bm{\phi}_1} \mathscr{E}^{\rm K})(\zeta,\bm{\phi}_1,\bm{\phi}_2), 
   D_\zeta \bm{S}_1(\zeta)[\dot{\zeta}]\phi )_{L^2} \\
&\quad\;
 + ( (\delta_{\bm{\phi}_2} \mathscr{E}^{\rm K})(\zeta,\bm{\phi}_1,\bm{\phi}_2), 
   D_\zeta \bm{S}_2(\zeta)[\dot{\zeta}]\phi )_{L^2}.
\end{align*}
Here, we have 
\begin{align*}
& ( (\delta_{\bm{\phi}_1} \mathscr{E}^{\rm K})(\zeta,\bm{\phi}_1,\bm{\phi}_2), 
   D_\zeta \bm{S}_1(\zeta)[\dot{\zeta}]\phi )_{L^2} 
 + ( (\delta_{\bm{\phi}_2} \mathscr{E}^{\rm K})(\zeta,\bm{\phi}_1,\bm{\phi}_2), 
   D_\zeta \bm{S}_2(\zeta)[\dot{\zeta}]\phi )_{L^2} \\
&= ( \rho_1L_1(H_1)\bm{\phi}_1, D_\zeta \bm{S}_1(\zeta)[\dot{\zeta}]\phi )_{L^2}
 + ( \rho_2L_2(H_2,b)\bm{\phi}_2, D_\zeta \bm{S}_2(\zeta)[\dot{\zeta}]\phi )_{L^2} \\
&= ( \mathcal{L}_{1,0}(H_1)\bm{\phi}_1, \rho_1\bm{l}_1(H_1)\cdot D_\zeta \bm{S}_1(\zeta)[\dot{\zeta}]\phi
  - \rho_2\bm{l}_2(H_2)\cdot D_\zeta \bm{S}_2(\zeta)[\dot{\zeta}]\phi)_{L^2} \\
&= ( \mathcal{L}_{1,0}(H_1)\bm{\phi}_1, 
 (\rho_1(\partial_{H_1}\bm{l}_1)(H_1) \cdot \bm{\phi}_1
  + \rho_2(\partial_{H_2}\bm{l}_2)(H_2) \cdot \bm{\phi}_2) \dot{\zeta})_{L^2} \\
&= ( (\mathcal{L}_{1,0}(H_1)\bm{\phi}_1)(\rho_1(\partial_{H_1}\bm{l}_1)(H_1) \cdot \bm{\phi}_1
  + \rho_2(\partial_{H_2}\bm{l}_2)(H_2) \cdot \bm{\phi}_2), \dot{\zeta})_{L^2},
\end{align*}
where we used the identity
\begin{align*}
&\rho_1\bm{l}_1(H_1)\cdot D_\zeta \bm{S}_1(\zeta)[\dot{\zeta}]\phi
  - \rho_2\bm{l}_2(H_2)\cdot D_\zeta \bm{S}_2(\zeta)[\dot{\zeta}]\phi \\
&= \left( \rho_1(\partial_{H_1}\bm{l}_1)(H_1) \cdot \bm{\phi}_1
  + \rho_2(\partial_{H_2}\bm{l}_2)(H_2) \cdot \bm{\phi}_2 \right) \dot{\zeta},
\end{align*}
stemming from~\eqref{H:FD1}. 
Again, the above identities are still valid for $(\zeta,\phi)\in U_b^m\times \mathring{H}^1$ 
provided we replace the $L^2$ inner products with suitable duality products. 
This concludes the proof of the Fr\'echet differentiability, and the second equation of the lemma. 
\end{proof}

Now, we are ready to show another main result in this paper.

\begin{theorem}\label{H:th2}
Let $m$ be an integer such that $m>\frac{n}{2}+1$ and $b\in W^{m,\infty}$. 
Then, the Kakinuma model~\eqref{Kaki:KM1}--\eqref{Kaki:KM3} is equivalent to Hamilton's canonical equations
\begin{equation}\label{H:CF}
\partial_t\zeta = \frac{\delta\mathscr{H}^{\rm K}}{\delta\phi}, \quad
\partial_t\phi = -\frac{\delta\mathscr{H}^{\rm K}}{\delta\zeta},
\end{equation}
with $\mathscr{H}^{\rm K}$ defined by~\eqref{H:Hamiltonian2} as long as $\zeta(\cdot,t) \in U^m_b$ 
and $\phi(\cdot,t)\in \mathring{H}^1$. 
More precisely, for any regular solution $(\zeta,\bm{\phi}_1,\bm{\phi}_2)$ to the Kakinuma model 
\eqref{Kaki:KM1}--\eqref{Kaki:KM3}, 
if we define $\phi$ by~\eqref{H:cv}, then $(\zeta,\phi)$ satisfies Hamilton's canonical equations~\eqref{H:CF}. 
Conversely, for any regular solution $(\zeta,\phi)$ to Hamilton's canonical equations~\eqref{H:CF}, 
if we define $\bm{\phi}_1$ and $\bm{\phi}_2$ by $\bm{\phi}_k=\bm{S}_k(\zeta)\phi$ for $k=1,2$, 
then $(\zeta,\bm{\phi}_1,\bm{\phi}_2)$ satisfies the Kakinuma model~\eqref{Kaki:KM1}--\eqref{Kaki:KM3}. 
\end{theorem}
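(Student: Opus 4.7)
The plan is to verify Hamilton's canonical equations by direct computation, reducing both to the identity $\partial_t\zeta = -\mathcal{L}_{1,0}(H_1)\bm{\phi}_1$ plus the Bernoulli-type equation, and appealing to the variational derivative formulas already obtained in Lemma~\ref{H:lem3}. A key structural observation I will use in both directions is that, when the compatibility conditions~\eqref{Kaki:CC1}--\eqref{Kaki:CC2} hold, the full Kakinuma system~\eqref{H:KM} is equivalent to the three scalar/vector equations
\[
\partial_t\zeta + \mathcal{L}_{1,0}(H_1)\bm{\phi}_1 = 0, \quad -\partial_t\zeta + \mathcal{L}_{2,0}(H_2,b)\bm{\phi}_2 = 0,\quad -\rho_1\bm{l}_1\cdot\partial_t\bm{\phi}_1 + \rho_2\bm{l}_2\cdot\partial_t\bm{\phi}_2 = F,
\]
since for $i\geq 1$ the $i$-th component of the first vector equation reads $H_1^{2i}(\partial_t\zeta+\mathcal{L}_{1,0}(H_1)\bm{\phi}_1) + \mathcal{L}_{1,i}(H_1)\bm{\phi}_1 = 0$ and similarly for the second vector equation.

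For the forward implication I would first observe that the compatibility conditions~\eqref{Kaki:CC1}--\eqref{Kaki:CC2} are propagated in time by any regular solution to~\eqref{Kaki:KM1}--\eqref{Kaki:KM3} (obtained by eliminating $\partial_t\zeta$ between the evolution equations), so that $\bm{\phi}_k(\cdot,t) = \bm{S}_k(\zeta(\cdot,t))\phi(\cdot,t)$ at each time, up to the harmless additive constant of the form $(\mathcal{C}\rho_2,\mathcal{C}\rho_1)$. Then the first Hamilton equation is immediate from Lemma~\ref{H:lem3} and the $i=0$ component of the first Kakinuma equation. For the second Hamilton equation, I differentiate~\eqref{H:cv} in $t$, using $\partial_t H_1=-\partial_t\zeta$ and $\partial_t H_2=\partial_t\zeta$, to obtain
\[
\partial_t\phi = \bigl(\rho_1(\partial_{H_1}\bm{l}_1)\cdot\bm{\phi}_1 + \rho_2(\partial_{H_2}\bm{l}_2)\cdot\bm{\phi}_2\bigr)\partial_t\zeta + \rho_2\bm{l}_2\cdot\partial_t\bm{\phi}_2 - \rho_1\bm{l}_1\cdot\partial_t\bm{\phi}_1,
\]
substitute $\partial_t\zeta = -\mathcal{L}_{1,0}(H_1)\bm{\phi}_1$ in the first parenthesis and replace the last two terms by $F$ using the third Kakinuma equation. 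A direct comparison with the expression for $\delta_\zeta \mathscr{H}^{\rm K}$ given in Lemma~\ref{H:lem3} together with the identity $\delta_\zeta\mathscr{E}^{\rm K}=-F$ from~\eqref{H:VD1} yields exactly $\partial_t\phi = -\delta_\zeta\mathscr{H}^{\rm K}$.

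For the reverse implication, I would assume $(\zeta,\phi)$ solves~\eqref{H:CF} and set $\bm{\phi}_k=\bm{S}_k(\zeta)\phi$, so the compatibility conditions and the relation~\eqref{H:cv} hold by construction of $\bm{S}_k$ through~\eqref{H:relations1}. The first Hamilton equation combined with Lemma~\ref{H:lem3} gives $\partial_t\zeta+\mathcal{L}_{1,0}(H_1)\bm{\phi}_1=0$, which by the compatibility relation $\mathcal{L}_{1,0}(H_1)\bm{\phi}_1 + \mathcal{L}_{2,0}(H_2,b)\bm{\phi}_2=0$ also yields $-\partial_t\zeta+\mathcal{L}_{2,0}(H_2,b)\bm{\phi}_2=0$. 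By the structural observation above, these two equations together with the compatibility conditions $\mathcal{L}_{k,i}(\cdot)\bm{\phi}_k=0$ for $i\geq 1$ are exactly the first two vector equations of~\eqref{H:KM}. For the third equation, I differentiate~\eqref{H:cv} in $t$ as before to express $-\rho_1\bm{l}_1\cdot\partial_t\bm{\phi}_1+\rho_2\bm{l}_2\cdot\partial_t\bm{\phi}_2$ in terms of $\partial_t\phi$ and $\partial_t\zeta$, and then invoke the second Hamilton equation with Lemma~\ref{H:lem3} to identify the right-hand side with $F$.

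The main obstacle will be bookkeeping rather than a deep step: ensuring that the non-uniqueness in $\bm{S}_k(\zeta)\phi$ (an additive constant of the form $(\mathcal{C}\rho_2,\mathcal{C}\rho_1)$) is invisible to every quantity entering Hamilton's equations, namely $\mathcal{L}_{k,0}$, $\mathscr{E}^{\rm K}$ and the combination $\rho_1(\partial_{H_1}\bm{l}_1)\cdot\bm{\phi}_1+\rho_2(\partial_{H_2}\bm{l}_2)\cdot\bm{\phi}_2$, and that the implicit time differentiation through $\bm{S}_k(\zeta(t))$ in the reverse direction is compatible with~\eqref{H:FD1} so that no spurious terms appear when recovering the third Kakinuma equation.
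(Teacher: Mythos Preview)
Your proposal is correct and follows essentially the same approach as the paper: both directions reduce to the identity $\partial_t\zeta = -\mathcal{L}_{1,0}(H_1)\bm{\phi}_1$ together with the Bernoulli-type equation, combined with the variational derivative formulas of Lemma~\ref{H:lem3} and~\eqref{H:VD1}. Your structural observation about the equivalence of the vector equations with the scalar $i=0$ equations plus compatibility, and your explicit remark that the compatibility conditions are propagated so that $\bm{\phi}_k=\bm{S}_k(\zeta)\phi$, make explicit what the paper uses tacitly (it invokes Lemma~\ref{H:lem1} at the corresponding step), but the argument is otherwise the same.
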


\begin{proof}[{\bf Proof}.]
Suppose that $(\zeta,\bm{\phi}_1,\bm{\phi}_2)$ is a solution to the Kakinuma model~\eqref{Kaki:KM1}--\eqref{Kaki:KM3}. 
Then, it satisfies~\eqref{H:pCF}, and in particular
\begin{equation}\label{H:eq1}
\partial_t\zeta = -\mathcal{L}_{1,0}(H_1)\bm{\phi}_1.
\end{equation}
Moreover, it follows from~\eqref{H:cv} and~\eqref{H:pCF} that 
\begin{align*}
\partial_t\phi 
&= \rho_2\bm{l}_2(H_2)\cdot\partial_t\bm{\phi}_2 - \rho_1\bm{l}_1(H_1)\cdot\partial_t\bm{\phi}_1
 + \left( \rho_2(\partial_{H_2}\bm{l}_2(H_2))\cdot\bm{\phi}_2 
  + \rho_1(\partial_{H_1}\bm{l}_1(H_1))\cdot\bm{\phi}_1 \right)\partial_t\zeta \\
&= - (\delta_\zeta \mathscr{E}^{\rm K})(\zeta,\bm{\phi}_1\bm{\phi}_2)
 - (\mathcal{L}_{1,0}(H_1)\bm{\phi}_1)\left( \rho_1(\partial_{H_1}\bm{l}_1(H_1))\cdot\bm{\phi}_1
  + \rho_2(\partial_{H_2}\bm{l}_2(H_2))\cdot\bm{\phi}_2 \right).
\end{align*}
These equations together with Lemma~\ref{H:lem3} show that $(\zeta,\phi)$ satisfies~\eqref{H:CF}.

Conversely, suppose that $(\zeta,\phi)$ satisfies Hamilton's canonical equations~\eqref{H:CF} and 
put $\bm{\phi}_k=\bm{S}_k(\zeta)\phi$ for $k=1,2$. 
Then, it follows from~\eqref{H:CF} and Lemma~\ref{H:lem3} that we have~\eqref{H:eq1}. 
This fact and Lemma~\ref{H:lem1} imply the equations 
\[
\begin{cases}
  \bm{l}_1(H_1)\partial_t\zeta + L_1(H_1)\bm{\phi}_1 = \bm{0}, \\
 - \bm{l}_2(H_2)\partial_t\zeta + L_2(H_2,b)\bm{\phi}_2 = \bm{0}.
\end{cases}
\]
We see also that 
\begin{align*}
-\rho_1\bm{l}_1(H_1)\cdot\partial_t\bm{\phi}_1 + \rho_2\bm{l}_2(H_2)\cdot\partial_t\bm{\phi}_2
&= \partial_t\phi - \left( \rho_1(\partial_{H_1}\bm{l}_1)(H_1) \cdot \bm{\phi}_1
  + \rho_2(\partial_{H_2}\bm{l}_2)(H_2) \cdot \bm{\phi}_2 \right)\partial_t\zeta \\
&=  - \delta_{\zeta}\mathscr{E}^{\rm K}(\zeta,\bm{\phi}_1\bm{\phi}_2) = F,
\end{align*}
where we used~\eqref{H:CF},~\eqref{H:eq1}, Lemma~\ref{H:lem3} and~\eqref{H:VD1}. 
Therefore, $(\zeta,\bm{\phi}_1,\bm{\phi}_2)$ satisfies~\eqref{H:KM}, that is, 
the Kakinuma model~\eqref{Kaki:KM1}--\eqref{Kaki:KM3}. 
\end{proof}

\section{Conservation laws}
\label{sect:CL}
The Kakinuma model~\eqref{Kaki:KM1}--\eqref{Kaki:KM3} has conservative quantities: 
the excess of mass $\int_{\mathbf{R}^n}\zeta{\rm d}\bm{x}$ and the total energy $\mathscr{E}^{\rm K}(\zeta,\bm{\phi}_1,\bm{\phi}_2)$ given by~\eqref{H:Hamiltonian1}. 
Moreover, in the case of the flat bottom in the lower layer, the momentum given by 
\begin{align*}
\mathscr{M}^{\rm K}(\zeta,\bm{\phi}_1,\bm{\phi}_2)
&= \int\!\!\!\int_{\Omega_1(t)}\rho_1\nabla\Phi_1^{\rm app}{\rm d}\bm{x}{\rm d}z
 + \int\!\!\!\int_{\Omega_2(t)}\rho_2\nabla\Phi_2^{\rm app}{\rm d}\bm{x}{\rm d}z \\
&
 = \int_{\mathbf{R}^n}\zeta\nabla(-\rho_1\bm{l}_1(H_1) \cdot \bm{\phi}_1
  + \rho_2\bm{l}_2(H_2) \cdot \bm{\phi}_2){\rm d}\bm{x} \\
&
 = \int_{\mathbf{R}^n}\zeta\nabla\phi{\rm d}\bm{x}
\end{align*}
is also conserved for the Kakinuma model. 
Here, we give also the corresponding flux functions to these conservative quantities.

We have two forms of conservation of mass by~\eqref{Kaki:KM1} and~\eqref{Kaki:KM2} with $i=0$, that is, 
\begin{align}\label{CL:mass}
& \partial_t\zeta + \nabla\cdot\sum_{j=0}^N\left( - \frac{1}{2j+1}H_1^{2j+1}\nabla\phi_{1,j} \right) = 0, \\
& \partial_t\zeta + \nabla\cdot\sum_{j=0}^{N^*}\left(
 \frac{1}{p_j+1}H_2^{p_j+1}\nabla\phi_{2,j} - \frac{p_j}{p_j}H_2^{p_j}\phi_{2,j}\nabla b \right) = 0.
\end{align}

\begin{proposition}\label{CL:energy}
Any regular solution $(\zeta,\bm{\phi}_1,\bm{\phi}_2)$ to the Kakinuma model 
\eqref{Kaki:KM1}--\eqref{Kaki:KM3} satisfies the conservation of energy 
\[
\partial_t e^{\rm K} + \nabla\cdot\bm{f}_e^{\rm K} = 0,
\]
where the energy density $e^{\rm K}$ is defined by~\eqref{H:energy density} and the corresponding flux 
$\bm{f}_e^{\rm K}$ is given by 
\begin{align*}
\bm{f}_e^{\rm K}
&= \rho_1\sum_{i,j=0}^N \left( -\frac{1}{2(i+j)+1}H_1^{2(i+j)+1}\nabla\phi_{1,j} \right)(\partial_t\phi_{1,i}) \\
&\quad\;
 + \rho_2\sum_{i,j=0}^{N^*}\left( -\frac{1}{p_i+p_j+1}H_2^{p_i+p_j+1}\nabla\phi_{2,j}
  +\frac{p_j}{p_i+p_j}H_2^{p_i+p_j}\phi_{2,j}\nabla b \right)(\partial_t\phi_{2,i}).
\end{align*}
\end{proposition}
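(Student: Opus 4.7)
The plan is to verify the local conservation law pointwise by a direct computation that follows exactly the symmetric structure already used in Section~\ref{sect:H} to identify the Hamiltonian. Write $e^{\rm K}=e_1+e_2+e_g$, where $e_1$ and $e_2$ are the kinetic parts for the upper and lower layers and $e_g=\frac12(\rho_2-\rho_1)g\zeta^2$. Since $\partial_t H_1=-\partial_t\zeta$ and $\partial_t H_2=\partial_t\zeta$, differentiating the monomial factors $\frac{H_1^{2(i+j)+1}}{2(i+j)+1}$, $\frac{H_1^{2(i+j)-1}}{2(i+j)-1}$ and their lower-layer analogues produces, after using the identities
\[
|\bm u_1|^2+w_1^2=\sum_{i,j=0}^N\Bigl(H_1^{2(i+j)}\nabla\phi_{1,i}\!\cdot\!\nabla\phi_{1,j}+4ij H_1^{2(i+j)-2}\phi_{1,i}\phi_{1,j}\Bigr)
\]
and
\[
|\bm u_2|^2+w_2^2=\sum_{i,j=0}^{N^*}\Bigl(H_2^{p_i+p_j}\nabla\phi_{2,i}\!\cdot\!\nabla\phi_{2,j}-2p_iH_2^{p_i+p_j-1}\phi_{2,i}\nabla b\!\cdot\!\nabla\phi_{2,j}+p_ip_jH_2^{p_i+p_j-2}(1+|\nabla b|^2)\phi_{2,i}\phi_{2,j}\Bigr),
\]
exactly the contributions $-\tfrac12\rho_1(|\bm u_1|^2+w_1^2)\partial_t\zeta$ and $+\tfrac12\rho_2(|\bm u_2|^2+w_2^2)\partial_t\zeta$ in $\partial_t e_1$ and $\partial_t e_2$, respectively.

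Next I would gather the terms coming from the $t$-derivatives of the $\phi_{k,j}$'s. Using the symmetry of the coefficients in $(i,j)$ so that the two differentiations yield equal contributions, the upper-layer piece equals
\[
\rho_1\sum_{i,j=0}^N\Bigl(\tfrac1{2(i+j)+1}H_1^{2(i+j)+1}\nabla\phi_{1,j}\!\cdot\!\nabla\partial_t\phi_{1,i}+\tfrac{4ij}{2(i+j)-1}H_1^{2(i+j)-1}\phi_{1,j}\partial_t\phi_{1,i}\Bigr),
\]
and integrating the gradient factor on $\partial_t\phi_{1,i}$ by parts (pointwise, producing a divergence term) rewrites this as
\[
\nabla\cdot\Bigl(\rho_1\sum_{i,j=0}^N\tfrac1{2(i+j)+1}H_1^{2(i+j)+1}\nabla\phi_{1,j}\,\partial_t\phi_{1,i}\Bigr)+\rho_1\sum_{i=0}^N\partial_t\phi_{1,i}\sum_{j=0}^N L_{1,ij}(H_1)\phi_{1,j}.
\]
The first term is exactly $-\nabla\cdot \bm f_{e,1}^{\rm K}$ with the upper-layer part of the flux from the proposition, and the inner sum in the second term equals $-H_1^{2i}\partial_t\zeta$ by the first equation of~\eqref{ARO:IK}, so this second term reduces to $-\rho_1\partial_t\zeta\,\bm l_1(H_1)\!\cdot\!\partial_t\bm\phi_1$. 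The lower-layer computation is completely parallel: one uses the definition of $L_2(H_2,b)$, which by design produces the boundary-topography terms appearing in $e_2$, and the second equation of~\eqref{ARO:IK} yields $\sum_j L_{2,ij}(H_2,b)\phi_{2,j}=H_2^{p_i}\partial_t\zeta$, contributing $+\rho_2\partial_t\zeta\,\bm l_2(H_2)\!\cdot\!\partial_t\bm\phi_2$ together with the corresponding divergence $-\nabla\cdot\bm f_{e,2}^{\rm K}$.

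Putting all of this together and adding $\partial_t e_g=(\rho_2-\rho_1)g\zeta\partial_t\zeta$, I arrive at
\[
\partial_t e^{\rm K}+\nabla\cdot\bm f_e^{\rm K}=\partial_t\zeta\,\Bigl\{\rho_2\bigl[\bm l_2(H_2)\!\cdot\!\partial_t\bm\phi_2+g\zeta+\tfrac12(|\bm u_2|^2+w_2^2)\bigr]-\rho_1\bigl[\bm l_1(H_1)\!\cdot\!\partial_t\bm\phi_1+g\zeta+\tfrac12(|\bm u_1|^2+w_1^2)\bigr]\Bigr\},
\]
and the bracket vanishes thanks to the third equation of the Kakinuma model (the last equation of~\eqref{ARO:IK} together with the explicit form~\eqref{const:F}), which finishes the proof.

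The main bookkeeping difficulty will be the lower-layer computation: one has to correctly pair the non-symmetric coefficient $\frac{p_j}{p_i+p_j}$ appearing inside the divergence in $L_{2,ij}$ with its transpose from $L_{2,ji}$ (using the relabelling $i\leftrightarrow j$) to produce the symmetric coefficient $\frac{2p_i}{p_i+p_j}$ present in $e_2$, and to check that the $\nabla b$ contributions combine with those from $w_2$ to exactly reconstruct $|\bm u_2|^2+w_2^2$. Once this identification is made correctly the rest is algebra, and the identity $(L_{k,ij})^*=L_{k,ji}$ noted at the beginning of Section~\ref{sect:ARO} guarantees that the integration by parts scheme succeeds pointwise rather than merely under the integral sign.
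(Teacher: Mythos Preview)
Your proof is correct and follows essentially the same route as the paper's: compute $\partial_t e^{\rm K}$, integrate the $\nabla\partial_t\phi_{k,i}$ terms by parts to peel off $-\nabla\cdot\bm f_e^{\rm K}$ and expose $\rho_k L_k\bm\phi_k\cdot\partial_t\bm\phi_k$, replace $L_k\bm\phi_k$ using the first two Kakinuma equations, and kill the remaining factor of $\partial_t\zeta$ with the third equation. The only cosmetic difference is that the paper bundles the $H_k$-derivative contributions and the gravity term together as $-F\partial_t\zeta$ from the outset (so that the final cancellation reads $(-F-\rho_1\bm l_1\cdot\partial_t\bm\phi_1+\rho_2\bm l_2\cdot\partial_t\bm\phi_2)\partial_t\zeta=0$), whereas you unpack $F$ into its kinetic and potential constituents and reassemble them at the end; the computation is otherwise identical.
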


\begin{proof}[{\bf Proof}.]
By using $F$ defined by~\eqref{H:F}, we see that 
\begin{align*}
\partial_t e^{\rm K}
&= -F\partial_t\zeta \\
&\quad\;
 + \rho_1 \sum_{i,j}^N \left(
  \frac{1}{2(i+j)+1}H_1^{2(i+j)+1}\nabla\phi_{1,j}\cdot\nabla\partial_t\phi_{1,i}
   + \frac{4ij}{2(i+j)-1}H_1^{2(i+j)-1}\phi_{1,j}\partial_t\phi_{1,i} \right) \\
&\quad\;
 + \rho_2 \sum_{i,j=0}^{N^*} \left\{ \left(
   \frac{1}{p_i+p_j+1}H_2^{p_i+p_j+1}\nabla\phi_{2,j}
   - \frac{p_j}{p_i+p_j}H_2^{p_i+p_j}\phi_{2,j}\nabla b \right) \cdot\nabla\partial_t\phi_{2,i} \right. \\
&\qquad\quad
 + \left. \left(
  - \frac{p_i}{p_i+p_j}H_2^{p_i+p_j}\nabla b\cdot\nabla\phi_{2,j}
   + \frac{p_ip_j}{p_i+p_j-1}H_2^{p_i+p_j-1}(1 + |\nabla b|^2)\phi_{2,j} \right)\partial_t\phi_{2,i} \right\} \\
&= -F\partial_t\zeta - \nabla\cdot\bm{f}_e^{\rm K} 
 + \rho_1L_1(H_1)\bm{\phi}_1\cdot\partial_t\bm{\phi}_1 + \rho_2L_2(H_2,b)\bm{\phi}_2\cdot\partial_t\bm{\phi}_2,
\end{align*}
so that, by~\eqref{H:KM}, 
\begin{align*}
\partial_t e^{\rm K} + \nabla\cdot\bm{f}_e^{\rm K} 
&= -F\partial_t\zeta
 + \rho_1L_1(H_1)\bm{\phi}_1\cdot\partial_t\bm{\phi}_1 + \rho_2L_2(H_2,b)\bm{\phi}_2\cdot\partial_t\bm{\phi}_2 \\
&= \left( -F - \rho_1\bm{l}_1(H_1)\cdot\partial_t\bm{\phi}_1 + \rho_2\bm{l}_2(H_2)\cdot\partial_t\bm{\phi}_2 \right)
 \partial_t\zeta \\
&=0,
\end{align*}
which is the desired identity. 
\end{proof}

\begin{proposition}\label{CL:momentum}
Suppose that the bottom in the lower layer is flat, that is, $b=0$. 
Then, any regular solution $(\zeta,\bm{\phi}_1,\bm{\phi}_2)$ to the Kakinuma model 
\eqref{Kaki:KM1}--\eqref{Kaki:KM3} satisfies the conservation of momentum 
\[
\partial_t \bm{m}^{\rm K} + \nabla\cdot F_{\bm{m}}^{\rm K} = 0,
\]
where the momentum density $\bm{m}^{\rm K}$ and the corresponding flux matrix $F_m^{\rm K}$ are given by 
\begin{align*}
\bm{m}^{\rm K} &= \zeta\nabla\phi = \zeta\nabla(\rho_2\bm{l}_2(H_2)\cdot\bm{\phi}_2 - \rho_1\bm{l}_1(H_1)\cdot\bm{\phi}_1), \\
F_{\bm{m}}^{\rm K} 
&= -\left( \zeta\partial_t(\rho_2\bm{l}_2(H_2)\cdot\bm{\phi}_2 - \rho_1\bm{l}_1(H_1)\cdot\bm{\phi}_1)
 + e^{\rm K} \right){\rm Id} \\
&\quad\;
 + \rho_1\sum_{i,j=0}^N\frac{1}{2(i+j)+1}H_1^{2(i+j)+1}\nabla\phi_{1,i}\otimes\nabla\phi_{1,j} \\
&\quad\;
 + \rho_1\sum_{i,j=0}^{N^*}\frac{1}{p_i+p_j+1}H_2^{p_i+p_j+1}\nabla\phi_{2,i}\otimes\nabla\phi_{2,j}.
\end{align*}
\end{proposition}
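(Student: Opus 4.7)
The plan is to mimic the logic of Proposition~\ref{CL:energy}, relying crucially on the flat-bottom hypothesis $b=0$ which guarantees translation invariance. First, by the product rule,
\[
\partial_t\bm{m}^{\rm K} \;=\; (\partial_t\zeta)\nabla\phi \;-\; (\nabla\zeta)(\partial_t\phi) \;+\; \nabla(\zeta\partial_t\phi),
\]
and the last term exactly matches the scalar contribution $-\zeta(\partial_t\phi)\,{\rm Id}$ of $F_{\bm{m}}^{\rm K}$. Hence the asserted conservation law reduces to showing
\[
(\partial_t\zeta)\nabla\phi \;-\; (\nabla\zeta)(\partial_t\phi) \;=\; \nabla e^{\rm K} \;-\; \nabla\cdot T,
\]
where $T$ denotes the tensor part of $F_{\bm{m}}^{\rm K}$ involving $\nabla\phi_{k,i}\otimes\nabla\phi_{k,j}$.

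To handle the left-hand side I would apply the chain rule to $\phi = \rho_2\bm{l}_2(H_2)\cdot\bm{\phi}_2 - \rho_1\bm{l}_1(H_1)\cdot\bm{\phi}_1$. Using $\partial H_1 = -\partial\zeta$ and $\partial H_2 = \partial\zeta$ (valid in both space and time thanks to $b\equiv 0$), the definitions~\eqref{sc:u}--\eqref{sc:w} of $\bm{u}_k$ and $w_k$, and the third equation of~\eqref{H:KM}, one obtains
\[
\nabla\phi = (\rho_2\bm{u}_2 - \rho_1\bm{u}_1) + (\rho_2 w_2 - \rho_1 w_1)\nabla\zeta, \qquad \partial_t\phi = F + (\rho_2 w_2 - \rho_1 w_1)\partial_t\zeta.
\]
The $(\rho_2 w_2 - \rho_1 w_1)$-contributions cancel upon forming the Wronskian-type combination $(\partial_t\zeta)\nabla\phi - (\nabla\zeta)(\partial_t\phi)$, leaving the reduced expression $(\partial_t\zeta)(\rho_2\bm{u}_2 - \rho_1\bm{u}_1) - F\nabla\zeta$.

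For the right-hand side I would differentiate the density~\eqref{H:energy density} (with $b=0$, which eliminates all $\nabla b$ contributions) and integrate by parts in $\bm{x}$ to produce the analogue of an Euler--Lagrange identity
\[
\partial_m e^{\rm K} = (\partial_\zeta e^{\rm K})\partial_m\zeta + \sum_{k=1,2}\sum_i (\delta_{\bm{\phi}_k}\mathscr{E}^{\rm K})_i\,\partial_m\phi_{k,i} + \sum_l \partial_l T_{lm}.
\]
The pointwise identity $\partial_{\partial_l\phi_{1,k}}e^{\rm K} = \rho_1\sum_i\tfrac{1}{2(i+k)+1}H_1^{2(i+k)+1}\partial_l\phi_{1,i}$, and its analogue in the lower layer, yield exactly $T_{lm}$ after the $i\leftrightarrow k$ symmetry absorbs the factor $\tfrac12$ in $e^{\rm K}$. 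The variational identities~\eqref{H:VD1} give $\partial_\zeta e^{\rm K} = -F$, and substituting the first two equations of~\eqref{H:KM}, namely $\rho_1 L_1(H_1)\bm{\phi}_1 = -\rho_1\bm{l}_1(H_1)\partial_t\zeta$ and $\rho_2 L_2(H_2,0)\bm{\phi}_2 = \rho_2\bm{l}_2(H_2)\partial_t\zeta$, collapses the middle sum into precisely $(\partial_t\zeta)(\rho_2\bm{u}_2 - \rho_1\bm{u}_1)$. The two sides then match term by term.

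I do not expect any serious obstacle: the argument is pure bookkeeping, with no nonlinear estimates or commutator analysis required. The only delicate point is the correct identification of $T_{lm}$ from the partial derivatives $\partial_{\partial_l\phi_{k,i}}e^{\rm K}$, and the decisive role of the flat-bottom assumption must be emphasized, since any nonzero $\nabla b$ would inject extra terms into $\partial_m e^{\rm K}$ that could not be absorbed into a pure divergence, reflecting the loss of translation invariance and the corresponding failure of momentum conservation.
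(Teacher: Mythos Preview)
Your proposal is correct and follows essentially the same route as the paper. The paper also reduces to the Wronskian combination $\partial_t(\zeta\partial_l\phi)-\partial_l(\zeta\partial_t\phi)$, exploits the same cancellation of the $(\rho_2w_2-\rho_1w_1)$-terms, substitutes the first two equations of~\eqref{H:KM} to get $\rho_1L_1(H_1)\bm{\phi}_1\cdot\partial_l\bm{\phi}_1+\rho_2L_2(H_2,0)\bm{\phi}_2\cdot\partial_l\bm{\phi}_2-(\partial_l\zeta)F$, and then verifies by direct expansion that the remainder after extracting the divergence of $T$ equals $\partial_l e^{\rm K}$; your Noether-type packaging via $\partial_m e^{\rm K}=(\partial_\zeta e^{\rm K})\partial_m\zeta+\sum(\delta_{\phi_{k,i}}\mathscr{E}^{\rm K})\partial_m\phi_{k,i}+\partial_lT_{lm}$ is a slightly cleaner way to organize that same computation.
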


\begin{proof}[{\bf Proof}.]
For $l=1,2,\ldots,n$, we see by~\eqref{H:KM} that 
\begin{align*}
\partial_t(\zeta\partial_l\phi) - \partial_l(\zeta\partial_t\phi) 
&= (\partial_t\zeta)\left( \rho_2\bm{l}_2(H_2)\cdot\partial_l\bm{\phi}_2
   - \rho_1\bm{l}_1(H_1)\cdot\partial_l\bm{\phi}_1 \right) \\
&\quad\;
 - (\partial_l\zeta)\left( \rho_2\bm{l}_2(H_2)\cdot\partial_t\bm{\phi}_2
   - \rho_1\bm{l}_1(H_1)\cdot\partial_t\bm{\phi}_1 \right) \\
&= \rho_2L_2(H_2,0)\bm{\phi}_2\cdot\partial_l\bm{\phi}_2
 + \rho_1L_1(H_1)\bm{\phi}_1\cdot\partial_l\bm{\phi}_1 - (\partial_l\zeta)F \\
&= -\nabla\cdot\left\{
 \rho_1\sum_{i,j=0}^N\left(\frac{1}{2(i+j)+1}H_1^{2(i+j)+1}\nabla\phi_{1,i}\right)\partial_l\phi_{1,j} \right. \\
&\qquad\qquad\left.
 + \rho_2\sum_{i,j=0}^{N^*}\left(\frac{1}{p_i+p_j+1}H_2^{p_i+p_j+1}\nabla\phi_{2,i}\right)\partial_l\phi_{2,j} \right\}
 + R_1,
\end{align*}
where $F$ is given by~\eqref{H:F} and 
\begin{align*}
R_1 &= \rho_1\sum_{i,j=0}^N\left( \frac{1}{2(i+j)+1}H_1^{2(i+j)+1}\nabla\phi_{1,i}\cdot\nabla\partial_l\phi_{1,j} 
  + \frac{4ij}{2(i+j)-1}H_1^{2(i+j)-1}\phi_{1,i}\partial_l\phi_{1,j} \right) \\
&\quad\;
 + \rho_2\sum_{i,j=0}^{N^*}\left( \frac{1}{p_i+p_j+1}H_2^{p_i+p_j+1}\nabla\phi_{2,i}\cdot\nabla\partial_l\phi_{2,j} 
  + \frac{p_ip_j}{p_i+p_j-1}H_2^{p_i+p_j-1}\phi_{2,i}\partial_l\phi_{2,j} \right) \\
&\quad\;
  - (\partial_l\zeta)F \\
&= \partial_l \left\{
 \frac12\rho_1\sum_{i,j=0}^N\left( \frac{1}{2(i+j)+1}H_1^{2(i+j)+1}\nabla\phi_{1,i}\cdot\nabla\phi_{1,j} 
  + \frac{4ij}{2(i+j)-1}H_1^{2(i+j)-1}\phi_{1,i}\phi_{1,j} \right)
\right. \\
&\qquad\left.
 + \frac12\rho_2\sum_{i,j=0}^{N^*}\left( \frac{1}{p_i+p_j+1}H_2^{p_i+p_j+1}\nabla\phi_{2,i}\cdot\nabla\phi_{2,j} 
  + \frac{p_ip_j}{p_i+p_j-1}H_2^{p_i+p_j-1}\phi_{2,i}\phi_{2,j} \right) \right\} \\
&\quad\;
 + R_2.
\end{align*}
Here, we have 
\begin{align*}
R_2 &= \frac12\rho_1\sum_{i,j=0}^N\left( H_1^{2(i+j)}\nabla\phi_{1,i}\cdot\nabla\phi_{1,j} 
  + 4ijH_1^{2(i+j-1)}\phi_{1,i}\phi_{1,j} \right) \partial_l\zeta \\
&\quad\;
 - \frac12\rho_2\sum_{i,j=0}^{N^*}\left( H_2^{p_i+p_j}\nabla\phi_{2,i}\cdot\nabla\phi_{2,j} 
  + p_ip_jH_2^{p_i+p_j-2}\phi_{2,i}\phi_{2,j} \right) \partial_l\zeta
 -F\partial_l\zeta \\
&= (\rho_2-\rho_1)g\zeta\partial_l\zeta
 = \partial_l\left( \frac12(\rho_2-\rho_1)g\zeta^2 \right),
\end{align*}
so that $R_1=\partial_le^{\rm K}$. 
These identities yield the desired one. 
\end{proof}


\bigskip
Vincent Duch\^ene \par
{\sc Institut de Recherche Math\'ematique de Rennes} \par
{\sc Univ Rennes}, CNRS, IRMAR -- UMR 6625 \par
{\sc F-35000 Rennes, France} \par
E-mail: \texttt{vincent.duchene@univ-rennes1.fr}

\bigskip
Tatsuo Iguchi \par
{\sc Department of Mathematics} \par
{\sc Faculty of Science and Technology, Keio University} \par
{\sc 3-14-1 Hiyoshi, Kohoku-ku, Yokohama, 223-8522, Japan} \par
E-mail: \texttt{iguchi@math.keio.ac.jp}


\begin{thebibliography}{99}
%
\bibitem{BenjaminBridges1997}
T. B. Benjamin and T. J. Bridges, 
Reappraisal of the Kelvin--Helmholtz problem. Part 1. Hamiltonian structure, 
J. Fluid Mech., {\bf 333} (1997), 301--325. 
%
\bibitem{BreschDesjardinsGhidagliaGrenierHillairet2018}
D. Bresch, B. Desjardins, J.-M. Ghidaglia, E. Grenier, and M. Hillairet,
Multi-fluid models including compressible fluids. 
In: Handbook of mathematical analysis in mechanics of viscous fluids, Springer, Cham, 2018, 2927--2978. 
%
\bibitem{BreschRenardy2011}
D. Bresch and M. Renardy, 
Well-posedness of two-layer shallow-water flow between two horizontal rigid plates, 
Nonlinearity {\bf 24} (2011), 1081--1088.
%
\bibitem{ChoiCammasa1999}
W. Choi and R. Camassa, 
Fully nonlinear internal waves in a two-fluid system, 
J. Fluid Mech., {\bf 396} (1999), 1--36. 
%
\bibitem{CraigGroves2000}
W. Craig and M. D. Groves, 
Normal forms for wave motion in fluid interfaces, 
Wave Motion {\bf 31} (2000), 21--41. 
%
\bibitem{CraigGuyenneKalisch2005}
W. Craig, P. Guyenne, and H. Kalisch, 
Hamiltonian long-wave expansions for free surfaces and interfaces, 
Comm. Pure Appl. Math., {\bf 58} (2005), 1587--1641. 
%
\bibitem{DrazinReid2004}
P. G. Drazin and W. H. Reid, 
Hydrodynamic stability. Second edition. With a foreword by John Miles. 
Cambridge Mathematical Library. Cambridge University Press, Cambridge, 2004.
%
\bibitem{DucheneIguchi2019}
V. Duch\^ene and T. Iguchi, 
A Hamiltonian structure of the Isobe--Kakinuma model for water waves, 
Water Waves, {\bf 3} (2021), 193--211. 
%
\bibitem{DucheneIguchi2022}
V. Duch\^ene and T. Iguchi, 
A mathematical analysis of the Kakinuma model for interfacial gravity waves. 
Part II: Consistency in the shallow water regime, 
in preparation. 
%
\bibitem{DucheneIsrawiTalhouk2016} 
V. Duch\^ene, S. Israwi, and R. Talhouk, 
A new class of two-layer Green--Naghdi systems with improved frequency dispersion, 
Stud. Appl. Math., {\bf 137} (2016), 356--415.
%
\bibitem{HughesKatoMarsden1976}
T. J. R. Hughes, T. Kato, and J. E. Marsden, 
Well-posed quasi-linear second-order hyperbolic systems with applications to nonlinear elastodynamics and general relativity, 
Arch. Ration. Mech. Anal., {\bf 63} (1976),  273--294. 
%
\bibitem{Iguchi2018-1}
T. Iguchi, 
Isobe--Kakinuma model for water waves as a higher order shallow water approximation, 
J. Differential Equations, {\bf 265} (2018), 935--962.
%
\bibitem{Iguchi2018-2}
T. Iguchi, 
A mathematical justification of the Isobe--Kakinuma model for water waves with and without bottom topography, 
J. Math. Fluid Mech., {\bf 20} (2018), 1985--2018. 
%
\bibitem{IguchiTanakaTani1997}
T. Iguchi, N. Tanaka, and A. Tani, 
On the two-phase free boundary problem for two-dimensional water waves, 
Math. Ann., {\bf 309} (1997), 199--223. 
%
\bibitem{Isobe1994}
M. Isobe, 
A proposal on a nonlinear gentle slope wave equation, 
Proceedings of Coastal Engineering, 
Japan Society of Civil Engineers, {\bf 41} (1994), 1--5 [Japanese]. 
%
\bibitem{Isobe1994-2}
M. Isobe, 
Time-dependent mild-slope equations for random waves, 
Proceedings of 24th International Conference on Coastal Engineering, ASCE, 285--299, 1994.
%
\bibitem{Kakinuma2000}
T. Kakinuma, 
[title in Japanese], 
Proceedings of Coastal Engineering, 
Japan Society of Civil Engineers, {\bf 47} (2000), 1--5 [Japanese]. 
%
\bibitem{Kakinuma2001}
T. Kakinuma, 
A set of fully nonlinear equations for surface and internal gravity waves, 
Coastal Engineering V: Computer Modelling of Seas and Coastal Regions, 
225--234, WIT Press, 2001. 
%
\bibitem{Kakinuma2003}
T. Kakinuma, 
A nonlinear numerical model for surface and internal waves shoaling on a permeable beach, 
Coastal engineering VI: Computer Modelling and Experimental Measurements of Seas and Coastal Regions, 
227--236, WIT Press, 2003.
%
\bibitem{KamotskiLebeau2005}
V. Kamotski and G. Lebeau, 
On 2D Rayleigh--Taylor instabilities, 
Asymptotic Analysis, {\bf 42} (2005), 1--27. 
%
\bibitem{KlopmanGroesenDingemans2010}
G. Klopman, B. van Groesen, and M. W. Dingemans, 
A variational approach to Boussinesq modelling of fully nonlinear water waves, 
J. Fluid Mech., (2010), {\bf 657}, 36--63.
%
\bibitem{Lannes2013}
D. Lannes, 
A stability criterion for two-fluid interfaces and applications, 
Arch. Ration. Mech. Anal., {\bf 208} (2013), 481--567. 
%
\bibitem{Luke1967}
J. C. Luke, 
A variational principle for a fluid with a free surface, 
J. Fluid Mech., {\bf 27} (1967), 395--397. 
%
\bibitem{Majda1984}
A. J. Majda, 
Compressible fluid flow and systems of conservation laws in several space variables, 
Applied Mathematical Sciences, \textbf{53}. Springer-Verlag, New York, 1984. 
%
\bibitem{MajdaBertozzi2002}
A. J. Majda and A. L. Bertozzi, 
Vorticity and incompressible flow, 
Cambridge Texts in Applied Mathematics, \textbf{27}. Cambridge University Press, Cambridge, 2002. 
%
\bibitem{Miyata1985}
M. Miyata, 
An internal solitary wave of large amplitude, 
La mer, {\bf 23} (1985), 43--48.
%
\bibitem{MurakamiIguchi2015}
Y. Murakami and T. Iguchi, 
Solvability of the initial value problem to a model system for water waves, 
Kodai Math. J., {\bf 38} (2015), 470--491.
%
\bibitem{NemotoIguchi2018}
R. Nemoto and T. Iguchi, 
Solvability of the initial value problem to the Isobe--Kakinuma model for water waves, 
J. Math. Fluid Mech., {\bf 20} (2018), 631--653. 
%
\bibitem{PapoutsellisAthanassoulis17}
Ch. E. Papoutsellis and G. A. Athanassoulis, 
A new efficient Hamiltonian approach to the nonlinear water-wave problem over arbitrary bathymetry, 
arXiv preprint:1704.03276. 
%
\bibitem{SulemSulem85}
C. Sulem and P.-L. Sulem, 
Finite time analyticity for the two- and three-dimensional Rayleigh--Taylor instability, 
Trans. Amer. Math. Soc., {\bf 287} (1985), 127--160. 
%
\bibitem{SulemSulemBardosFrisch1981}
C. Sulem, P.-L. Sulem, C. Bardos, and U. Frisch,  
Finite time analyticity for the two- and three-dimensional Kelvin--Helmholtz instability, 
Comm. Math. Phys. {\bf 80} (1981),485--516. 
%
\bibitem{Zakharov1968}
V. E. Zakharov,
Stability of periodic waves of finite amplitude on the surface of a deep fluid, 
J. Appl. Mech. Tech. Phys., {\bf 9} (1968), 190--194. 
%
\end{thebibliography}
\end{document}